\numberwithin{figure}{section}
\numberwithin{equation}{section}
\title{Noncommutative crossing partitions}
\author[K.~Shigechi]{Keiichi~Shigechi}
\email{k1.shigechi AT gmail.com}
\date{\today}
\newcommand\tikzpic[2]{
\raisebox{#1\totalheight}{
\begin{tikzpicture}
#2
\end{tikzpicture}
}}
\newtheorem{theorem}[figure]{Theorem}
\newtheorem{example}[figure]{Example}
\newtheorem{lemma}[figure]{Lemma}
\newtheorem{defn}[figure]{Definition}
\newtheorem{prop}[figure]{Proposition}
\newtheorem{cor}[figure]{Corollary}
\newtheorem{remark}[figure]{Remark}
\begin{document}
\begin{abstract}
We define and study noncommutative crossing partitions which are a generalization of 
non-crossing partitions.
By introducing a new cover relation on binary trees, we show that the partially 
ordered set of noncommutative crossing partitions is a graded lattice.
This new lattice contains the Kreweras lattice, the lattice of non-crossing partitions,
as a sublattice.
We calculate the M\"obius function, the number of maximal chains and 
the number of $k$-chains in this new lattice by constructing an explicit $EL$-labeling on the lattice.
By use of the $EL$-labeling, we recover the classical results on the Kreweras lattice.
We characterize two endomorphism on the Kreweras lattice, the Kreweras complement map and the involution
defined by Simion and Ullman, in terms of the maps on the noncommutative crossing partitions. 
We also establish relations among three combinatorial objects: labeled $k+1$-ary trees, $k$-chains 
in the lattice, and $k$-Dyck tilings.
\end{abstract}

\maketitle

\section{Introduction}
A {\it non-crossing partition} of the set $[n]:=\{1,2,\ldots,n\}$ is a 
partition $\pi$ of $[n]$ such that
if four integers satisfy $a<b<c<d$, a block $B_{1}$ contains $a$ and $c$ 
and another block $B_{2}$ contains $b$ and $d$, then 
$B_1$ and $B_2$ coincides with each other.
In 1972, G.~Kreweras systematically studied the non-crossing 
partition lattice, which we call the Kreweras lattice \cite{Kre72} (see also \cite{Pou72} by Y.~Poupard). 
It was shown that the M\"obius function is equal to the Catalan number up to a sign, 
and the number of intervals is equal to the Fuss--Catalan number.
In \cite{Edel80,Edel82}, P.~H.~Edelman constructed a bijection between $k+1$-ary trees and $k$-chains in the
Kreweras lattice. There, the number of maximal chains in the lattice is equal to the number of parking functions, 
or equivalently the number of rooted labeled forests \cite[Corollary 3.3]{Edel80}.
The combinatorial aspects of $k$-chains are extensively studied in \cite{Edel80,Edel82,EdelSim94}. 
R.~P.~Stanley established a bijection between maximal chains in the Kreweras lattice and parking functions 
in \cite{Sta97} by constructing an explicit $EL$-labeling on the Kreweras lattice.
In addition, non-crossing partitions appear in various context:
algebraic combinatorics \cite{Arm09,Bia97,Rei97,Sim94,Sim00}, geometric group theory \cite{Arm09,Rei97} and 
noncommutative probability theory \cite{NicSpe97,Spe94,Spe97}.

We will study {\it noncommutative crossing partitions}, which 
are a natural generalization of the notion of non-crossing partitions.
In the theory of non-crossing partitions, a partition satisfies the following 
two properties: the order of blocks are irrelevant and two blocks are non-crossing.
In the case of a noncommutative crossing partition, the order of blocks in a partition 
plays an important role and two blocks may be crossing.
By introducing an appropriate cover relation on the set $\mathtt{NCCP}(n)$ of noncommutative 
crossing partitions, we construct a new lattice which contains the Kreweras lattice, 
the lattice of non-crossing partitions, as a sublattice.
We denote by $\mathtt{NCP}(n)$ the set of non-crossing partitions.
The cover relation $\subseteq$ on the Kreweras lattice is defined by use of a refinement of the blocks.
On the other hand, the cover relation $\lessdot$ on $\mathtt{NCCP}(n)$ 
introduced in this paper is defined on the binary trees, 
which are bijective to the noncommutative crossing partitions.  
To embed the Kreweras lattice into the lattice of $\mathtt{NCCP}(n)$, we consider 
the subset $\mathtt{NCCP}(n;312)$, which is the set of $312$-avoiding noncommutative 
crossing partitions.
We establish the lattice isomorphism between two lattices $(\mathtt{NCP}(n),\subseteq)$ and 
$(\mathtt{NCCP}(n;312),\le)$.
Under this isomorphism, the two cover relations $\lessdot$ and $\subset$ are compatible 
with each other.
Since the cover relation $\lessdot$ is defined in $\mathtt{NCCP}(n)$, 
it can be regarded as a natural generalization of the cover relation $\subseteq$.

In \cite{Kre72}, Kreweras defined an endomorphism on the lattice $(\mathtt{NCP}(n),\subseteq)$, 
which we call Kreweras complement map.
Similarly, in \cite{SimUll91}, Simion and Ullman defined an involution similar to the Kreweras complement map.
The latter map can be obtained by modifying the Kreweras complement map.
Both maps are defined on $\mathtt{NCP}(n)$. 
By use of the isomorphism between $\mathtt{NCP}(n)$ and $\mathtt{NCCP}(n;312)$, 
we study the relation between these two maps and two other maps on $\mathtt{NCCP}(n)$.
By characterizing these two maps, Kreweras complement map and the involution, on $(\mathtt{NCCP}(n;312),\le)$, 
we show that they are compatible with certain maps from $\mathtt{NCCP}(n;312)$ to $\mathtt{NCP}(n)$.

By the results of Bj\"orner \cite{Bjo80} and Edelman and Simion \cite{EdelSim94}, it is known 
that the lattice $(\mathtt{NCP}(n),\subseteq)$ admits an $EL$-labeling.
If a poset admits an $EL$-labeling, then this poset is lexicographically shellable and 
it implies Cohen-Macaulay \cite{Bjo80,BjoGarSta82,BjoWac83}.
As in the case of the Kreweras lattice, we show that the lattice $\mathcal{L}_{NCCP}(n):=(\mathtt{NCCP}(n),\le)$ 
is bounded and graded. 
Further, through an explicit construction of an $EL$-labeling, the lattice $\mathcal{L}_{NCCP}(n)$ 
is lexicographically shellable.
This $EL$-labeling has an advantage such that it is defined through the definition of a cover relation,
and directly related to a generalization of parking functions, labeled parking functions.
Here, a labeled parking function is a pair of a parking function and a permutation (see Section \ref{sec:lpk}).
By using this explicit $EL$-labeling on $\mathcal{L}_{NCCP}(n)$, 
we calculate the M\"obius function (Theroem \ref{thrm:moebius}), 
the number of maximal chains (Theorem \ref{thrm:mchainNCCP}), and 
the number of $k$-chains (Theorem \ref{thrm:interval}).
Since we introduce a pair of a parking function and a permutation as a refinement of 
the $EL$-labeling, we study the multiplicities of a permutation associated to 
the calculation of the M\"obius function (Section \ref{sec:mult}).
Table \ref{table:summary} is a summary of combinatorial properties of two lattices, the Kreweras lattice 	
and $\mathcal{L}_{NCCP}(n)$.
\begin{table}[ht]
\begin{tabular}{c|c|c}
 & Kreweras lattice $\mathtt{NCP}(n)$ & $\mathcal{L}_{NCCP}(n)$ \\ \hline
(signless) M\"obius function & Catalan number &  $(2n-3)!!$ \\ \hline 
Number of maximal chains &  $n^{n-2}$ & $((n-1)!)^2$ \\ \hline
Number of $k-1$-chains & Fuss--Catalan number & $\prod_{j=0}^{n-1}((k-1)j+1)$ \\ 
\end{tabular}
\vspace*{10pt}
\caption{Summary of combinatorial properties of the Kreweras lattice and $\mathcal{L}_{NCCP}(n)$}
\label{table:summary}
\end{table}

The difference of combinatorial structures between the Kreweras lattice and $\mathcal{L}_{NCCP}(n)$ 
is that the former is characterized by combinatorial objects without labels, however, 
the latter is with labels.
Thus, by ignoring the labels on the combinatorial objects on $\mathcal{L}_{NCCP}(n)$, 
we can recover the results for the Kreweras lattice obtained in 
\cite{Edel80,Edel82,Kre72,Sim00}: the M\"obius function (Theorem \ref{cor:KreMoebius}), 
the number of maximal chains (Theorem \ref{thrm:mchainKre}), 
and the number of $k$-chains (Theorem \ref{thrm:interval}).

The study of chains in the Kreweras lattice in \cite{Edel80,Edel82} reveals a relation 
between $k+1$-ary trees and $k$-chains.
The cardinality of $k$-chains is given by the Fuss--Catalan number, which 
is a generalization of the Catalan number.
We study $k$-chains in the lattice $\mathcal{L}_{NCCP}(n)$ and connect 
them with labeled $k+1$-ary trees.
On the other hand, labeled $k+1$-ary trees are bijective to 
another combinatorial object, $k$-Dyck tilings (generalized Dyck tilings).
A $k$-Dyck tiling is a generalization of Dyck tilings studied 
in \cite{JosVerKim16,KeyWil12,KimMesPanWil14,Shi21,ShiZinJus12}.
Thus, we make a new bridge between $k$-chains and $k$-Dyck tilings.

The set of $k$-Dyck tilings are divided into two classes: the first one is $k$-Dyck tilings 
consisting of trivial tiles, and the other is $k$-Dyck tilings with non-trivial 
tiles. If an element in a $k$-chain $c$ is not in $\mathtt{NCCP}(n;312)$, this 
chain corresponds to a $k$-Dyck tiling with non-trivial tiles.
Conversely, if all element in the chain $c$ are in $\mathtt{NCCP}(n;312)$,
$c$ corresponds to a $k$-Dyck tiling consisting of only trivial tiles.
This correspondence is equivalent, by the isomorphism between 
$\mathtt{NCCP}(n;312)$ and $\mathtt{NCP}(n)$, to the correspondence between 
a $k$-chain in the lattice of non-crossing partitions and a 
$k$-Dyck path obtained in \cite{Edel80,Edel82}.

Schematically in Figure \ref{fig:cco}, we have the correspondences among 
combinatorial objects which appear in this paper.
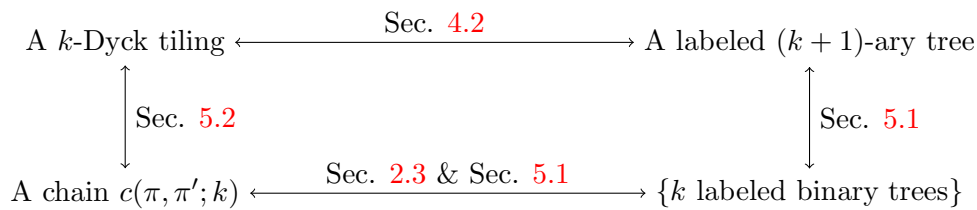
\begin{figure}[ht]
\begin{align*}
\begin{split}
\begin{tikzpicture}
\node (0) at (0,0) {A labeled $(k+1)$-ary tree};
\node (1) at (-9,0) {A $k$-Dyck tiling};
\node (2) at (-9,-2){A chain $c(\pi,\pi';k)$};
\node (3) at (0,-2){$\{ k \text{ labeled binary trees}\}$};
\draw[<->,anchor=south] (0.west) to node{Sec. \ref{sec:GDyckT}}(1.east);
\draw[<->,anchor=west] (0.south) to node{Sec. \ref{sec:deckbt}}(3.north);
\draw[<->,anchor=south] (2.east) to node{Sec. \ref{sec:lbt} $\&$ Sec. \ref{sec:deckbt}}(3.west);
\draw[<->,anchor=west] (1.south) to node{Sec. \ref{sec:interlat}}(2.north);
\end{tikzpicture}
\end{split}
\end{align*}
\caption{Correspondences among combinatorial objects.}
\label{fig:cco}
\end{figure}
These correspondences are a natural generalization of 
the correspondences studied in \cite{Edel80,Edel82,EdelSim94}.

The paper is organized as follows.
In Section \ref{sec:NCCP}, we introduce the notion of noncommutative 
crossing partitions and study the relation to labeled binary trees.
We also give a recurrence relation for the number of noncommutative crossing partitions
of a certain type. This number is expressed as the number of paths in a lattice.
In Section \ref{sec:LatNCCP}, we define a cover relation on 
labeled binary trees. 
This gives a view of the poset $(\mathtt{NCCP}(n),\le)$ as a lattice.
We study the lattice structure and show that the Kreweras lattice 
is a sublattice of this new lattice.
As for the Kreweras lattice, we study two endomorphisms: the Kreweras complement
map in \cite{Kre72} and an involution studied in \cite{SimUll91}.
By constructing an explicit $EL$-labeling on the lattice $\mathcal{L}_{NCCP}(n)$, 
we calculate the M\"obius function, and the number of maximal chains.
In Section \ref{sec:kDyck}, we briefly review the relation between 
labeled $k$-ary trees and $k$-Dyck tilings.
In Section \ref{sec:int}, we give a decomposition of a labeled $k$-ary 
trees in terms of $k-1$ labeled binary trees.
Then, we consider the chains of length $k$ in $\mathcal{L}_{NCCP}(n)$ 
and make a bridge to $k$-Dyck tilings introduced in Section \ref{sec:kDyck}.

\section{Noncommutative crossing partitions and labeled trees}
\label{sec:NCCP}
\subsection{Noncommutative crossing partitions}
We first introduce the notion of noncommutative crossing partitions.
They are a generalization of non-crossing partitions and possess similar properties.
Especially, the set of non-crossing partitions is contained as a subset.

\begin{defn}
A noncommutative crossing partition $\pi:=(\pi_{1},\pi_{2},\ldots,\pi_{l})$ of length $l$ in $[n]:=\{1,2,\ldots,n\}$  
is a set of crossing partitions (or blocks) $\pi_{i}$ with $1\le i\le l$ which satisfies the following three conditions:
\begin{enumerate}
\item Each integer $p\in[n]$ appears exactly once in $\pi$.
\item Each $\pi_{i}$, $1\le i\le l$, is an increasing integer sequence from left to right.
\item We have $\max(\pi_{i})>\min(\pi_{i+1})$ for $1\le i\le l-1$.
\end{enumerate}
We denote by $\mathtt{NCCP}(n)$ the set of noncommutative crossing partition in $[n]$.
\end{defn}

\begin{remark}
Consider two elements in $\mathtt{NCCP}(4)$, $\pi=24/13$ and $\pi'=13/24$.
If we ignore the order of blocks, we have two blocks: $13$ and $24$.
These are the simplest examples of noncommutativity and crossing partitions, 
since we distinguish $\pi$ from $\pi'$.
\end{remark}

For simplicity, we write $\pi:=(\pi_1,\ldots,\pi_{l})$
as $\pi_1/\pi_{2}/\cdots/\pi_{l}$.
For example, $\pi=(24,3,1)$ in $\{1,2,3,4\}$ is written as $24/3/1$.

We define two classes of noncommutative crossing partitions:
non-crossing partitions and canonical partitions.
\begin{defn}
Let $\pi:=(\pi_{1},\ldots,\pi_{l})\in\mathtt{NCCP}(n)$.
Suppose $1\le a<b<c<d\le n$.
Then, we call $\pi$ a non-crossing partition if there is no $i\neq j$ such that 
$a,c\in\pi_{i}$ and $b,d\in\pi_{j}$.
\end{defn}

\begin{defn}
A partition $\pi:=(\pi_{1},\ldots,\pi_{l})\in\mathtt{NCCP}(n)$ is called canonical 
if it satisfies 
\begin{align*}
\min(\pi_{l})<\min(\pi_{l-1})<\ldots<\min(\pi_{1}).
\end{align*}
\end{defn}

We have two crossing partitions for $n=4$: $24/13$ and $13/24$.
Other partitions for $n=4$ are non-crossing.
Similarly, the partition $24/13$ is canonical, but $13/24$ is not.

\begin{defn}
We denote by $\mathtt{NCP}(n)$ the set of 
non-crossing canonical partitions in $[n]$.
\end{defn}

\begin{remark}
The set $\mathtt{NCP}(n)$ is nothing but the set of non-crossing partitions 
in $[n]$ studied in the literature (see for exapmle \cite{Kre72,Pou72,Sim00} 
and references therein).
\end{remark}

Given a noncommutative crossing partition $\pi:=\pi_1/\pi_2/\cdots/\pi_{l}$, 
we write each crossing partition as $\pi_{i}:=\pi_{i,1}\ldots \pi_{i,p}$ where 
$p$ is the length of $\pi_{i}$.  

\begin{defn}
A crossing partition $\pi\in\mathtt{NCCP}(n)$ is $312$-avoiding
if and only if 
there is no integers $1\le i<j\le k\le l$ and $p,q,r$ such that 
\begin{align*}
\pi_{j,q}<\pi_{k,r}<\pi_{i,p}.
\end{align*}
We denote by $\mathtt{NCCP}(n;312)$ is the set of $312$-avoiding 
partitions in $\mathtt{NCCP}(n)$.
\end{defn}

\subsection{Enumerations of noncommutative crossing partitions}
We first show that a noncommutative crossing partitions is bijective 
to a permutation.
We give several enumerative results regarding to partitions.

\begin{prop}
The number of noncommutative crossing partitions of $[n]$ 
is $n!$, i.e., $|\mathtt{NCCP}(n)|=n!$.
\end{prop}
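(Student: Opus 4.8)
The plan is to construct an explicit bijection between $\mathtt{NCCP}(n)$ and the symmetric group $\mathfrak{S}_n$, from which $|\mathtt{NCCP}(n)|=n!$ follows immediately. The key observation is that a noncommutative crossing partition records two independent pieces of data: the multiset of blocks (which is a genuine set partition, since the blocks partition $[n]$) and the order in which the blocks are listed, subject only to condition (3), $\max(\pi_i)>\min(\pi_{i+1})$. So the first step is to understand condition (3) as a constraint that, perhaps surprisingly, imposes no loss: I would show that \emph{every} set partition of $[n]$ together with \emph{every} linear order on its blocks compatible with (3) can be encoded by a permutation, and conversely.

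The cleanest route is a reading/insertion procedure. Given $\pi=\pi_1/\pi_2/\cdots/\pi_l$, read the entries of the blocks left-to-right, block by block: first all entries of $\pi_1$ in increasing order, then all entries of $\pi_2$, and so on. This produces a word $w=w_1w_2\cdots w_n$ which is a permutation of $[n]$ (by condition (1), each integer appears exactly once). The content of the argument is then to prove this map is a bijection, i.e., that $\pi$ can be recovered from $w$. The recovery rule is to cut $w$ into maximal \emph{increasing runs} — but this is not quite right, because a single block is increasing and consecutive blocks might also be increasing across the boundary if we are not careful; condition (3) is exactly what prevents two consecutive blocks from merging into one increasing run, since $\max(\pi_i)>\min(\pi_{i+1})$ forces a descent at each block boundary. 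Therefore the maximal increasing runs of $w$ are precisely the blocks of $\pi$, read in their given order, and the map $\pi\mapsto w$ is injective. For surjectivity, given any permutation $w$, decompose it into its maximal increasing runs $B_1,B_2,\ldots,B_l$; each $B_i$ is by construction an increasing sequence, and maximality of the runs forces a descent between $B_i$ and $B_{i+1}$, i.e. $\max(B_i)>\min(B_{i+1})$, so $(B_1,\ldots,B_l)$ is a valid element of $\mathtt{NCCP}(n)$ mapping to $w$.

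Carrying this out, the three conditions in the definition correspond exactly to: (1) $w$ is a permutation of $[n]$ (each integer used once); (2) each block is increasing, which is automatic for the runs; (3) the descent between consecutive runs, which is the maximality of runs. Since the descent set of a permutation determines the run decomposition and vice versa, the correspondence $\pi\leftrightarrow w$ is a genuine bijection $\mathtt{NCCP}(n)\xrightarrow{\sim}\mathfrak{S}_n$, giving $|\mathtt{NCCP}(n)|=|\mathfrak{S}_n|=n!$.

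The main obstacle, such as it is, is verifying the two-sided nature of the correspondence carefully — specifically, checking that condition (3) is neither too weak (allowing several $\pi$ to map to one $w$, which would happen if a block boundary could fall in the middle of an increasing run) nor too strong (forbidding some permutation from arising, which it does not, since the run decomposition always satisfies (3)). Both directions hinge on the single elementary fact that ``block boundary'' $=$ ``descent position,'' so once that equivalence is pinned down the proof is essentially complete. I would also remark in passing that, under this bijection, the subset $\mathtt{NCCP}(n;312)$ should correspond to the $312$-avoiding permutations, which anticipates the later identification of $\mathtt{NCCP}(n;312)$ with $\mathtt{NCP}(n)$ — but that is not needed for the present count.
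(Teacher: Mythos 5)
Your proposal is correct and follows essentially the same route as the paper: concatenate the blocks to obtain a permutation, and recover the partition by cutting at descents, with condition (3) of the definition being precisely the descent at each block boundary. The paper states the invertibility as obvious, whereas you verify the run-boundary/descent equivalence explicitly, but the underlying bijection is identical.
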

\begin{proof}
We have a natural bijection between a noncommutative crossing partition 
and a permutation.
Given a $\pi:=(\pi_1,\ldots,\pi_{l})\in\mathtt{NCCP}(n)$, we have a unique permutations $w:=w_{1}\ldots w_{n}$ in $[n]$
by concatenating $\pi_{i}$, $1\le i\le l$, from left to right.
This map from $w$ to $\mu$ is obviously invertible, since we have a unique noncommutative crossing partition 
by inserting ``$/$" between $w_{i}$ and $w_{i+1}$ if $w_{i}>w_{i+1}$.

From these observations, the number $\mathtt{NCCP}(n)$ is equal to the number of permutations in $[n]$, which 
completes the proof.
\end{proof}

The following two results are classical results on non-crossing partitions \cite{Kre72,Sim00} 
and the number of permutations with a pattern avoidance.
\begin{prop}
\label{prop:cardNCP}
The number of non-crossing canonical partitions of $[n]$
is the $n$-th Catalan number, i.e., 
\begin{align*}
|\mathtt{NCP}(n)|=\genfrac{}{}{}{}{1}{n+1}\genfrac{(}{)}{0pt}{}{2n}{n}.
\end{align*}
\end{prop}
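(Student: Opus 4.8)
The plan is to identify $\mathtt{NCP}(n)$ with the classical set of (unordered) non-crossing set partitions of $[n]$ and then invoke the Catalan count for the latter, due to Kreweras \cite{Kre72}. As the Remark above already asserts, these two objects are meant to coincide; the point of the proof is to make the identification precise and then quote (or re-derive) the classical enumeration.

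First I would show that every non-crossing set partition of $[n]$ carries a unique \emph{canonical} ordering of its blocks. Since distinct blocks have distinct minima, a set partition $\{B_1,\ldots,B_l\}$ admits exactly one tuple $(\pi_1,\ldots,\pi_l)$ — a reordering of the $B_i$, each written as an increasing sequence — with $\min(\pi_l)<\min(\pi_{l-1})<\cdots<\min(\pi_1)$. I then claim this tuple lies in $\mathtt{NCCP}(n)$: conditions (1) and (2) of the definition are immediate, and condition (3) is automatic, because $\max(\pi_i)\ge\min(\pi_i)>\min(\pi_{i+1})$ for every $i$. If, in addition, the underlying set partition is non-crossing, then the tuple is non-crossing in the sense of the paper's definition (the two ``crossing'' conditions are literally the same statement about quadruples $a<b<c<d$), hence it lies in $\mathtt{NCP}(n)$. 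Conversely, forgetting the order of blocks sends an element of $\mathtt{NCP}(n)$ to a non-crossing set partition, and the two maps are mutually inverse. This gives a bijection between $\mathtt{NCP}(n)$ and the set of non-crossing set partitions of $[n]$.

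It then remains to count non-crossing set partitions of $[n]$, which is the classical fact that this number equals $\frac{1}{n+1}\binom{2n}{n}$. I would either cite \cite{Kre72} (or \cite{Sim00}) directly, or, for a self-contained argument, recall the standard bijection between non-crossing partitions of $[n]$ and Dyck paths (equivalently, ballot sequences) of length $2n$ and check that the non-crossing condition translates exactly into the non-negativity condition on the path. I expect no genuine obstacle: the only points requiring care are that the canonical block order is well defined and that conditions (1)--(3) of $\mathtt{NCCP}(n)$ hold automatically for it, and — if one takes the self-contained route rather than quoting Kreweras — the routine verification that ``non-crossing'' matches the Dyck condition under the chosen encoding.
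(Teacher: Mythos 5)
Your proof is correct, but it takes a different route from the paper. The paper argues directly by recursion: writing $\pi\in\mathtt{NCP}(n)$ in one-line notation and splitting it at the position of the letter $n$ into a left part $\pi_l$ and a right part $\pi_r$, it claims both parts are again non-crossing canonical partitions (of smaller sets), deduces the Catalan recurrence $A_n=\sum_{k=0}^{n-1}A_kA_{n-1-k}$, and concludes. You instead make precise the identification that the paper only states as a Remark: every unordered non-crossing set partition has a unique canonical block ordering, that ordering automatically satisfies conditions (1)--(3) of the definition of $\mathtt{NCCP}(n)$ (your observation that $\max(\pi_i)\ge\min(\pi_i)>\min(\pi_{i+1})$ forces condition (3) is exactly the right point), and the non-crossing conditions agree since they only concern the underlying quadruples $a<b<c<d$; you then quote Kreweras' classical count. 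Both arguments are sound. Yours has the virtue of supplying the justification for the paper's Remark that $\mathtt{NCP}(n)$ ``is nothing but'' the classical set of non-crossing partitions, and of outsourcing the enumeration to the literature; the paper's has the virtue of being self-contained and of exercising the one-line-notation machinery it has just set up (though its assertion that $\pi_l$ and $\pi_r$ are again in $\mathtt{NCP}$ of the appropriate sizes is itself stated as ``obvious'' and would deserve the same level of care you give to your identification).
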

\begin{proof}
Let $A_{n}:=|\mathtt{NCP}(n)|$ and $\pi\in\mathtt{NCP}(n)$ in one-line notation. 
We divide $\pi$ into two pieces $\pi_{l}$ and $\pi_{r}$ where 
$\pi_l$ (resp. $\pi_r$) is left (resp. right) to $n$ in $\pi$.
It is obvious that $\pi_l$ and $\pi_r$ are in $\mathtt{NCP}(|\pi_{l}|)$ and 
$\mathtt{NCP}(|\pi_r|)$ where $|\pi|$ is the size of $\pi$.
Thus, $A_{n}$ satisfies the recurrence relation
\begin{align*}
A_{n}=\sum_{k=0}^{n-1}A_{k}A_{n-1-k},
\end{align*}
which is nothing but the recurrence equation for the Catalan numbers.
\end{proof}

\begin{prop}
The number of $312$-avoiding noncommutative crossing partitions of $[n]$ 
is given by the $n$-th Catalan number.
\end{prop}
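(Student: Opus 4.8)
The plan is to reduce the statement to the classical enumeration of pattern-avoiding permutations. Recall from the proof of the previous proposition the bijection $\Phi\colon\mathtt{NCCP}(n)\to S_{n}$ sending $\pi=\pi_{1}/\cdots/\pi_{l}$ to the word $w=w_{1}\cdots w_{n}$ obtained by concatenating the blocks from left to right; the positions at which a new block begins are exactly the descent positions of $w$. I would show that, under $\Phi$, the set $\mathtt{NCCP}(n;312)$ corresponds precisely to the set of $312$-avoiding permutations of $[n]$, that is, to words $w$ with no indices $a<b<c$ such that $w_{b}<w_{c}<w_{a}$. Granting this, the proposition follows from the well-known fact that the number of $312$-avoiding permutations of $[n]$ is the $n$-th Catalan number.

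For the correspondence, first suppose $w=\Phi(\pi)$ is not $312$-avoiding, witnessed by positions $a<b<c$ with $w_{b}<w_{c}<w_{a}$. Let $i,j,k$ be the indices of the blocks of $\pi$ containing positions $a,b,c$; since $a<b<c$ and blocks occupy consecutive positions in $w$, we have $i\le j\le k$. Because the entries within a single block increase from left to right, positions $a$ and $b$ cannot lie in the same block (otherwise $w_{a}<w_{b}$), so in fact $i<j$. Writing $w_{a}=\pi_{i,p}$, $w_{b}=\pi_{j,q}$, $w_{c}=\pi_{k,r}$, the inequalities $w_{b}<w_{c}<w_{a}$ become $\pi_{j,q}<\pi_{k,r}<\pi_{i,p}$ with $i<j\le k$, which is exactly the forbidden configuration; hence $\pi\notin\mathtt{NCCP}(n;312)$. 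Conversely, suppose $\pi$ has a forbidden triple $\pi_{j,q}<\pi_{k,r}<\pi_{i,p}$ with $i<j\le k$. Since blocks are linearly ordered in $w$, the position of $\pi_{i,p}$ precedes that of $\pi_{j,q}$; and the position of $\pi_{j,q}$ precedes that of $\pi_{k,r}$ — immediate if $j<k$, and if $j=k$ it follows from $\pi_{j,q}<\pi_{j,r}$ together with the left-to-right increase inside block $j$. So these three entries occur at positions $a<b<c$ with $w_{a}>w_{c}>w_{b}$, a $312$ pattern. This establishes that $\Phi$ restricts to a bijection between $\mathtt{NCCP}(n;312)$ and the $312$-avoiding permutations of $[n]$.

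Finally, I would invoke the classical count $|\{w\in S_{n}: w\ \text{is}\ 312\text{-avoiding}\}|=\frac{1}{n+1}\binom{2n}{n}$ (Knuth; see also any standard reference on pattern avoidance). Alternatively, to keep the argument self-contained, one can decompose a $312$-avoiding permutation around the position of its smallest entry $1$: this forces every entry before that position to be smaller than every entry after it, so the two sides carry the value sets $\{2,\dots,j\}$ and $\{j+1,\dots,n\}$ and are each $312$-avoiding, which yields the Catalan recurrence $A_n=\sum_{k=0}^{n-1}A_kA_{n-1-k}$ exactly as in the proof of Proposition~\ref{prop:cardNCP}. The only point that requires care is the bookkeeping in the translation above, in particular the strict inequality $i<j$, which is forced precisely by the monotonicity of the blocks; everything else is routine.
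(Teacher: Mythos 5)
Your proof is correct, and it takes a genuinely different route from the paper's. The paper's own argument simply splits $\pi$ at the position of the largest entry $n$ and appeals to ``the same argument'' as in the proof for $\mathtt{NCP}(n)$; you instead first verify that, under the concatenation bijection $\mathtt{NCCP}(n)\to\mathcal{S}_{n}$, the block-level condition defining $\mathtt{NCCP}(n;312)$ (with block indices $i<j\le k$) is literally equivalent to $312$-pattern avoidance of the resulting word, and then count $312$-avoiding permutations, either by citation or by splitting at the position of the entry $1$. Two remarks on the comparison. First, the translation step you carry out --- in particular the observation that $i<j$ is forced by the monotonicity of entries within a block, while $j=k$ is permitted and handled by that same monotonicity --- is genuine content that the paper leaves implicit, and it is worth recording because the paper's $312$-condition is phrased on blocks rather than on the word. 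Second, your choice to decompose at the minimum rather than at the maximum is the natural one for the pattern $312$: with $1$ at position $j$, avoidance forces every entry to its left to be smaller than every entry to its right, the two sides are then independent, and the convolution $A_n=\sum_{k}A_kA_{n-1-k}$ falls out cleanly. A split at $n$, by contrast, forces the suffix after $n$ to be decreasing and leaves cross-constraints between the two sides, so the position of $n$ does not distribute according to the Catalan convolution the way it does for $\mathtt{NCP}(n)$; your version of the recurrence therefore supplies details that make the count airtight rather than merely analogous.
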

\begin{proof}
Let $\pi\in\mathtt{NCCP}(n;312)$. 
As in the case of Proposition \ref{prop:cardNCP}, we divide $\pi$ into 
two pieces $\pi_l$ and $\pi_r$ where $\pi_{l}$ (resp. $\pi_{r}$) is left (resp. right)
to $n$ in $\pi$.
Then, by the same argument in the proof of Proposition \ref{prop:cardNCP}, 
we have $|\mathtt{NCCP}(n;312)|=|\mathtt{NCP}(n)|$.
This completes the proof.
\end{proof}

Let $A(n,l)$ be the number of noncommutative crossing partition of length $l$ in $[n]$.
Then, $A(n,l)$ satisfies the recurrence equation
\begin{align*}
A(n,l)=l\ A(n-1,l)+(n-l+1)A(n-1,l-1).
\end{align*} 
The integers $A(n,l)$ are known as the Eulerian numbers and 
satisfy the formula
\begin{align*}
A(n,l)=\sum_{j=0}^{l}(-1)^{j}\genfrac{(}{)}{0pt}{}{n+1}{j}(l-j)^{n},
\end{align*}
where $n\ge1$ and $1\le l\le n$.
These integers $A(n,l)$ appear in the sequence A008292 in OEIS \cite{Slo}.

Similarly, $B(n,l)$ be the number of non-crossing canonical partitions of length $l$ in $[n]$.
Then, $B(n,l)$ is given by the formula
\begin{align*}
B(n,l)=\genfrac{}{}{}{}{1}{n}\genfrac{(}{)}{0pt}{}{n}{l}\genfrac{(}{)}{0pt}{}{n}{l-1},
\end{align*}
where $n\ge1$ and $1\le l\le n$.
The integers $B(n,l)$ are known as the Narayana numbers and 
appear in the sequence A001263 in OEIS \cite{Slo}.

We say that $\pi:=(\pi_1,\ldots,\pi_{l})\in\mathtt{NCCP}(n)$ is type $(s_1,\ldots,s_l)$ 
if each $\pi_{i}$ consists of $s_{i}$ letters.
For example, $4/23/15$ is type $(1,2,2)$. 
Let $\mathbf{s}:=(s_1,\ldots,s_{l})$ be a sequence of positive integers, which 
satisfy $\sum_{i=1}^{l}s_{i}=n$.
We denote by $T(\mathbf{s})$ the number of elements in $\mathtt{NCCP}(n)$ of type 
$\mathbf{s}$.

\begin{defn}
\label{defn:ssetS}
Given $\mathbf{s}:=(s_1,\ldots,s_{l})$, we define two sets of positive integers:
\begin{align*}
S_{1}(\mathbf{s})&:=\{i\le l-1 | s_{i}>1 \}\cup\{l\}, \\
S_{2}(\mathbf{s})&:=\{i\le l-1| s_{i+1}\ge2 \}.
\end{align*}
\end{defn}

\begin{lemma}
\label{lemma:T}
The number $T(\mathbf{s})$ satisfies the recurrence equation
\begin{align}
\label{eq:recT}
T(\mathbf{s})=
\sum_{i\in S_{1}(\mathbf{s})}T(s_1,\ldots, s_{i}-1,\ldots,s_{l})
+\sum_{i\in S_{2}(\mathbf{s})}T(s_1,\ldots,s_{i-1}, s_{i}+s_{i+1}-1, s_{i+2},\ldots,s_{l}),
\end{align}
with the initial condition $T(1)=1$.
\end{lemma}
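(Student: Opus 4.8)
The plan is to establish the recurrence by a direct bijective/decomposition argument on the set of noncommutative crossing partitions of a fixed type $\mathbf{s}$, classifying each $\pi\in\mathtt{NCCP}(n)$ of type $\mathbf{s}$ according to the position and role of the largest letter $n$. Since $n$ is the maximum, inside its block $\pi_i$ it must occupy the rightmost (last) position, because each block is increasing from left to right. I would first remove $n$ from $\pi$ and analyze which conditions (1)--(3) of the definition survive, and under what circumstances the resulting object is again a valid noncommutative crossing partition of the predicted smaller type. Two essentially different things can happen: either deleting $n$ leaves block $\pi_i$ nonempty, in which case we obtain a partition of type $(s_1,\ldots,s_i-1,\ldots,s_l)$; or $\pi_i$ consisted only of $n$, i.e.\ $s_i=1$, and its deletion forces us to merge the two neighbouring blocks $\pi_{i-1}$ and $\pi_{i+1}$ (they cannot simply sit side by side, since condition (3), $\max(\pi_{i-1})>\min(\pi_{i+1})$, need not hold after deletion), yielding a partition of type $(s_1,\ldots,s_{i-1},s_i+s_{i+1}-1,s_{i+2},\ldots,s_l)$ after also removing $n$ from the merged block.

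Next I would check that these two operations land exactly in the index sets $S_1(\mathbf{s})$ and $S_2(\mathbf{s})$, and that they are reversible, i.e.\ that the map is a bijection onto the disjoint union of smaller-type partition sets counted on the right-hand side of \eqref{eq:recT}. For the first case: removing $n$ from a block $\pi_i$ with $s_i\ge 2$ is always legal, but one must verify condition (3) is preserved at the junctions $i-1,i$ and $i,i+1$; the deletion only decreases $\max(\pi_i)$, so $\max(\pi_{i-1})>\min(\pi_i)$ still holds because $\min(\pi_i)$ is unchanged, and $\max(\pi_i)>\min(\pi_{i+1})$ requires that $\max(\pi_i)$ after deletion—i.e.\ the second-largest element of $\pi_i$—still exceeds $\min(\pi_{i+1})$. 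Precisely here is where the restriction "$i\le l-1$" together with "$s_i>1$" (versus $i=l$, where no right-neighbour condition applies) produces the set $S_1(\mathbf{s})=\{i\le l-1\mid s_i>1\}\cup\{l\}$: when $i=l$ there is nothing to check on the right, and when $i<l$ with $s_i\ge2$ the second-largest element of $\pi_i$ is automatically $>\min(\pi_{i+1})$ because $n$ itself, being the global maximum, was not needed to witness $\max(\pi_i)>\min(\pi_{i+1})$ — one should spell this out carefully. For the second (merge) case, $s_i=1$ and reindexing shows the relevant condition is "$s_{i+1}\ge 2$" at positions $i\le l-1$, matching $S_2(\mathbf{s})$; the merged block $\pi_{i-1}\cup(\pi_{i+1}\setminus\{n\})$ is increasing because $\max(\pi_{i-1})<n$ gives $\max(\pi_{i-1})<$ every element of $\pi_{i+1}$, wait—one must instead use that $\pi_i=\{n\}$ sat between them and $\max(\pi_{i-1})>\min(\pi_i)=n$ is impossible, so in fact the ordering is forced by the structure and needs to be argued from conditions (2)--(3) applied to the original $\pi$.

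Finally I would verify the base case $T(1)=1$ (the unique partition of $[1]$) and note that the recursion terminates because $n=\sum s_i$ strictly decreases under both operations, so the sets $S_1,S_2$ and the argument together determine $T(\mathbf{s})$ uniquely. The main obstacle I expect is the case analysis at the block junctions in the merge case: showing that the merged sequence is genuinely increasing and that condition (3) holds at the new junctions (between the merged block and its neighbours $\pi_{i-2}$ and $\pi_{i+2}$), and conversely that every partition of the merged type, together with a choice of "split point" recording where $\pi_{i-1}$ ended, arises from a unique $\pi$ — i.e.\ that the inverse map is well-defined. One must be careful that the split point is recoverable, which it is: in a partition of type $(\ldots,s_{i-1},s_i+s_{i+1}-1,\ldots)$ we reinsert $n$ as a singleton block after the first $s_{i-1}$ letters of the big block and append $n$ to... no — we split the big block after its $s_{i-1}$-th letter, insert the singleton block $n$ between the two pieces, and then append $n$ to the right piece; checking this is a two-sided inverse is the heart of the argument and the step most likely to need genuine care rather than routine verification.
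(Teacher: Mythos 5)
Your overall strategy---peeling off an extremal letter and classifying what happens to its block---is the right kind of argument, but you chose the opposite extremal letter from the paper, and this is not a cosmetic difference: the paper inserts/deletes the \emph{smallest} letter $1$, whereas you delete the \emph{largest} letter $n$, and for $n$ the case analysis you describe is incorrect. First, your claim that for $i<l$ with $s_i\ge 2$ ``the second-largest element of $\pi_i$ is automatically $>\min(\pi_{i+1})$'' is false: take $\pi=14/23$ of type $(2,2)$; deleting $4$ leaves $1/23$, which violates condition (3) since $1<2$. The point is that $n$ itself is precisely the witness of $\max(\pi_i)>\min(\pi_{i+1})$ (that inequality is vacuous when $n\in\pi_i$), so deleting $n$ can destroy condition (3) and force a merge of $\pi_i$ with $\pi_{i+1}$ even when $s_i\ge2$. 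Second, your merge case is vacuous as stated: if $\pi_i=\{n\}$ with $i\ge2$, condition (3) at the junction $(i-1,i)$ would require $\max(\pi_{i-1})>n$, which is impossible, so a singleton block $\{n\}$ can only be the \emph{first} block and there is no $\pi_{i-1}$ to merge with. (You notice exactly this contradiction mid-argument but do not draw the conclusion.) Consequently the merge terms in your decomposition come from the wrong source, and your identification of the resulting index sets with $S_1(\mathbf{s})$ and $S_2(\mathbf{s})$ does not go through; the proposed inverse map for the merge case (which inserts $n$ both as a singleton block and as an appended letter) also uses $n$ twice.

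Even after repairing the case analysis, deleting $n$ naturally produces the index sets $\{i\mid s_i\ge2\}$ for the ``shrink a block'' terms and $\{i\le l-1\mid s_i\ge2\}$ (plus a special term when $s_1=1$) for the ``merge'' terms; these are not the sets $S_1(\mathbf{s})$ and $S_2(\mathbf{s})$ of Definition \ref{defn:ssetS}, so one would still have to prove that the two multisets of terms coincide --- an extra, nonobvious identity you do not address. The paper's proof avoids all of this by working with the letter $1$: since $1$ is the global minimum, $\max(\pi_{i-1})>\min(\pi_i)=1$ is automatic; a singleton block $\{1\}$ can only be the \emph{last} block (which is why $l\in S_1(\mathbf{s})$ unconditionally, while $i<l$ requires $s_i>1$); prepending $1$ to block $i$ is always legal (the $S_1$ terms); and inserting $1$ in the interior of a block splits it into two valid blocks whose right part has size at least $2$ (the $S_2$ terms, explaining the condition $s_{i+1}\ge2$). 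I recommend redoing the argument with the smallest letter.
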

\begin{proof}
Suppose $\pi$ is a partition of type $\mathbf{s}$ and $n=\sum_{i}s_{i}$.
First, $\pi$ is obtained from a partition $\pi'$ of type $\mathbf{s'}$ with $\sum_{i}s'_{i}=n-1$
such that $\mathbf{s'}=(s_1,\ldots,s_{i-1},s_{i}-1,s_{i+1},\ldots,s_{l})$ and $i\in S_{1}(\mathbf{s})$.
In fact, we increase the integers in $\pi'$ by one, insert $1$ at position $i$ and obtain $\pi$.
Secondly, suppose we have an increasing sequence of size $m$ $\mathbf{a}:=a_1<a_2<\ldots<a_{m}$ with $a_1>2$. 
If we insert one between $a_{i}$ and $a_{i+1}$, $1\le i$, we have two increasing sequences 
$\mathbf{a'}:=a_1<a_2\ldots<a_{i}$ and $\mathbf{a''}:=1<a_{i+1}<\ldots<a_{m}$.
From this, the partition $\pi$ can be obtained from $\pi'$ of type $\mathbf{s'}$ with $\sum_{i}s'_{i}=n-1$
such that $\mathbf{s'}=(s_1,\ldots,s_{i-1},s_{i}+s_{i+1}-1,s_{i+2},\ldots,s_{l})$.
From these observations, we have Eq. (\ref{eq:recT}).
\end{proof}

For example, we have 
\begin{align*}
&T(4)=1, \\
&T(3,1)=T(1,3)=3,\quad T(2,2)=5, \\
&T(2,1,1)=T(1,1,2)=3,\quad T(1,2,1)=5, \\
&T(1,1,1)=1.
\end{align*}
The integers $T(\mathbf{s})$ appear in the sequence A335845 in OEIS \cite{Slo}.

The number $T(\mathbf{s})$ possesses a symmetric property.
\begin{lemma}
Let $\mathbf{s}^{\mathrm{rev}}:=(s_l,s_{l-1},\ldots,s_{1})$.
Then, we have $T(\mathbf{s}^{\mathrm{rev}})=T(\mathbf{s})$. 
\end{lemma}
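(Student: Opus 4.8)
The plan is to construct an explicit bijection between noncommutative crossing partitions of type $\mathbf{s}$ and those of type $\mathbf{s}^{\mathrm{rev}}$, by reversing the order of the blocks and simultaneously reflecting the ground set $[n]$ via $p\mapsto n+1-p$. Concretely, given $\pi=(\pi_1,\ldots,\pi_l)\in\mathtt{NCCP}(n)$ of type $(s_1,\ldots,s_l)$, define $\widetilde{\pi}$ to be the sequence of blocks $(\widetilde{\pi_l},\widetilde{\pi_{l-1}},\ldots,\widetilde{\pi_1})$, where $\widetilde{\pi_i}$ is obtained from $\pi_i$ by replacing each entry $p$ by $n+1-p$ and then sorting the result into increasing order. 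I would first check that $\widetilde{\pi}\in\mathtt{NCCP}(n)$: condition (1) is immediate since $p\mapsto n+1-p$ is a bijection of $[n]$; condition (2) holds by construction; and condition (3), namely $\max(\widetilde{\pi_{i+1}})>\min(\widetilde{\pi_i})$ in the reversed listing, translates under $p\mapsto n+1-p$ back into $\min(\pi_{i+1})<\max(\pi_i)$, which is condition (3) for $\pi$. Since $\widetilde{\pi_i}$ has the same number of entries as $\pi_i$ and the blocks are relisted in reverse order, $\widetilde{\pi}$ has type $(s_l,\ldots,s_1)=\mathbf{s}^{\mathrm{rev}}$.

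Next I would verify that $\pi\mapsto\widetilde{\pi}$ is an involution (after accounting for the double reversal), hence a bijection. Applying the construction twice reverses the block order back to the original and applies $p\mapsto n+1-(n+1-p)=p$, so $\widetilde{\widetilde{\pi}}=\pi$. This gives $T(\mathbf{s}^{\mathrm{rev}})=T(\mathbf{s})$ directly, since the map restricts to a bijection between the type-$\mathbf{s}$ and type-$\mathbf{s}^{\mathrm{rev}}$ partitions.

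An alternative, perhaps cleaner, route is purely recursive: show that the right-hand side of the recurrence \eqref{eq:recT} in Lemma \ref{lemma:T} is invariant under $\mathbf{s}\mapsto\mathbf{s}^{\mathrm{rev}}$, and conclude by induction on $n=\sum_i s_i$, the base case $T(1)=1$ being symmetric trivially. Here one checks that reversing $\mathbf{s}$ sets up a bijection $i\leftrightarrow l+1-i$ between $S_1(\mathbf{s})$ and $S_1(\mathbf{s}^{\mathrm{rev}})$ (using the definition $S_1(\mathbf{s})=\{i\le l-1\mid s_i>1\}\cup\{l\}$) and a bijection $i\leftrightarrow l-i$ between $S_2(\mathbf{s})$ and $S_2(\mathbf{s}^{\mathrm{rev}})$ (using $S_2(\mathbf{s})=\{i\le l-1\mid s_{i+1}\ge 2\}$), under which the arguments of $T$ appearing in the two sums are reverses of one another; the inductive hypothesis then finishes the argument.

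The main obstacle, in the bijective approach, is checking that the crossing/noncommutativity data is genuinely preserved — in particular that condition (3) survives the reflection — since reordering the entries within each block could a priori interact badly with the reversal of block order. In the recursive approach the only delicate point is the bookkeeping of the index sets $S_1$ and $S_2$ under reversal, especially the asymmetric treatment of the last index $l$ in $S_1$; one must confirm that the extra element $\{l\}$ in $S_1(\mathbf{s})$ corresponds correctly to $\{l\}$ in $S_1(\mathbf{s}^{\mathrm{rev}})$ and that no off-by-one error creeps into the shift $s_i+s_{i+1}-1$ term. I would favor presenting the bijective proof, as it is more transparent and reusable, and relegate the recursive verification to a remark.
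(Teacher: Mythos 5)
Your primary (bijective) argument is correct and takes a genuinely different route from the paper. You define the type-reversing involution $\pi\mapsto\widetilde{\pi}$ on $\mathtt{NCCP}(n)$ that reverses the block order and complements entries via $p\mapsto n+1-p$; your check that condition (3) is preserved is exactly right, since $\max(\widetilde{\pi_{i+1}})=n+1-\min(\pi_{i+1})$ and $\min(\widetilde{\pi_i})=n+1-\max(\pi_i)$, so the inequality for $\widetilde{\pi}$ is equivalent to $\max(\pi_i)>\min(\pi_{i+1})$. This gives $T(\mathbf{s}^{\mathrm{rev}})=T(\mathbf{s})$ in one stroke and is considerably shorter than the paper's proof, which instead works recursively through Eq.~(\ref{eq:recT}) with a four-case analysis according to whether $s_1\ge 2$ and $s_l\ge 2$, matching up terms of the two recurrences. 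The bijective proof also buys more: an explicit equinumerosity at the level of partitions, not just of counts. One caution about your secondary, recursive sketch: the claimed index-wise bijections $i\leftrightarrow l+1-i$ between $S_1(\mathbf{s})$ and $S_1(\mathbf{s}^{\mathrm{rev}})$, and $i\leftrightarrow l-i$ between the $S_2$'s, fail in general --- e.g.\ for $\mathbf{s}=(2,1)$ one has $S_1(\mathbf{s})=\{1,2\}$ but $S_1(\mathbf{s}^{\mathrm{rev}})=\{2\}$, and $S_2(\mathbf{s})=\emptyset$ while $S_2(\mathbf{s}^{\mathrm{rev}})=\{1\}$. Whenever exactly one of $s_1,s_l$ is $\ge 2$ the cardinalities of the individual index sets differ, and one must cross-match an $S_1$-term of $\mathbf{s}$ with an $S_2$-term of $\mathbf{s}^{\mathrm{rev}}$ (this is precisely what the paper's Cases (2) and (3) do). You flag this as the delicate point, but as written the sketch asserts bijections that do not exist; since you rightly favor presenting the bijective proof, this does not affect the correctness of your main argument.
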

\begin{proof}
Suppose that $\mathbf{s}:=(1^{l})$.
It is obvious that we have $T(\mathbf{s}^{\mathrm{rev}})=T(\mathbf{s})$ 
since $\mathbf{s}^{\mathrm{rev}}=\mathbf{s}$.

We define the set $\widetilde{S}$ of indices in $[l]$ such that if $s_{i}\ge2$, then $i\in \widetilde{S}$.
We define an involution $\overline{i}:=l+1-i$ for $i\in[l]$. 
We denote by $\overline{P}$ the set $\{\overline{i} | i\in P\}$ and 
by $P\pm1$ the set $\{i\pm1| i\in P\}$ for some set $P$.
We write $\mathbf{s}\xrightarrow{i}\mathbf{s'}$ if 
$i\in S_{2}(\mathbf{s})$ and $s':=(s_1,\ldots,s_{i-1},s_{i}+s_{i+1}-1,s_{i+2},\ldots,s_{l})$.

We consider four cases: (1) $1,l\in\widetilde{S}$,  (2) $1\in\widetilde{S}$ and $l\notin\widetilde{S}$, 
(3) $1\notin\widetilde{S}$ and $l\in\widetilde{S}$ and (4) $1,l\notin\widetilde{S}$.

\paragraph{Case (1)}
Since $1,l\in\widetilde{S}$, we have $S_{1}(\mathbf{s})=\widetilde{S}=\overline{S_{1}(\mathbf{s}^{\mathrm{rev}})}$.
By definition of $S_{2}(\mathbf{s})$, 
we have $S_{2}(\mathbf{s})=\widetilde{S}-1\setminus\{0\}$ and 
$S_{2}(\mathbf{s}^{\mathrm{rev}})=\overline{\widetilde{S}}-1\setminus\{0\}$.
It is easy to see that $|S_{2}(\mathbf{s}^{\mathrm{rev}})|=|S_{2}(\mathbf{s})|=|\widetilde{S}|-1$.
Suppose that we have $\mathbf{s}\xrightarrow{i}\mathbf{s'}$.
Then, there exists a unique $j$ such that 
$\mathbf{s}^{\mathrm{rev}}\xrightarrow{j}\mathbf{s'}^{\mathrm{rev}}$.
From these observations, we have $T(\mathbf{s}^{\mathrm{rev}})=T(\mathrm{s})$.

\paragraph{Case (2)}
By definition of $S_{1}(\mathbf{s})$, 
we have $S_{1}(\mathbf{s})=\widetilde{S}\cup\{l\}$, $S_{1}(\mathbf{s}^{\mathrm{rev}})=\overline{\widetilde{S}}$, and 
$|S_{1}(\mathbf{s})|=|S_{1}(\mathbf{s}^{\mathrm{rev}})|+1$. 
Therefore, the reverse sequence of $(s_1,\ldots,s_{l-1})$ can not be constructed 
from $\mathbf{s}^{\mathrm{rev}}$ and $S_{1}(\mathbf{s}^{\mathrm{rev}})$ 
since $1\notin S_{1}(\mathbf{s}^{\mathrm{rev}})$.
Since $1\in\widetilde{S}$ and $l\notin\widetilde{S}$, 
$S_{2}(\mathbf{s})=\widetilde{S}-1\setminus\{0\}$ and 
$S_{2}(\mathbf{s}^{\mathrm{rev}})=\overline{\widetilde{S}+1}$.
We have $|S_{2}(\mathbf{s}^{\mathrm{rev}})|=|S_{2}(\mathbf{s})|+1$.
Note that the sequence $\mathbf{s'}:=(s_1,\ldots,s_{l-1})$ can not 
be constructed from $S_{2}(\mathbf{s})$, however, the reverse sequence
$\mathbf{s'}^{\mathrm{rev}}$ can be constructed from $S_{2}(\mathbf{s}^{\mathrm{rev}})$.
We have 
$|S_{1}(\mathbf{s})|+|S_{2}(\mathbf{s})|=|S_{1}(\mathbf{s}^{\mathrm{rev}})|+|S_{2}(\mathbf{s}^{\mathrm{rev}})|$
and there is a one-to-one correspondence between 
$i$ and $j$ such that  
$i\in S_{1}(\mathbf{s})$ or $S_{2}(\mathbf{s})$ and 
$j\in S_{1}(\mathbf{s}^{\mathrm{rev}})$ or $S_{2}(\mathbf{s}^{\mathrm{rev}})$.
Especially, we have a unique $j\in S_2(\mathbf{s}^{\mathrm{rev}})$ which 
corresponds to $l\in S_{1}(\mathbf{s})$.
Therefore, we have $T(\mathbf{s})=T(\mathbf{s}^{\mathrm{rev}})$.

\paragraph{Case (3)}
This case is essentially the same as Case (2) since one can replace $\mathbf{s}$ by $\mathbf{s}^{\mathrm{rev}}$ 
in Case (2), which is equivalent to consider Case (3).

\paragraph{Case (4)}
This case is essentially the same as Case (1). 
The difference is that we have $S_{2}(\mathbf{s})=\widetilde{S}-1$ and 
$S_{2}(\mathbf{s}^{\mathrm{rev}})=\overline{\widetilde{S}+1}$.

In all cases, we have $T(\mathbf{s})=T(\mathbf{s}^{\mathrm{rev}})$, which completes the proof.
\end{proof}

We define a cover relation $\lessdot$ on sequences $\mathbf{s}$ of positive integers:
\begin{align*}
\mathbf{s}\lessdot\mathbf{s'}, 
\end{align*}
if and only if 
\begin{align*}
\mathbf{s'}=(s_1,\ldots,s_{i-1},s_{i}-1, s_{i+1},\ldots,s_{l}), \quad i\in S_{1}(\mathbf{s}),
\end{align*}
or 
\begin{align*}
\mathbf{s'}=(s_1,\ldots,s_{i-1},s_{i}+s_{i+1}-1, s_{i+2},\ldots,s_{l}), \quad i\in S_{2}(\mathbf{s}),
\end{align*}
where $\mathbf{s}=(s_1,\ldots,s_{l})$ and the sets $S_{i}(\mathbf{s})$, $i=1,2$, are defined in Definition \ref{defn:ssetS}.
Note that the sum of elements in $\mathbf{s}'$ is one less than that in $\mathbf{s}$.

By construction, it is obvious that we have a graded bounded lattice with the greatest element $(1)$ 
and the minimum element $\mathbf{s}$.
We have a natural grading by $n:=\sum_{i=1}^{l}s_{i}$.
We denote by $\mathcal{L}(\mathbf{s})$ the lattice obtained as above.
The next proposition gives an formula of $T(\mathbf{s})$ in terms of 
paths in the Hasse diagram of the lattice $\mathcal{L}(\mathbf{s})$.
\begin{prop}
The number $T(\mathbf{s})$ is given by 
\begin{align}
\label{eq:Tinpaths}
T(\mathbf{s})=\#\{\text{ paths from } \mathbf{s} \text{ to } (1) \text{ in } \mathcal{L}(\mathbf{s})\},
\end{align}
\end{prop}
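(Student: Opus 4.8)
The plan is to show that the recurrence equation \eqref{eq:recT} satisfied by $T(\mathbf{s})$ coincides, term by term, with the recurrence for the number of paths from $\mathbf{s}$ to $(1)$ in the Hasse diagram of $\mathcal{L}(\mathbf{s})$, and that the two quantities agree at the base case. Write $P(\mathbf{s})$ for the right-hand side of \eqref{eq:Tinpaths}, the number of (saturated, i.e. descending by single covers) paths from $\mathbf{s}$ down to the top element $(1)$. Since the grading is by $n=\sum_i s_i$ and every cover relation $\mathbf{s}\lessdot\mathbf{s}'$ decreases $n$ by exactly one, every maximal path from $\mathbf{s}$ to $(1)$ has length $n-1$ and is automatically saturated, so there is no ambiguity in what ``path'' means here.

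First I would record the base case: when $\mathbf{s}=(1)$ there is a unique (empty) path, so $P(1)=1=T(1)$. Next I would observe that, directly from the definition of the cover relation $\lessdot$ on sequences, the elements covered by $\mathbf{s}$ (that is, the $\mathbf{s}'$ with $\mathbf{s}\lessdot\mathbf{s}'$) are exactly the sequences $(s_1,\ldots,s_i-1,\ldots,s_l)$ for $i\in S_1(\mathbf{s})$ together with the sequences $(s_1,\ldots,s_{i-1},s_i+s_{i+1}-1,s_{i+2},\ldots,s_l)$ for $i\in S_2(\mathbf{s})$ --- which are precisely the arguments appearing on the right-hand side of \eqref{eq:recT}. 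Because any path from $\mathbf{s}$ to $(1)$ begins with a single cover step $\mathbf{s}\lessdot\mathbf{s}'$ followed by a path from $\mathbf{s}'$ to $(1)$, we get the recurrence
\begin{align*}
P(\mathbf{s})=\sum_{i\in S_{1}(\mathbf{s})}P(s_1,\ldots,s_i-1,\ldots,s_l)
+\sum_{i\in S_{2}(\mathbf{s})}P(s_1,\ldots,s_{i-1},s_i+s_{i+1}-1,s_{i+2},\ldots,s_l).
\end{align*}
This is term-for-term identical to \eqref{eq:recT}. An induction on $n=\sum_i s_i$ (with the base case $n=1$ above) then yields $T(\mathbf{s})=P(\mathbf{s})$ for all $\mathbf{s}$, which is the claim.

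The one point that needs genuine care --- and which I expect to be the main (though modest) obstacle --- is checking that the set of lower covers of $\mathbf{s}$ is listed \emph{without repetition and without omission}, matching the index sets $S_1(\mathbf{s})$ and $S_2(\mathbf{s})$ exactly as they appear in Lemma \ref{lemma:T}; in particular one should confirm that two distinct choices of $i$ (possibly one from $S_1$ and one from $S_2$) never produce the same sequence $\mathbf{s}'$, since otherwise the path count would differ from the recurrence by multiplicities. This is a routine but necessary case analysis on the values $s_i=1$ versus $s_i\ge 2$ and on whether $i=l$; it mirrors the combinatorial reasoning already carried out in the proof of Lemma \ref{lemma:T}. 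Once this bookkeeping is in place, the identification of the two recurrences and the induction are immediate.
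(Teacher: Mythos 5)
Your proposal is correct and follows essentially the same route as the paper: induction on $n=\sum_i s_i$, using the base case $\mathbf{s}=(1)$ and the observation that the terms of the recurrence \eqref{eq:recT} are exactly the covers $\mathbf{s}\lessdot\mathbf{s}'$, so the path count satisfies the same recurrence as $T(\mathbf{s})$. Your additional remark about verifying that distinct indices in $S_1(\mathbf{s})\cup S_2(\mathbf{s})$ never produce the same $\mathbf{s}'$ is a sensible point of care that the paper passes over silently (and which does check out, since an $S_2$-move at $i$ requires $s_{i+1}\ge 2$, ruling out the potential coincidences), but it does not change the substance of the argument.
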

\begin{proof}
When $s=(1)$, we have $T(\mathbf{s})=1$ and Eq. (\ref{eq:Tinpaths}) holds trivially. 	
Suppose that Eq. (\ref{eq:Tinpaths}) holds up to $n=\sum_{i}s_{i}$.
Then, from Lemma \ref{lemma:T} and the cover relation, we have 
\begin{align*}
T(\mathbf{s})&=\sum_{\mathbf{s'}:\mathbf{s}\lessdot\mathbf{s'}}T(\mathbf{s'}), \\
&=\sum_{\mathbf{s'}:\mathbf{s}\lessdot\mathbf{s'}}\#\{\text{ paths from } \mathbf{s'} \text { to } (1) \}, \\
&=\#\{\text{ paths from } \mathbf{s} \text{ to } (1) \text{ in } \mathcal{L}(\mathbf{s})\},
\end{align*}
which completes the proof.
\end{proof}

\begin{example}
Figure \ref{fig:typeS} is an example of the lattice 
$\mathcal{L}(\mathbf{s})$ with $\mathbf{s}=(1,3,1)$.
The number $T(1,3,1)=11$ and the number of paths from 
$(1,3,1)$ to $(1)$ is also $11$ in Figure \ref{fig:typeS}.
\begin{figure}[ht]
\begin{tikzpicture}
\node (131) at (0,0){$(1,3,1)$};
\node (31) at (-2,1.5){$(3,1)$};
\node (13) at (0,1.5){$(1,3)$};
\node (121) at (2,1.5){$(1,2,1)$};
\node (3) at (-3,3){$(3)$};
\node (21) at (-1,3){$(2,1)$};
\node (12) at (1,3){$(1,2)$};
\node (111) at (3,3){$(1,1,1)$};
\node (2) at (-1,4.5){$(2)$};
\node (11) at (1,4.5){$(1,1)$};
\node (1) at (0,6){$(1)$};
\foreach \a in {31,13,121}
\draw (131.north)--(\a.south);
\foreach \a in {3,21}
\draw (31.north)--(\a.south);
\foreach \a in {3,12}
\draw (13.north)--(\a.south);
\foreach \a in {21,12,111}
\draw (121.north)--(\a.south);
\foreach \a in {3,21,12}
\draw (\a.north)--(2.south);
\foreach \a in {21,12,111}
\draw (\a.north)--(11.south);
\foreach \a in {2,11}
\draw (1.south)--(\a.north);
\end{tikzpicture}
\caption{The Hasse diagram of the lattice $\mathcal{L}(1,3,1)$.}
\label{fig:typeS}
\end{figure}
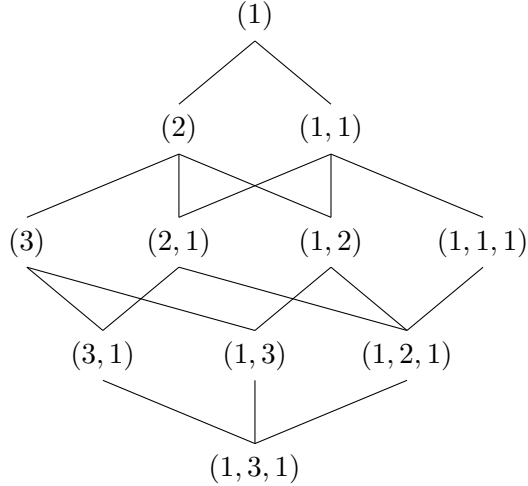
\end{example}

\subsection{Labeled binary trees}
\label{sec:lbt}
A {\it binary tree} is a rooted tree in which each node has 
at most two children.
Each child is referred as either the left child or the right child.
A binary tree is said to be complete if each node has exactly 
two children.
An {\it internal node} is a node which has child nodes.
An {\it external node} is a node that does not 
have child nodes.
An external node is sometimes called a leaf of a tree.
Similarly, an {\it internal edge} is an edge that has 
child nodes, and an {\it external edge} is an edge that 
does not have child nodes.

We identify a complete binary tree with a binary tree 
by deleting external edges from the complete tree.
It is obvious that this identification is a bijection 
between complete binary trees and (incomplete) binary trees.

Suppose that a binary tree $T$ has $n$ nodes.
We introduce a notion of labeled trees.
\begin{defn}
\label{def:labeledtree}
A labeled tree $L(T)$ is a tree $T$ which 
has integers in $[n]$ on nodes.
It satisfies the following conditions:
\begin{enumerate}
\item Each integer $i\in[n]$ appears exactly once in $L(T)$.
\item The labels of nodes are increasing from the root to a leaf.
\end{enumerate}
\end{defn}

Given a node $n_{1}$ in a tree $T$, we have a unique sequence of edges from 
$n_{1}$ to the root. 
We denote by $\mathtt{seq}(n_{1})$ the sequence of edges obtained 
as above.
We say that $\mathtt{seq}(n_2)$ is right to $\mathtt{seq}(n_1)$ 
if and only if $\mathtt{seq}(n_1)\cap \mathtt{seq}(n_2)=\emptyset$
and $n_2$ is right to $n_1$.
Similarly, we say that $\mathtt{seq}(n_2)$ is weakly right 
to $\mathtt{seq}(n_1)$ if and only if 
$\mathtt{seq}(n_1)\cap \mathtt{seq}(n_2)$ may not be empty and 
$T$ contains a subtree in which $n_2$ is right to $n_1$.

\begin{defn}
Suppose that the label of a node $n_2$ is larger than that of a node $n_1$ 
in a labeled tree. 
The node $n_{2}$ is said to be weakly right to the node $n_{1}$ 
if and only if the node $n_{2}$ is a descendant node of $n_1$ or 
$\mathtt{seq}(n_{2})$ is weakly right to $\mathtt{seq}(n_1)$.
\end{defn}

Given a binary tree $T$, we define a {\it canonical} labeling 
on $T$ as follows.
\begin{defn}
\label{defn:canotree}
Let $n(i)$ and $n(i+1)$ be two nodes with labels $i$ and $i+1$ 
respectively.
Then, $T$ is said to be canonical if and only if $n(i+1)$ is 
weakly right to the node $n(i)$ for all $1\le i\le n-1$.
\end{defn}

We will construct a bijection $\phi$ between permutations $w$ in $[n]$ 
and labeled trees $L(T)$ with $n$ nodes.
We define $\phi: L(T)\mapsto w$ as follows.
Given a rooted labeled tree (not necessarily binary) $L$, we denote 
by $\mathrm{word}(T)$ the word obtained from $L$ by the in-order.
Here, the in-order means that we visit first the left subtree, secondly 
the root, then finally the right subtree.

The inverse map $\phi^{-1}:w\mapsto L(T)$ is constructed recursively as follows:
\begin{enumerate}
\item If $w=1$, $\phi^{-1}(w)$ is a binary tree with a single node with a label $1$.
\item 
Suppose $w\in\mathcal{S}_{n}$ and $w_{i}=n$ for some $i\in[n]$. 
We have a unique permutation $w'$ obtained from $w$ by deleting $w_{i}$. 
We denote by $L(T')$ the labeled binary tree corresponding 
to $w'$. 
We have four cases: (a) $i=1$ or $i=n$, (b) $w_{i-1}>w_{i+1}$ for $2\le i\le n-1$ and 
(c) $w_{i-1}<w_{i+1}$ for $2\le i\le n-1$.
\begin{enumerate}
\item A labeled binary tree $L(T)$ is obtained from $L(T')$ 
by adding a left (resp. right) node labeled $n$
to the node labeled $w'_{1}$ (resp. $w'_{n-1}$) if $i=1$ (resp. $i=n$).
\item Add a right node labeled $n$ to the node labeled  $w'_{i-1}$ in $L(T')$.
\item Add a left node labeled $n$ to the node labeled $w'_{i+1}$ in $L(T')$.
\end{enumerate}
\end{enumerate}

For example, a labeled binary tree for $w=83741526$ is depicted as in Figure \ref{fig:bt}.
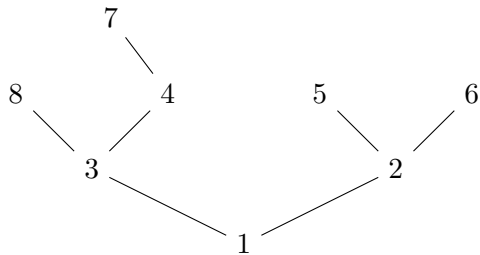
\begin{figure}[ht]
\begin{tikzpicture}
[grow'=up,level distance=1cm, 
level 1/.style={sibling distance=4cm},
level 2/.style={sibling distance=2cm},
level 3/.style={sibling distance=1.5cm},
]
\node {$1$} child{ node {$3$}
		 child{node{$8$}}
		 child{node{$4$}
		 	child{node{$7$}}
		 	child[missing]
		 }
		}
	     child{node{$2$}
	     	child{node{$5$}}
	     	child{node{$6$}}};
\end{tikzpicture}
\caption{A labeled binary tree for $w=83741526$}
\label{fig:bt}
\end{figure}

\begin{prop}
The number of labeled binary trees with $n$ nodes is 
$n!$.
\end{prop}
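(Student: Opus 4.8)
The statement to prove is that the number of labeled binary trees with $n$ nodes is $n!$.

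The plan is to establish a bijection between labeled binary trees with $n$ nodes and permutations in $\mathcal{S}_n$, which has cardinality $n!$. The map $\phi$ and its inverse $\phi^{-1}$ have already been defined in the text: $\phi$ sends a labeled binary tree $L(T)$ to the word obtained by the in-order traversal, and $\phi^{-1}$ is the recursive construction that inserts the node labeled $n$ into the labeled binary tree for $w'$ (the permutation obtained by deleting the entry $n$) according to the four cases (a)--(c). So the real content is to verify that these two maps are well-defined and mutually inverse.

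First I would check that $\phi$ is well-defined, i.e., that the in-order word of a labeled binary tree is genuinely a permutation of $[n]$: this is immediate since each label in $[n]$ appears exactly once (Definition \ref{def:labeledtree}(1)) and the in-order traversal visits every node exactly once. Next I would check that $\phi^{-1}$ is well-defined: one must verify by induction on $n$ that the tree produced in cases (a), (b), (c) is again a labeled binary tree in the sense of Definition \ref{def:labeledtree} — in particular that adding the node labeled $n$ (the maximal label) as a new leaf keeps the labels increasing from the root to each leaf, which holds because $n$ is larger than every existing label and is attached as a leaf. One also checks that the newly added node is attached to a node that currently has no child on the relevant side (left child of $w'_{i+1}$, right child of $w'_{i-1}$, etc.), so the result is still a binary tree; this follows from how $w'_{i\pm1}$ sit relative to $w_i = n$ in the one-line notation.

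Then I would prove $\phi\circ\phi^{-1} = \mathrm{id}$ and $\phi^{-1}\circ\phi = \mathrm{id}$ by induction on $n$. For $\phi\circ\phi^{-1}$: given $w$ with $w_i = n$, the tree $\phi^{-1}(w)$ is $L(T')$ with the node labeled $n$ inserted; reading $L(T)$ in in-order, the subtree rooted at $n$ (which is just the single node $n$) is read immediately after (case (b), as a right child of $w'_{i-1}$) or immediately before (case (c)/(a) left child) the appropriate neighbour, so the in-order word of $L(T)$ is exactly $w'$ with $n$ reinserted at position $i$, namely $w$; here one uses the inductive hypothesis $\phi(L(T')) = w'$. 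For $\phi^{-1}\circ\phi = \mathrm{id}$: given a labeled binary tree $L(T)$ with $n$ nodes, the node labeled $n$ is necessarily a leaf (labels increase toward leaves, so the maximum is at a leaf), and deleting it yields a labeled binary tree $L(T')$ with $\phi(L(T')) = w'$ the word $\phi(L(T))$ with $n$ removed; the position of $n$ in $\phi(L(T))$ and the side (left/right child) on which $n$ hangs determine which of cases (a)--(c) applies, and one checks these match up so that $\phi^{-1}$ reconstructs $L(T)$ from $\phi(L(T))$.

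The main obstacle is the bookkeeping in the last step: one must carefully match the combinatorial position of the leaf labeled $n$ in the tree (left vs.\ right child, and of which node) with the position $i$ of $n$ in the one-line word and the case distinction (a)/(b)/(c) in the definition of $\phi^{-1}$, and confirm there are no overlaps or omissions among the cases. This is a routine but slightly delicate verification; alternatively, one can sidestep it by invoking the already-proved bijection between $\mathtt{NCCP}(n)$ and permutations together with a direct bijection between $\mathtt{NCCP}(n)$ and labeled binary trees (the correspondence $\pi \leftrightarrow L(T)$ referenced in Section \ref{sec:lbt}), giving $|\{\text{labeled binary trees with } n \text{ nodes}\}| = |\mathtt{NCCP}(n)| = n!$.
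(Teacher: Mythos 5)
Your proposal is correct and follows the same route as the paper, which simply invokes the bijection $\phi$ between labeled binary trees with $n$ nodes and permutations in $[n]$; you merely spell out the verification that $\phi$ and $\phi^{-1}$ are well-defined and mutually inverse, which the paper treats as obvious. No gap; the extra detail is consistent with the paper's construction.
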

\begin{proof}
It is obvious from the bijection $\phi$ between permutations in $[n]$ and 
labeled binary trees with $n$ nodes.
\end{proof}

\begin{prop}
\label{prop:clbt}
The number of canonical labeled binary trees with $n$ nodes
is the $n$-th Catalan number.
\end{prop}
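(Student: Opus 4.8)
The plan is to establish a bijection between canonical labeled binary trees with $n$ nodes and non-crossing canonical partitions $\mathtt{NCP}(n)$, whose cardinality is the $n$-th Catalan number by Proposition \ref{prop:cardNCP}. The natural candidate is the restriction of the bijection $\phi$ between labeled binary trees and permutations: a labeled binary tree $L(T)$ maps to the word $w=\mathrm{word}(T)$ obtained by the in-order traversal, and a permutation $w$ corresponds to a noncommutative crossing partition by inserting a block separator ``$/$'' at each descent. So I would prove that, under $\phi$, the canonical labeled binary trees are exactly those permutations $w$ whose associated noncommutative crossing partition lies in $\mathtt{NCP}(n)$, i.e.\ is a non-crossing canonical partition.

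First I would unwind the in-order traversal: if $w_i=n$ sits in position $i$, then $w=\pi_l\, n\, \pi_r$ where $\pi_l$ is the in-order word of the left subtree of the root labeled $n$ and $\pi_r$ that of the right subtree, and the root of $L(T)$ is labeled by the minimum of $[n]$. This matches the recursive decomposition used in the proof of Proposition \ref{prop:cardNCP} (splitting a non-crossing canonical partition around $n$), so the counting would follow once I show the \emph{canonical} condition on trees translates precisely into the conjunction of ``non-crossing'' and ``canonical'' for the resulting partition. The key local observation is Definition \ref{defn:canotree}: $T$ is canonical iff for every $i$, the node $n(i{+}1)$ is weakly right to $n(i)$. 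I would show that ``$n(i{+}1)$ weakly right to $n(i)$ for all $i$'' forces, on the partition side, both that consecutive integers $i,i{+}1$ never create a $312$-type obstruction and that each block is listed after blocks containing larger minima — i.e.\ exactly the definitions of non-crossing (no $a<b<c<d$ with $a,c$ in one block and $b,d$ in another) together with $\min(\pi_l)<\dots<\min(\pi_1)$. Conversely, given $\pi\in\mathtt{NCP}(n)$, the tree $\phi^{-1}(w)$ built by the recursion in step (2) places node $n$ as a right child of $w'_{i-1}$ (case (b), a descent, so $n$ opens/continues toward the right) or a left child of $w'_{i+1}$ (case (c)), and one checks by induction that in both cases the weakly-right condition with the neighbor $n-1$ is preserved, using the non-crossing property of $\pi$ to control where $n-1$ sits relative to $n$.

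I would carry it out as: (1) record the in-order decomposition $w=\pi_l\,n\,\pi_r$ and note the root label is $\min[n]=1$; (2) prove by induction on $n$ that $L(T)$ canonical $\iff$ the partition of $w$ is in $\mathtt{NCP}(n)$, handling the two insertion cases (b),(c) and the boundary case (a) separately and invoking the inductive hypothesis on $L(T')$ and $w'$; (3) conclude $|\{\text{canonical labeled binary trees with }n\text{ nodes}\}| = |\mathtt{NCP}(n)| = \tfrac{1}{n+1}\binom{2n}{n}$ by Proposition \ref{prop:cardNCP}. An alternative, and perhaps cleaner, route is to skip the partition detour and show directly that canonical labeled binary trees satisfy the Catalan recurrence $C_n=\sum_{k=0}^{n-1}C_k C_{n-1-k}$: split at the root-to-$n$ structure, argue that the left and right pieces are again canonical labeled binary trees on $k$ and $n-1-k$ nodes, and that the canonicity constraint linking the two pieces is automatically satisfiable in exactly one way — this is essentially the same content as step (2) but organized as a recurrence.

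The main obstacle will be step (2): making the translation between the tree-theoretic ``weakly right'' relation (which is about the combinatorial position of nodes, via the $\mathtt{seq}(\cdot)$ of edges to the root) and the order/crossing conditions on blocks of the partition fully rigorous, in particular checking that the weakly-right condition for \emph{all} consecutive pairs $i,i+1$ — not merely for $n-1,n$ — is preserved under the insertion of node $n$, and that it is \emph{equivalent} to (not merely implied by) the non-crossing-plus-canonical condition. Care is needed because inserting $n$ can change the $\mathtt{seq}$ of existing nodes, so the inductive step must verify that no previously-valid weakly-right relation is broken; this is where the precise case analysis (b) vs.\ (c) and the descent structure of $w$ around position $i$ does the work.
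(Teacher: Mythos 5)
Your main route contains a false key claim, so step (2) of your plan cannot be carried out. You assert that, under $\phi$ (in-order reading followed by inserting ``/'' at descents), the canonical labeled binary trees correspond exactly to the partitions in $\mathtt{NCP}(n)$, i.e.\ to the non-crossing \emph{canonical} partitions. This is not true: what canonicity of the tree is actually equivalent to is membership in $\mathtt{NCCP}(n;312)$ (this is the content of Lemma \ref{lemma:312clbt}), and $\mathtt{NCCP}(n;312)\neq\mathtt{NCP}(n)$. Two concrete counterexamples from the paper itself: the tree $\phi^{-1}(24/3/1)$ is canonical, yet $24/3/1$ fails the canonical-partition condition $\min(\pi_l)<\cdots<\min(\pi_1)$ (the minima are $1,3,2$); and $13/24$ is $312$-avoiding, so its tree is canonical, yet $13/24$ is a \emph{crossing} partition. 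So both halves of your claimed translation (``non-crossing'' and ``blocks ordered by minima'') fail. The bijection from canonical labeled trees to $\mathtt{NCP}(n)$ is a genuinely different map $\psi$ (blocks are the maximal right-edge chains, then sorted canonically), not $\phi$. Your count would come out to the Catalan number anyway, since $|\mathtt{NCCP}(n;312)|$ is also Catalan, but the proof as structured proves a false equivalence and therefore does not go through; you even flagged step (2) as the main obstacle, and it is where the argument breaks.

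For comparison, the paper's proof is a one-liner in a different direction: every unlabeled binary tree admits a \emph{unique} canonical labeling (root gets $1$, the left subtree gets the next consecutive block of labels, the right subtree the rest, recursively), so canonical labeled binary trees are in bijection with unlabeled binary trees, which are counted by the Catalan number. Your ``alternative, cleaner route'' at the end --- splitting a canonical labeled tree at the root and checking that the labels are forced to distribute as contiguous blocks $\{2,\dots,k+1\}$ and $\{k+2,\dots,n\}$ between the two subtrees, yielding the Catalan recurrence --- is essentially a proof of this uniqueness statement and is sound; but it is not ``the same content as step (2)'' as you claim, precisely because it avoids the false identification with $\mathtt{NCP}(n)$. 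I would recommend developing that alternative (or the uniqueness-of-labeling argument directly) and discarding the detour through non-crossing partitions.
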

\begin{proof}
Given a binary tree $T$, we have a unique canonical labeling of $T$.
Conversely, given a canonical labeling of a binary tree $T$, 
we have a binary tree by forgetting labels on nodes.
Thus, the number of canonical labeled binary tree with $n$ nodes 
is equal to the number of (unlabeled) binary trees, which is known 
to be the $n$-th Catalan number.
\end{proof}

\subsection{A bijection between \texorpdfstring{$\mathtt{NCP}(n)$}{NCP(n)} 
and \texorpdfstring{$\mathtt{NCCP}(n;312)$}{NCCP(n;312)}}
In the previous section, we introduce a bijection $\phi$ between 
$\mathtt{NCCP}(n)$ and a labeled binary tree with $n$ nodes.	
In this section, we introduce another map $\psi$ from a canonical labeled 
binary tree to an element in $\mathtt{NCCP}(n)$.

\begin{lemma}
\label{lemma:312clbt}
We have
\begin{align*}
\pi\in\mathtt{NCCP}(n;312)
\Leftrightarrow
\text{A labeled binary tree } \phi^{-1}(\pi) \text{ is canonical}.
\end{align*}
\end{lemma}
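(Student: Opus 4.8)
The plan is to prove the biconditional by induction on $n$, using the recursive construction of $\phi^{-1}$ given above. First I would fix a permutation $w = w_1\cdots w_n$ with $w_i = n$ and let $w'$ be $w$ with $w_i$ deleted, so that $\pi = \phi(w)$ decomposes and $L(T') = \phi^{-1}(w')$ is the labeled binary tree for the partition $\pi'$ on $[n-1]$. By the inductive hypothesis, $\pi' \in \mathtt{NCCP}(n-1;312)$ iff $L(T')$ is canonical. I then need to relate two things: (a) whether attaching a node labeled $n$ (as the left child of $n(w'_{i+1})$, or the right child of $n(w'_{i-1})$, or in the boundary cases $i=1$, $i=n$) preserves the canonical labeling condition of Definition \ref{defn:canotree}, and (b) whether inserting $n$ into $\pi'$ in the corresponding position preserves the $312$-avoiding property.

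The key observation on the tree side is that attaching $n$ as a new leaf never violates the ``labels increase from root to leaf'' condition, so $L(T)$ is a legitimate labeled tree automatically; moreover, since $n$ is the largest label, canonicity of $L(T)$ reduces to canonicity of $L(T')$ \emph{plus} the single new requirement that the node $n(n)$ be weakly right to $n(n-1)$. So I would isolate the statement: \emph{with the attachment rules (a)--(c), $n(n)$ is weakly right to $n(n-1)$ in $L(T)$ iff $n$ lies to the right of $n-1$ in $w$ in a way compatible with $312$-avoidance.} Concretely I would track where $n-1$ sits relative to $n$ in $w$: in $w'$ the letter $n-1$ is the maximum, so $n(n-1)$ is (by the same construction) attached high up, and I would use the explicit in-order reading $w = \mathrm{word}(T)$ to argue that ``$n$ weakly right of $n-1$ in the tree'' is exactly the condition that no index $j$ with $w_j$ between $w_i=n$... — more precisely, I would show the failure of weak-rightness corresponds to the existence of a $312$-pattern whose ``3'' is $n$. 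Here the earlier Proposition identifying $\mathtt{NCCP}(n;312)$ with the Catalan number, and the bijection $\phi$, give a useful sanity check: canonical labeled binary trees are also counted by the Catalan number (Proposition \ref{prop:clbt}), so the two sets have the same size and it suffices to prove one implication, say $\Leftarrow$, and then invoke cardinality — though I would prefer to prove both directions directly for clarity.

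In more detail, for the forward direction I would assume $\phi^{-1}(\pi)$ canonical and show $\pi$ is $312$-avoiding by contradiction: a $312$-pattern $\pi_{j,q} < \pi_{k,r} < \pi_{i,p}$ with $i < j \le k$ would force, via the in-order traversal, the node carrying $\pi_{i,p}$ to lie on a branch that is strictly left of and disjoint from the branch carrying $\pi_{k,r}$, contradicting weak-rightness of the larger label. For the converse, assuming canonicity fails, there are consecutive labels $m, m+1$ with $n(m+1)$ not weakly right to $n(m)$; reading off the in-order word and using that $n(m+1)$ then sits in a left subtree disjoint from the path to $n(m)$, I would extract integers witnessing a $312$-pattern with the roles: $m+1$ small-ish as the ``1'', some larger descendant of $n(m)$ as the ``3'', and $m$-region as the ``2''. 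The main obstacle I anticipate is bookkeeping the precise geometry: translating ``weakly right'' (which allows the paths $\mathtt{seq}(n_1)$, $\mathtt{seq}(n_2)$ to overlap, i.e. $n_2$ a descendant of $n_1$ or right in some subtree) into inequalities among the entries of $w$, and making sure the boundary cases $i = 1$ and $i = n$ of the construction of $\phi^{-1}$, as well as the ``first occurrence'' subtleties when labels repeat as maxima at successive stages of the recursion, are handled uniformly rather than as a profusion of subcases.
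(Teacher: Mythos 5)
Your overall frame (induction on $n$, reducing everything to the single new pair $(n-1,n)$, with the cardinality-plus-one-inclusion shortcut as a fallback) is viable and genuinely different from the paper, which argues directly with no induction: the paper's whole proof is the observation that a $312$-pattern $(n_3,n_1,n_2)$ in the in-order word, with values $n_1<n_2<n_3$, is the same thing as a configuration in which the paths $\mathtt{seq}(n_3)$ and $\mathtt{seq}(n_2)$ branch at the node labeled $n_1$ with $n_3$ going left and $n_2$ going right. The problem is that both of your concrete pattern-to-geometry translations --- which are exactly the content of the lemma --- are stated incorrectly. For the direction ``canonical $\Rightarrow$ $312$-avoiding'': a pattern $\pi_{j,q}<\pi_{k,r}<\pi_{i,p}$ does \emph{not} force the node of the ``3'' to lie on a branch disjoint from and left of the node of the ``2''. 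Since the ``3'' is read before the ``2'' in in-order and cannot be its ancestor (labels increase toward the leaves), the alternative is that the ``3'' sits in the \emph{left subtree} of the ``2'', which by itself is perfectly canonical (e.g.\ the word $132$). You only get a contradiction in that case by bringing in the ``1'': everything read between a node of the left subtree of $v$ and $v$ itself lies in that left subtree, so the ``1'' would be a descendant of the ``2'', contradicting the increasing-label condition. Your sketch never uses the ``1'' at all, so as written it proves nothing in this case.

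The converse witness extraction is worse: if $n(m+1)$ is not weakly right to $n(m)$, the two paths branch at a common ancestor $v$ with $n(m+1)$ on the left and $n(m)$ on the right, so \emph{every} descendant of $n(m)$ is read \emph{after} $n(m+1)$ in the in-order word. Hence ``some larger descendant of $n(m)$'' can never serve as the ``3'' of a $312$-pattern whose ``1'' is $m+1$; that triple occurs in the wrong order. The correct witness is $(m+1,\ell,m)$ in word order, where $\ell$ is the label of the branch point $v$: one has $\ell<m<m+1$ because $v$ is a proper common ancestor, and the in-order reading is left subtree of $v$, then $v$, then right subtree of $v$, so $m+1$ plays the ``3'', $\ell$ the ``1'', and $m$ the ``2''. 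With these two corrections (and a check that the consecutive-pair condition propagates to arbitrary pairs $i<j$), your inductive scheme goes through; alternatively, your cardinality fallback via Proposition \ref{prop:clbt} and the Catalan count of $\mathtt{NCCP}(n;312)$ would let you get away with only the corrected forward inclusion.
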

\begin{proof}
(Proof for $\Rightarrow$): 
Since $\pi\in\mathtt{NCCP}(n;312)$, there is no labels 
$n_i$, $1\le i\le 3$, such that $n_{1}<n_{2}<n_{3}$,   
$\mathtt{seq}(n_2)$ is right to $\mathtt{seq}(n_3)$, 
and the largest label in $\mathtt{seq}(n_2)\cap\mathtt{seq}(n_3)$ 
is $n_1$.
This means that a labeled tree $\phi^{-1}(\pi)$ satisfies the 
condition that $\mathtt{seq}(n_2)$ is weakly right to $\mathtt{seq}(n_1)$
if $n_1<n_2$. Thus, a labeled tree $\phi^{-1}(\pi)$ is canonical. 

(Proof for $\Leftarrow$): 
Since the label of the tree $\phi^{-1}(\pi)$ is canonical, the sequence of edges 
$\mathtt{seq}(n_2)$ is weakly right to $\mathtt{seq}(n_1)$ if $n_1<n_2$.
Recall that the partition $\pi$ is obtained from $\phi^{-1}(\pi)$ by reading the labels of 
nodes in the in-order. 
Then, $n_2$ is left and adjacent to $n_1$ or right to $n_1$ in $\pi$ if $n_2=n_1+1$.
This means that $\pi$ has no pattern $312$.
\end{proof}

Given a canonical labeled tree $L(T)$, we will construct a map 
$\psi: L(T)\mapsto\pi'\in\mathtt{NCCP}(n)$.  
Note that $\phi$ is also a map from a canonical labeled tree 
to a noncommutative crossing partition. 
The map $\psi$ is different from $\phi$ for a general canonical labeled binary tree.

We decompose a canonical labeled tree $L(T)$ by erasing the left edges 
from $L(T)$. Then, we obtain chains of nodes which are connected by 
only right edges.
For each chain, we obtain an increasing integer sequence as a block.
By sorting these increasing sequences in the canonical order,
we obtain a unique canonical partition in $\mathtt{NCCP}(n)$.
We denote by $\psi(L(T))$ the canonical partition obtained 
from $L(T)$.

\begin{remark}
Consider the restriction of the composition $\psi\circ\phi^{-1}$ to the set $\mathtt{NCCP}(n;312)$. 
Then, we have a map $\psi\circ\phi^{-1}|_{\mathtt{NCCP}(n;312)}:\mathtt{NCCP}(n;312)\rightarrow\mathtt{NCP}(n)$, 
and this map is a bijection as we will see below.
\end{remark}

\begin{example}
Take $\pi=24/3/1$. Then, we have
\begin{align*}
24/3/1\quad \xrightarrow{\phi^{-1}}\quad
\tikzpic{-0.5}{
\node (1) at (0,0){$1$};
\node (2) at (-0.8,0.8){$2$};
\node (3) at (0,1.6){$3$};
\node (4) at (-0.8,2.4) {$4$};
\draw (1)--(2)--(3)--(4);
}
\quad\xrightarrow{\psi}\quad 4/23/1.
\end{align*}
Note that $24/3/1\notin \mathtt{NCP}(4)$, $24/3/1\in\mathtt{NCCP}(4;312)$, and $4/23/1\in\mathtt{NCP}(4)$.
\end{example}

\begin{lemma}
\label{lemma:psiL}
Suppose that $L(T)$ be a canonical labeled tree.
Them, $\psi(L(T))$ gives the canonical non-crossing partition.
\end{lemma}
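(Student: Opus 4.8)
The plan is to show that $\psi(L(T))$ satisfies the two requirements: it is a genuine element of $\mathtt{NCCP}(n)$ of canonical type, and it has no crossing pair of blocks. The construction of $\psi$ already arranges blocks in canonical order, so the canonicity condition $\min(\pi_l)<\min(\pi_{l-1})<\cdots<\min(\pi_1)$ is immediate by fiat; what needs genuine argument is first that the list of right-edge chains, with the canonical sorting, actually forms a valid noncommutative crossing partition (in particular that condition (3), $\max(\pi_i)>\min(\pi_{i+1})$, holds), and second that the resulting partition is non-crossing in the sense of the paper's definition.

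First I would unwind the construction: erasing left edges from the canonical binary tree $L(T)$ decomposes the node set into maximal right-edge chains $C_1,\ldots,C_l$; each $C_j$, read from its top (closest to root) node down along right edges, is an increasing integer sequence because labels increase from root to leaf. These are the (unordered) blocks. Then I sort them so that the sequence of minima is decreasing, giving $\pi_1/\cdots/\pi_l$. Condition (1) (each integer appears once) is clear since the chains partition the nodes. Condition (2) is the increasing-sequence property just noted. For condition (3), I would argue that the block containing the smallest overall element, call it the block with minimum $\min$, cannot be $\pi_1$ unless $l=1$: more carefully, I want that for consecutive blocks in the canonical ordering, $\max(\pi_i)>\min(\pi_{i+1})$. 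The key observation is that a right-edge chain $C$ whose top node is $v$ hangs off the tree as a \emph{left} child of some node $u$ (or $v$ is the root); the label of $u$ is then smaller than $\min(C)$, and $u$ lies in some other chain; using the canonical (weakly-right) labeling one shows the chain containing $u$ has a maximum exceeding $\min(C)$ and that these nesting relations force the inequality between canonically-consecutive blocks. I expect this bookkeeping — relating the tree position of a chain's top node to the block ordering — to be the main obstacle, and I would handle it by induction on $n$, removing the node labeled $n$ (a leaf), seeing how its removal affects at most one chain, and invoking the recursive structure already used to define $\phi^{-1}$.

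For the non-crossing property, I would likewise proceed by the structure of the tree. Suppose $a<b<c<d$ with $a,c$ in one chain $C$ and $b,d$ in another chain $C'$. Reading within a chain corresponds to descending right edges, so $a$ and $c$ both lie on a single root-to-leaf-ish right path; the node labeled $b$, being between $a$ and $c$ in value and in a different chain, must sit as a left descendant branching off between the positions of $a$ and $c$ on the chain $C$ (here I use the canonical labeling: $b=$ some intermediate value forces $b$'s node to be weakly right of $a$'s node but not on $C$, hence strictly inside the subtree hanging left off $C$ between $a$ and $c$). But then $d>c$ cannot be reached from $b$'s position while staying in $C'$: the chain $C'$ through $b$ lives entirely in that left subtree, whose labels are all $<c$ by the canonical property (every node in the subtree rooted at the left child of the node labeled $c$... — more precisely, by the weakly-right condition, the node labeled $c$ being further down chain $C$ than the branch point means all labels strictly between the branch point and $c$ along $C$, and in particular the off-hanging left subtree, are $<c$). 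This contradicts $d>c$ lying in $C'$. Thus no crossing exists.

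Finally I would note that since $\psi(L(T))$ is by construction canonical and has just been shown to be non-crossing, it lies in $\mathtt{NCP}(n)$ as defined, which is exactly the claim. I would also remark that the two inductions (for condition (3) and for non-crossing) can be merged, or alternatively the whole lemma can be deduced from a single statement describing which pairs of labels land in the same chain purely in terms of the weakly-right relation, namely $i$ and $j$ with $i<j$ are in the same chain iff $j$ is obtained from $i$ by a sequence of right-child steps in $T$; from that characterization both conclusions drop out by comparing paths in $T$. The delicate point throughout, and the part I would write most carefully, is the precise use of Definition \ref{defn:canotree} (the weakly-right labeling) to control which labels can appear in a left subtree hanging off a given chain.
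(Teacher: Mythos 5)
Your proposal is correct and follows essentially the same route as the paper's proof: canonicity of $\psi(L(T))$ is immediate from the sorting built into $\psi$, and the non-crossing property is established by contradiction, showing that a crossing $a<b<c<d$ between two right-edge chains forces a larger label to lie to the left of a smaller one, violating the weakly-right (canonical) labeling of the tree. One simplification worth noting: your worry about condition (3) of the definition of $\mathtt{NCCP}(n)$ needs no induction at all, since the canonical ordering of the blocks already gives $\max(\pi_i)\ge\min(\pi_i)>\min(\pi_{i+1})$.
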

\begin{proof}
Recall that $\psi$ is the map from $L(T)$ to a partition $\pi\in\mathtt{NCCP}(n)$.
By definition of $\psi$, $\pi$ is canonical. 
Therefore, it is enough to show that $\pi$ is non-crossing.

Suppose that $\pi$ is crossing, i.e., there exist four labels $a<b<c<d$ such 
that $a$ and $c$ are in the same block $\pi_{i}$ and $b$ and $d$ are in $\pi_{j}$
with $i\neq j$.
In terms of binary trees, labels in both $\pi_i$ and $\pi_j$ are connected by 
right edges in the tree. 
We consider two cases: $\pi_{i}$ is left to $\pi_{j}$, and vice versa.
First, assume that $\pi_{i}$ is left to 
$\pi_{j}$ in $L(T)$. Then, when we read the labels in $L(T)$ according to the 
order of the blocks, we first read the labels of $\pi_{i}$, then those of $\pi_{j}$.
By the crossing property, we read the label $c$ before $b$, which implies 
that the label $c$ is left to $b$.
Thus $L(T)$ is not canonical.
Secondly, assume that $\pi_{j}$ is left to $\pi_{i}$ in $L(T)$.
Then, if we read the labels in $L(T)$ according to the order of blocks,
we first read $b$ and $d$, then $a$ and $c$.
Since the label $d$ is left to $c$, $L(T)$ is not canonical.
In both cases, we have a contradiction since $L(T)$ is canonical.
Thus, $\psi(L(T))$ is a canonical non-crossing partition if $L(T)$ is canonical.	
\end{proof}

\begin{lemma}
\label{lemma:psibij}
The map $\psi$ is a bijection between $\mathtt{NCP}(n)$ and the set of 
canonical labeled trees.
\end{lemma}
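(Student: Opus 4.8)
The plan is to show $\psi$ is a bijection by exhibiting an explicit two-sided inverse, rather than by a counting argument. (A counting argument is tempting: by Proposition~\ref{prop:clbt} the number of canonical labeled trees with $n$ nodes is the $n$-th Catalan number, and by Proposition~\ref{prop:cardNCP} so is $|\mathtt{NCP}(n)|$, so it would suffice to prove $\psi$ injective or surjective alone. But constructing the inverse is more informative and sets up the later identification of cover relations, so I would do that.) By Lemma~\ref{lemma:psiL}, $\psi$ maps canonical labeled trees into $\mathtt{NCP}(n)$, so it remains to build $\psi^{-1}:\mathtt{NCP}(n)\to\{\text{canonical labeled trees}\}$ and check both composites are the identity.

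First I would describe the candidate inverse. Given $\pi=\pi_1/\pi_2/\cdots/\pi_l\in\mathtt{NCP}(n)$ (canonical, so $\min(\pi_l)<\cdots<\min(\pi_1)$), each block $\pi_i=\pi_{i,1}<\pi_{i,2}<\cdots$ becomes a chain of nodes joined by right edges, with $\pi_{i,1}$ the topmost node of that chain. One then glues these chains together by left edges: the chain of block $\pi_i$ is attached as the left child of a suitable node in a chain coming from a block with smaller index. The correct attachment point is forced: $\pi_{i,1}$ must be hung, via a left edge, below the node labeled $\pi_{i,1}-1$ (which lies in some block $\pi_j$ with $j<i$, since $\pi$ is canonical and non-crossing). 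I would verify that this rule is well-defined: the non-crossing condition guarantees that $\pi_{i,1}-1$ does not already have a left child, and canonicity guarantees $\pi_{i,1}-1$ sits in an earlier block, so the construction produces a genuine binary tree. I would then check the resulting labeled tree is \emph{canonical} in the sense of Definition~\ref{defn:canotree}: for each $i$, the node $n(i+1)$ is weakly right to $n(i)$ — if $i$ and $i+1$ lie in the same block this is immediate (right edge), and if $i+1$ starts a new block then $n(i+1)$ is the left child of $n(i)$, hence weakly right to it by definition.

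Next I would check $\psi\circ\psi^{-1}=\mathrm{id}$: erasing the left edges of the tree just built recovers exactly the chains of the blocks $\pi_i$, and sorting the resulting increasing sequences into canonical order returns $\pi$ itself (the blocks are already indexed so that their minima decrease). For the other direction $\psi^{-1}\circ\psi=\mathrm{id}$: starting from a canonical labeled tree $L(T)$, the blocks of $\psi(L(T))$ are precisely the maximal right-edge chains of $L(T)$, and one must show that the reattachment rule "hang block $\pi_i$ below the node $\pi_{i,1}-1$ by a left edge" reconstructs the original left edges of $L(T)$. The point is that in any labeled binary tree the parent, via a left edge, of the top node $m$ of a right-chain must be labeled $m-1$: indeed in a canonical tree the node labeled $m-1$ is the unique node to which $m$ can be weakly-right-attached as a left child, because $m$ is not on a right-chain through any smaller-labeled node. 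I expect this last verification — that left edges of a canonical tree are recovered by the "$-1$" rule — to be the main obstacle, and it is where the canonicity hypothesis and the non-crossing property of $\psi(L(T))$ (Lemma~\ref{lemma:psiL}) are used essentially; everything else is bookkeeping. Once both composites are shown to be the identity, $\psi$ is a bijection.
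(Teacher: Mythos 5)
Your overall route is essentially the paper's: build the inverse explicitly by turning each block into a right-edge chain and hanging the chain of $\pi_i$ as the left child of the node labeled $\min(\pi_i)-1$ (the paper phrases the attachment point as the largest label below $\min(\pi_{i-1})$ among the blocks already placed, which for a canonical partition is exactly $\min(\pi_{i-1})-1$). The genuine gap is in your canonicity check. You split into two cases — $i$ and $i+1$ in the same block, or ``$i+1$ starts a new block'' — but this misses the case where $i+1$ lies in a different block from $i$ and is \emph{not} the minimum of its block. There $n(i+1)$ is attached by a right edge to some node labeled $e\le i-1$ inside its own block, not to $n(i)$, and ``$n(i+1)$ weakly right to $n(i)$'' is not automatic. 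For instance, in $\pi=3/124$ the pair $(3,4)$ is of this type: node $4$ is the right child of node $2$ while node $3$ is the left child of node $2$, and one must actually argue that $4$ lands to the right of $3$. This omitted case is precisely where the non-crossing hypothesis is indispensable: applying your construction to the crossing partition $24/13$ yields the tree with root $1$, right child $3$, left child $2$, and $4$ the right child of $2$, in which node $4$ is \emph{left} to node $3$ — not canonical. Since your two-case argument never invokes non-crossingness, it would equally ``prove'' canonicity for $24/13$, which is false. The paper closes exactly this case by contradiction: if $n_1<n_2$ with $n_2$ left to $n_1$ and the two nodes in distinct blocks $\pi_p$ and $\pi_q$, the gluing forces $\min(\pi_q)<\min(\pi_p)<n_1<n_2$, exhibiting a crossing. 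You need an argument of this kind.

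Two lesser points. The well-definedness worry is misplaced: distinct blocks have distinct minima, so at most one block attaches to any given node, and right-edge chains contribute no left children — no hypothesis on $\pi$ is needed there (also, with the convention $\min(\pi_l)<\cdots<\min(\pi_1)$, the node $\min(\pi_i)-1$ lies in a block of \emph{larger} index $j>i$, not smaller). And the key claim in your $\psi^{-1}\circ\psi=\mathrm{id}$ direction — that in a canonical labeled binary tree every left child labeled $m$ has parent labeled $m-1$ — is true and is indeed the crux there, but you only flag it; it still requires a proof (if the parent $q$ satisfied $q<m-1$, then node $m-1$ is neither an ancestor of $m$ nor a descendant of $m$, and placing it either in the right subtree of $q$ or outside the subtree of $q$ produces a consecutive pair $j,j+1$ with $n(j+1)$ to the left of $n(j)$, contradicting canonicity).
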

\begin{proof}
From Proposition \ref{prop:cardNCP} and Proposition \ref{prop:clbt}, 
$|\mathtt{NCP}(n)|$ is equal to the number of canonical labeled trees with 
$n$ internal nodes.

Below, we will construct a bijection between them.
Let $\pi:=(\pi_{1},\ldots,\pi_{l})\in\mathtt{NCP}(n)$.
We assign a labeled binary tree $LT(\pi_{i})$ 
to each $\pi_{i}$, $1\le i\le l$ such that it consists of only right edges and their labels are in $\pi_{i}$.
We glue $LT(\pi_{i})$, $1\le i\le l$ together and construct a labeled binary tree 
in the following way.
Let $l_{i}$ be the label in $LT(\pi_{i})$ such that it is the largest label 
which is smaller than the minimum label in $LT(\pi_{i-1})$.
We glue $LT(\pi_{i-1})$ with $LT(\pi_{i})$ such that 
the root of $LT(\pi_{i-1})$ is connected to the node labeled $l_{i}$ in $LT(\pi_{i})$
by a left edge.
By repeating this process for all $i\in[l]$, we obtain a labeled tree $L(T)$.

Since $\pi\in\mathtt{NCP}(n)$ is canonical and non-crossing, 
the labels on the newly obtained tree $L(T)$ are increasing 
from the root to leaves. We will show that $L(T)$ is canonical.
Suppose that $L(T)$ is not canonical.
Then, there exists a pair of labels $n_1<n_2$ such that 
the node labeled $n_2$ is left to the node labeled $n_1$, and 
these two nodes are contained in different $LT(\pi_{i})$'s.
Let $LT(\pi_{p})$ (resp. $LT(\pi_{q})$) be the labeled tree which 
contains the node labeled $n_2$ (resp. $n_1$).
By the definition of gluing process, we have $p<q$,  
$\min(LT(\pi_{p}))>\min(LT(\pi_{q}))$, and $\min(LT(\pi_p))<n_1$.
Thus, we have 
\begin{align*}
\min(LT(\pi_q))<\min(LT(\pi_{p}))<n_1<n_2,
\end{align*} 
which implies that $\pi_p$ and $\pi_q$ are crossing.
This contradicts the fact that $\pi$ is non-crossing.
Thus, the labeled tree $L(T)$ is canonical.
\end{proof}

The composition map $\psi\circ\phi^{-1}$ is characterized as follows.
\begin{prop}
\label{prop:bij312NCP}
The map $\psi\circ\phi^{-1}$ is a bijection between $\mathtt{NCCP}(n;312)$ and 
$\mathtt{NCP}(n)$.
\end{prop}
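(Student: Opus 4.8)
The plan is to show that $\psi\circ\phi^{-1}$ restricted to $\mathtt{NCCP}(n;312)$ is a well-defined bijection onto $\mathtt{NCP}(n)$, by assembling the pieces already proved: Lemma~\ref{lemma:312clbt} identifies the image of $\mathtt{NCCP}(n;312)$ under $\phi^{-1}$ with the set of canonical labeled binary trees, and Lemma~\ref{lemma:psibij} says $\psi$ is a bijection from that same set of canonical labeled trees onto $\mathtt{NCP}(n)$. So the composition is a bijection simply because it is a composition of two bijections between matching sets.

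More concretely, first I would note that $\phi^{-1}$ is a bijection between $\mathtt{NCCP}(n)$ (equivalently, permutations of $[n]$) and labeled binary trees with $n$ nodes, and that by Lemma~\ref{lemma:312clbt} it restricts to a bijection $\phi^{-1}\colon \mathtt{NCCP}(n;312)\xrightarrow{\sim}\{\text{canonical labeled binary trees with }n\text{ nodes}\}$. Second, I would invoke Lemma~\ref{lemma:psiL} to see that $\psi$ applied to a canonical labeled tree indeed lands in $\mathtt{NCP}(n)$ (it produces a canonical non-crossing partition), so the composite is well-defined as a map into $\mathtt{NCP}(n)$. Third, Lemma~\ref{lemma:psibij} gives that $\psi$ is a bijection between the set of canonical labeled trees and $\mathtt{NCP}(n)$. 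Composing the two bijections yields that $\psi\circ\phi^{-1}\colon \mathtt{NCCP}(n;312)\to\mathtt{NCP}(n)$ is a bijection, which is exactly the claim.

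Since the essential content has already been isolated into the two lemmas, there is no real obstacle here; the proof is a one-line composition argument. The only thing to be careful about is bookkeeping of domains and codomains: one must verify that the codomain of $\phi^{-1}|_{\mathtt{NCCP}(n;312)}$ (canonical labeled trees) coincides with the domain of $\psi$ as used in Lemma~\ref{lemma:psibij}, which it does by construction. If a referee wanted a self-contained check rather than a citation chain, the natural alternative is a cardinality-plus-injectivity argument: both sides have size the $n$-th Catalan number by Proposition~\ref{prop:cardNCP} and the $312$-avoidance count, and injectivity of $\psi\circ\phi^{-1}$ follows from injectivity of each factor; but the composition-of-bijections route is cleaner and is what I would write.
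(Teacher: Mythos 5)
Your proof is correct and takes essentially the same route as the paper: it composes the bijection $\phi^{-1}|_{\mathtt{NCCP}(n;312)}$ onto canonical labeled trees (Lemma~\ref{lemma:312clbt}) with the bijection $\psi$ onto $\mathtt{NCP}(n)$ (Lemma~\ref{lemma:psibij}). Your extra remark invoking Lemma~\ref{lemma:psiL} for well-definedness is harmless and only makes the bookkeeping more explicit.
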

\begin{proof}
The map $\phi$ is a bijection between $\mathtt{NCCP}(n;312)$ and the set of 
canonical labeled trees by Lemma \ref{lemma:312clbt}.
The map $\psi$ is a bijection between the set of canonical labeled trees 
and $\mathtt{NCP}(n)$ by Lemma \ref{lemma:psibij}.
Thus, the composition $\psi\circ\phi^{-1}$ is a bijection between 
$\mathtt{NCCP}(n;312)$ and $\mathtt{NCP}(n)$, 
which completes the proof.	
\end{proof}

\section{Lattice of noncommutative crossing partitions}
\label{sec:LatNCCP}
In this section, we introduce a new lattice whose nodes in the Hasse diagram are labeled 
noncommutative crossing partitions.
This lattice contains the Kreweras lattice as a sublattice.
We first define a cover relation $\lessdot$ on $\mathtt{NCCP}(n)$ by 
use of the notion of a rotation of a labeled binary tree.
By introducing another cover relation $\subset$ on $\mathtt{NCP}(n)$, 
we show that the poset $(\mathtt{NCP}(n),\subset)$ is equivalent to 
the Kreweras lattice of non-crossing partitions.
The subposet $(\mathtt{NCCP}(n;312),\le)$ is shown to be isomorphic 
to the Kreweras lattice. As a consequence, the lattice $(\mathtt{NCCP}(n),\le)$
contains the Kreweras lattice as a sublattice.

\subsection{A lattice on \texorpdfstring{$\mathtt{NCCP}(n)$}{NCCP(n)}}
\subsubsection{Cover relation on \texorpdfstring{$\mathtt{NCCP}(n)$}{NCCP(n)}}
\label{sec:covNCCP}
We define a cover relation $\lessdot$ on $\mathtt{NCCP}(n)$ 
by making use of labeled binary trees. 

Let $L(T)$ be a labeled binary tree of shape $T$.
We denote by $L(\mathfrak{n})$ the label of a node $\mathfrak{n}$. 
Given a pair of nodes $\mathfrak{n}_{1}$ and $\mathfrak{n}_2$,  
we denote by $\mathfrak{n}_1\rightarrow\mathfrak{n}_2$ if 
$\mathfrak{n}_1$ is a child node of $\mathfrak{n}_2$ in the labeled 
tree $L(T)$.

Since $T$ is a rooted binary tree, we have a unique sequence of 
nodes from $\mathfrak{n}$ to the root $\mathfrak{r}$, 
and denote it by 
\begin{align*}
p(\mathfrak{n}): \mathfrak{n}=\mathfrak{n}_0\rightarrow \mathfrak{n}_1\rightarrow
\ldots \rightarrow \mathfrak{n}_{k}=\mathfrak{r}.
\end{align*} 
We call $p(\mathfrak{n})$ a path from $\mathfrak{n}$ to $\mathfrak{r}$.
We write $\overline{p(\mathfrak{n})}:=p(\mathfrak{n})\setminus\{\mathfrak{n}\}$ 
as the path from $\mathfrak{n}_1$ to the root in $p(\mathfrak{n})$.

Below, we assume that the node $\mathfrak{n}$ is connected to the parent node 
by a left edge.
We define the set of nodes 
\begin{align*}
LS(\mathfrak{n}):=\{\mathfrak{n}_{i}\in p(\mathfrak{n})| 
\mathfrak{n}_{i-1} \text{ and } \mathfrak{n}_{i} \text{ is connected by a left edge} \},
\end{align*}
and call it a {\it left set} of the node $\mathfrak{n}$.

Let $\mathfrak{n}'$ be the node with label smaller than $L(\mathfrak{n})$.
Then, we have the following cases: 
\begin{enumerate}
\item The path $p(\mathfrak{n}')$ is contained in $p(\mathfrak{n})$, i.e., 
$\mathfrak{n}_{j}\in p(\mathfrak{n})$ for some $j\in[k]$ is $\mathfrak{n'}$.
\item The path $p(\mathfrak{n}')$ is said to be left (resp. right) to $p(\mathfrak{n})$, i.e., 
there exists a node $\mathfrak{n}_{c}$, $c\le k-1$, such that 
$p(\mathfrak{n}_{c})=p(\mathfrak{n})\cap p(\mathfrak{n'})$, 
the path $p(\mathfrak{n'})\setminus p(\mathfrak{n}_{c})$ is in the left (resp. right) subtree of $\mathfrak{n}_{c}$,
and path $p(\mathfrak{n})\setminus p(\mathfrak{n}_{c})$ is in the right (resp. left) subtree of $\mathfrak{n}_{c}$.
\end{enumerate}
We say that $p(\mathfrak{n'})$ is weakly left (resp. right) to $p(\mathfrak{n})$ if it 
is contained in or left (resp. right) to $p(\mathfrak{n})$. 

\begin{example}
We consider the labeled tree in Figure \ref{fig:bt}. 
The path from the node with label $7$ is given by $p_1:=7\rightarrow 4\rightarrow 3 \rightarrow 1$. 
The path $p_2:=8\rightarrow3\rightarrow1$ is left to $p_1$ and the path $p_3:=5\rightarrow2\rightarrow1$
is right to $p_1$.
The left set $LS(7)$ of the node with label $7$ is $LS(7)=\{4,1\}$.
\end{example}

\begin{defn}
\label{defn:lereseq}
We define two paths, which are sequences of nodes: 
\begin{enumerate}
\item 
Given a node $\mathfrak{n}$, we define the path $\overrightarrow{p}(\mathfrak{n})$ of nodes by 
\begin{align}
\label{eq:leseq}
\mathfrak{m}_{p}\rightarrow\mathfrak{m}_{p-1}\rightarrow\ldots \rightarrow
\mathfrak{m}_{0}=\mathfrak{n},
\end{align}
where two nodes $\mathfrak{m}_{i}$ and $\mathfrak{m}_{i-1}$ are connected by 
a left edge and $p$ is maximal.
We call $\overrightarrow{p}(\mathfrak{n})$ a left-extended sequence of $\mathfrak{n}$.
The label of $\mathfrak{m}_{i}$ is larger than that of $\mathfrak{m}_{i-1}$.
\item
Given a node $\mathfrak{m}$, we define $\overrightarrow{q}(\mathfrak{m})$ by the sequence of nodes 
\begin{align*}
\mathfrak{m}=\mathfrak{m'}_{0}\leftarrow \mathfrak{m'}_{1}\leftarrow \ldots \leftarrow \mathfrak{m'}_{q},
\end{align*}
where $\mathfrak{m'}_{i}$ and $\mathfrak{m'}_{i+1}$ are connected by a right edge in $L(T)$ and $q$ is maximal. 
We call $\overrightarrow{q}(\mathfrak{m})$ a right-extended sequence of $\mathfrak{m}$.
The label of $\mathfrak{m'}_{i}$ is larger than that of $\mathfrak{m'}_{i-1}$.
\end{enumerate}
\end{defn}

Note that, by definition, the paths $\overrightarrow{p}(\mathfrak{n})$ and $\overrightarrow{q}(\mathfrak{n})$ are 
unique if the node $\mathfrak{n}$ is given.

Choose a node $\mathfrak{n}_{a}$ in $LS(\mathfrak{n})$  
if $p(\mathfrak{n'})$ is weakly left to $p(\mathfrak{n})$ for all $L(\mathfrak{n'})<L(\mathfrak{n})$.
Similarly, suppose $p(\mathfrak{n'})$ is right to $p(\mathfrak{n})$ for some $\mathfrak{n'}$ in $L(T)$, where 
$L(\mathfrak{n'})<L(\mathfrak{n})$.
Among such $\mathfrak{n'}$, denote the left-most $\mathfrak{n'}$ by $\mathfrak{n'}_{l}$.
Choose a node $\mathfrak{n}_{a}$ in $LS(\mathfrak{n})\setminus \overline{p(\mathfrak{n}_c)}$, where 
$\mathfrak{n}_{c}$ is the node such that $\mathfrak{n}_{c}\in p(\mathfrak{n})\cap p(\mathfrak{n'}_{l})$
and its label is maximum.
Note that by definition of $\overline{p(\mathfrak{n}_c)}$, $\mathfrak{n_a}$ can be $\mathfrak{n}_{c}$.

Suppose that the left-extended sequence of $\mathfrak{n}$ consists of $p+1$ nodes as in 
Eq. (\ref{eq:leseq}). 
Let $S:=S(\mathfrak{n})\subseteq[1,p]$ be the subset and $\mathfrak{m}(S)$ be the set of nodes such that 
if and only if $i\in S$, then $\mathfrak{m}_{i}\in\mathfrak{m}(S)$.

We define a {\it rotation} of a labeled tree $L(T)$ as follows.
Fix a triplet $\nu:=(\mathfrak{n},\mathfrak{n}_{a},\mathfrak{m}(S))$ in $L(T)$.
A rotation is an operation on a labeled tree characterized by the triplet $\nu$.
We first construct two labeled trees $L(T_{1})$ and $L(T_{2})$.
Then, we construct a new labeled tree from $L(T_{1})$ and $L(T_{2})$, 
which is defined to be a rotation of $L(T)$.

We first construct two labeled binary trees $L(T_1)$ and $L(T_2)$ from $L(T)$ 
by use of the triplet $\nu$:
\begin{enumerate}
\item 
First, we construct a tree, whose root is $\mathfrak{n}$, from the set of nodes 
$\mathfrak{m}(S)\cup\{\mathfrak{n}\}$ and their right subtrees.  
We connect the nodes in $\mathfrak{m}(S)\cup\{\mathfrak{n}\}$ 
by left edges such that the labels are increasing from the root to leaves.  
We keep the right subtrees of each node in $\mathfrak{m}(S)\cup\{\mathfrak{n}\}$ as it is.
We denote by $L(T_{1})$ the newly obtained labeled binary tree.
\item 
We construct a labeled binary tree $L(T_2)$ by the following two steps:
\begin{enumerate}
\item
Recall that $\mathfrak{n}$ is connected to the parent edge $\mathfrak{n}_{0}$ by a left edge 
by assumption.
Let $\mathfrak{n'}\in \overrightarrow{p}(\mathfrak{n}_{0})\setminus(\mathfrak{m}(S)\cup\{\mathfrak{n}\})$ 
and $RT(\mathfrak{n'})$ be a subtree consists of the root $\mathfrak{n'}$ and its right subtree.
The root $\mathfrak{n'}$ in the tree $RT(\mathfrak{n'})$ does not have its left subtree.
We connect the roots of the trees $RT(\mathfrak{n'})$ by left edges such that we have an increasing 
labels from $\mathfrak{n}_{0}$ to leaves.
We denote the new labeled binary tree by $\widetilde{L}(T_2)$. The root of $\widetilde{L}(T_2)$ 
is the node $\mathfrak{n}_{0}$.

\item 
Let $ST(\mathfrak{n}_{0})$ be the subtree in $L(T)$ whose root is the node $\mathfrak{n}_{0}$.
Suppose that the node $\mathfrak{n}_{0}$ is connected to the parent node by a left or right edge.
We connect the tree $\widetilde{L}(T_2)$  and $L(T)\setminus ST(\mathfrak{n}_{0})$ 
at the node $\mathfrak{n}_{0}$ by a left or right edge.
Then, we denote the new binary labeled tree by $L(T_2)$.
If the node $\mathfrak{n}_{0}$ is the root of $L(T)$, we define 
$L(T_2):=\widetilde{L}(T_{2})$.
\end{enumerate}
\end{enumerate}

\begin{example}
We consider the the left-most tree $L(T)$ in Figure \ref{fig:LTles}. 
In the tree, the node $\mathfrak{n}_{0}$ is the parent node of the node $\mathfrak{n}$, and 
$U$ and $U_{i}$, $0\le i\le 3$, are right subtrees whose roots are $\mathfrak{n}$ and 
$\mathfrak{n}_{i}$ for $0\le i\le 3$. 
We construct two trees $L(T_1)$ and $L(T_2)$ from $L(T)$ by following the 
procedure given above.
Since we consider the left-extended sequence which contains the node $\mathfrak{n}$, the node 
$\mathfrak{n}_3$ has no left subtree.

\begin{figure}[ht]
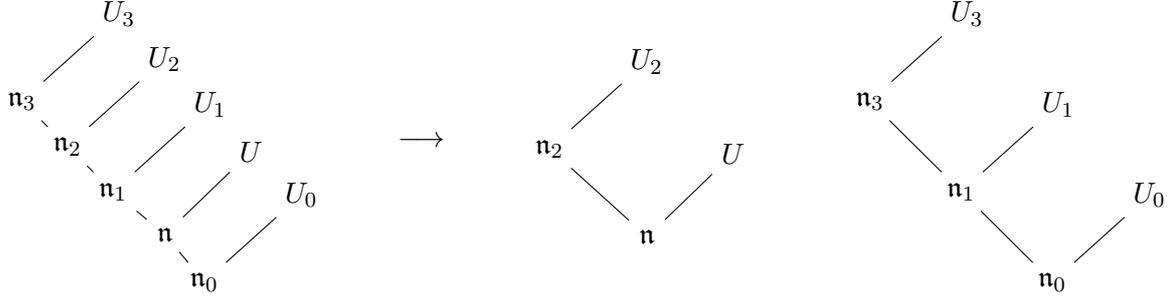

\tikzpic{-0.5}{[scale=0.6]
\node (root) at(0,0)[anchor=north east]{$\mathfrak{n}_{0}$};
\node (Tr)at(1,1)[anchor=south west]{$U_{0}$};
\node (n)at(-1,1)[anchor=north east]{$\mathfrak{n}$};
\node (Tn)at(0,2)[anchor=south west]{$U$};
\node (n1)at(-2,2)[anchor=north east]{$\mathfrak{n}_1$};
\node (T1)at(-1,3)[anchor=south west]{$U_1$};
\node (n2)at(-3,3)[anchor=north east]{$\mathfrak{n}_{2}$};
\node (T2)at(-2,4)[anchor=south west]{$U_2$};
\node (n3)at(-4,4)[anchor=north east]{$\mathfrak{n}_3$};
\node (T3)at(-3,5)[anchor=south west]{$U_3$};
\draw (Tr)--(root)--(n)--(n1)--(n2)--(n3)(n)--(Tn)(n1)--(T1)(n2)--(T2)(n3)--(T3);
}\qquad$\longrightarrow$\qquad
\tikzpic{-0.5}{[scale=0.6]
\node (n)at(-1,1)[anchor=north east]{$\mathfrak{n}$};
\node (Tn)at(0,2)[anchor=south west]{$U$};
\node (n2)at(-3,3)[anchor=north east]{$\mathfrak{n}_{2}$};
\node (T2)at(-2,4)[anchor=south west]{$U_2$};
\draw (n)--(n2)(n)--(Tn)(n2)--(T2);
}\qquad
\tikzpic{-0.5}{[scale=0.6]
\node (root) at(0,0)[anchor=north east]{$\mathfrak{n}_{0}$};
\node (Tr)at(1,1)[anchor=south west]{$U_{0}$};
\node (n1)at(-2,2)[anchor=north east]{$\mathfrak{n}_1$};
\node (T1)at(-1,3)[anchor=south west]{$U_1$};
\node (n3)at(-4,4)[anchor=north east]{$\mathfrak{n}_3$};
\node (T3)at(-3,5)[anchor=south west]{$U_3$};
\draw (Tr)--(root)--(n1)--(n3)(n1)--(T1)(n3)--(T3);
}

\caption{An example of constructing two labeled binary trees}
\label{fig:LTles}
\end{figure}

We consider the set of nodes $\mathfrak{m}(S)=\{\mathfrak{n}_2\}$.
The middle labeled binary tree is $L(T_1)$ and the right tree
is $L(T_2)$.
The tree $L(T_1)$ consists of two nodes $\mathfrak{n}$ and $\mathfrak{n}_2$, 
and two subtrees $U$ and $U_2$.
Similarly, the tree $L(T_2)$ consists of three nodes $\mathfrak{n}_0$, $\mathfrak{n}_1$
and $\mathfrak{n}_3$ and three subtrees $U_0$, $U_1$ and $U_3$.

Note that the connectivity of nodes is changed, but 
the trees $U$ and $U_{i}$, $0\le i\le 3$, remain as it was.
\end{example}

From $L(T_1)$ and $L(T_2)$ together with $\nu$, we construct a 
labeled binary tree $L(T')$ of $n$ internal nodes as follows.

Recall that the root of $L(T_1)$ has a label $L(\mathfrak{n})$.
Let $\overrightarrow{q}(\mathfrak{n})$ be a right-extended sequence of $\mathfrak{n}$ 
in $L(T_1)$.
Similarly, let $\overrightarrow{q}(\mathfrak{n}_a)$ be a right-extended sequence of 
$\mathfrak{n}_{a}$ in $L(T_2)$.

We insert the labeled tree $L(T_1)$ into $L(T_2)$ by the following procedures.
Since $L(\mathfrak{n}_a)<L(\mathfrak{n})$, one can construct a unique 
right-extended sequence of $\mathfrak{n}_a$  by sorting the nodes in 
$\overrightarrow{q}(\mathfrak{n})$ and $\overrightarrow{q}(\mathfrak{n}_a)$
in the increasing order.
We lengthen the right-extended sequence $\overrightarrow{q}(\mathfrak{n}_a)$ 
by use of $\overrightarrow{q}(\mathfrak{n})$.
We denote the new right-extended sequence by 
$\overrightarrow{q}(\mathfrak{n}\cup\mathfrak{n}_a)$.

If a node $\mathfrak{m}\in\overrightarrow{q}(\mathfrak{n}\cup\mathfrak{n}_a)$
has a left subtree $T_{L}(\mathfrak{m})$ in $L(T_1)$ or $L(T_2)$, 
we append $T_{L}(\mathfrak{m})$ to the node $\mathfrak{m}$ in $\overrightarrow{q}(\mathfrak{n}\cup\mathfrak{n}_a)$
by a left edge.
We denote by $L(T')$ the newly obtained tree.

\begin{defn}
The labeled binary tree $L(T')$ covers $L(T)$ if and only if $L(T')$ can be obtained from $L(T)$ by the above 
operation with the triplet $\nu$.
We say $\pi'$ is a rotation of $\pi$ if a labeled tree $L(T')$ for $\pi'$ is obtained from $L(T)$ for $\pi$ by 
the triplet $\nu$.
The partition $\pi'$ covers $\pi$, denoted by $\pi\lessdot\pi'$, if and only if $\pi'$ is a rotation of $\pi$. 
We write $\pi\le\pi'$ if and only if there exists an unrefinable sequence of partition 
$\pi=\pi_0\lessdot \pi_1\lessdot\ldots\lessdot\pi_{k}=\pi'$.
\end{defn}

\begin{example}
We consider the same labeled binary tree $L(T)$ as in Figure \ref{fig:bt}.
We take $\mathfrak{n}=7$ (which means the node with label $7$).
Since $LS(7)=\{1,4\}$ and $p(7)\cap p(5)=\{1\}$, $\mathfrak{n}_{a}$ is either $4$ or $1$.
We have two labeled binary trees $L(T')$ which covers $L(T)$ in the case of $\mathfrak{n}=7$:
\begin{align*}
\begin{tikzpicture}
[scale=0.8,grow'=up,level distance=1cm, 
level 1/.style={sibling distance=4cm},
level 2/.style={sibling distance=2cm},
level 3/.style={sibling distance=1.5cm},
]
\node {$1$} child{ node {$3$}
		 child{node{$8$}}
		 child{node{$4$}
		 	child[missing]
		 	child{node{$7$}}
		 }
		}
	     child{node{$2$}
	     	child{node{$5$}}
	     	child{node{$6$}}};
\end{tikzpicture}
\qquad
\begin{tikzpicture}
[scale=0.8,grow'=up,level distance=1cm, 
level 1/.style={sibling distance=4cm},
level 2/.style={sibling distance=2cm},
level 3/.style={sibling distance=1.5cm},
]
\node {$1$} child{ node {$3$}
		 child{node{$8$}}
		 child{node{$4$}
		 }
		}
	     child{node{$2$}
	     	child{node{$5$}}
	     	child{node{$6$}
	     	child[missing]
	     	child{node{$7$}}}};
\end{tikzpicture}
\end{align*}
Note that the node labeled $7$ is connected to its parent node
by a right edge in the labeled binary tree $L(T')$.

Similarly, take $\mathfrak{n}=3$. Then, we have $\mathfrak{n}_{a}=1$.
If we choose $S=\emptyset$, i.e., $\mathfrak{m}(S)=\emptyset$, 
the labeled binary tree which covers $L(T)$ is 
\begin{align*}
\begin{tikzpicture}
[scale=0.8,grow'=up,level distance=1cm, 
level 1/.style={sibling distance=4cm},
level 2/.style={sibling distance=2cm},
level 3/.style={sibling distance=1.5cm},
]
\node {$1$} child{ node {$8$}
		}
	     child{node{$2$}
	     	child{node{$5$}}
	     	child{node{$3$}
	     	child[missing]
	     	child{node{$4$}
	     		child{node{$7$}}
	     		child{node{$6$}}}}};
\end{tikzpicture}
\end{align*}

Finally, if we take $\mathfrak{n}=8$, the admissible $\mathfrak{n}_a$ is $3$.
This is because $LS(8)=\{3,1\}$, $p(7)$ is right to $p(8)$ and $p(8)\cap p(7)=3\rightarrow 1$. 
The partition which covers $L(T)$ is given by $37/48/15/26$.
\end{example}

\subsubsection{Lattice on \texorpdfstring{$\mathtt{NCCP}(n)$}{NCCP(n)}}

Let $(L,\le)$ be a partially ordered set (poset).
We define two binary operations $\vee$ and $\wedge$: 
$\vee,\wedge: L\times L\rightarrow L$.
Given two elements $x,y\in L$, an element $x\vee y$ is called a {\it join} (least upper bound)
and $x\wedge y$ a {\it meet} (greatest lower bound).
A poset $(L,\le)$ is called a {\it join-semilattice} (resp. {\it meet-semilattice})
if two elements in $L$ have a unique join (resp. meet).
The poset $(L,\le)$ is called a {\it lattice} if it is a join- and meet-semilattice.

To view the poset $(\mathtt{NCCP}(n),\le)$ as a lattice, 
we will define binary operations $\wedge$ and $\vee$ on the 
poset $(\mathtt{NCCP}(n,\le))$, that is, 
$\wedge,\vee: \mathtt{NCCP}(n)\times\mathtt{NCCP}(n)\rightarrow\mathtt{NCCP}(n)$.
Given two elements in $\mathtt{NCCP}(n)$, we will define a join $a\vee b$ and 
a meet $a\wedge b$ by use of the rotation of a labeled binary tree 
introduced in Section \ref{sec:covNCCP}.

Given an element $\pi\in\mathtt{NCCP}(n)$ in one-line notation, we define two sets for each $i\in[n-1]$ by 
\begin{align*}
\mathfrak{R}(i;\pi)&:=\{j>i | j \text{ is right to } i \text{ in } \pi \}, \\
\mathfrak{L}(i;\pi)&:=\{j>i | j \text{ is left to } i \text{ in } \pi \}.
\end{align*}

By construction, we have 
\begin{align*}
\mathfrak{R}(i;\pi)\cup\mathfrak{L}(i;\pi)=[n]\setminus[i],
\end{align*}
for all $i\in[n-1]$.

The following lemma is a key to construct a join and a meet.
\begin{lemma}
\label{lemma:lbtLR}
There is a bijection between a labeled binary tree of $n$ internal nodes 
and a collection of the sets $\{(\mathfrak{L}(i;\pi),\mathfrak{R}(i;\pi))\}_{1\le i\le n-1}$.
\end{lemma}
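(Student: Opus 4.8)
The strategy is to exhibit the two maps of the claimed bijection explicitly and show they are mutually inverse, mimicking the recursive structure already used for $\phi^{-1}$. In one direction, given a labeled binary tree $L(T)$ with $n$ internal nodes, read off the one-line permutation $w = \mathrm{word}(L(T))$ by the in-order (equivalently, recover $\pi\in\mathtt{NCCP}(n)$ via the block-insertion rule of Proposition~2.8), and then set $\mathfrak{L}(i;\pi)$ and $\mathfrak{R}(i;\pi)$ as in the definitions above. This is clearly well-defined and, by the displayed identity $\mathfrak{R}(i;\pi)\cup\mathfrak{L}(i;\pi)=[n]\setminus[i]$, produces a valid collection of pairs partitioning $[n]\setminus[i]$ for each $i$. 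So the content is the reverse direction: from an abstract collection $\{(\mathfrak{L}_i,\mathfrak{R}_i)\}_{1\le i\le n-1}$ with $\mathfrak{L}_i\sqcup\mathfrak{R}_i=[n]\setminus[i]$, reconstruct a unique labeled binary tree.

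First I would set up the reconstruction recursively on $n$. For $n=1$ there is nothing to choose: the single-node tree. For $n\ge 2$, consider the index $i=1$: the pair $(\mathfrak{L}_1,\mathfrak{R}_1)$ tells us, for every label $j>1$, whether $j$ sits to the left or to the right of $1$ in the in-order word. In tree terms, $\mathfrak{L}_1$ is exactly the set of labels in the left subtree of the node labeled $1$, and $\mathfrak{R}_1$ those in its right subtree — but since labels increase from the root toward the leaves and $1$ is the minimum, $1$ must be the root, so $\mathfrak{L}_1$ is the label set of the left subtree of the root and $\mathfrak{R}_1$ that of the right subtree. Then I would restrict the remaining data $\{(\mathfrak{L}_i,\mathfrak{R}_i)\}_{i\ge 2}$ to the index sets $\mathfrak{L}_1$ and $\mathfrak{R}_1$ respectively: for $i\in\mathfrak{L}_1$ with $i\ge 2$, take $(\mathfrak{L}_i\cap\mathfrak{L}_1,\ \mathfrak{R}_i\cap\mathfrak{L}_1)$, and similarly over $\mathfrak{R}_1$. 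The crucial consistency point is that these restricted collections again satisfy the partition condition relative to the ground sets $\mathfrak{L}_1$ and $\mathfrak{R}_1$ — i.e. for $i\in\mathfrak{L}_1$, the sets $\mathfrak{L}_i\cap\mathfrak{L}_1$ and $\mathfrak{R}_i\cap\mathfrak{L}_1$ partition $\{j\in\mathfrak{L}_1 : j>i\}$ — which follows because in any actual tree the descendants of a node $i$ living in the left subtree of the root are precisely the labels $>i$ that lie in $\mathfrak{L}_1$. Applying the inductive hypothesis to each restricted collection produces labeled binary trees on the index sets $\mathfrak{L}_1$ and $\mathfrak{R}_1$; attaching them to a new root labeled $1$ as left and right subtrees yields the desired $L(T)$. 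Uniqueness is forced at each step: the root label is determined (it is the global minimum), and the left/right partition of the non-root labels is dictated by $(\mathfrak{L}_1,\mathfrak{R}_1)$, after which the subtrees are unique by induction. That the two constructions are inverse to each other is then a direct unwinding of the definitions: reading $\mathfrak{L}(i;\pi),\mathfrak{R}(i;\pi)$ off the tree built from a collection returns the original collection, because "left/right of $i$ in the in-order word" coincides with "in the left/right subtree of the node labeled $i$, or separated from it at a higher common ancestor" in exactly the way the recursion encodes.

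I expect the main obstacle to be the bookkeeping in verifying the consistency of the restricted collections — namely checking that the condition $\mathfrak{L}_i\cap\mathfrak{L}_1$ together with $\mathfrak{R}_i\cap\mathfrak{L}_1$ really exhausts $\{j\in\mathfrak{L}_1:j>i\}$ and nothing more, so that induction applies cleanly. This amounts to showing that for $i\in\mathfrak{L}_1$ no label $j>i$ lying in $\mathfrak{R}_1$ can be "confused" into $i$'s subtree; concretely, if $j>i$, $j\in\mathfrak{R}_1$, and $i\in\mathfrak{L}_1$, then the least common ancestor of $i$ and $j$ is the root (label $1$), so $j$ is to the right of $i$ in the word regardless, and indeed $j\in\mathfrak{R}_i$ automatically — one must confirm this is consistent and does not over-determine the data. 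A clean way to organize this is to prove, as a lemma inside the argument, the characterization: \emph{$j\in\mathfrak{R}_i$ iff either $j$ is a descendant of the node labeled $i$ sitting in that node's right subtree, or the node labeled $i$ is a descendant in the left subtree of the least common ancestor of $i$ and $j$}; and the mirror statement for $\mathfrak{L}_i$. With that characterization in hand, both the well-definedness of the restriction and the inverse property become short formal checks, and a cardinality sanity check ($\prod_{i=1}^{n-1} 2^{\,?}$ is not literally right, but the count of labeled binary trees is $n!$ by Proposition~2.17, matching the count of admissible collections, which can be verified independently by the same recursion) confirms we have a genuine bijection rather than merely an injection.
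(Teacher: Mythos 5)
Your reconstruction is correct and proves the same thing the paper proves, but you organize the inverse map differently. The paper builds the tree top-down, one node at a time: the root is $1$, its right child is $r(1)=\min\mathfrak{R}(1;\pi)$, its left child is $l(1)=\min\mathfrak{L}(1;\pi)$, and deeper children are read off as minima of set differences such as $\min\bigl(\mathfrak{L}(r(1);\pi)\setminus\mathfrak{L}(1;\pi)\bigr)$, which strips out the labels that lie left of $r(1)$ only because they sit in the other subtree of the root. You instead split the whole label set at the root into $\mathfrak{L}_1$ and $\mathfrak{R}_1$, restrict the remaining pairs to each part, and recurse. The two approaches trade the same bookkeeping in different places: the paper's minima-of-differences rule identifies each child directly and never has to discuss whether a restricted collection is ``admissible,'' whereas your divide-and-conquer is structurally cleaner (the restricted data is literally the data of the subtree, since the in-order word of a subtree is the restriction of the global in-order word) but obliges you to verify the consistency of the restriction and to state the induction for trees labeled by arbitrary finite subsets of $[n]$ rather than initial segments. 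Your LCA characterization of $\mathfrak{R}_i$ is exactly the observation that makes both arguments work, and your closing caveat is well taken: neither your argument nor the paper's characterizes which abstract collections arise, so what is really being proved is that the map from trees to collections is injective with an explicitly computable inverse on its image.
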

\begin{proof}
When $n=1$, we have a labeled binary tree with one internal node labeled one.
We assume that $n\ge2$.
We construct a labeled binary tree from the sets $\{(\mathfrak{L}(i;\pi),\mathfrak{R}(i;\pi))\}_{1\le i\le n-1}$.
Let $r(i):=\min(\mathfrak{R}(i;\pi))$ and $l(i):=\min(\mathfrak{L}(i;\pi))$.
We connect a node labeled one and a node labeled $r(1)$ (resp. $l_1$) by a right (resp. $l(1)$) edge.
Then, we connect a node labeled $r(1)$ and a node labeled $r(r(1))$ 
(resp. $\min(\mathfrak{L}(r(1);\pi)\setminus\mathfrak{L}(1;\pi))$) 
by a right (resp. left) edge.
Similarly, we connect a node labeled $l(1)$ and a node labeled $l(l(1))$ 
(resp. $\min(\mathfrak{R}(l(1);\pi)\setminus\mathfrak{R}(1;\pi))$) by a left (resp. right) edge.
We continue this process until we add $n-1$ nodes one-by-one to the labeled tree, and 
obtain a labeled binary tree.

Conversely, given a labeled binary tree $L(T)$, by reading the labels of $L(T)$ by the in-order,
we recover the collection of the sets  $\{(\mathfrak{L}(i;\pi),\mathfrak{R}(i;\pi))\}_{1\le i\le n-1}$ 
by following the definition of the sets.

From these observations, we have a bijection between a labeled tree and the sets. 
\end{proof}

Given two element $\pi,\pi'\in\mathtt{NCCP}(n)$, we define 
a join $\pi\vee\pi'$ and a meet $\pi\wedge\pi'$ by use of the rotation 
as follows.
 
\begin{defn}
\label{defn:joinmeet}
Given two elements $\pi_{1},\pi_{2}\in\mathtt{NCCP}(n)$, a join $\pi^{\vee}:=\pi_1\vee\pi_2$ and 
a meet $\pi^{\wedge}:=\pi_1\wedge\pi_2$ 
are defined as follows.
We define $\pi^{\vee}=\pi$ and $\pi^{\wedge}=\pi$ if $\pi=\pi_1=\pi_2$.
For $\pi_1\neq\pi_2$, the join $\pi^{\vee}$ is recursively given: 
\begin{enumerate}
\item We define 
$\mathfrak{R}_{\cup}(i):=\mathfrak{R}(i;\pi_1)\cup\mathfrak{R}(i;\pi_2)$.

\item We construct $\pi'_{i}$, $i=1,2$, recursively:
\begin{enumerate}
\item Set $p=n-1$.
\item Suppose that $\mathfrak{R}(p;\pi_{i})\neq \mathfrak{R}_{\cup}(p)$ 
and $q\in\mathfrak{R}_{\cup}(p)\setminus\mathfrak{R}(p;\pi_{i})$.

We perform rotations to obtain a new partition $\pi'_{i}$.
In the partition $\pi'_{i}$, the integer $q$ is right to $p$, 
i.e., $q\in\mathfrak{R}(p;\pi'_{i})$, and $|\mathfrak{R}(p;\pi'_{i})|$ 
is minimum. 
These two conditions give a unique partition from $\pi_{i}$.

\item Decrease $p$ by one. Continue the same process as (b) until the new partition 
$\pi'_1$ satisfies $\mathfrak{R}_{\cup}(i)\subseteq\mathfrak{R}(i;\pi'_{i})$ for 
all $i\in[n-1]$.
\end{enumerate}
\item We define $\pi^{\vee}:=\pi'_{1}\vee\pi'_{2}$.
\end{enumerate}
Similarly, the meet $\pi^{\wedge}$ is also recursively given as in the case of a join.
We replace $\mathfrak{R}(p;\pi_{i})$ and $\mathfrak{R}_{\cup}(i)$ in (1) and (2) 
by $\mathfrak{L}(p;\pi_{i})$ and $\mathfrak{L}_{\cup}(i):=\mathfrak{L}(i;\pi_1)\cup\mathfrak{L}(i;\pi_{2})$ 
respectively.  
Further, we replace a rotation of $\pi_{i}$ in (2-b) by an inverse rotation of $\pi_{i}$, and 
define $\pi^{\wedge}:=\pi'_{1}\wedge\pi'_{2}$ in (3).
\end{defn}

\begin{example}
Take $\pi_1=23/14$ and $\pi_{2}=4/23/1$.
We have 
\begin{align*}
&\mathfrak{R}(1;\pi_1)=\{4\},\quad \mathfrak{R}(2;\pi_1)=\{3,4\},\quad \mathfrak{R}(3;\pi_1)=\{4\},  \\
&\mathfrak{R}(1;\pi_2)=\emptyset,\quad \mathfrak{R}(2;\pi_2)=\{3\},\quad \mathfrak{R}(3;\pi_2)=\emptyset,  
\end{align*}
and $\mathfrak{R}_{\cup}(i)=\mathfrak{R}(i;\pi_1)$ for $1\le i\le 3$.
From this, we have $\pi'_1=\pi_{1}$.
We have $\mathfrak{R}(3;\pi_2)=\emptyset\subset\mathfrak{R}_{\cup}(3)$. 
Since the node labeled $3$ is right to the node labeled by $4$ in $\pi_{2}$, we have to rotate 
$\pi_{2}$ such that $4$ is right to $3$. 
Then, we obtain $\pi''_{2}:=234/1$ by a rotation on $\pi_2$. 
We have $\mathfrak{R}_{\cup}(2)=\mathfrak{R}(2;\pi''_{2})$ and 
$\mathfrak{R}(1;\pi''_{2})=\emptyset\subset\mathfrak{R}_{\cup}(1)$.
We rotate $\pi''_{2}$ such that $2$ is right to $1$.
By a rotation, we obtain a new partition $\pi'_{2}=1234$.
Thus, the join is given by 
\begin{align*}
23/14\vee 4/23/1=23/14\vee 1234 =1234.
\end{align*}
For the meet, we have 
\begin{align*}
23/14\wedge 4/23/1=24/3/1\wedge 4/23/1=4/3/2/1.
\end{align*}

Similarly, the join and the meet of $23/14$ and $24/3/1$ is given by 
\begin{align*}
&23/14\vee 24/3/1=23/14\vee 23/14=23/14, \\
&23/14\wedge 24/3/1=24/3/1\wedge 24/3/1=24/3/1.
\end{align*}
\end{example}

\begin{prop}
The poset $(\mathtt{NCCP}(n),\le)$ is a lattice.
\end{prop}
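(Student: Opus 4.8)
The plan is to verify that the binary operations $\vee$ and $\wedge$ introduced in Definition \ref{defn:joinmeet} genuinely produce the least upper bound and greatest lower bound respectively, using the bijective description of a labeled binary tree via the family of pairs $\{(\mathfrak{L}(i;\pi),\mathfrak{R}(i;\pi))\}_{1\le i\le n-1}$ from Lemma \ref{lemma:lbtLR}. First I would reformulate the order relation $\le$ in terms of these data: I claim that $\pi\le\pi'$ if and only if $\mathfrak{R}(i;\pi)\subseteq\mathfrak{R}(i;\pi')$ for all $i\in[n-1]$ (equivalently $\mathfrak{L}(i;\pi')\subseteq\mathfrak{L}(i;\pi)$ for all $i$, since $\mathfrak{L}(i;\pi)\cup\mathfrak{R}(i;\pi)=[n]\setminus[i]$). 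The forward direction amounts to checking that a single rotation, as defined in Section \ref{sec:covNCCP} via a triplet $\nu=(\mathfrak{n},\mathfrak{n}_a,\mathfrak{m}(S))$, can only move integers from the ``left side'' of a given integer to its ``right side'' and never the reverse; this follows from tracing through the construction of $L(T')$, where the node $\mathfrak{n}$ and the nodes of $\mathfrak{m}(S)$ get reattached to $\mathfrak{n}_a$ via a right-extended sequence. The reverse direction — that whenever $\mathfrak{R}(i;\pi)\subsetneq\mathfrak{R}(i;\pi')$ componentwise there is at least one legal rotation strictly increasing some $\mathfrak{R}(i;\cdot)$ while staying $\le\pi'$ — is the delicate combinatorial point and I would prove it by taking the largest $p$ with $\mathfrak{R}(p;\pi)\subsetneq\mathfrak{R}(p;\pi')$ and exhibiting the triplet $\nu$ that realizes the minimal such step, exactly mirroring the recursion in Definition \ref{defn:joinmeet}(2).

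Granting the order-theoretic characterization, the rest is comparatively clean. For the join: set $\mathfrak{R}_\cup(i):=\mathfrak{R}(i;\pi_1)\cup\mathfrak{R}(i;\pi_2)$. Any common upper bound $\sigma$ of $\pi_1$ and $\pi_2$ satisfies $\mathfrak{R}(i;\sigma)\supseteq\mathfrak{R}(i;\pi_1)\cup\mathfrak{R}(i;\pi_2)=\mathfrak{R}_\cup(i)$ for all $i$, so it suffices to show that the family $\{\mathfrak{R}_\cup(i)\}_i$ is itself realized by a genuine element of $\mathtt{NCCP}(n)$ that is reached from both $\pi_1$ and $\pi_2$ by rotations; that element is then forced to be the join. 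I would argue that the recursive procedure in Definition \ref{defn:joinmeet}, applied to $\pi_i$, terminates in a partition $\pi'_i$ with $\mathfrak{R}(p;\pi'_i)\supseteq\mathfrak{R}_\cup(p)$ for all $p$ and, by the minimality built into step (2-b) and a downward induction on $p$, with $\mathfrak{R}(p;\pi'_i)=\mathfrak{R}_\cup(p)$ exactly — hence $\pi'_1=\pi'_2$, and this common value, lying above both $\pi_1,\pi_2$ via rotations and below every common upper bound, is $\pi_1\vee\pi_2$. The meet is entirely dual: replace $\mathfrak{R}$ by $\mathfrak{L}$, unions by unions of the $\mathfrak{L}$'s, and rotations by inverse rotations, invoking the same characterization of $\le$ in its $\mathfrak{L}$-form. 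One should also record that $\mathcal{L}_{NCCP}(n)$ is bounded — the partition $12\cdots n$ (a single block) is the maximum and $n/n{-}1/\cdots/1$ is the minimum — which guarantees existence of enough rotations for the recursion to bottom out.

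The main obstacle I anticipate is establishing the equivalence ``$\pi\le\pi'\iff \mathfrak{R}(i;\pi)\subseteq\mathfrak{R}(i;\pi')$ for all $i$,'' and in particular the well-definedness and minimality claims about rotations: one must confirm that for a given $p$ and a target $q\in\mathfrak{R}_\cup(p)\setminus\mathfrak{R}(p;\pi_i)$ there is a \emph{unique} rotation (specified by an appropriate triplet $\nu$) placing $q$ to the right of $p$ with $|\mathfrak{R}(p;\pi'_i)|$ minimal, and that iterating these never decreases any previously-established $\mathfrak{R}(j;\cdot)$ for $j>p$. This is really a statement about how the left-extended and right-extended sequences interact under the rotation operation, and checking it requires a careful but finite case analysis on whether $p(\mathfrak{n}')$ is weakly left or right to $p(\mathfrak{n})$ for the relevant nodes — essentially unwinding the definitions in Section \ref{sec:covNCCP}. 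Once this ``local move lemma'' is in place, the semilattice properties and hence the lattice property follow formally, and the proof concludes by noting that a poset with a maximum element in which every pair has a meet is automatically a lattice, or symmetrically via joins; here we have both explicitly.
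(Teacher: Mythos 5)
Your proposal rests on the claimed equivalence ``$\pi\le\pi'$ if and only if $\mathfrak{R}(i;\pi)\subseteq\mathfrak{R}(i;\pi')$ for all $i\in[n-1]$,'' and that equivalence is false. Take $\pi=134/2$ and $\pi'=13/24$ in $\mathtt{NCCP}(4)$, i.e.\ $1342$ and $1324$ in one-line notation. Then $\mathfrak{R}(1;\pi)=\{2,3,4\}=\mathfrak{R}(1;\pi')$, $\mathfrak{R}(2;\pi)=\emptyset\subsetneq\{4\}=\mathfrak{R}(2;\pi')$, and $\mathfrak{R}(3;\pi)=\{4\}=\mathfrak{R}(3;\pi')$, so the containments all hold; but both partitions have two blocks, hence the same rank $\rho=2$, and since every cover decreases the number of blocks by exactly one (Proposition \ref{prop:rank}), they are incomparable in $\mathcal{L}_{NCCP}(4)$ (they sit at the same level of Figure \ref{fig:latNCCP} with no relation between them). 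The forward implication --- that a rotation only enlarges the sets $\mathfrak{R}(i;\cdot)$ --- is plausible and is implicitly what the paper uses, but the converse, which is exactly what you need to conclude that an element realizing minimal $\mathfrak{R}$-data above $\mathfrak{R}_\cup$ is \emph{below} every common upper bound, fails. Since you explicitly build the rest of the argument on this characterization (``granting the order-theoretic characterization, the rest is comparatively clean''), the leastness of your candidate join is not established.

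A secondary problem: you assert that the recursion terminates with $\mathfrak{R}(p;\pi'_i)=\mathfrak{R}_\cup(p)$ exactly. There is no reason the family $\{\mathfrak{R}_\cup(i)\}_i$ should be realizable by any element of $\mathtt{NCCP}(n)$ --- Lemma \ref{lemma:lbtLR} is a bijection with those collections that actually arise from labeled binary trees, not with arbitrary families of sets --- and this is precisely why Definition \ref{defn:joinmeet} only demands a \emph{minimal} superset $\mathfrak{R}(p;\pi'_i)\supseteq\mathfrak{R}_\cup(p)$ rather than equality. The paper's own (admittedly terse) proof avoids your reformulation entirely: it argues uniqueness of $\pi'_1,\pi'_2$ from the uniqueness of these minimal supersets together with Lemma \ref{lemma:lbtLR}, and termination from the fact that the sets are bounded above by $[n]\setminus[p]$. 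To repair your route you would need either a correct order-theoretic characterization of $\le$ (componentwise containment of the $\mathfrak{R}$-data is necessary but not sufficient) or a direct argument that the output of the recursion lies below every common upper bound.
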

\begin{proof}
To show the poset $(\mathtt{NCCP}(n),\le)$ is a lattice, 
we have to show that there exists a unique join and meet 
for given two elements $\pi_1$ and $\pi_2$ in $\mathtt{NCCP}(n)$.

From Definition \ref{defn:joinmeet}, we have a unique 
set $\mathfrak{R}_{\cup}(p)$ constructed from $\mathfrak{R}(p;\pi_{i})$, $i=1,2$.
Further, we construct $\mathfrak{R}(p;\pi'_{i})$ as a minimum set which satisfies 
$\mathfrak{R}_{\cup}(p)\subseteq\mathfrak{R}(p;\pi'_{i})$.
From Lemma \ref{lemma:lbtLR}, we have a unique $\pi'_{1}$ and $\pi'_{2}$ given 
$\pi_1$ and $\pi_{2}$.
Note that we have $\mathfrak{R}(p,\pi)=[n]\setminus[p]$ if $\pi=123\ldots n$ and 
this set is maximal with respect to $\subseteq$.
Thus, $\pi_1\vee\pi_2=\pi'_{1}\vee\pi'_{2}\le 123\ldots n$ and the binary operation 
$\vee$ gives a unique element.

By a similar argument, one can show that there exists a unique meet for given two 
elements $\pi_1$ and $\pi_2$.
\end{proof}

\begin{lemma}
\label{lemma:01}
The maximum (resp. minimum) element in the lattice $\mathcal{L}_{NCCP}$
is $\hat{1}=12\ldots n$ (resp. $\hat{0}=n/n-1/\ldots/1$).
\end{lemma}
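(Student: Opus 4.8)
The plan is to show that $\hat 1 = 12\ldots n$ is the unique maximal element and $\hat 0 = n/(n-1)/\cdots/1$ is the unique minimal element of $\mathcal{L}_{NCCP}(n)$, which amounts to two dual verifications. I will work primarily through the characterization of labeled binary trees by the collection of pairs $\{(\mathfrak{L}(i;\pi),\mathfrak{R}(i;\pi))\}_{1\le i\le n-1}$ from Lemma \ref{lemma:lbtLR}, since the cover relation $\lessdot$ is defined via rotations and a rotation (as used in Definition \ref{defn:joinmeet}) strictly enlarges some $\mathfrak{R}(p;\pi)$ while an inverse rotation strictly enlarges some $\mathfrak{L}(p;\pi)$. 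First I would observe that the one-line notation $12\ldots n$ has $\mathfrak{R}(i;\pi)=[n]\setminus[i]$ and $\mathfrak{L}(i;\pi)=\emptyset$ for every $i$, so every integer $j>i$ lies to the right of $i$; dually, $n/(n-1)/\cdots/1$ has $\mathfrak{L}(i;\pi)=[n]\setminus[i]$ and $\mathfrak{R}(i;\pi)=\emptyset$. These are the componentwise extremal configurations among all admissible collections of pairs, since $\mathfrak{R}(i;\pi)\cup\mathfrak{L}(i;\pi)=[n]\setminus[i]$ always.

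Next I would show $\hat 1$ is maximal, i.e. that no partition covers $12\ldots n$: a rotation producing $\pi'$ from $\pi$ forces some $q\in\mathfrak{R}(p;\pi')\setminus\mathfrak{R}(p;\pi)$, which is impossible when $\mathfrak{R}(p;\pi)$ is already all of $[n]\setminus[p]$. For maximality to mean "greatest" rather than merely "maximal," I would argue that from any $\pi\ne 12\ldots n$ there is an unrefinable ascending chain to $12\ldots n$: pick the largest $p$ with $\mathfrak{R}(p;\pi)\subsetneq[n]\setminus[p]$ and perform the rotation moving some missing $q$ to the right of $p$ while keeping $|\mathfrak{R}(p;\pi')|$ minimal (exactly as in step (2)(b) of Definition \ref{defn:joinmeet}); this is a legal cover, strictly increases $\sum_i|\mathfrak{R}(i;\pi)|$, and never decreases any $\mathfrak{R}(i;\pi)$, so iterating terminates at $12\ldots n$. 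The argument for $\hat 0$ is the exact dual, using inverse rotations and the sets $\mathfrak{L}(i;\pi)$, together with the fact that $n/(n-1)/\cdots/1$ admits no inverse rotation since each $\mathfrak{L}(i;\pi)$ is already maximal, and from any $\pi$ one descends to it by repeatedly enlarging some $\mathfrak{L}(p;\pi)$.

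The main obstacle I anticipate is the bookkeeping that a single rotation of the specific type used in Definition \ref{defn:joinmeet}(2)(b)—moving one designated element $q$ to the right of $p$ with $|\mathfrak{R}(p;\pi')|$ minimal—genuinely is a cover in the sense of the rotation Definition (i.e. corresponds to a valid triplet $\nu=(\mathfrak{n},\mathfrak{n}_a,\mathfrak{m}(S))$), and that it is \emph{monotone}, never shrinking any $\mathfrak{R}(i;\pi)$ for $i\ne p$ (and analogously $\mathfrak{L}(i;\pi)$ for the meet). Once monotonicity is in hand, the potential function $\sum_i|\mathfrak{R}(i;\pi)|$ (resp. $\sum_i|\mathfrak{L}(i;\pi)|$) strictly increases under each step and is bounded, so the chain to $\hat 1$ (resp. $\hat 0$) is finite; combined with the non-existence of any cover out of $\hat 1$ (resp. into $\hat 0$), this yields that $\hat 1$ and $\hat 0$ are the top and bottom. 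I would also remark that since the lattice operations $\vee,\wedge$ of Definition \ref{defn:joinmeet} are already established, one may alternatively note $\pi\vee(12\ldots n)=12\ldots n$ and $\pi\wedge(n/(n-1)/\cdots/1)=n/(n-1)/\cdots/1$ directly from the componentwise-maximality of the corresponding $\mathfrak{R}$- and $\mathfrak{L}$-data, giving a shorter route if the monotonicity lemma turns out to be delicate.
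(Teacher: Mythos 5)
Your proposal is correct, but it is considerably more elaborate than what the paper does: the paper's entire proof of this lemma is the single sentence ``It is obvious from the definition of the cover relation on $\mathtt{NCCP}(n)$.'' What the author presumably has in mind is the short structural argument: a rotation always converts a left edge of the labeled binary tree into a right edge (equivalently, decreases the number of blocks by one), the unique labeled binary tree with no left edges is the right chain whose in-order word is $12\ldots n$, and the unique tree with no right edges is the left chain giving $n/n-1/\ldots/1$; since every tree with a left (resp.\ right) edge admits a rotation (resp.\ inverse rotation) and the block count is bounded, every element connects to these two by unrefinable chains. Your route through the $\{(\mathfrak{L}(i;\pi),\mathfrak{R}(i;\pi))\}$ data of Lemma \ref{lemma:lbtLR} reaches the same conclusion and has the merit of tying the extremality of $\hat 0,\hat 1$ directly to the componentwise extremality used in the join/meet construction of Definition \ref{defn:joinmeet}. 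One remark: the monotonicity claim you flag as the main obstacle (that a rotation never shrinks $\mathfrak{R}(i;\pi)$ for $i\ne p$) is not actually needed for termination --- each cover increases the rank $\rho(\pi)=n-l(\pi)$ by exactly one and $\rho$ is bounded by $n-1$, so any greedy ascending (resp.\ descending) process stops after finitely many steps, and it can only stop at an element admitting no rotation (resp.\ inverse rotation), which forces $12\ldots n$ (resp.\ $n/n-1/\ldots/1$). Replacing your potential function $\sum_i|\mathfrak{R}(i;\pi)|$ by the rank function removes the one delicate point in your write-up and makes the argument airtight.
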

\begin{proof}
It is obvious from the definition of the cover relation on $\mathtt{NCCP}(n)$. 
\end{proof}

Lemma \ref{lemma:01} implies the lattice is bounded.
To view the lattice $(\mathtt{NCCP}(n),\le)$ as a graded lattice,
we need to introduce a rank function $\rho$.

\begin{defn}
We define the function $\rho:\mathtt{NCCP}(n)\rightarrow\mathbb{N}$ 
as $\rho(\pi)=n-l$ where $l$ is the length of $\pi$, or equivalently, 
$l$ is the number of blocks in $\pi$. 
\end{defn}

\begin{prop}
\label{prop:rank}
The function $\rho$ is a rank function on the lattice $(\mathtt{NCCP}(n),\le)$.
\end{prop}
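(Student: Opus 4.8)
The plan is to verify the two defining conditions of a rank function on a graded bounded lattice: first, that $\rho$ is strictly monotone along cover relations (in fact increases by exactly one), and second, that $\rho$ is normalized so that $\rho(\hat 0)=0$ (and hence $\rho(\hat 1)=n-1$ automatically). The second point is immediate from Lemma \ref{lemma:01}: the minimum element $\hat 0=n/n-1/\cdots/1$ has length $l=n$, so $\rho(\hat 0)=n-n=0$; similarly $\hat 1=12\ldots n$ has length $1$, so $\rho(\hat 1)=n-1$, which will match the length of every maximal chain once the cover step is established.

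The heart of the argument is the claim that if $\pi\lessdot\pi'$ then the length of $\pi'$ is exactly one less than the length of $\pi$, equivalently $\rho(\pi')=\rho(\pi)+1$. First I would recall that a rotation is specified by a triplet $\nu=(\mathfrak{n},\mathfrak{n}_a,\mathfrak{m}(S))$ and that, under $\psi$-type reading, the blocks of a noncommutative crossing partition correspond to the maximal chains of right edges in the labeled binary tree $L(T)$. So counting blocks amounts to counting ``right-edge chains'' (equivalently, counting left edges: the number of blocks equals the number of left edges plus one). I would then trace through the construction of $L(T')$ from $L(T_1)$ and $L(T_2)$: the step that forms $L(T_1)$ collects the nodes $\mathfrak{m}(S)\cup\{\mathfrak{n}\}$ and reconnects them by left edges keeping their right subtrees, the step that forms $\widetilde L(T_2)$ reconnects the remaining nodes of $\overrightarrow{p}(\mathfrak{n}_0)$ by left edges, and the final insertion step merges the right-extended sequence $\overrightarrow{q}(\mathfrak{n})$ of $L(T_1)$ into $\overrightarrow{q}(\mathfrak{n}_a)$ of $L(T_2)$, concatenating two right-edge chains into one. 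The key bookkeeping observation is that all the reconnections among the $\overrightarrow{p}$-sequence nodes preserve the number of left edges among those nodes (they are merely redistributed between $T_1$ and $T_2$, then the left subtrees are reattached unchanged), while the merge of $\overrightarrow{q}(\mathfrak{n})$ with $\overrightarrow{q}(\mathfrak{n}_a)$ destroys exactly one left edge — namely the left edge by which $\mathfrak{n}$ (the root of $T_1$) was attached — replacing it by a right-edge extension. Hence the total number of left edges drops by exactly one, so the number of blocks drops by exactly one, giving $\rho(\pi')=\rho(\pi)+1$.

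From the covering increment it follows by a standard induction on chain length that for any $\pi\le\sigma$, every maximal chain in the interval $[\pi,\sigma]$ has length $\rho(\sigma)-\rho(\pi)$, so the lattice is graded with rank function $\rho$; combined with Lemma \ref{lemma:01} this completes the proof. The main obstacle I anticipate is the careful edge-count in the rotation: one must check, across the several sub-cases in the construction of $L(T_2)$ (whether $\mathfrak{n}_0$ is the root, and whether $p(\mathfrak{n}')$ lies weakly left or right of $p(\mathfrak{n})$, which governs the choice of $\mathfrak{n}_a$ in $LS(\mathfrak{n})$ versus $LS(\mathfrak{n})\setminus\overline{p(\mathfrak{n}_c)}$), that no left edge is silently created or destroyed beyond the single one accounted for. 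A clean way to organize this is to note that a rotation keeps the multiset of right subtrees $\{U, U_0,\ldots,U_p\}$ fixed and only rewires the $\overrightarrow{p}$-sequence plus one $\overrightarrow{q}$-merge, so the left-edge count changes only along that merge; I would state this as the crucial lemma and verify it by the picture in Figure \ref{fig:LTles} generalized to arbitrary $S$.
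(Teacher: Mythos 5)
Your proposal is correct and follows essentially the same route as the paper: both reduce the claim to showing that a single rotation decreases the number of left edges (equivalently, the number of blocks) by exactly one, so that $\rho$ increases by one along each cover. The paper simply asserts this left-edge count without tracing through the construction, whereas you carry out the bookkeeping across $L(T_1)$, $L(T_2)$ and the $\overrightarrow{q}$-merge explicitly, which is a welcome (and correct) elaboration rather than a different method.
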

\begin{proof}
Recall that we have a labeled binary tree $L(\pi)$ associated to an element $\pi$ 
in $\mathtt{NCCP}(n)$.
Suppose $\pi_2$ covers $\pi_1$. 
Then, the number of left edges in $L(\pi_2)$ is one less than that of 
left edges in $L(\pi_1)$.
In terms of the length, the length of $\pi_2$ is one less than that of $\pi_1$,
that is, $l(\pi_2)=l(\pi_1)-1$.
By definition of $\rho$, we have $\rho(\pi_2)=\rho(\pi_1)+1$, which 
implies that the function $\rho$ is consistent with the cover relation.
It is obvious that the function $\rho$ is compatible with the ordering in $\mathtt{NCCP}(n)$,
i.e., $\rho(\pi_1)<\rho(\pi_2)$ if $\pi_1<\pi_2$.
From these, the function $\rho$ is  a rank function.
\end{proof}

\begin{remark}
Given a partition $\pi\in\mathtt{NCCP}(n)$, the rank $\rho(\pi)$ is 
equal to the number of right edges in the labeled binary tree corresponding 
to $\pi$.
This is because the number of right edges in a labeled binary tree
is increased by one by a rotation of $\pi$ by definition.
\end{remark}

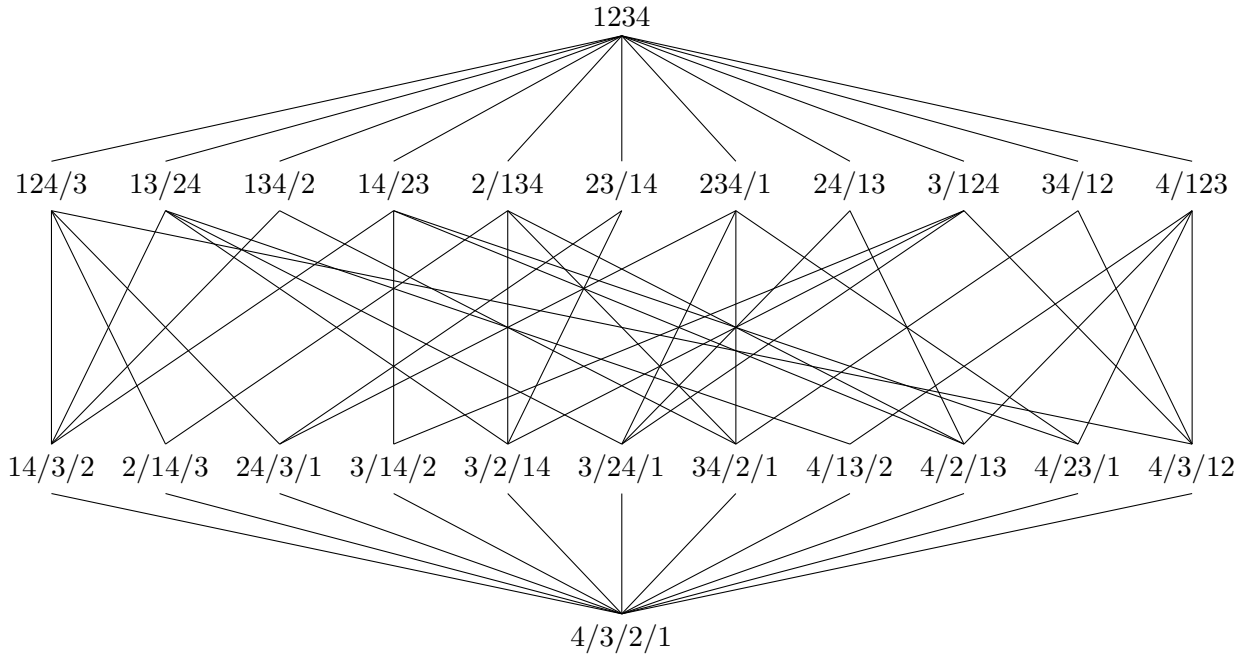
\begin{figure}[ht]
\begin{tikzpicture}[scale=1.5]
\node (max) at (0,4) {$1234$};
\node (1243) at (-5,2.5) {$124/3$};
\node (1324) at (-4,2.5){$13/24$};
\node (1342) at (-3,2.5){$134/2$};
\node (1423) at (-2,2.5){$14/23$}; 
\node (2134) at (-1,2.5){$2/134$};
\node (2314) at (0,2.5){$23/14$};
\node (2341) at (1,2.5){$234/1$};
\node (2413) at (2,2.5){$24/13$};
\node (3124) at (3,2.5){$3/124$};
\node (3412) at (4,2.5){$34/12$};
\node (4123) at (5,2.5){$4/123$};
\node (1432) at (-5,0){$14/3/2$};
\node (2143) at (-4,0){$2/14/3$};
\node (2431) at (-3,0){$24/3/1$};
\node (3142) at (-2,0){$3/14/2$};
\node (3214) at (-1,0){$3/2/14$};
\node (3241) at (0,0){$3/24/1$};
\node (3421) at (1,-0){$34/2/1$};
\node (4132) at (2,0){$4/13/2$};
\node (4213) at (3,0){$4/2/13$}; 
\node (4231) at (4,0){$4/23/1$};
\node (4312) at (5,0){$4/3/12$};
\node (min) at (0,-1.5) {$4/3/2/1$};
\foreach \a in {1432,2143,2431,3142,3214,3241,3421,4132,4213,4231,4312}
\draw (min.north)--(\a.south);
\foreach \a in {1243,1324,1342,1423,2134,2314,2341,2413,3124,3412,4123}
\draw (max.south)--(\a.north);
\foreach \a in {1342,1324,1243,1423}
\draw (1432.north)--(\a.south);
\foreach \a in {1243,2134}
\draw (2143.north)--(\a.south);
\foreach \a in {1243,2341,2314}
\draw (2431.north)--(\a.south);
\foreach \a in {1423,3124}
\draw (3142.north)--(\a.south);
\foreach \a in {1324,2314,2134,3124}
\draw (3214.north)--(\a.south);
\foreach \a in {3124,2341,2413,1324}
\draw (3241.north)--(\a.south);
\foreach \a in {3412,2341,2134,1342}
\draw (3421.north)--(\a.south);
\foreach \a in {4123,1324}
\draw (4132.north)--(\a.south);
\foreach \a in {4123,2413,2134,1423}
\draw (4213.north)--(\a.south);
\foreach \a in {4123,2341,1423}
\draw (4231.north)--(\a.south);
\foreach \a in {4123,3412,3124,1243}
\draw (4312.north)--(\a.south);
\end{tikzpicture}
\caption{The lattice $\mathcal{L}_{NCCP}(4)$.}
\label{fig:latNCCP}
\end{figure}

\begin{cor}
\label{cor:NCCPgbl}
The lattice $(\mathtt{NCCP}(n),\le)$ is a graded 
bounded lattice.
\end{cor}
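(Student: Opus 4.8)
The plan is to assemble the three ingredients that have just been established, since this statement is a corollary and no new construction is needed. First, the preceding proposition shows that $(\mathtt{NCCP}(n),\le)$ is a lattice, so the operations $\vee$ and $\wedge$ are everywhere defined and single-valued. Second, Lemma \ref{lemma:01} exhibits a bottom element $\hat 0 = n/n-1/\cdots/1$ and a top element $\hat 1 = 12\cdots n$, so the lattice is bounded. It therefore only remains to observe that it is graded, and for this I would simply invoke Proposition \ref{prop:rank}.

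Concretely, I would spell out why the existence of the rank function $\rho(\pi) = n - l(\pi)$ from Proposition \ref{prop:rank} upgrades ``bounded lattice'' to ``graded bounded lattice''. I would check the normalization: $\rho(\hat 0) = n - n = 0$ because the minimum element consists of $n$ singleton blocks, and $\rho(\hat 1) = n - 1$ because $\hat 1 = 12\cdots n$ has a single block. Combining this with the two properties recorded in Proposition \ref{prop:rank}, namely that $\rho(\pi') = \rho(\pi) + 1$ whenever $\pi \lessdot \pi'$ and that $\rho$ is order-preserving, one concludes that every maximal chain in any interval $[\pi,\pi']$ of $\mathtt{NCCP}(n)$ has length exactly $\rho(\pi') - \rho(\pi)$; in particular all maximal chains of the whole poset have length $n-1$. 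This is precisely the defining property of a graded poset.

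There is no genuine obstacle here: the corollary is a matter of citing the previous proposition (lattice), Lemma \ref{lemma:01} (bounded), and Proposition \ref{prop:rank} (rank function $\Rightarrow$ graded), and matching them against the definition of a graded bounded lattice. The only point deserving an explicit sentence is the normalization $\rho(\hat 0) = 0$, which is immediate from the description of the minimum element in Lemma \ref{lemma:01}.
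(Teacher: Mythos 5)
Your proposal is correct and follows exactly the paper's route: the paper's proof of this corollary is a one-line citation of Lemma \ref{lemma:01} (boundedness) and Proposition \ref{prop:rank} (rank function, hence graded), together with the preceding proposition establishing the lattice structure. The extra details you supply about the normalization $\rho(\hat 0)=0$ and the lengths of maximal chains are harmless elaborations of the same argument.
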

\begin{proof}
The lattice is graded and bounded from Lemma \ref{lemma:01} and Proposition \ref{prop:rank}.
\end{proof}

\begin{defn}
We denote by $\mathcal{L}_{NCCP}(n)$ the graded bounded lattice $(\mathtt{NCCP}(n),\le)$. 
\end{defn}

Figure \ref{fig:latNCCP} shows the lattice $\mathcal{L}_{NCCP}(4)$. 
An edge in the Hasse diagram corresponds to the cover relation.

\subsection{Kreweras lattice}
We first characterize the set $\mathtt{NCP}(n)$ by introducing a cover relation $\subset$.
We will see that the poset $(\mathtt{NCP}(n),\subset)$ is equivalent to the Kreweras lattice 
of non-crossing partitions. To characterize the Kreweras lattice as a sublattice of the lattice $(\mathtt{NCCP}(n),\le)$, 
we study the subposet $(\mathtt{NCCP}(n;312),\le)$, and show that this poset is 
isomorphic to the Kreweras lattice.

Let $\pi:=(\pi_{1},\ldots,\pi_{l})$ and $\pi':=(\pi'_{1},\ldots,\pi'_{m})$ with 
$l\le m$ be noncommutative crossing partitions in $\mathtt{NCCP}(n)$.
Given two crossing blocks $\pi_{i}$ and $\pi_{i'}$, 
we write $\pi_{i}\cup \pi_{i'}$ as a unique crossing partition formed by 
integers in $\pi_{i}$ and $\pi_{i'}$.
This process corresponds to coarsement of blocks since we have obviously  
$\pi_{i},\pi_{i'}\subset \pi_{i}\cup\pi_{i'}$.
For example, we write $12467=146\cup 27$ in one-line notation, and 
$146/27$ and $27/146$ are refinements of $12467$. 

\begin{defn}
\label{defn:refpi}
Suppose that 
\begin{align*}
\pi_{i}=\bigcup_{j\in J(i)} \pi'_{j}, \quad \forall i\in[l],
\end{align*}
where $J(i)\neq\emptyset$ is a subset in $[m]$ such that 
$\bigsqcup_{i}J(i)=[m]$ and at least one $J(i)$ satisfies $|J(i)|\ge2$. 
We say that $\pi'$ is a refinement of $\pi$, and equivalently 
$\pi$ is a coarsement of $\pi'$.
\end{defn}

Note that in Definition \ref{defn:refpi}, the order of blocks 
in $\pi\in\mathtt{NCCP}(n)$ is irrelevant to the definition.
For example,  $36/5/24/1$ is a refinement of $\pi=15/24/36$.
Similarly, $24/36/5/1$ is also a refinement of $\pi$.
However, $6/5/4/13/2$ is not a refinement of $\pi$ since $\pi$ does not 
have a block containing $13$.
Similarly, $24/15/36$ is not a refinement of $\pi$ since it can be obtained 
from $\pi$ by rearranging the order of blocks of $\pi$.

\begin{defn}
Let $\pi,\pi'\in\mathtt{NCP}(n)$ and $l(\pi)$ be the number of blocks in $\pi$.
We say $\pi\subset\pi'$ if and only if 
$\pi$ is a refinement of $\pi'$ and $\rho(\pi)=\rho(\pi')+1$ where $\rho$ 
is the rank function.
\end{defn}

\begin{remark}
A cover relation $\pi\subset\pi'$ can be defined on $\mathtt{NCP}(n)$.
Take $\pi=4/2/13$. Definition \ref{defn:refpi} implies that 
$24/13$ is a coarsement of $\pi$. However, $24/13$ is a crossing 
partition and not in $\mathtt{NCP}(n)$.
Thus, the coarsements of $\pi$ in $\mathtt{NCP}(n)$ are 
$4/123$, $2/134$ and $1234$.
\end{remark}

It is a routine to show the following lemma.
\begin{lemma}
\label{lemma:spo}
The binary relation $\subset$ on $\mathtt{NCP}(n)$ is 
irreflexive, antisymmetric, and transitive.
\end{lemma}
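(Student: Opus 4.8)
The plan is to verify the three properties directly from the definitions, since each is essentially a bookkeeping exercise. Recall that $\pi\subset\pi'$ means two things at once: $\pi$ is a refinement of $\pi'$ in the sense of Definition \ref{defn:refpi}, and $\rho(\pi)=\rho(\pi')+1$, where $\rho(\pi)=n-l(\pi)$. The rank condition forces $l(\pi)=l(\pi')+1$, so in any cover $\pi\subset\pi'$ exactly one coarsement step has occurred: there is a single index $i$ with $|J(i)|=2$ and $|J(j)|=1$ for all other $j$. This observation is the engine behind all three verifications.

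First I would check irreflexivity. If $\pi\subset\pi$, then $\rho(\pi)=\rho(\pi)+1$, which is absurd; hence $\pi\not\subset\pi$. (Equivalently, refinement in the sense of Definition \ref{defn:refpi} requires at least one $|J(i)|\ge 2$, so $\pi$ can never be a refinement of itself among partitions of the same set $[n]$.) Next, antisymmetry: suppose $\pi\subset\pi'$ and $\pi'\subset\pi$. From the rank condition we would get $\rho(\pi)=\rho(\pi')+1$ and $\rho(\pi')=\rho(\pi)+1$, so $\rho(\pi)=\rho(\pi)+2$, a contradiction. Hence no such pair exists, and antisymmetry holds vacuously in the strict sense (there is no pair of distinct elements each below the other).

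For transitivity, suppose $\pi\subset\pi'$ and $\pi'\subset\pi''$. I must show $\pi\subset\pi''$, i.e.\ that $\pi$ is a refinement of $\pi''$ and that $\rho(\pi)=\rho(\pi'')+1$. Wait---here one must be careful: $\rho(\pi)=\rho(\pi')+1=\rho(\pi'')+2$, so the rank condition $\rho(\pi)=\rho(\pi'')+1$ \emph{fails}. So transitivity as a cover relation cannot literally hold; what the lemma must intend is transitivity of the induced order (or of the refinement relation itself, forgetting the rank normalization). Reading the statement in context---$\subset$ is being set up so that $(\mathtt{NCP}(n),\subset)$ is a poset, and the poset order is the transitive closure---the content to prove is that the \emph{refinement relation} of Definition \ref{defn:refpi} is transitive: if every block of $\pi'$ is a union of blocks of $\pi$ and every block of $\pi''$ is a union of blocks of $\pi'$, then every block of $\pi''$ is a union of blocks of $\pi$. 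This is immediate from composing the index-set decompositions: if $\pi''_k=\bigcup_{j\in J'(k)}\pi'_j$ and $\pi'_j=\bigcup_{i\in J(j)}\pi_i$, then $\pi''_k=\bigcup_{i\in\bigsqcup_{j\in J'(k)}J(j)}\pi_i$, and the sets $\bigsqcup_{j\in J'(k)}J(j)$ partition $[l(\pi)]$ as $k$ ranges over $[l(\pi'')]$; since $\pi\neq\pi''$ at least one has size $\ge 2$. Antisymmetry and irreflexivity of refinement then follow from the size/cardinality count $l(\pi)>l(\pi')$ whenever $\pi$ properly refines $\pi'$.

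The main obstacle, and the only subtle point, is precisely the interplay between the rank-based definition of the cover relation $\subset$ and the ``routine'' claim of transitivity: one has to interpret the lemma correctly (as a statement about the refinement relation, whose transitive closure is the poset order), rather than taking ``$\subset$ is transitive'' at face value for the cover relation. Once that is pinned down, every step is a one-line cardinality or set-algebra argument, and the word ``routine'' in the paper is justified. I would present the proof in the three-bullet format matching the statement, being explicit in the transitivity part that I am composing the decompositions $J(\cdot)$ and tracking that the disjointness and non-triviality hypotheses are preserved.
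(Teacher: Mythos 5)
Your proof is correct, and since the paper offers no argument at all for this lemma (it is simply declared ``routine''), there is nothing to diverge from: your verification is the standard one. The one substantive point you raise is real and worth keeping: as literally written, Definition of $\subset$ includes the rank condition $\rho(\pi)=\rho(\pi')+1$, which makes $\subset$ a cover relation, and a cover relation on a graded poset with more than two levels cannot be transitive (your rank-gap-of-two computation is exactly the obstruction); note also that the paper's rank equation appears to have its sides reversed, since a refinement has \emph{more} blocks and hence \emph{smaller} $\rho$, though this does not affect any of your contradictions. Your resolution --- that the lemma is really asserting irreflexivity, antisymmetry, and transitivity of the underlying refinement relation of Definition \ref{defn:refpi}, whose transitive closure is the intended strict order --- is the only reading consistent with the paper's subsequent use of the lemma (``the relation $\subset$ is a strict partial order''). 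The three verifications themselves are complete: irreflexivity and antisymmetry follow from the strict drop in block count (or the rank equation), and transitivity follows from composing the index decompositions $J(\cdot)$ and $J'(\cdot)$ and checking that the composed sets $\bigsqcup_{j\in J'(k)}J(j)$ still partition $[l(\pi)]$ with at least one of size $\ge 2$. No gaps.
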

From Lemma \ref{lemma:spo}, the relation $\subset$ on $\mathtt{NCP}(n)$ is 
a strict partial order.
By a natural correspondence between a strict partial order and a non-strict partial 
order, we obtain a non-strict partial order whose relation is denoted by $\subseteq$.
In fact, if we take the reflexive closure of the strict partial order $\subset$,
that is, 
\begin{align*}
\pi\subseteq \pi' \text{ if } \pi\subset \pi' \text{ or } \pi=\pi',
\end{align*}
we obtain the non-strict partial order on $\mathtt{NCCP}(n)$.

Further, this poset $(\mathtt{NCP}(n),\subseteq)$ has a lattice structure. 
The following proposition is obvious from the definition of 
the poset $(\mathtt{NCP}(n),\subseteq)$.
\begin{prop}
The poset $(\mathtt{NCP}(n),\subseteq)$ is a bounded lattice.
The greatest element $\hat{1}$ corresponds to $12\ldots n$ and the least element $\hat{0}$ is $n/n-1/\ldots/2/1$.
\end{prop}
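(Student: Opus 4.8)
The plan is to verify directly that $(\mathtt{NCP}(n),\subseteq)$ is a bounded lattice by exhibiting the top and bottom elements and producing joins and meets, all in the language of non-crossing (canonical) partitions that the poset already carries from the classical theory. First I would observe that the reflexive closure $\subseteq$ of the strict order $\subset$ (justified by Lemma \ref{lemma:spo}) is, up to the canonical relabeling of blocks, exactly the refinement order on set partitions of $[n]$ restricted to those partitions whose block structure is non-crossing; this is the content of the Remark identifying $\mathtt{NCP}(n)$ with the classical set of non-crossing partitions. Under this identification the claimed Hasse-diagram data (cover $=$ merging two blocks, rank $=n-l$) matches the standard description, so the poset in question coincides set-theoretically and order-theoretically with the Kreweras poset.

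Next I would handle boundedness. The partition $12\ldots n$ is the single block $\{1,\dots,n\}$, which is trivially non-crossing and canonical; every $\pi\in\mathtt{NCP}(n)$ is a refinement of it, hence $\pi\subseteq 12\ldots n$, so $\hat 1=12\ldots n$. Dually $n/n{-}1/\cdots/2/1$ is the all-singletons partition, the unique minimal element under refinement, and it is canonical by the displayed condition $\min(\pi_l)<\cdots<\min(\pi_1)$ since each block is a singleton; thus $\hat 0=n/n{-}1/\cdots/1$. This gives the bounds claimed in the statement.

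For the lattice property I would argue that joins and meets exist. The join $\pi\vee\pi'$ is obtained as the finest non-crossing partition coarsening both: take the ordinary partition-lattice join (the transitive closure of the union of the two equivalence relations), and note that a join of two non-crossing partitions in the full partition lattice is automatically non-crossing — a standard fact, proved by checking that if two blocks of the join crossed, then already two blocks of $\pi$ or of $\pi'$ would cross, contradiction. Hence $\pi\vee\pi'$ lies in $\mathtt{NCP}(n)$ and is the least upper bound. For the meet one cannot simply intersect the equivalence relations (intersecting non-crossing partitions does stay non-crossing, actually, so the meet is the ordinary partition-lattice meet, which I would verify is non-crossing directly: a block of $\pi\wedge\pi'$ is an intersection $B\cap B'$ of blocks, and intersections of intervals-with-respect-to-the-circular-order stay non-crossing). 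Either way, both operations land in $\mathtt{NCP}(n)$, and since $\mathtt{NCP}(n)$ is finite with $\hat 0,\hat 1$, the existence of all pairwise joins and meets makes it a lattice.

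The main obstacle is the meet: unlike the join, it is not a priori obvious that the greatest lower bound with respect to $\subseteq$ is computed by the naive "intersect the partitions" recipe, because $\subseteq$ is only defined via covers that merge blocks, so I must check that the set-partition meet is indeed non-crossing and is the $\subseteq$-greatest common refinement, not merely a common refinement. I would dispatch this with the circular/interval characterization of non-crossing partitions (a partition is non-crossing iff some block is an interval on the circle and the rest is non-crossing on the complement), together with an induction on $n$, or alternatively cite that $\mathtt{NCP}(n)$ is well known to be a (meet-)sublattice of the partition lattice (Kreweras, Edelman). All remaining verifications — that $\vee,\wedge$ respect the order and the ranks, and that they are unique — are routine and follow from finiteness and the rank function $\rho$ of Proposition \ref{prop:rank}.
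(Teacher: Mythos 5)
The paper offers essentially no argument here --- it declares the proposition ``obvious from the definition'' --- so your proposal is supplying a proof where the paper gives none. Your treatment of boundedness and of the meet is correct: the common refinement of two non-crossing partitions is again non-crossing (if two blocks $B_1\cap B_1'$ and $B_2\cap B_2'$ of the refinement crossed, then already either the unprimed or the primed pair of blocks would cross), and it is the greatest lower bound.

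However, your join step contains a genuine error. It is \emph{not} true that the join of two non-crossing partitions in the full partition lattice $\Pi_n$ is automatically non-crossing. Take $n=4$, $\pi=4/2/13$ and $\pi'=3/24/1$: both lie in $\mathtt{NCP}(4)$, but their join in $\Pi_4$ is $24/13$, which is crossing (the paper's own remark after Definition~\ref{defn:refpi} notes exactly that $24/13$ is a crossing coarsening of $4/2/13$); the join in $\mathtt{NCP}(4)$ is $1234$. Your proposed verification --- ``if two blocks of the join crossed, then already two blocks of $\pi$ or of $\pi'$ would cross'' --- fails on this example, because the two crossing blocks $\{1,3\}$ and $\{2,4\}$ come one from each partition, so neither partition individually has a crossing. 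The correct standard fact is asymmetric: $\mathtt{NCP}(n)$ is a meet-subsemilattice of $\Pi_n$ but not a join-subsemilattice. The conclusion is still reachable by a different route: since $\mathtt{NCP}(n)$ is finite, contains the greatest element $12\ldots n$, and is closed under the meet you constructed, it is automatically a lattice (the join of $\pi$ and $\pi'$ is the meet of their set of common upper bounds, which is nonempty). You should also still carry out the small check you flagged yourself, namely that refinement order between non-crossing partitions agrees with the transitive closure of the cover relation $\subset$, i.e., that any two refinement-comparable elements of $\mathtt{NCP}(n)$ are joined by a saturated chain staying inside $\mathtt{NCP}(n)$.
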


\begin{remark}
Two remarks are in order.
\begin{enumerate}
\item The lattice $(\mathtt{NCP}(n),\subseteq)$ is the Kreweras lattice studied in \cite{Kre72}.
\item The Kreweras lattice is a graded poset. 
One can define a rank function $\rho:\mathtt{NCP}(n)\rightarrow \mathbb{N}$ of the poset 
as $n-l$ where $l$ is the number of blocks in a non-crossing canonical partition.
This rank function is the same as the one for $(\mathtt{NCCP}(n),\le)$.
\end{enumerate} 
\end{remark}

Recall that the set $\mathtt{NCCP}(n;312)$ of $312$-avoiding crossing partitions is 
a subset in $\mathtt{NCCP}(n)$.

\begin{lemma}
\label{lemma:lat312}
The poset $(\mathtt{NCCP}(n;312),\le)$ is a graded bounded lattice with the greatest element 
$\hat{1}=123\ldots n$ and the minimum element $\hat{0}=n/n-1/\ldots /1$.
\end{lemma}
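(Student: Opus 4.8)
The plan is to show that $(\mathtt{NCCP}(n;312),\le)$ inherits enough structure from the ambient lattice $\mathcal{L}_{NCCP}(n)$ and simultaneously reflects the already-established lattice structure of the Kreweras lattice via the bijection of Proposition \ref{prop:bij312NCP}. First I would establish that $\mathtt{NCCP}(n;312)$ is closed under the cover relation $\lessdot$ in the appropriate sense: if $\pi\in\mathtt{NCCP}(n;312)$ and $\pi\lessdot\pi'$ in $\mathcal{L}_{NCCP}(n)$, then $\pi'\in\mathtt{NCCP}(n;312)$. By Lemma \ref{lemma:312clbt} this is equivalent to showing that a rotation applied to a canonical labeled binary tree again yields a canonical labeled binary tree; one traces through the construction of $L(T')$ from $L(T)$ via the triplet $\nu$ in Section \ref{sec:covNCCP} and checks that the ``weakly right'' condition of Definition \ref{defn:canotree} is preserved, using that $\mathfrak{n}_a$ is chosen in $LS(\mathfrak{n})$ (or $LS(\mathfrak{n})\setminus\overline{p(\mathfrak{n}_c)}$) precisely so that no $312$-pattern is created. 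This gives that $(\mathtt{NCCP}(n;312),\le)$ is an induced subposet of $\mathcal{L}_{NCCP}(n)$ on which $\hat 1 = 123\ldots n$ and $\hat 0 = n/n-1/\ldots/1$ both lie (the latter being $n$ singleton blocks, which is vacuously $312$-avoiding, the former being the single increasing block).

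Next I would invoke the bijection $\Phi := \psi\circ\phi^{-1}\colon \mathtt{NCCP}(n;312)\to\mathtt{NCP}(n)$ of Proposition \ref{prop:bij312NCP} together with the rank function $\rho$, which by the final remark of the previous subsection is literally the same function $n-l$ on both sides. The key claim is that $\Phi$ is an isomorphism of posets, i.e.\ $\pi\le\pi''$ in $\mathtt{NCCP}(n;312)$ if and only if $\Phi(\pi)\subseteq\Phi(\pi'')$ in the Kreweras lattice. To prove this it suffices to match the cover relations: $\pi\lessdot\pi'$ in $\mathtt{NCCP}(n;312)$ iff $\Phi(\pi)\subset\Phi(\pi')$ in $(\mathtt{NCP}(n),\subset)$. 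Since a single rotation decreases the length by exactly one (Proposition \ref{prop:rank}) and a coarsement in the Kreweras sense likewise decreases the number of blocks by one, both sides are rank-graded by $\rho$, so it is enough to check that the rotation of $L(T)$ corresponds, under erasing left edges and sorting into canonical order (the map $\psi$), exactly to merging two blocks of a non-crossing canonical partition; this follows by unwinding the definition of $\psi$ (each right-edge chain is a block) and the rotation (which, in the $312$-avoiding case, reattaches a right-edge chain rooted at $\mathfrak{n}$ onto the right-extended sequence through $\mathfrak{n}_a$, i.e.\ concatenates two blocks into one). Since the Kreweras lattice is already known to be a graded bounded lattice, transporting its lattice operations along $\Phi$ exhibits $(\mathtt{NCCP}(n;312),\le)$ as a graded bounded lattice with the stated bounds.

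The main obstacle I anticipate is the careful bookkeeping in the first step: verifying that the rotation operation, defined in full generality in Section \ref{sec:covNCCP} with its two cases (weakly left vs.\ right) and the subtlety about $\mathfrak{n}_a$ possibly equalling $\mathfrak{n}_c$, genuinely preserves canonicity and does not introduce a $312$-pattern, and conversely that the restriction of $\lessdot$ to $\mathtt{NCCP}(n;312)$ hits \emph{all} Kreweras covers of $\Phi(\pi)$ (surjectivity onto covers, not just well-definedness). One must rule out that some rotation starting from a canonical tree produces a non-canonical one, and also that every pair-of-blocks merge on the Kreweras side is realized by an admissible triplet $\nu$; this is where Lemma \ref{lemma:lbtLR} — the encoding of a labeled binary tree by the sets $\{(\mathfrak{L}(i;\pi),\mathfrak{R}(i;\pi))\}$ — is the right tool, since canonicity and $312$-avoidance translate into monotonicity conditions on these sets that are transparently preserved. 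A secondary, more routine obstacle is confirming that the meet and join of $\mathcal{L}_{NCCP}(n)$ restrict to $\mathtt{NCCP}(n;312)$ rather than merely that an abstract lattice structure exists; but this also follows from the isomorphism with the Kreweras lattice, so I would present the proof primarily through $\Phi$ rather than by a direct meet/join computation.
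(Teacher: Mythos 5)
Your first step contains a false claim that the rest of the argument leans on. You assert that $\mathtt{NCCP}(n;312)$ is closed under the ambient cover relation, i.e.\ that if $\pi\in\mathtt{NCCP}(n;312)$ and $\pi\lessdot\pi'$ in $\mathcal{L}_{NCCP}(n)$ then $\pi'\in\mathtt{NCCP}(n;312)$, equivalently that a rotation applied to a canonical labeled binary tree yields a canonical tree. This fails already for $n=4$: in Figure \ref{fig:latNCCP} the minimum $4/3/2/1$ (which is $312$-avoiding, its tree being the left chain) is covered by $3/14/2$ and by $4/13/2$, both of which contain the pattern $312$ (e.g.\ the subsequence $3,1,2$ in $3142$). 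Closure fails downward as well: $24/13\lessdot 1234$ with $1234\in\mathtt{NCCP}(4;312)$ but $24/13\notin\mathtt{NCCP}(4;312)$. So the subposet is genuinely not closed under covers in either direction, and no amount of bookkeeping about the triplet $\nu$ will establish the preservation statement you set out to verify. What the paper actually proves is the much weaker existence statement that suffices: for every $\hat{0}\neq\pi'\in\mathtt{NCCP}(n;312)$ there is \emph{at least one} $\pi\in\mathtt{NCCP}(n;312)$ with $\pi\lessdot\pi'$, produced by one specific rotation --- detach the subtree rooted at a node labeled $l$ that hangs by a right edge and reattach it to the node labeled $l-1$ by a left edge; since $l-1$ is weakly left of $l$ in a canonical tree, this particular move preserves canonicity. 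Together with the restriction of the rank function $\rho$ and of the join/meet constructions, this yields the graded bounded lattice directly.

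The second half of your plan --- transporting the lattice structure of $(\mathtt{NCP}(n),\subseteq)$ along $\Phi=\psi\circ\phi^{-1}$ --- is essentially the content of the paper's later Proposition \ref{prop:312Kre} and Lemma \ref{lemma:lesub}, and it inverts the paper's logical order: the paper first proves the present lemma by the direct argument above and then uses it to establish the isomorphism with the Kreweras lattice. Your route could in principle be made self-contained, but you would have to prove both that covers in the induced subposet map to Kreweras covers and that every Kreweras cover is realized by an admissible rotation, and the first of these again requires identifying which rotations stay inside $\mathtt{NCCP}(n;312)$ --- precisely the point where your closure claim breaks down. As written, the proposal does not constitute a proof.
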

\begin{proof}
Since $\hat{0},\hat{1}\in\mathtt{NCCP}(n;312)$, and $\hat{0}$ and $\hat{1}$ are 
the greatest and the minimum element in $\mathtt{NCCP}(n)$,
they are the greatest and the minimum element in $\mathtt{NCCP}(n;312)$.

We first show that given $\hat{0}\neq\pi'\in\mathtt{NCCP}(n;312)$, 
there exits at least one $\pi\in\mathtt{NCCP}(n;312)$ which is covered by $\pi'$.
From Lemma \ref{lemma:312clbt}, the labeled binary tree $L(T')$ corresponding to 
$\pi'$ is canonical.
Let $RL(T')$ be the set of labels such that if $i\in RL(T')$ then the node labeled $i$
is connected to the parent node by a right edge in $L(T')$.
Since $\pi'\neq\hat{0}$, we have at least one element in $RL(T')$.
Fix $l\in RL(T')$.
Since $L(T')$ is canonical, the node labeled $l-1$ is weakly left to the node labeled 
$l$.
We denote by $L(T';l)$ the subtree in $L(T')$ whose root is the node labeled $l$.
We separate $L(T';l)$ from $L(T')$, where the remaining tree is denoted by $L(T')\setminus L(T';l)$.
We reconnect $L(T';l)$ to $L(T')\setminus L(T';l)$ at the node labeled $l-1$ by a left edge.
Then, the newly obtained tree $L(T)$ is again canonical.
By Lemma \ref{lemma:312clbt}, we have an element $\pi\in\mathtt{NCCP}(n;312)$ corresponding to 
$L(T)$.
By construction, $\pi'$ covers $\pi$. 

Further, if we restrict the rank function $\rho$ on $\mathtt{NCCP}(n)$ to $\mathtt{NCCP}(n;312)$,
the function $\rho$ is a rank function on $\mathtt{NCCP}(n;312)$.
Similarly, by restricting the construction of a join and a meet on $\mathtt{NCCP}(n)$ to
$\mathtt{NCCP}(n;312)$, we have a unique join and a meet on the poset $\mathtt{NCCP}(n;312)$.

From these observations, the poset $(\mathtt{NCCP}(n;312),\le)$ is the graded bounded lattice 
and has the maximum and minimum element.
\end{proof}

The following proposition shows the relation between the lattice $(\mathtt{NCCP}(n;312),\le)$
and the Kreweras lattice. The former lattice is characterized by the pattern avoidance and 
the latter is by being non-crossing and canonical. 
Thus, to avoid the pattern $312$ can be regarded to be non-crossing and canonical. 
This identification will also appear when we consider $k-1$-chains in the lattice 
and its corresponding labeled $k$-ary trees (see Section \ref{sec:int}).

\begin{prop}
\label{prop:312Kre}
The lattice $(\mathtt{NCCP}(n;312),\le)$ is isomorphic to 
Kreweras lattice $(\mathtt{NCP}(n),\subseteq)$.
\end{prop}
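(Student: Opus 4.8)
The plan is to produce an explicit, order-preserving bijection between $\mathtt{NCCP}(n;312)$ and $\mathtt{NCP}(n)$ and to check that it carries the cover relation $\lessdot$ to the cover relation $\subset$ (and hence $\le$ to $\subseteq$). The natural candidate is already essentially available: by Proposition \ref{prop:bij312NCP} the map $\Phi:=\psi\circ\phi^{-1}$ is a bijection from $\mathtt{NCCP}(n;312)$ to $\mathtt{NCP}(n)$, defined by passing to the canonical labeled binary tree $\phi^{-1}(\pi)$ (which is canonical precisely by Lemma \ref{lemma:312clbt}) and then reading off its right-edge chains as blocks via $\psi$. Since both lattices are graded bounded (Lemma \ref{lemma:lat312}, Corollary \ref{cor:NCCPgbl} and the remark on the Kreweras lattice) with the \emph{same} rank function $\rho(\pi)=n-\ell(\pi)$, and both have matching $\hat 0=n/n{-}1/\cdots/1$ and $\hat 1=12\cdots n$, it suffices to prove that $\Phi$ is a poset isomorphism on the level of covers: $\pi\lessdot\pi'$ in $\mathtt{NCCP}(n;312)$ if and only if $\Phi(\pi)\subset\Phi(\pi')$ in $\mathtt{NCP}(n)$.

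First I would describe the covers in $\mathtt{NCCP}(n;312)$ concretely in tree language, using the computation already done inside the proof of Lemma \ref{lemma:lat312}: if $\pi'\in\mathtt{NCCP}(n;312)$ and $l$ is a label whose node in the canonical tree $L(T')$ is attached to its parent by a right edge, then detaching the subtree $L(T';l)$ and reattaching it by a \emph{left} edge to the node labeled $l-1$ (which exists and is weakly left of $l$ by canonicity) yields the canonical tree of a partition $\pi$ with $\pi'\gtrdot\pi$; moreover these are exactly the lower covers of $\pi'$ in $\mathtt{NCCP}(n;312)$, since any cover changes exactly one right edge into a left edge and $\rho$ drops by one. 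Next I would translate this tree move through $\psi$: erasing left edges and reading right-chains as blocks, the above move merges the block of $\pi'$ containing $l$ with the block containing $l-1$ (all labels on the right-chain through $l$ get appended, in increasing order, after $l-1$'s chain). So $\Phi(\pi)$ is obtained from $\Phi(\pi')$ by coarsening two blocks into one, i.e. $\Phi(\pi')$ is a refinement of $\Phi(\pi)$ with one fewer block; since both lie in $\mathtt{NCP}(n)$ and the ranks differ by one, this is precisely $\Phi(\pi)\subset\Phi(\pi')$. Conversely, every cover $\sigma\subset\sigma'$ in the Kreweras lattice merges two blocks of $\sigma'$, and one checks that on canonical non-crossing partitions such a merge is realized by exactly one right-to-left edge move of the type above on the tree $\psi^{-1}(\sigma')$, so $\Phi$ is a cover-preserving bijection in both directions.

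Once covers match in both directions, $\Phi$ extends to an isomorphism of the order relations: $\pi\le\pi'$ iff there is a saturated chain $\pi=\pi_0\lessdot\cdots\lessdot\pi_k=\pi'$, and applying $\Phi$ termwise gives a saturated chain in the Kreweras lattice, and vice versa via $\Phi^{-1}$; compatibility of the rank function then upgrades this to a lattice isomorphism, since $\vee$ and $\wedge$ in a bounded graded lattice are determined by the order relation. The main obstacle I anticipate is the bookkeeping in the ``conversely'' direction: showing that an \emph{arbitrary} block-merging cover $\sigma\subset\sigma'$ in $\mathtt{NCP}(n)$ lifts, under $\psi^{-1}$, to a single admissible right-to-left edge move rather than something more complicated. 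This amounts to checking that in the canonical tree of a non-crossing canonical partition, two blocks that can legally be merged (so that the result is again non-crossing and canonical) are always situated so that the root of one block's right-chain sits directly above, via a left edge, the largest label of the other block's chain that is smaller than it — which is exactly the gluing rule used in the proof of Lemma \ref{lemma:psibij}. So the argument will lean heavily on that gluing description of $\psi^{-1}$, and the crux is verifying that \emph{legal coarsements in $\mathtt{NCP}(n)$ correspond bijectively to the single-edge tree moves}, with the $312$-avoidance / canonicity condition being exactly what prevents any other coarsening from landing back in $\mathtt{NCP}(n)$ (cf. the remark after the definition of the cover $\subset$).
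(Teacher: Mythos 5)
Your proposal is correct and follows essentially the same route as the paper: the bijection $\psi\circ\phi^{-1}$ from Proposition \ref{prop:bij312NCP}, combined with the compatibility of the cover relations $\lessdot$ and $\subset$ via the detach-and-reattach subtree move (the paper's Lemma \ref{lemma:lesub}, which itself leans on the construction in Lemma \ref{lemma:lat312}). Your explicit treatment of the converse direction (that every block-merging cover in $\mathtt{NCP}(n)$ lifts to a single edge move) is a point the paper's Lemma \ref{lemma:lesub} leaves implicit, so flagging it as the crux is well judged.
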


Before proceeding the proof of Proposition \ref{prop:312Kre}, we study 
the relation between the two cover relations $\lessdot$ and $\subset$.

\begin{lemma}
\label{lemma:lesub}
The cover relation $\le$ on $\mathtt{NCCP}(n;312)$ is 
compatible with the cover relation $\subseteq$ on $\mathtt{NCP}(n)$.
\end{lemma}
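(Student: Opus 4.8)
The statement compares the cover relation $\lessdot$ on $\mathtt{NCCP}(n;312)$ (defined via rotations of canonical labeled binary trees) with the cover relation $\subset$ on $\mathtt{NCP}(n)$ (defined via refinement of blocks, with the rank constraint). The natural bridge is the bijection $\psi\circ\phi^{-1}:\mathtt{NCCP}(n;312)\to\mathtt{NCP}(n)$ from Proposition~\ref{prop:bij312NCP}, together with the bijection $\psi$ between canonical labeled trees and $\mathtt{NCP}(n)$ from Lemma~\ref{lemma:psibij}. So the plan is to show: if $\pi\lessdot\pi'$ in $\mathtt{NCCP}(n;312)$, then the images satisfy $\psi\phi^{-1}(\pi)\subset\psi\phi^{-1}(\pi')$ in $\mathtt{NCP}(n)$, and conversely every cover $\sigma\subset\sigma'$ in $\mathtt{NCP}(n)$ pulls back to a cover in $\mathtt{NCCP}(n;312)$.

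\medskip
First I would unwind what a rotation does on a \emph{canonical} labeled binary tree. On such trees the admissible data $\nu=(\mathfrak n,\mathfrak n_a,\mathfrak m(S))$ is very restricted: since the tree is canonical, for a node $\mathfrak n$ attached by a left edge the relevant $\mathfrak n_a$ is forced (as illustrated in the running example, where taking $\mathfrak n$ attached by a right edge gives a forced $\mathfrak n_a$, and in the proof of Lemma~\ref{lemma:lat312} the cover is produced precisely by detaching the subtree rooted at the node labeled $l$ and reattaching it by a left edge at the node labeled $l-1$). Concretely, I expect that a rotation within $\mathtt{NCCP}(n;312)$ amounts, on the tree side, to moving a single subtree from being attached by a left edge to being attached by a right edge — equivalently, on the partition side under $\psi$, merging exactly two blocks $\pi_i,\pi_j$ into $\pi_i\cup\pi_j$. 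The rank function $\rho(\pi)=n-l$ from Proposition~\ref{prop:rank} changes by $+1$ under a cover in $\mathcal L_{NCCP}(n)$, so the image partition has exactly one fewer block; since $\psi$ sends right-edge chains to blocks, the two blocks whose right-chains get concatenated are precisely the ones that merge. This matches the definition of $\subset$ (refinement with $\rho(\pi)=\rho(\pi')+1$), so the forward direction follows once one checks that the merged partition is again non-crossing and canonical — which is automatic since $\psi$ always outputs a canonical non-crossing partition by Lemma~\ref{lemma:psiL}, and the image lies in $\mathtt{NCP}(n)$ by Proposition~\ref{prop:bij312NCP}.

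\medskip
For the converse, given $\sigma\subset\sigma'$ in $\mathtt{NCP}(n)$, the refinement plus rank condition forces $\sigma'$ to be obtained from $\sigma$ by merging exactly two blocks into one. I would translate this back through $\psi^{-1}$ to the canonical tree picture: the gluing procedure in the proof of Lemma~\ref{lemma:psibij} shows how blocks correspond to right-chains glued by left edges at specified nodes $l_i$, so merging two blocks corresponds to replacing one such left edge by concatenation of two right-chains — which is exactly an inverse rotation of the type allowed in $\mathtt{NCCP}(n;312)$. Then Lemma~\ref{lemma:312clbt} guarantees the resulting tree, still canonical, corresponds to an element of $\mathtt{NCCP}(n;312)$, and this element covers the preimage of $\sigma$.

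\medskip
The main obstacle I anticipate is bookkeeping: pinning down \emph{exactly which} rotations (i.e.\ which triplets $\nu$) are available when the labeled tree is canonical, and verifying that these are in bijective correspondence with single-pair block-merges that keep the partition non-crossing and canonical. One must rule out rotations that would either destroy canonicity (hence leave $\mathtt{NCCP}(n;312)$) or merge more than two blocks (hence violate the rank-one condition); the $312$-avoidance, via Lemma~\ref{lemma:312clbt}, is what makes only the ``benign'' rotations survive. Once this dictionary between rotations-on-canonical-trees and block-merges is established, compatibility of $\lessdot$ with $\subset$ is immediate, and this is precisely the input needed for the isomorphism in Proposition~\ref{prop:312Kre}.
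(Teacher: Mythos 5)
Your proposal follows essentially the same route as the paper: both describe a cover in $\mathtt{NCCP}(n;312)$ as the detach-and-reattach of a subtree on the canonical labeled tree (exactly the move used in the proof of Lemma~\ref{lemma:lat312}), apply $\psi$ to see that this concatenates two right-edge chains and hence merges exactly two blocks (a rank-one refinement), and invoke Lemma~\ref{lemma:psiL} to guarantee the image lies in $\mathtt{NCP}(n)$. Your extra attention to the converse direction and to pinning down which rotations preserve canonicity is more explicit than the paper's brief argument, but the underlying idea is identical.
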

\begin{proof}
Let $\pi,\pi'\in\mathtt{NCCP}(n;312)$ such that $\pi'$ covers $\pi$. 
As in the proof of Lemma \ref{lemma:lat312},
when we obtain $\pi'$ from $\pi$ by the rotation on the labeled tree $L(T)$,
we separate the subtree $L(T;l)$ from the whole labeled tree.
Then, we reconnect $L(T;l)$ to the labeled tree $L(T)\setminus L(T;l)$ by the rotation, 
and obtain the new tree $L(T')$.
It is easy to see that $\psi(L(T))$ is the refinement of $\psi(L(T'))$.
Note that from Lemma \ref{lemma:psiL}, $\psi(L(T'))$ is in $\mathtt{NCP}(n)$.

Therefore, the cover relation $\le$ is compatible with $\subseteq$, which 
completes the proof.
\end{proof}

\begin{proof}[Proof of Proposition \ref{prop:312Kre}]
We have a bijection $\psi\circ\phi^{-1}$ between $\mathtt{NCCP}(n;312)$ and $\mathtt{NCP}(n)$
from Proposition \ref{prop:bij312NCP}.
From Lemma \ref{lemma:lesub}, the cover relations $\le$ and $\subseteq$ are compatible 
with each other.
Thus, we have an isomorphism between $(\mathtt{NCCP}(n;312),\le)$ and $(\mathtt{NCP}(n),\subseteq)$, 
which completes the proof.
\end{proof}

By restricting ourselves to $\mathtt{NCCP}(n;312)$, we have the sublattice 
$(\mathtt{NCCP}(n;312),\le)$ in the poset $(\mathtt{NCCP}(n),\le)$.
The following proposition is a direct consequence of the observation above.
\begin{prop}
The poset $(\mathtt{NCP}(n),\subseteq)$ is a sublattice in the lattice $(\mathtt{NCCP}(n),\le)$.
\end{prop}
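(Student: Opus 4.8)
The statement to prove is that $(\mathtt{NCP}(n),\subseteq)$ is a sublattice of $(\mathtt{NCCP}(n),\le)$. The plan is to leverage the two results just established: the isomorphism $(\mathtt{NCCP}(n;312),\le)\cong(\mathtt{NCP}(n),\subseteq)$ from Proposition \ref{prop:312Kre}, and the fact from Lemma \ref{lemma:lat312} that $(\mathtt{NCCP}(n;312),\le)$ is already a (graded bounded) lattice whose partial order is the restriction of $\le$ on $\mathtt{NCCP}(n)$. So the substance of the claim reduces to showing that $\mathtt{NCCP}(n;312)$ is closed under the join $\vee$ and meet $\wedge$ of the ambient lattice $\mathcal{L}_{NCCP}(n)$, i.e. that these operations, when restricted to $312$-avoiding elements, produce $312$-avoiding elements — equivalently, that the join/meet computed inside $\mathtt{NCCP}(n;312)$ agrees with the one computed inside $\mathtt{NCCP}(n)$.

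First I would recall, via Lemma \ref{lemma:312clbt}, that $\pi\in\mathtt{NCCP}(n;312)$ iff the labeled binary tree $\phi^{-1}(\pi)$ is canonical, so ``closed under join/meet'' becomes a statement about canonical labeled trees. Next I would examine the recursive construction of $\pi^\vee=\pi_1\vee\pi_2$ in Definition \ref{defn:joinmeet}: it builds $\mathfrak{R}_\cup(p)=\mathfrak{R}(p;\pi_1)\cup\mathfrak{R}(p;\pi_2)$ and then produces, for each $i$, the partition with $\mathfrak{R}(p;\pi_i')$ the minimal set containing $\mathfrak{R}_\cup(p)$. The key point to verify is that if $\pi_1,\pi_2$ are both $312$-avoiding, then the sets $\{(\mathfrak{L}(i;\pi_1'),\mathfrak{R}(i;\pi_1'))\}$ assembled by this procedure (using Lemma \ref{lemma:lbtLR}) again correspond to a canonical tree. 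Intuitively, in a canonical tree the set $\mathfrak{R}(i;\pi)$ of elements to the right of $i$ that exceed $i$ is an ``up-set''-like initial segment along right-paths, and taking unions preserves this structure; I would make this precise by characterizing which collections $\{(\mathfrak{L}(i),\mathfrak{R}(i))\}$ arise from canonical trees, then checking that the characterization is closed under the union-and-minimize operation. The meet case is dual, using $\mathfrak{L}$ and inverse rotations.

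The main obstacle I anticipate is precisely this closure verification: the join/meet in Definition \ref{defn:joinmeet} is defined recursively via rotations, and one must check that no step of the recursion escapes $\mathtt{NCCP}(n;312)$. A cleaner route, which I would pursue as the primary strategy, is to avoid re-deriving closure from scratch and instead argue as follows. By Lemma \ref{lemma:lat312}, $(\mathtt{NCCP}(n;312),\le)$ is a lattice with join $\vee'$ and meet $\wedge'$ computed by restricting the very same construction of Definition \ref{defn:joinmeet} (the proof of Lemma \ref{lemma:lat312} explicitly says ``by restricting the construction of a join and a meet on $\mathtt{NCCP}(n)$ to $\mathtt{NCCP}(n;312)$, we have a unique join and a meet''). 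Since the construction is literally the same, $\pi_1\vee'\pi_2=\pi_1\vee\pi_2$ and $\pi_1\wedge'\pi_2=\pi_1\wedge\pi_2$ for $\pi_1,\pi_2\in\mathtt{NCCP}(n;312)$; in particular these ambient join/meet values lie in $\mathtt{NCCP}(n;312)$. Hence $\mathtt{NCCP}(n;312)$ is a sublattice of $\mathcal{L}_{NCCP}(n)$, and transporting along the isomorphism $\psi\circ\phi^{-1}$ of Proposition \ref{prop:312Kre}, $(\mathtt{NCP}(n),\subseteq)$ is a sublattice of $(\mathtt{NCCP}(n),\le)$. The only genuinely nontrivial sub-point to nail down is that the restricted join from Lemma \ref{lemma:lat312} really is obtained by applying Definition \ref{defn:joinmeet} without modification (rather than by some ad hoc construction internal to $\mathtt{NCCP}(n;312)$), and that the intermediate partitions $\pi_i'$ produced during its recursion stay $312$-avoiding; I would dispatch this by the canonical-tree characterization sketched above, noting that a rotation that only lengthens right-paths (as happens when enlarging $\mathfrak{R}$) sends canonical trees to canonical trees.
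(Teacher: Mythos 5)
Your proposal is correct and follows essentially the same route as the paper, which derives the proposition directly from Lemma \ref{lemma:lat312} (the restricted join and meet on $\mathtt{NCCP}(n;312)$ are obtained from the ambient construction of Definition \ref{defn:joinmeet}) together with the isomorphism of Proposition \ref{prop:312Kre}. You are in fact more explicit than the paper about the one genuinely nontrivial point — that the ambient join/meet of two $312$-avoiding elements stays $312$-avoiding, i.e.\ that the recursion never leaves the class of canonical trees — which the paper asserts without detailed verification.
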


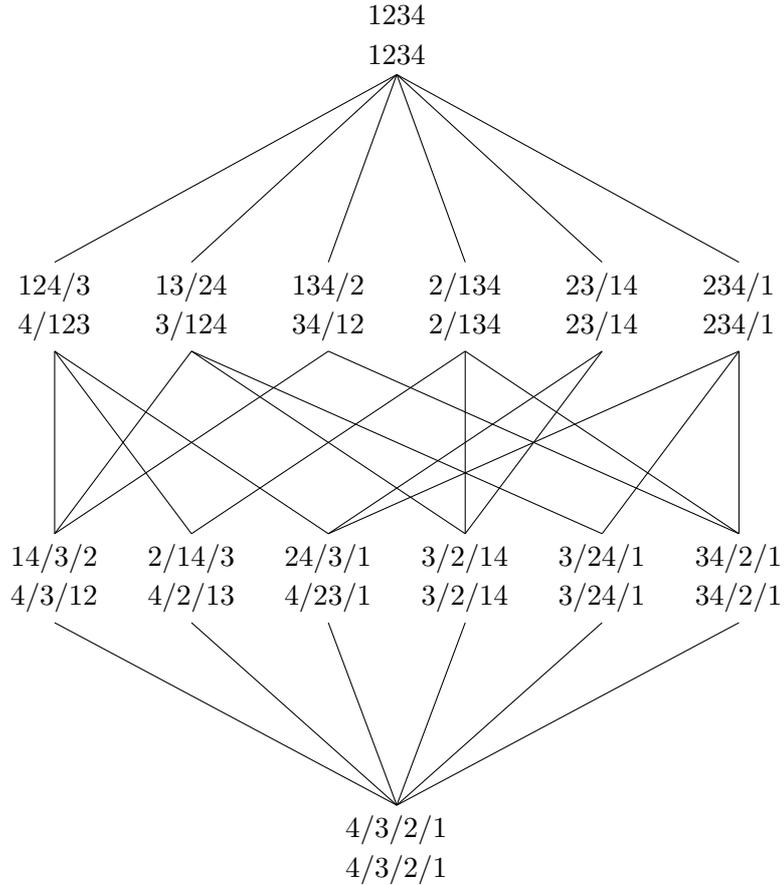
\begin{figure}[ht]	
\begin{tikzpicture}[scale=0.9]
\node (max) at (0,6) {$\displaystyle\genfrac{}{}{0pt}{}{1234}{1234}$};
\node (11) at (-3,2) {$\displaystyle\genfrac{}{}{0pt}{}{13/24}{3/124}$};
\node (12) at (-1,2) {$\displaystyle\genfrac{}{}{0pt}{}{134/2}{34/12}$};
\node (13) at (1,2) {$\displaystyle\genfrac{}{}{0pt}{}{2/134}{2/134}$};
\node (14) at (3,2) {$\displaystyle\genfrac{}{}{0pt}{}{23/14}{23/14}$};
\node (15) at (5,2) {$\displaystyle\genfrac{}{}{0pt}{}{234/1}{234/1}$};
\node (16) at (-5,2) {$\displaystyle\genfrac{}{}{0pt}{}{124/3}{4/123}$};
\node (21) at (-5,-2) {$\displaystyle\genfrac{}{}{0pt}{}{14/3/2}{4/3/12}$};
\node (22) at (-3,-2) {$\displaystyle\genfrac{}{}{0pt}{}{2/14/3}{4/2/13}$};
\node (23) at (-1,-2) {$\displaystyle\genfrac{}{}{0pt}{}{24/3/1}{4/23/1}$};
\node (24) at (1,-2) {$\displaystyle\genfrac{}{}{0pt}{}{3/2/14}{3/2/14}$};
\node (25) at (3,-2) {$\displaystyle\genfrac{}{}{0pt}{}{3/24/1}{3/24/1}$};
\node (26) at (5,-2) {$\displaystyle\genfrac{}{}{0pt}{}{34/2/1}{34/2/1}$};
\node (min) at (0,-6) {$\displaystyle\genfrac{}{}{0pt}{}{4/3/2/1}{4/3/2/1}$}; 
\foreach \a in {11,12,13,14,15,16}
\draw (max.south)--(\a.north);
\foreach \a in {21,22,23,24,25,26}
\draw (min.north)--(\a.south);
\foreach \a in {21,24,25}
\draw (11.south)--(\a.north);
\foreach \a in {21,26}
\draw (12.south)--(\a.north);
\foreach \a in {22,24,26}
\draw (13.south)--(\a.north);
\foreach \a in {23,24}
\draw (14.south)--(\a.north);
\foreach \a in {23,25,26}
\draw(15.south)--(\a.north);
\foreach \a in {21,22,23}
\draw(16.south)--(\a.north);
\end{tikzpicture}
\caption{Kreweras lattice. The top row in each node is an element in $\mathtt{NCCP}(4;312)$ and the bottom row
is the corresponding element in $\mathtt{NCP}(4)$.}
\label{fig:Hasse1}
\end{figure}

\begin{defn}
We denote by $\mathcal{L}_{\mathrm{Kre}}(n)$ the sublattice $(\mathtt{NCP}(n),\subseteq)$, or equivalently
$(\mathtt{NCCP}(n;312),\le)$ in $\mathcal{L}_{NCCP}$.
\end{defn}

Figure \ref{fig:Hasse1} shows the two lattices $(\mathtt{NCP}(4),\subseteq)$ and $(\mathtt{NCCP}(4;312),\le)$
in one Hasse diagram.
Note that some partitions in the two lattices coincide with each other, but not in general.

The set $\mathtt{NCP}(n)$ can be characterized by $\mathtt{NCCP}(n;312)$.
We have another characterization of $\mathtt{NCP}(n)$ by $132$-avoiding permutations.
Recall that a noncommutative crossing partition is bijective to 
a permutation. Let $\sigma$ be a $312$-avoiding permutation.
Then, $\sigma^{-1}$ is a $132$-avoiding permutation.
Since we have a correspondence between $\pi\in\mathtt{NCP}(n)$ and 
an element $\sigma\in\mathtt{NCCP}(n;312)$, it is natural 
to study a correspondence between $\pi$ and $\sigma^{-1}$.

\begin{prop}
\label{prop:NCP132}
The set $\mathtt{NCP}(n)$ is equivalent to the set $\mathtt{NCCP}(n;132)$ 
of $132$-avoiding crossing partitions.
\end{prop}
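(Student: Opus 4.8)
The plan is to pass through the bijection $\beta\colon\mathtt{NCCP}(n)\to\mathcal{S}_{n}$ recording the one-line word of a partition (the concatenation $\pi_{1}\pi_{2}\cdots\pi_{l}$), the map already used to prove $|\mathtt{NCCP}(n)|=n!$. The first observation is that conditions (2) and (3) in the definition of $\mathtt{NCCP}(n)$ — each block increasing, and $\max(\pi_{i})>\min(\pi_{i+1})$ — make pattern occurrences in the word ``visible'' at the level of blocks: if $w_{a}<w_{c}<w_{b}$ at positions $a<b<c$, then, since values increase inside a block, $a$ and $b$ lie in different blocks and so do $b$ and $c$, with $a$'s block no later than $b$'s and $b$'s block strictly earlier than $c$'s. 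Unwinding the definition of $\mathtt{NCCP}(n;312)$ this way shows that $\beta$ restricts to a bijection between $\mathtt{NCCP}(n;312)$ and the $312$-avoiding permutations, and, by the analogous definition of $132$-avoidance for crossing partitions, between $\mathtt{NCCP}(n;132)$ and the $132$-avoiding permutations. It therefore suffices to prove that $\beta(\mathtt{NCP}(n))$ is exactly the set of $132$-avoiding permutations, i.e.\ that $\pi\in\mathtt{NCCP}(n)$ is non-crossing and canonical if and only if $\beta(\pi)$ avoids the pattern $132$.

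For the ``only if'' direction I would argue by contraposition. Suppose $w=\beta(\pi)$ has a $132$ occurrence $w_{a}<w_{c}<w_{b}$ at positions $a<b<c$. By the observation above, $w_{b}\in\pi_{i}$ and $w_{c}\in\pi_{j}$ with $i<j$, while $w_{a}$ lies in $\pi_{i}$ or in an earlier block. Assuming $\pi$ canonical, the block minima strictly decrease along the sequence, so $\min(\pi_{j})<\min(\pi_{i})\le w_{a}$; one then checks that the four distinct values $\min(\pi_{j})<w_{a}<w_{c}<w_{b}$ when $w_{a}\in\pi_{i}$, respectively $\min(\pi_{j})<\min(\pi_{i})<w_{c}<w_{b}$ when $w_{a}$ lies in an earlier block, sit alternately in $\pi_{j}$ and $\pi_{i}$, which is precisely a crossing. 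Hence $\pi\notin\mathtt{NCP}(n)$.

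For the ``if'' direction I would again argue by contraposition, treating the two ways $\pi$ can fail to lie in $\mathtt{NCP}(n)$. If $\pi$ is not canonical, pick consecutive blocks with $\min(\pi_{i})<\min(\pi_{i+1})$; then $|\pi_{i}|\ge 2$ (otherwise $\min(\pi_{i})=\max(\pi_{i})>\min(\pi_{i+1})$, a contradiction), and condition (3) gives $\min(\pi_{i})<\min(\pi_{i+1})<\max(\pi_{i})$, so $w$ reads these three values in the order small, large, medium — a $132$ occurrence. If $\pi$ is crossing, take $a<b<c<d$ with $a,c\in\pi_{i}$ and $b,d\in\pi_{j}$, $i\neq j$; inspecting the two cases $i<j$ and $j<i$ shows $w$ contains the subsequence $a,c,b$ (block $\pi_{i}$ first) or $b,d,c$ (block $\pi_{j}$ first), each a $132$ occurrence. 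Combining the two directions gives $\beta(\mathtt{NCP}(n))=\beta(\mathtt{NCCP}(n;132))$, hence $\mathtt{NCP}(n)=\mathtt{NCCP}(n;132)$ as subsets of $\mathtt{NCCP}(n)$.

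The step I expect to be the crux is the ``only if'' direction: one has to locate the ``hidden'' small values — the block minima $\min(\pi_{i})$ and $\min(\pi_{j})$ — and verify that together with $w_{b}$ and $w_{c}$ they genuinely form a crossing quadruple, which forces a careful use of canonicity to order the block minima and a split on whether position $a$ lies in $b$'s block or in an earlier one. The remaining verifications (the block-to-word dictionary for patterns, and the not-canonical and crossing cases of the converse) are routine bookkeeping once the position orderings are spelled out. The block-level argument above has the advantage of pinning down the two sets outright rather than merely matching their cardinalities.
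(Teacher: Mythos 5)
Your proof is correct and follows essentially the same route as the paper's: both inclusions are proved by contraposition, turning a $132$ occurrence into a crossing quadruple via the block minima and canonicity, and conversely turning non-canonicity or a crossing into a $132$ occurrence in the one-line word. If anything, your forward direction is slightly more careful than the paper's (you anchor the crossing on the block of the large value $w_b$ and split on where $w_a$ sits, whereas the paper asserts without full justification that the block of the smallest value contains an element exceeding $w_b$); the only blemish is the throwaway claim that positions $a$ and $b$ must lie in different blocks, which is false but unused, since your case analysis explicitly allows $w_a\in\pi_i$.
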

\begin{proof}
We first show that $\mathtt{NCCP}(n;132)\supseteq\mathtt{NCP}(n)$.
Suppose $\pi:=(\pi_1,\ldots,\pi_{l})\in\mathtt{NCP}(n)$ and 
$\pi$ has the pattern $132$.
Given a triplet $(w_1,w_2,w_3)$ with $w_1<w_2<w_3$, 
we assume that $w_1,w_2$ and $w_3$ form a pattern $132$ and 
$w_1\in\pi_{i}$ and $w_2\in\pi_{j}$ with $i<j$. 
Since $\pi$ is a canonical partition, $\pi_{j}$ contains 
an integer $w_0$ where $w_0<w_1$.
Since each $\pi_{p}$ is increasing from left to right,
$\pi_{i}$ contains $w_3$ or an integer $w_4$ which is greater than $w_3$.
From these observations, it is easy to see that $\pi$ is a crossing partition.
However, $\mathtt{NCP}(n)$ is the set of non-crossing partitions by definition,
and we have a contradiction. Thus, $\pi$ avoids the pattern $132$, which 
leads to $\mathtt{NCCP}(n;132)\supseteq\mathtt{NCP}(n)$.

Conversely, take a partition $\lambda:=(\lambda_1,\ldots,\lambda_l)\in\mathtt{NCCP}(n;132)$.
We first assume that $\lambda$ is not canonical.
There exists a pair of $(i,j)$ such that $\min(\lambda_{i})<\min(\lambda_{j})$
with $i<j$.
Since $\lambda_{i}$ and $\lambda_{j}$ are two different blocks, 
one can assume that $\lambda_{i}$ contains an integer $p$ which is greater than 
$\min(\lambda_{j})$ without loss of generality.
It is obvious that $\lambda$ contains the pattern $132$.
Since this is a contradiction, $\lambda$ is canonical.
Secondly, we assume that $\lambda$ has a crossing.
By a similar argument as above, $\lambda$ contains the pattern $132$.
Thus, $\lambda$ is non-crossing.
From these observations, $\lambda$ is a non-crossing canonical partition.
We have $\mathtt{NCCP}(n;132)\subseteq\mathtt{NCP}(n)$. 

As a consequence, we have $\mathtt{NCCP}(n;132)=\mathtt{NCP}(n)$.
\end{proof}

\begin{remark}
The Kreweras lattice is self-dual for each $n\ge1$ (see Theorem 1.1 in \cite{SimUll91}).
However, the lattice $\mathcal{L}_{NCCP}(n)$ is not self-dual in general.
The lattice $\mathcal{L}_{NCCP}(n)$ is self-dual up to $n=3$. 
For $n=4$, it is no longer self-dual (see Figure \ref{fig:latNCCP}).
The isomorphism between $\mathtt{NCP}(n)$ and $\mathtt{NCCP}(n;312)$ 
implies that the self-dual lattice can be a sublattice of $\mathcal{L}_{NCCP}(n)$.
An element in $\mathtt{NCCP}(n)\setminus\mathtt{NCCP}(n;312)$ is not canonical
or crossing. The two properties, being canonical and non-crossing, make the Kreweras 
lattice self-dual.
\end{remark}

\subsection{Maps on the Kreweras lattice}
We study two maps on the Kreweras lattice: an involution 
introduced in \cite{SimUll91} and Kreweras complement map \cite{Kre72}.
The latter is not an involution on the lattice, but this map gives a cyclic 
shift of an element in the lattice.
The former is constructed by modifying the complement map.
We interpret these two maps on the lattice $\mathcal{L}_{NCCP}(n)$.

\subsubsection{Involution on the Kreweras lattice}
\label{sec:Kreinv}

Given $\pi\in\mathtt{NCCP}(n)$, 
we define a dual $\overline{\pi}$ of $\pi$ where 
the bar operation is given by $\overline{i}=n+1-i$ for $i\in[n]$.
For example, $\overline{15/46/23}=6/23/15/4$.

The next lemma is a direct consequence of the bar operation.
\begin{lemma}
\label{lemma:312132}
Suppose $\pi\in\mathtt{NCCP}(n;312)$.
Then, $\overline{\pi}\in\mathtt{NCCP}(n;132)$.
\end{lemma}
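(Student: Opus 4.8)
The plan is to show directly that the bar operation $i \mapsto n+1-i$ converts a $312$-avoiding partition into a $132$-avoiding one, by tracking what happens to a forbidden pattern under the involution. The key observation is that the bar operation, applied letterwise to the one-line notation of $\pi$, reverses all the strict inequalities among the values while preserving the left-to-right positions. So if $\overline{\pi}$ contained an occurrence of the pattern $132$ — three positions, read left to right, carrying values $a, c, b$ with $a < b < c$ — then in $\pi$ the same three positions carry the values $\overline{a}, \overline{c}, \overline{b}$, and since the bar operation is order-reversing these satisfy $\overline{c} < \overline{b} < \overline{a}$. Thus in $\pi$, reading those three positions left to right, we see the value $\overline{a}$ (the largest), then $\overline{c}$ (the smallest), then $\overline{b}$ (the middle): this is precisely an occurrence of the pattern $312$.

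The first thing I would do is make precise the statement that pattern occurrences in the one-line notation correspond to the pattern occurrences in the partition as defined in Definition~2.9 (the $312$-avoidance condition on $\pi_{j,q} < \pi_{k,r} < \pi_{i,p}$), and similarly for the $132$ condition used in Proposition~2.30. Since Proposition~2.10 gives the bijection between $\mathtt{NCCP}(n)$ and permutations by concatenation, and the pattern conditions are phrased in terms of which values appear in which blocks and in what left-to-right order, it is straightforward that ``$\pi$ avoids $312$'' in the sense of Definition~2.9 is equivalent to ``the one-line word of $\pi$ avoids the classical pattern $312$,'' and likewise ``$\pi$ avoids $132$'' (the notion used in Proposition~2.30, which defines $\mathtt{NCCP}(n;132)$) corresponds to the classical pattern $132$ in the word. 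I would state this equivalence as a short preliminary remark.

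Next I would run the contrapositive argument sketched above: assume $\overline{\pi} \notin \mathtt{NCCP}(n;132)$, extract a $132$-occurrence in its one-line word at positions $p_1 < p_2 < p_3$ with values $x < z$, $x < y < z$ in the order $x, z, y$, apply the bar operation to see that $\pi$'s word has at the same positions the values $\overline{x}, \overline{z}, \overline{y}$ with $\overline{z} < \overline{y} < \overline{x}$, i.e.\ a $312$-occurrence, contradicting $\pi \in \mathtt{NCCP}(n;312)$. Since the bar operation is an involution on $[n]$, and positions are untouched, no subtlety arises beyond the order-reversal.

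I do not expect a serious obstacle here; this lemma is essentially bookkeeping. The only mildly delicate point is confirming that the reversal map $i \mapsto n+1-i$ applied to the \emph{values} of the one-line word really does correspond to the ``dual'' $\overline{\pi}$ as defined via blocks (the example $\overline{15/46/23} = 6/23/15/4$ shows that blocks get internally reversed in order and the block order itself is reshuffled), but since the one-line word of $\overline{\pi}$ is obtained from that of $\pi$ simply by replacing each letter $i$ with $n+1-i$ — the ``$/$'' markers being re-inserted automatically at descents, which become ascents and vice versa — this is immediate once phrased at the level of words. Hence the main step is just to set up the word-level reformulation cleanly and then the pattern-swap $312 \leftrightarrow 132$ under value-reversal finishes it.
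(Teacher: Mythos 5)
Your argument is correct and is exactly the reasoning the paper leaves implicit: the paper offers no proof, stating only that the lemma is ``a direct consequence of the bar operation,'' and your word-level complementation argument (a $132$-occurrence in $\overline{\pi}$ pulls back to a $312$-occurrence in $\pi$ since $i\mapsto n+1-i$ reverses values while fixing positions) is the standard way to make that precise. Your preliminary check that the block-level $312$-condition of Definition~2.9 coincides with the classical word pattern, and that $\overline{\pi}$'s one-line word is the letterwise complement of $\pi$'s, covers the only points that need verification.
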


From Proposition \ref{prop:NCP132}, we have $\mathtt{NCP}(n)=\mathtt{NCCP}(n;132)$.
Further, Lemma \ref{lemma:312132} implies that the bar operation gives 
a bijection between $\mathtt{NCCP}(n;312)$ and $\mathtt{NCCP}(n;132)$.
Recall that the composition $\phi\circ\psi^{-1}$ of the bijections 
$\psi^{-1}$ and $\phi$ gives a bijection between 
$\mathtt{NCP}(n)$ and $\mathtt{NCCP}(n;312)$.
Schematically, we have 
\begin{align}
\label{eq:312to312}
\mathtt{NCCP}(n;312)\xrightarrow{\text{bar}}
\mathtt{NCCP}(n;132)=\mathtt{NCP}(n)
\xrightarrow{\phi\circ\psi^{-1}}
\mathtt{NCCP}(n;312).
\end{align}

\begin{defn}
We denote by $\alpha:\mathtt{NCCP}(n;312)\rightarrow\mathtt{NCCP}(n;312)$ 
the map obtained by the composition of $\phi\circ\psi^{-1}$ and the bar operation 
(see Eq. (\ref{eq:312to312})).
\end{defn}

To characterize the map $\alpha$, we introduce another 
map $\alpha':\mathtt{NCP}(n)\rightarrow\mathtt{NCP}(n)$.
This map $\alpha'$ is studied in \cite{SimUll91} as an involution 
on $\mathtt{NCP}(n)$.

We consider a circle $C$ with $n$ points. 
The points are enumerated by $1,2\ldots,n$ clock-wise on $C$.
A circular presentation of $\pi:=(\pi_1,\ldots,\pi_{l})\in\mathtt{NCP}(n)$ on $C$ 
is given by connecting integers in each $\pi_{i}$.
If the cardinality of $\pi_{i}$ is one, we put a loop on the point.
We call a line connecting two labels a diagonal of $C$.

We append new $n$ points on $C$ by dividing the interval between two points labeled 
$i$ and $i+1$ for $1\le i\le n-1$, or $n$ and $1$.
We put a label $1'$ on the point between $n$ and $1$, and 
put label $i'$ on the $n-1$ remaining new points counterclockwise.
	
\begin{figure}[ht]
\begin{tikzpicture}[scale=0.8]
\draw circle(3cm);
\foreach \a in {0,36,72,108,...,360}
\filldraw [black] (\a:3cm)circle(1.5pt);
\draw (108:3cm) node[anchor=south] {$1'$};
\draw (180:3cm) node[anchor=east]{$2'$};
\draw (252:3cm) node[anchor=north east]{$3'$};
\draw (324:3cm) node[anchor=north west]{$4'$};
\draw (36:3cm) node[anchor=south west]{$5'$};
\draw (72:3cm) node[anchor=south west]{$1$};
\draw (0:3cm) node[anchor=west]{$2$};
\draw (-72:3cm)node[anchor=north west]{$3$};
\draw (-144:3cm)node[anchor=north east]{$4$};
\draw (-216:3cm)node[anchor=south east]{$5$};
\draw(72:3cm)to[bend right=30](0:3cm)to[bend right=15](-144:3cm)to[bend right=15](72:3cm);
\draw (-216:3cm) to [bend left=90] (-216:2.3cm) to [bend left=90] (-216:3cm);
\draw (-72:3cm) to [bend left=90] (-72:2.3cm) to [bend left=90] (-72:3cm);
\draw[dashed] (108:3cm) to[bend left=40] (180:3cm);
\draw[dashed] (252:3cm)to[bend left=40](324:3cm);
\draw[dashed] (36:3cm) to [bend left=90] (36:2.3cm)to[bend left=90](36:3cm);
\end{tikzpicture}
\caption{The map $\alpha'$. The partition $5/3/124$ is mapped to $5/34/12$.}
\label{fig:alphaprime}
\end{figure}

Since $\pi$ is non-crossing, the diagonals in $C$ are also non-crossing.
This means that the diagonals divide the inside of $C$ into
smaller regions.  
Given a circular presentation $\pi$, we construct $\pi'$ by 
connecting the points in the same region.

For example, we have $\alpha':5/3/124\mapsto 5/34/12$ as in Figure \ref{fig:alphaprime}.
The partition $5/3/124$ corresponds to the blocks grouped by solid lines, 
and $5/34/12$ corresponds to the blocks grouped by dashed lines.
Note that, by construction, solid lines and dashed lines never cross.
The pictorial presentation possesses the non-crossing property. 

\begin{lemma}
The map $\alpha':\mathtt{NCP}(n)\rightarrow\mathtt{NCP}(n)$ satisfies the following 
properties:
\begin{enumerate}
\item $\alpha'$ is an involution.
\item The rank function $\rho$ satisfies $\rho(\pi)+\rho(\alpha'(\pi))=n+1$.
\item $\alpha'$ is order-reversing.
\end{enumerate}
\end{lemma}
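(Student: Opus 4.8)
The plan is to read all three properties off the circular picture of Figure~\ref{fig:alphaprime}, with a fixed convention for the diagonals: I draw a block $\{i_{1}<\dots<i_{k}\}$ of $\pi$ as the broken chord $i_{1}i_{2}\cup i_{2}i_{3}\cup\dots\cup i_{k-1}i_{k}$ (a singleton contributing no diagonal), so that the $\pi$-diagonals form a non-crossing family $A(\pi)$ of $n-l(\pi)$ chords cutting the open disc into regions. Call a region \emph{exterior} if it is bounded and its closure meets the circle $C$. Each of the $2n$ elementary arcs of $C$ lies, seen from inside the disc, on the boundary of exactly one exterior region, so the exterior regions partition the $n$ primed points, and by construction this partition is $\alpha'(\pi)$. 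Two preliminary remarks: \emph{(i)} a diagonal of $\alpha'(\pi)$ joins two primed points lying in one region of $A(\pi)$, so it stays inside that region and crosses no $\pi$-diagonal, and two such diagonals inside a common region cannot cross; hence $A(\pi)\cup A(\alpha'(\pi))$ is again non-crossing and in particular $\alpha'(\pi)\in\mathtt{NCP}(n)$. \emph{(ii)} Since $A(\pi)$, viewed as a graph on the $n$ circle points, is a disjoint union of paths and hence a forest, every bounded region of $A(\pi)$ meets $C$.

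I would establish part~(2) first, by Euler's formula applied to the planar arrangement $A(\pi)$: it has $V=n$ vertices and $E=n+(n-l(\pi))$ edges (the $n$ arcs and the $n-l(\pi)$ pairwise non-crossing chords), hence $F=n-l(\pi)+2$ faces; one is the outer unbounded face and, by remark~\emph{(ii)}, each of the remaining $n-l(\pi)+1$ bounded faces is an exterior region, so $l(\alpha'(\pi))=n+1-l(\pi)$, which is the rank identity of part~(2).

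For part~(1), set $\pi'':=\alpha'(\alpha'(\pi))$. The easy half is $\pi\subseteq\pi''$: if $i<j$ lie in one block $B$ of $\pi$, the broken chord of $B$ is a connected curve joining $i$ to $j$ all of whose circle points are unprimed; since diagonals of $\alpha'(\pi)$ join primed points and, by~\emph{(i)}, meet no $\pi$-diagonal, no diagonal of $\alpha'(\pi)$ touches this curve, so the curve lies in a single exterior region of $A(\alpha'(\pi))$, whence $i$ and $j$ lie in the same block of $\pi''$. Applying~(2) twice gives $l(\pi'')=n+1-l(\alpha'(\pi))=l(\pi)$, so $\pi''$ is a coarsening of $\pi$ with as many blocks as $\pi$; therefore $\pi''=\pi$, i.e.\ $\alpha'$ is an involution. (Alternatively one could cite the treatments in \cite{Kre72,SimUll91}, but the argument above is self-contained.)

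For part~(3), by transitivity it is enough to show that a cover $\pi\lessdot\pi'$ in $\mathtt{NCP}(n)$ yields $\alpha'(\pi')\subseteq\alpha'(\pi)$: combined with~(2) this forces $\alpha'(\pi')\lessdot\alpha'(\pi)$, and running this along a saturated chain then shows $\alpha'$ is order-reversing. When $\pi'$ is obtained from $\pi$ by merging two non-crossing-adjacent blocks, the broken chord of the merged block joins the same pair of circle points as a former sub-chord but detours through the absorbed block, and one checks by a local inspection of Figure~\ref{fig:alphaprime} that $A(\pi')$ only subdivides exterior regions of $A(\pi)$; hence the partition $A(\pi')$ induces on the primed points refines the one induced by $A(\pi)$, that is, $\alpha'(\pi')$ is a refinement of $\alpha'(\pi)$, which is $\alpha'(\pi')\subseteq\alpha'(\pi)$. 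The steps I expect to need the most care are the disc-topology input to part~(1) — that for a non-crossing chord family ``not separated by any chord'' coincides with ``lying in a common exterior region'', and that an unprimed point is assigned to the region in which its incident broken chord lies — together with the local case analysis behind the subdivision claim in part~(3).
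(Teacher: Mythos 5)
Your Euler--characteristic proof of part (2) is correct and takes a genuinely different route from the paper, which instead inducts on $n$ by slicing the disc along the block containing $1$ and counting blocks region by region; your argument is arguably cleaner, and both of you are in fact computing that the \emph{block counts} of $\pi$ and $\alpha'(\pi)$ sum to $n+1$. Your reduction of part (3) to covers plus the rank identity is also exactly the paper's strategy, and your proof of $\alpha'(\pi)\in\mathtt{NCP}(n)$ is fine. However, two of your key steps do not hold as written.

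In part (1), the conclusion ``$i$ and $j$ lie in the same block of $\pi''$'' presupposes that the second application of $\alpha'$ attaches the label $i$ to the region containing the original circle point labelled $i$. Your argument never uses the fact that the primed points are inserted \emph{counterclockwise}; run verbatim for the Kreweras complement $c$ (primed points inserted clockwise) it would ``prove'' $c^2=\mathrm{id}$, whereas $c^2$ is a nontrivial rotation. So the orientation bookkeeping --- which is precisely the paper's mirror-image step --- is not optional: you must check that the double-primed point $i''$ lands at the position of the original point $i$ (it does, because of the counterclockwise convention), and only then does your ``coarsening with equal block count'' argument close. As stated, the argument cannot distinguish $\alpha'$ from $c$, so it cannot establish that $\alpha'$ is an involution.

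In part (3), the claim that ``$A(\pi')$ only subdivides exterior regions of $A(\pi)$'' is false. Take the cover $23/14\lessdot 1234$ in $\mathtt{NCP}(4)$: the region of $A(1234)$ containing $1'$ is bounded by the chords $12$, $23$, $34$ and the arc through $1'$, and the chord $14\in A(23/14)$ runs through its interior, so this single region of $A(\pi')$ meets two distinct regions of $A(\pi)$. The intended conclusion --- that $\alpha'(\pi')$ refines $\alpha'(\pi)$ --- is nevertheless true, but it must be proved at the level of primed points rather than regions; for instance, one can show that whenever an edge $i_si_{s+1}$ of the broken chord of a block $B$ of $\pi$ separates two primed points, the sub-path of the broken chord of the block $B'\supseteq B$ of $\pi'$ joining $i_s$ to $i_{s+1}$ also separates them. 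The paper instead argues directly with the two merged blocks (the disjoint and nested cases) and the primed points trapped between them. So while the statements and your overall architecture are sound, parts (1) and (3) have genuine gaps that need to be repaired.
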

\begin{proof}
(1) Consider the circular presentation of $\pi$ and $\pi':=\alpha'(\pi)$ on the same 
circle. If we take a mirror image of the circular presentation and replace non-primed 
integers by primed integers, and vice versa, we have $\pi=\alpha'(\pi')=\alpha'^{2}(\pi)$.
Therefore, $\alpha'$ is an involution.

(2) We prove the statement by induction on $n$. It is trivial when $n=1,2$.
Suppose that $\pi$ contains the increasing sequence $(k_{1},k_2,\ldots,k_{p})$
where $k_1=1$ and $p$ is an integer. 
In the circular presentation, we have diagonals which connect $k_{i}$ and $k_{i+1}$ for 
$1\le i\le p-1$ and $k_{p}$ and $k_1$.
Since $\pi$ is non-crossing, we denote by $b_{i}$ the number of blocks  
between $k_{i}$ and $k_{i+1}$.
We have $k_{i+1}-k_{i}$ primed integers in the region $R$ separated by the diagonal connecting 
$k_{i}$ and $k_{i+1}$.
In the region $R$, we have $k_{i+1}-k_{i}-b_{i}$ blocks in $\pi'$.
We have $1+\sum_{i=1}^{p}b_{i}$ blocks in $\pi$ and 
$\sum_{i=1}^{p}(k_{i+1}-k_{i}-b_{i})=n-\sum_{i=1}^{p}b_{i}$ in $\pi'$.
By taking the sum of the number of blocks in $\pi$ and $\pi'$, 
we have $\rho(\pi)+\rho(\pi')=n+1$, which completes the proof.

(3) Suppose that $\pi'$ covers $\pi$, which implies that $\rho(\pi')=\rho(\pi)+1$.
From (2), we have $\rho(\alpha'(\pi))=\rho(\alpha'(\pi'))+1$.
It is enough to show that $\alpha'(\pi)$ covers $\alpha'(\pi')$.
Since $\pi'$ covers $\pi$, we have two blocks $\pi_{1},\pi_{2}\in\pi$ such that 
the merge $\pi_{1}\cup\pi_{2}$ of two blocks is a block in $\pi'$.
We assume that $\min(\pi_1)<\min(\pi_2)$ without loss of generality.
We have two cases:  a) $\max(\pi_1)<\min(\pi_2)$ and b) $\max(\pi_1)>\max(\pi_2)$.
Further, since $\pi$ is a non-crossing partition, 
there is no diagonal in $C$ connecting $i$ with $j$ in $\pi'$ such that
$\max(\pi_1)<i<\min(\pi_2)<\max(\pi_2)<j$ in the case a), and
$\min(\pi_1)<i<\min(\pi_2)<\max(\pi_2)<j<\max(\pi_1)$ in the case b).

\paragraph{Case a)}
Let $U(\pi)$ be the set of primed points between $\max(\pi_2)$ and $\min(\pi_{1})$ 
in the circular presentation, and $D(\pi)$ the set of primed points between 
$\max(\pi_1)$ and $\min(\pi_{2})$.
If we apply $\alpha'$ on $\pi$, we have diagonals connecting the primed points 
in $U(\pi)$ and $D(\pi)$.
Since $\pi_{1}\cup\pi_{2}$ is a single block in $\pi'$, there is no diagonal 
connecting the primed points in $U(\pi)$ and $D(\pi)$.
These observations imply that $\alpha'(\pi')$ is covered by $\alpha'(\pi)$.
Therefore, $\alpha'$ is order-reversing.

\paragraph{Case b)}
Let $i_0\in\pi_{1}$ (resp. $j_0\in\pi_{1}$) be the largest (resp. smallest) integer which 
is smaller (resp. larger) than $\min(\pi_2)$ (resp. $\max(\pi_2)$).
We denote by $U(\pi)$ the set of integers which are in $[\max(\pi_2),j_{0}]$, 
and by $D(\pi)$ the set of integers which are in $[i_{0}+1,\min(\pi_2)-1]$.
By applying the same argument as the case a) to $U(\pi)$ and $D(\pi)$, 
one can show that $\alpha'$ is order-reversing, which completes the proof.
\end{proof}

The next proposition reveals the relations among $\psi\circ\phi^{-1}$, $\alpha$, 
$\alpha'$ and the bar operation.
\begin{prop}
\label{prop:combaralpha}
The diagram 
\begin{align*}
\begin{tikzpicture}
\node (0) at (0,0){$\mathtt{NCCP}(n;312)$};
\node (1) at (3,0){$\mathtt{NCCP}(n;312)$};
\node (2) at (0,-2){$\mathtt{NCP}(n)$};
\node (3) at (3,-2){$\mathtt{NCP}(n)$};
\draw[->,anchor=south] (0) to node {$\alpha$} (1);
\draw[->,anchor=south] (2) to node {$\alpha'$} (3);
\draw[->,anchor=east] (0) to node {$\psi\circ\phi^{-1}$}(2);
\draw[->,anchor=west] (1) to node {$\psi\circ\phi^{-1}$}(3);
\draw[->,anchor=south west] (0) to node {\text{bar}} (3);
\end{tikzpicture}
\end{align*}
is commutative.
\end{prop}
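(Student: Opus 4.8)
The plan is to split the square into its two triangles, both of which contain the diagonal edge labelled $\mathrm{bar}$. Write $\beta:=\psi\circ\phi^{-1}\colon\mathtt{NCCP}(n;312)\to\mathtt{NCP}(n)$ for the bijection of Proposition \ref{prop:bij312NCP}. Commutativity of the whole diagram is then equivalent to the two identities
\begin{align*}
\beta\circ\alpha=\mathrm{bar}\qquad\text{and}\qquad\alpha'\circ\beta=\mathrm{bar}
\end{align*}
as maps from $\mathtt{NCCP}(n;312)$ to $\mathtt{NCP}(n)$; here $\mathrm{bar}$ does take values in $\mathtt{NCP}(n)$, since it sends $\mathtt{NCCP}(n;312)$ into $\mathtt{NCCP}(n;132)$ by Lemma \ref{lemma:312132}, and $\mathtt{NCCP}(n;132)=\mathtt{NCP}(n)$ by Proposition \ref{prop:NCP132}.

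The first identity (the upper triangle) is immediate from the definition of $\alpha$. By Eq. (\ref{eq:312to312}) we have $\alpha=(\phi\circ\psi^{-1})\circ\mathrm{bar}$, and $\phi\circ\psi^{-1}\colon\mathtt{NCP}(n)\to\mathtt{NCCP}(n;312)$ is exactly $\beta^{-1}$: both $\beta=\psi\circ\phi^{-1}$ and $\phi\circ\psi^{-1}$ are inverse to one another, factoring bijectively through the set of canonical labeled binary trees by Lemma \ref{lemma:312clbt} and Lemma \ref{lemma:psibij}. Hence $\beta\circ\alpha=\beta\circ\beta^{-1}\circ\mathrm{bar}=\mathrm{bar}$.

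It remains to prove the lower triangle, $\alpha'\circ\beta=\mathrm{bar}$. Since $\alpha'$ is an involution, this is equivalent to the single assertion $\beta(\pi)=\alpha'(\overline{\pi})$ for every $\pi\in\mathtt{NCCP}(n;312)$, where $\overline{\pi}$ is regarded as an element of $\mathtt{NCCP}(n;132)=\mathtt{NCP}(n)$. I would prove this by matching the two sides explicitly. On the left, $\phi^{-1}(\pi)$ is the canonical labeled binary tree produced from the permutation $\pi$ by the recursion of Section \ref{sec:lbt} (the increasing binary tree of $\pi$); deleting its left edges breaks it into maximal right-edge chains, and by the definition of $\psi$ (cf. Lemma \ref{lemma:psiL}) the blocks of $\beta(\pi)$ are precisely these chains, so that the covering arcs of the non-crossing partition $\beta(\pi)$ are exactly the pairs $(i,j)$ for which the node labeled $j$ is the right child of the node labeled $i$. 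On the right, one draws $\overline{\pi}$ in the circular presentation of Section \ref{sec:Kreinv}: its chords are non-crossing, they dissect the disk into regions, and $\alpha'(\overline{\pi})$ is the partition grouping the primed points by region, where $i'$ lies in the gap between the points $n+1-i$ and $n+2-i$. The core of the argument is a dictionary identifying the tree relation ``$j$ is the right child of $i$ in $\phi^{-1}(\pi)$'' with the geometric relation ``$i'$ and $j'$ are consecutive primed points lying in one region of the $\overline{\pi}$-dissection''; the reflection $i\mapsto n+1-i$ built into the counterclockwise labeling of the primed points is precisely the device that turns the bar of $\pi$ (which trades $312$-avoidance for $132$-avoidance, Lemma \ref{lemma:312132}) into the matching chord pattern. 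Once the dictionary shows these two covering relations agree pair by pair, the non-crossing partitions $\beta(\pi)$ and $\alpha'(\overline{\pi})$ coincide. Alternatively, one might organize this last step as an induction on $n$, exploiting that in a $312$-avoiding permutation every block to the right of the block containing $n$ is a singleton and these singletons decrease, which on the $\overline{\pi}$-side is a controlled local modification near the point $1$.

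The main obstacle is exactly this geometric--combinatorial dictionary: establishing, compatibly with the reflection $i\mapsto n+1-i$, the correspondence between the right-edge chains of $\phi^{-1}(\pi)$ and the regions carved out by the chords of $\overline{\pi}$. Everything else --- the upper triangle, the reduction using that $\alpha'$ is an involution, and the bookkeeping that $\mathrm{bar}$ interchanges $\mathtt{NCCP}(n;312)$ and $\mathtt{NCCP}(n;132)=\mathtt{NCP}(n)$ --- is routine in view of the results already established.
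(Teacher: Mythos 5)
Your reduction of the diagram to the two triangle identities is correct, and your treatment of the upper triangle is complete: since $\alpha$ is by definition $(\phi\circ\psi^{-1})\circ\mathrm{bar}$ and $\phi\circ\psi^{-1}$ is inverse to $\psi\circ\phi^{-1}$, the identity $(\psi\circ\phi^{-1})\circ\alpha=\mathrm{bar}$ is immediate, and the paper treats this as implicit for the same reason. The observation that $\mathrm{bar}$ lands in $\mathtt{NCCP}(n;132)=\mathtt{NCP}(n)$ is also correctly sourced. However, the lower triangle $\alpha'\circ(\psi\circ\phi^{-1})=\mathrm{bar}$ is the entire content of the proposition, and your proposal does not prove it: you reduce it (correctly, via the involutivity of $\alpha'$) to the identity $\psi\circ\phi^{-1}(\pi)=\alpha'(\overline{\pi})$, describe the two objects to be compared (maximal right-edge chains of $\phi^{-1}(\pi)$ on one side, regions of the chord dissection of $\overline{\pi}$ on the other), and then explicitly defer the ``dictionary'' matching them. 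Announcing that the key correspondence exists, while naming it as the main obstacle, is a genuine gap rather than a proof.

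For comparison, the paper closes exactly this gap by induction on $n$, which is essentially your second, briefly gestured-at alternative. Writing $\lambda:=\psi\circ\phi^{-1}(\pi)$, it deletes $n$ from $\pi$, notes that $\overline{\pi\downarrow n}=\overline{\pi}\downarrow 1$ and that deletion commutes with $\psi\circ\phi^{-1}$, and then verifies $\alpha'(\lambda\downarrow n)=\alpha'(\lambda)\downarrow 1$ by a two-case analysis (whether $1$ and $n$ lie in the same block of $\lambda$, which governs whether $1'$ is a singleton block of $\alpha'(\lambda)$ or must be merged with the block of $2'$). The induction then determines everything except the block of $\alpha'(\lambda)$ containing $1$, which is pinned down by showing its size equals the number of integers to the right of $n$ in the one-line notation of $\pi$ — this is the precise quantitative form of the ``controlled local modification near the point $1$'' you allude to. If you pursue your first route (the direct chain-to-region dictionary), you would still need to prove, not merely assert, that ``$j$ is the right child of $i$ in $\phi^{-1}(\pi)$'' translates under $i\mapsto n+1-i$ into ``$i'$ and $j'$ are consecutive primed points in one region of the $\overline{\pi}$-dissection''; as written, that equivalence is exactly what is missing.
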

\begin{proof}
By a simple calculation, one can show that the diagram is commutative $n=1$ and $2$.

Let $\pi\in\mathtt{NCCP}(n;312)$, $\lambda:=\varphi(\pi)\in\mathtt{NCP}(n)$ with $\varphi:=\psi\circ\phi^{-1}$ 
and $\alpha'(\lambda)\in\mathtt{NCP}(n)$.
We show that $\overline{\pi}=\alpha'(\lambda)$ by induction on $n$.
We assume that the diagram is commutative up to $n-1$.
Given a partition $\zeta\in\mathtt{NCCP}(n)$, we denote by $\zeta\downarrow i$ the 
partition obtained from $\zeta$ by deleting $i$.
Here, we view $\zeta\downarrow i$ as a partition in $\mathtt{NCCP}(n-1)$ by 
replacing an integer $j$ by $j-1$ if $j>i$ in $\zeta\downarrow i$. 
Note that we have $\lambda\downarrow n=\varphi(\pi)\downarrow n=\varphi(\pi\downarrow n)$.
By induction assumption, 
we have $\overline{\pi\downarrow n}=\alpha'(\lambda\downarrow n)$.
The bar operation and the deletion of $i$ on $\pi$ are commutative, which implies 
$\overline{\pi\downarrow n}=\overline{\pi}\downarrow 1$. 
We first show that $\alpha'(\lambda\downarrow n)=\alpha'(\lambda)\downarrow 1$.
We have two cases: 1) $1$ and $n$ are in the same block in $\lambda$, and 
2) $1$ and $n$ are in different blocks in $\lambda$.

\paragraph{Case 1)}
Since $1$ and $n$ are in the same block, $1'$ forms a block by itself. 
The deletion of $n$ in $\lambda$ corresponds to the deletion of $1$ in $\alpha'(\lambda)$.
This means that we delete the block containing only $1$ from $\alpha'(\lambda)$.
Therefore, we have $\alpha'(\lambda\downarrow n)=\alpha'(\lambda)\downarrow 1$.

\paragraph{Case 2)} 
We denote by $S(i')$ the set of primed integers which belong to the same block as $i'$ in 
$\alpha'(\lambda)$.
Since $1$ and $n$ are in different blocks, $1'$ is connected to some primed integers in 
$\alpha'(\lambda)$. We have $|S(1')|\ge2$.
The deletion of $n$ in $\lambda$ corresponds to merging $S(1')$ and $S(2')$.
Since $\alpha'(\lambda)$ is canonical, the deletion of $1$ simply corresponds to the merge
of the two blocks $S(1')$ and $S(2')$.
From these, $\alpha'(\lambda\downarrow n)=\alpha'(\lambda)\downarrow 1$.

From above considerations, we have $\overline{\pi}\downarrow1=\alpha'(\lambda)\downarrow1$.
To show $\overline{\pi}=\alpha'(\lambda)$, it is enough to show that 
the positions of $1$ in $\overline{\pi}$ and $\alpha'(\lambda)$ are the same. 
This means that if there are $p$ integers right to $n$ in $\pi$, 
the size of block which $1'$ belongs to is $p$.
In fact, we construct $\pi$ from a labeled binary tree by the in-order,
the number $p$ is equal to the number of primed integers which belong to 
the same block as $1'$.
From these, we have $\overline{\pi}=\alpha'(\lambda)$, which 
completes the proof.
\end{proof}

\begin{remark}
The involution $\alpha'$ is an order-reversing involution on 
the Kreweras lattice. This leads to the property that the Kreweras 
lattice is self-dual. Further, the Kreweras lattice admits the symmetric 
chain decomposition \cite{SimUll91}.
However, the lattice $\mathcal{L}_{NCCP}(n)$, $n\ge 4$, is not self-dual.
This is easily seen even for $n=4$ (see Figure \ref{fig:latNCCP}).
\end{remark}

\subsubsection{The Kreweras complement map}
In the previous section, we study an involution on the Kreweras lattice.
In \cite{Kre72}, Kreweras introduced a map $c:\mathtt{NCP}(n)\rightarrow\mathtt{NCP}(n), \pi\mapsto\pi^{c}$, 
which is similar to the involution in Section \ref{sec:Kreinv}.
The square of the complement map $c^{2}$ can be regarded as a rotation of a non-crossing partition.

Let $\pi\in\mathtt{NCP}(n)$ and consider its circular presentation $C(\pi)$.
When we construct a map $\alpha'$ in Section \ref{sec:Kreinv}, we add primed 
integers $i'$, $i\in[n]$, counter-clockwise. 
To construct the map $c$, we put primed integers clockwise such that $1'$ is between $1$ and $2$.
Recall that the diagonals connecting unprimed integers divide the inside of $C(\pi)$ into smaller regions.
As in the case of $\alpha'$, we construct $\pi^{c}$ by connecting the points with a prime in the 
same region.

\begin{example}
We have $\pi^{c}=7/68/45/2/13$ if $\pi=78/5/23/146$.
Then, the action of $c$ on $\pi^{c}$ is given by 
$(\pi^{c})^c=67/4/358/12$. 
Note that $(\pi^{c})^{c}$ can be obtained from $\pi$ by shifting
the integers by one and arranging the blocks in the canonical order.
\end{example}

Since we have a bijection between $\mathtt{NCCP}(n;312)$ and $\mathtt{NCP}(n)$, 
it is natural to construct a map $\beta:\mathtt{NCCP}(n;312)\rightarrow\mathtt{NCP}(n)$
which is compatible with complement map $c$.

Let $\omega:=(\omega_1,\ldots,\omega_{n})$ be a sequence of integers of length $n$ such that each integer appears at most once.
As in the case of a noncommutative crossing partition, we insert ``$/$" between $\omega_i$ and $\omega_{i+1}$
if $\omega_i>\omega_{i+1}$. By definition, ``$/$" divides $\omega$ into several blocks where each block 
contains an increasing sequence.
We define an increasing sequence $\mu(\omega;i):=(\mu_1,\ldots,\mu_{p})$ starting from $i$ 
from right to left in $\omega$ such that $\mu_1=i$, $\mu_1<\mu_2<\ldots<\mu_{p}$, 
and each $\mu_{i}$, $i\ge2$, is the maximal element in the block which it belongs to.  

More precisely, we construct a partition $\beta(\omega)$ from $\omega$ recursively as follows.
\begin{enumerate}
\item Set $i=1$. 
\item Define $r_{i}:=\min(\omega)$. Then, we construct an increasing sequence 
$\mu(\omega;r_{i}):=(r_i=\mu_1<\mu_2<\ldots<\mu_{p})$ from $\omega$ 
such that $\mu_{i}$ is the maximal element in the block of $\omega$ which it belongs to.
We denote this increasing sequence by $\lambda_{i}:=\mu(\omega;r_{i})$.
\item  Replace $\omega$ by $\omega\setminus\lambda_{i}$ where $\omega\setminus\lambda_{i}$ is an integer sequence 
obtained from $\omega$ by deleting the elements in $\lambda_{i}$. 
Then, if $\omega\neq\emptyset$, increase $i$ by one and go to (2). 
If $\omega=\emptyset$, go to (4).
\item Since we have the set of increasing sequences $\{\lambda_{i} | i\in[1,q]\}$ for some $q$, 
we construct a partition $\beta(\omega)$ by a concatenation of $\lambda_{i}$, {\it i.e.}, 
$\beta(\omega):=\lambda_{q}/\lambda_{q-1}/\ldots/\lambda_{1}$. 
\end{enumerate}

\begin{example}
Let $\pi=78/5/23/146\in\mathtt{NCP}(8)$. 
The corresponding element in $\mathtt{NCCP}(8;312)$ is $\pi'=23/15/478/6$ by 
the bijection $\phi\circ\psi^{-1}$.
The increasing sequence starting from $1$ is $\{1,3\}$.
Then $\pi'_{1}:=\pi'\setminus\{1,3\}=25/478/6$. 
Similarly, the increasing sequences of $\pi'_{1}$ starting from $2$, $4$, $6$ and
$7$ are $\{2\}$, $\{4,5\}$, $\{6,8\}$ and $\{7\}$ respectively.
The increasing sequences are the blocks in $\beta(\pi')$, and we arrange the blocks
in the canonical order. 
Therefore, $\beta(\pi')=7/68/45/2/13$.
\end{example}

The map $\beta$ can be characterized by restricting $\omega$ to the set $\mathtt{NCCP}(n;312)$.
\begin{lemma}
If $\omega\in\mathtt{NCCP}(n;312)$, then $\beta(\omega)\in\mathtt{NCP}(n)$.
\end{lemma}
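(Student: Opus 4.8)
The plan is to verify separately the two requirements for $\beta(\omega)\in\mathtt{NCP}(n)$: that $\beta(\omega)$ is a partition written in canonical order, and that it is non-crossing. For the first, note that the seeds satisfy $r_1<r_2<\cdots<r_q$, since $r_{i+1}$ is the minimum of $\omega^{(i+1)}:=\omega\setminus(\lambda_1\cup\cdots\cup\lambda_i)$, a sequence obtained from $\omega^{(i)}$ by deleting (among other things) its minimum $r_i$. As $\min(\lambda_i)=r_i$ and $\beta(\omega)=\lambda_q/\lambda_{q-1}/\cdots/\lambda_1$, reading the block minima from left to right gives $r_q>r_{q-1}>\cdots>r_1$, which is exactly the canonical condition; together with the facts that each $\lambda_i$ is increasing, that every integer of $[n]$ is used once, and that canonicity forces $\max(\pi_i)>\min(\pi_{i+1})$, this already yields $\beta(\omega)\in\mathtt{NCCP}(n)$ and canonical, using no hypothesis on $\omega$.

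The $312$-avoidance enters only in proving $\beta(\omega)$ non-crossing. Here I would first describe the ``trace'' of each block $\lambda_i$ inside $\omega$. Writing $\omega=C_1/C_2/\cdots/C_m$ for its blocks (maximal increasing runs), the chain $\mu(\omega^{(i)};r_i)$ producing $\lambda_i$ is obtained by scanning $\omega^{(i)}$ leftward from the occurrence of $r_i$ and collecting the successive records; since the blocks are increasing, these records are precisely certain block-maxima. Transported back into $\omega$ (deletion preserves relative order), the entries of $\lambda_i$ therefore occupy a set of positions on which $\omega$ restricts to a strictly decreasing word, with $r_i$ at the rightmost of them; moreover, strictly to the left of the leftmost such position and strictly between two consecutive such positions, every entry of $\omega^{(i)}$ is smaller than the right endpoint of the relevant step. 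These facts form the structural toolkit for what follows.

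Assuming for contradiction that $\beta(\omega)$ has a crossing $a<b<c<d$ with $\{a,c\}\subseteq\lambda_i$ and $\{b,d\}\subseteq\lambda_j$, $i\neq j$, I would locate $a,b,c,d$ in $\omega$ via the trace description and extract three of them forming the pattern $312$. For example, since $r_i=\min(\lambda_i)\le a<c$ the entry $c$ is not the seed of $\lambda_i$'s chain, so (records being collected in increasing order while scanning leftward) $c$ occurs to the left of $a$ in $\omega$; if moreover $b$ occurs to the right of $a$, then the occurrences of $c,a,b$ sit at increasing positions with values $c>b>a$, a $312$ pattern, contradicting $\omega\in\mathtt{NCCP}(n;312)$. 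The main obstacle is precisely to force such a configuration in every case: this needs a case analysis on which of $a,b,c,d$ are seeds versus block-records, on whether $i<j$ or $i>j$, and — most delicately — on how the trace of the block extracted second interleaves with that of the block extracted first, since the block decomposition of $\omega^{(j)}$ differs from that of $\omega$ and positions must be tracked carefully through the successive deletions $\omega\to\omega^{(2)}\to\cdots$. I expect the needed interleaving statement (roughly: for $i<j$ the trace of $\lambda_j$ lies to the left of the trace of $\lambda_i$ apart from possible nesting inside one of its gaps) to be provable by induction on $j-i$, after which reading off a position triple $p_1<p_2<p_3$ with $\omega_{p_1}>\omega_{p_3}>\omega_{p_2}$ in each case is routine.

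As a sanity check and an alternative, I note that $\beta$ is built so as to be compatible with the Kreweras complement map $c$, which manifestly sends $\mathtt{NCP}(n)$ into itself through its circular definition; so if one first establishes the identity $\beta=c\circ(\psi\circ\phi^{-1})$ on $\mathtt{NCCP}(n;312)$, the present lemma follows at once. Since that identity is presumably proved only afterward, however, the direct combinatorial argument sketched above is the one to run here.
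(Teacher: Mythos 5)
Your overall strategy is the same as the paper's: establish canonicity directly from the construction, then assume a crossing $a<b<c<d$ with $a,c\in\lambda_i$ and $b,d\in\lambda_j$ and extract a $312$ pattern from $\omega$. Your treatment of the canonical part is fine (indeed more detailed than the paper, which just says ``by construction''), and your observation that $c$ lies to the left of $a$ in $\omega$, together with the sub-case ``$b$ to the right of $a$'' yielding the pattern $c,a,b$, matches the paper exactly. But the proof is not finished: you explicitly defer the remaining positional configurations of $b$ to an ``interleaving statement'' that you only conjecture (``I expect \dots to be provable by induction on $j-i$''), and that is precisely where the content of the lemma lives. As written, this is a genuine gap, not a routine verification.

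The paper closes the two missing cases with short direct arguments that need no interleaving lemma and no induction. First, the same record-collecting observation you made for $a,c$ applies to $b,d$: since $b,d\in\lambda_j$ with $b<d$, the entry $d$ lies to the \emph{left} of $b$ in $\omega$. Hence if $b$ lies to the left of $c$, the subword $d,b,c$ (positions increasing, values $d>c>b$) is already a $312$ pattern. Second, if $b$ lies strictly between $c$ and $a$ in $\omega$, then since $b\notin\lambda_i$ there must be an element $c'\in\lambda_i$ with $b<c'<c$ lying in the same maximal increasing run of $\omega$ as $b$ (otherwise $b$ would have been collected as a record of the chain building $\lambda_i$); then $c'$ sits to the right of $b$, and $c,b,c'$ is a $312$ pattern. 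You should replace the appeal to the conjectural interleaving lemma by these two observations; in particular, the fact that $d$ is left of $b$ is the symmetric twin of the fact you did prove about $c$ and $a$, and it immediately disposes of one of your open cases. Your remark about deducing the lemma from the commutativity with the Kreweras complement map is correctly flagged as circular, since that identity is established only after this lemma.
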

\begin{proof}
By construction of $\beta(\omega)$, it is canonical. 
To show $\beta(\omega)\in\mathtt{NCP}(n)$, it is enough to show that 
$\beta(\omega)$ is non-crossing.
Suppose that $\beta(\omega):=\lambda_{q}/\lambda_{q-1}/\ldots/\lambda_{1}$ is crossing.
Then, there exist four integers $a<b<c<d$ such that $a$ and $c$ are in the block $\lambda_{p}$
and $b$ and $d$ are in the block $\lambda_{p'}$.
In $\omega$, $c$ is left to $a$ and $d$ is left to $b$.
If $b$ is left to $c$, then we have a $312$-pattern $dbc$ in $\omega$.
Similarly, if $b$ is right to $a$, we have a $312$-pattern $cab$ in $\omega$.
These contradict to the fact that $\omega\in\mathtt{NCCP}(n;312)$.
So, $b$ is between $c$ and $a$ in $\omega$. 
Since $b$ is not in the same block as $a$ and $c$ in $\beta(\omega)$, 
there exists $b<c'<c$ such that $c'$ is in the same block as $b$ in $\omega$, and 
in the same block as $a$ and $c$ in $\beta(\omega)$.
Then, the order of $\{b,c',c\}$ in $\omega$ is $c,b,c'$ from left to right.
This implies that $\omega$ has a $312$-pattern and we have a contradiction.
Thus, $b$ should not be between $c$ and $a$ in $\omega$.
This is a contradiction. 
Therefore, $\beta(\omega)$ is non-crossing, which completes the proof.
\end{proof}

The relations among the maps $\psi\circ\phi^{-1}$, $\beta$ and $c$ are summarized 
as the next proposition.
\begin{prop}
The diagram
\begin{align*}
\begin{tikzpicture}
\node (0) at (0,0){$\mathtt{NCCP}(n;312)$};
\node (1) at (0,-1.5){$\mathtt{NCP}(n)$};
\node (2) at (3,-1.5){$\mathtt{NCP}(n)$};
\draw[->,anchor=south west](0) to node {$\beta$} (2);
\draw[->,anchor=east](0) to node{$\psi\circ\phi^{-1}$} (1);
\draw[->,anchor=south](1) to node {$c$}(2);
\end{tikzpicture}
\end{align*}
is commutative.
\end{prop}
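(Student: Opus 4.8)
The plan is to show that both sides of the asserted identity equal one and the same explicit partition. For $\omega\in\mathtt{NCCP}(n;312)$ write $w=w_{1}\cdots w_{n}$ for its one-line word, decompose $w$ into its maximal decreasing runs, reorder each run increasingly into a block, and list the blocks in canonical order; call the resulting (canonical) partition $D(\omega)$. The proposition is then equivalent to the two statements $\beta(\omega)=D(\omega)$ and $c\bigl(\psi\circ\phi^{-1}(\omega)\bigr)=D(\omega)$, which I would prove separately; in particular these equalities also show that $D(\omega)$ lies in $\mathtt{NCP}(n)$.

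For $\beta(\omega)=D(\omega)$ I would induct on $n$, equivalently unwinding the peeling recursion defining $\beta$. The first block $\lambda_{1}$ is built by starting at $r_{1}=\min(\omega)$ and scanning $w$ leftwards, collecting the successive block-maxima that keep the sequence increasing. Since $r_{1}$ is the global minimum it is the right end of the decreasing run $R$ of $w$ containing it, and the crucial point is that the left-scan picks up exactly the entries of $R$ and then halts. That it picks up all of $R$ up to its maximum $a$ is clear; that it does not overshoot past $a$ is where $312$-avoidance enters: $a$ is the maximum of some block $B$, and if $|B|\ge 2$ then the maximum of the block just before $B$ is forced to be $<a$ (else that maximum together with the two least entries of $B$ forms a $312$-pattern), while if $|B|=1$ the same follows from maximality of $R$. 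Hence $\lambda_{1}$ is $R$ reordered. Next, deleting $R$ from $w$ does not glue together the decreasing runs on either side of $R$: otherwise the last entry before $R$, the minimum $r_{1}$ itself, and the first entry after $R$ would be a $312$-pattern. So the decreasing-run decomposition of $\omega\setminus\lambda_{1}$ is that of $\omega$ with $R$ removed, and the induction hypothesis finishes the argument.

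For $c\bigl(\psi\circ\phi^{-1}(\omega)\bigr)=D(\omega)$ I would invoke Proposition~\ref{prop:combaralpha}, which gives $\alpha'\bigl(\psi\circ\phi^{-1}(\omega)\bigr)=\overline{\omega}$, together with the fact that the Kreweras complement and the Simion--Ullman involution are read off the same picture: for a non-crossing partition $\pi$ one draws its diagonals on the circle and joins the interleaved points sharing a region, the only difference being that $\alpha'$ labels the interleaved points $1',2',\dots$ counter-clockwise beginning just after $n$ whereas $c$ labels them clockwise beginning just after $1$, and these two labellings of the same $n$ points differ precisely by $i'\mapsto\overline{i}'$. Thus $c(\pi)$ is obtained from $\alpha'(\pi)$ by the relabelling $i\mapsto n+1-i$, so $c\bigl(\psi\circ\phi^{-1}(\omega)\bigr)$ is $\overline{\omega}$ with its elements relabelled by $i\mapsto n+1-i$. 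Since $\overline{\omega}$ is the word $w$ with each value $v$ replaced by $n+1-v$ and with block boundaries inserted at the ascents of $\omega$, undoing the value change shows that two values adjacent in $w$ end up in the same block exactly when they form a descent of $\omega$; hence the blocks are the maximal decreasing runs of $w$, i.e.\ $D(\omega)$. Combining the two equalities proves the proposition.

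The main obstacle I expect is making the first equality airtight: the recursion re-parses the word into fresh blocks after every deletion, so one must track carefully how block boundaries move and confirm, invoking $312$-avoidance at each stage, that the left-scan never runs past the current decreasing run and that no deletion merges two runs. One could instead mimic the proof of Proposition~\ref{prop:combaralpha} directly, inducting on $n$ by deleting the maximum $n$ and noting that $\beta$ and $c\circ(\psi\circ\phi^{-1})$ both intertwine with the operation ``merge the block of $n-1$ into the block of $n$, then delete $n$''; but then the hard step moves to determining the block of $n$ in $c\bigl(\psi\circ\phi^{-1}(\omega)\bigr)$ directly from the circular presentation.
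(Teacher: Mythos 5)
Your proposal is correct, and it takes a genuinely different route from the paper. The paper argues by induction on $n$, deleting the element $1$: it checks that $\varphi(\pi\downarrow 1)=\varphi(\pi)\downarrow 1$, that both $c\circ\varphi$ and $\beta$ intertwine with this deletion up to re-sorting blocks, and then does the real work of identifying the block containing $1$ on both sides through an explicit set $S$ of nodes of the labeled binary tree $\phi^{-1}(\pi)$ --- essentially the alternative you mention in your last paragraph, with the hard step located exactly where you predict. You instead exhibit a common explicit value, the partition $D(\omega)$ into maximal decreasing runs of the one-line word. On the $c$ side, your reduction to Proposition~\ref{prop:combaralpha} is sound: $c$ and $\alpha'$ join the same physical points in the same regions of the circle under labellings that differ by $i'\mapsto(n+1-i)'$, so this relabelling undoes the value-complementation in $\overline{\omega}$ and leaves precisely the decreasing runs (this checks out against the paper's example $\omega=23/15/478/6$, where $c(\varphi(\omega))=7/68/45/2/13$ is indeed the decreasing-run partition of $23154786$). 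On the $\beta$ side, your two invocations of $312$-avoidance are exactly the right ones; the first in fact also disposes of the mild ambiguity in the paper's definition of $\mu(\omega;i)$, since $312$-avoidance forbids any block strictly to the left of the run's leftmost block from having a larger maximum, so the left-scan terminates there under any reasonable reading. What your approach buys is a closed-form description of $c\circ\psi\circ\phi^{-1}$ (and of $\beta$) as the decreasing-run partition, which the paper's proof never makes explicit; the cost is reliance on Proposition~\ref{prop:combaralpha}, whereas the paper's induction is self-contained.
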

\begin{proof}
We use the same notation $\zeta\downarrow i$ as in the proof of Proposition \ref{prop:combaralpha} and 
define a bijection $\varphi:=\psi\circ\phi^{-1}:\mathtt{NCCP}(n;312)\rightarrow\mathtt{NCP}(n)$.
Let $\pi\in\mathtt{NCCP}(n;312)$ and $\lambda:=\varphi(\pi)$.
We will show that $c(\lambda)=\beta(\pi)$ by induction on $n$.
When $n=1,2$, we have $c(\lambda)=\beta(\pi)$ by a simple calculation.
We assume that the diagram is commutative up to $n-1$.
From the assumption, we have $c(\varphi(\pi\downarrow 1))=\beta(\pi\downarrow 1)$.
Since $\pi$ is in $\mathtt{NCCP}(n;312)$, the labeled binary tree $\phi^{-1}(\pi)$ 
is canonical.  
The deletion of $1$ in $\pi$ corresponds to the merge of the two blocks which contain 
$1$ and $2$ respectively in $\varphi(\pi)$ since $\varphi(\pi)$ is non-crossing. 
If a block contains the two integers $1$ and $2$, we just simply delete $1$ from this block. 
Thus, we have $\varphi(\pi\downarrow1)=\varphi(\pi)\downarrow1$, which implies 
$c(\varphi(\pi)\downarrow1)=\beta(\pi\downarrow1)$.

Since $\varphi(\pi)\in\mathtt{NCP}(n)$, it is canonical. 
If we delete $1$ from $\varphi(\pi)$, we may have to merge two blocks 
containing $1$ and $2$ if $1$ and $2$ belong to different blocks.
This corresponds to obtaining $c(\varphi(\pi)\downarrow1)$ from $c(\varphi(\pi))$ 
by deleting $1$ and sorting blocks in the canonical order.

Similarly, since $\pi\in\mathtt{NCCP}(n;312)$, we have no pattern $312$ in $\pi$.
This is equivalent to the fact that if a node labeled $i$ has a left edge, then 
the child node has a label $i+1$ in the labeled binary tree.
Since $\pi$ is obtained from a labeled binary tree by the in-order, the labels in the right 
subtree, whose root is the node labeled $i$, are right to $i$ in $\pi$.
Therefore, if we apply $\beta$ on $\pi$, $\beta(\pi\downarrow1)$ is obtained from 
$\beta(\pi)$ by deleting $1$ and sorting the blocks in the canonical order. 

Below, we will show that the block which $1$ belongs to is the same in 
$c(\varphi(\pi))$ and $\beta(\pi)$.
Let $L(T)$ be the labeled binary tree corresponding to $\pi$ and $\varphi(\pi)$.
Let $S$ be the set of labels in $L(T)$ satisfying the following conditions:
\begin{enumerate}
\item An element $s\in S$ is in the left subtree whose root is the node 
labeled $1$.
\item 
An element $s\in S$ is connected to the parent node by a right edge 
and it does not have a right child.
\item 
Let $s_{1}<s_{2}$ be the labels of the nodes which satisfy the above 
two conditions. 
If $s_2$ is left to $s_{1}$ in $L(T)$, two elements satisfy 
$s_{1},s_{2}\in S$.
If $s_{2}$ is right to $s_{1}$ in $L(T)$, 
we have $s_2\in S$ and $s_{1}\notin S$. 
\end{enumerate}
Then, by the definition of $c$ and $\beta$, 
the set $S\cup\{1\}$ forms the same block in both $c(\varphi(\pi))$ and $\beta(\pi)$.

From these observations, we have $c(\varphi(\pi))=\beta(\pi)$, which completes the proof.
\end{proof}

Let $\pi\in\mathtt{NCCP}(n;312)$ and $\pi':=\psi\circ\phi^{-1}(\pi)\in\mathtt{NCP}(n)$. 
Let $C(\pi')$ be a circular presentation of $\pi'$.

We say that $\pi'$ is {\it symmetric} if $C(\pi')$ is symmetric along the line 
which passes through two points labeled $1$ and $n/2+1$ 
(resp. the middle of two points labeled $(n+1)/2$ and $(n+3)/2$) 
if $n$ is even (resp. odd).

\begin{prop}
Suppose $\pi'\in\mathtt{NCP}(n)$ is symmetric.
Then, we have $\overline{\pi}=\beta(\pi)$ for $\pi\in\mathtt{NCP}(n;312)$.
\end{prop}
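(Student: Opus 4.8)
The plan is to reduce the claim to a comparison of the two circular constructions defining the involution $\alpha'$ and the Kreweras complement $c$, using the two commutative diagrams already proved. Write $\varphi:=\psi\circ\phi^{-1}$, so $\pi'=\varphi(\pi)\in\mathtt{NCP}(n)$. By Proposition~\ref{prop:combaralpha} one has $\overline{\pi}=\alpha'(\pi')$, and by the preceding proposition (commutativity of the $\beta$-diagram) one has $\beta(\pi)=c(\pi')$; hence it suffices to prove that $\alpha'(\pi')=c(\pi')$ whenever $\pi'$ is symmetric.

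First I would put the two constructions on a common footing. In both $\alpha'$ and $c$ one draws the diagonals of $\pi'$ on the unprimed points $1,\ldots,n$; these diagonals are identical in the two constructions and cut the interior of $C(\pi')$ into the same family of regions $\mathcal{R}$. Then one inserts the $n$ primed points, one into each arc $g_k$ between consecutive unprimed points $k$ and $k+1$ (indices read cyclically, so $g_n$ is the arc between $n$ and $1$), and forms one block from the primed points lying in each region. The sole difference is the labelling of these $n$ points: $c$ places $k'$ into $g_k$, while by the placement rule for $\alpha'$ (namely $i'$ lies between $n+1-i$ and $n+2-i$) the construction of $\alpha'$ places $(n+1-k)'$ into $g_k$. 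It follows that, for every region $R\in\mathcal{R}$, the block that $\alpha'(\pi')$ takes from $R$ is the elementwise image under $k\mapsto n+1-k$ of the block that $c(\pi')$ takes from $R$; thus $\alpha'(\pi')$ is the image of $c(\pi')$ under $k\mapsto n+1-k$, re-sorted into canonical order.

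Next I would use the symmetry of $\pi'$. Let $r$ be the circle reflection from the definition of ``symmetric'': it fixes the point labelled $1$ and acts on the unprimed points by $k\mapsto n+2-k$ (modulo $n$). Since $C(\pi')$ is invariant under $r$, the reflection permutes the diagonals of $\pi'$ among themselves, and hence permutes the regions $\mathcal{R}$ among themselves. On arcs, $r$ sends $g_k$ (between $k$ and $k+1$) to the arc between $n+2-k$ and $n+1-k$, that is, to $g_{n+1-k}$. Therefore the set partition of $\{g_1,\ldots,g_n\}$ given by ``lying in a common region'' is invariant under the involution $g_k\leftrightarrow g_{n+1-k}$; reading this through the labelling $g_k\leftrightarrow k$ used by $c$, it says precisely that the set of blocks of $c(\pi')$ is carried to itself by $k\mapsto n+1-k$. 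Combined with the previous step, $\alpha'(\pi')$ equals the image of $c(\pi')$ under $k\mapsto n+1-k$, which equals $c(\pi')$; hence $\overline{\pi}=\alpha'(\pi')=c(\pi')=\beta(\pi)$.

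The main obstacle I expect is the bookkeeping in the first step: verifying that $\alpha'$ assigns $(n+1-k)'$ (rather than, say, some cyclic shift) to the arc $g_k$, so that the discrepancy between $\alpha'$ and $c$ is exactly the relabelling $k\mapsto n+1-k$, and then checking that the reflection $r$ acts on arcs by the matching involution $g_k\leftrightarrow g_{n+1-k}$. One must also be careful that the invariance obtained in the second step holds only at the level of the \emph{set} of blocks: individual regions need not be fixed by $r$ (they may be swapped), and accordingly the region-to-block correspondences of $\alpha'$ and of $c$ need not agree region by region; it is only after collecting all regions that the two partitions coincide. Finally, passing to canonical order is harmless, and the cases $n=1,2$ are immediate by inspection and are in any case subsumed by the general argument.
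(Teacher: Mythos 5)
Your proposal is correct and takes essentially the same route as the paper's (much terser) two-sentence proof: reduce via the two commutative diagrams to comparing $\alpha'(\pi')$ with $c(\pi')$, and observe that the only difference between the two circular constructions is the counterclockwise versus clockwise placement of the primed points, which the symmetry of $C(\pi')$ neutralizes. Your write-up simply makes explicit what the paper leaves implicit — that the discrepancy is exactly the relabelling $k\mapsto n+1-k$ and that the defining reflection sends the arc $g_k$ to $g_{n+1-k}$, hence permutes the blocks of $c(\pi')$ accordingly.
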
 
\begin{proof}
Recall that the primed integers in the circular presentation are placed 
counter-clockwise for $\overline{\pi}$ and clockwise for $\beta(\pi)$.
Since $\pi'$ is symmetric, the connectivity of the primed integers in 
$\overline{\pi}$ is the same as that in $\beta(\pi)$.
\end{proof}

\begin{remark}
The involution $\alpha'$ studied in \cite{SimUll91} and the complement map 
$c$ defined in \cite{Kre72} are both skew-automorphisms.
Biane showed that the group of these skew-automorphisms is isomorphic 
to the dihedral group in \cite{Bia97}. 
\end{remark}

\subsection{Enumeration in Kreweras lattice}
Given a permutation $\omega:=\omega_1\ldots\omega_{n}$, we define the value $\mathrm{inv}(\omega)$
by 
\begin{align*}
\mathrm{inv}(\omega):=\{(i,j) | w_{i}>w_{j}, i<j \}.
\end{align*}
By regarding $\pi\in\mathtt{NCCP}(n)$ as a permutation $\omega(\pi)$ (by forgetting ``/" in one-line notation),
we define $\mathrm{inv}(\pi):=\mathrm{inv}(\omega(\pi))$.

We consider a multivariate generating function of $\mathtt{NCCP}(n;312)$:
\begin{align*}
C(r,t,q):=1+\sum_{n\ge1}\sum_{\pi\in\mathtt{NCCP}(n;312)}r^{n}t^{\rho'(\pi)}q^{\mathrm{inv}(\pi)},
\end{align*}
where $\rho'(\pi)=n-\rho(\pi)$, i.e., the number of blocks in $\pi$.
The first few terms of $C(r,t,q)$ are as follows.
\begin{align*}
C(r,t,q)=1+ r t +r^{2}(t+t^2q) +r^{3}(t+t^2(2q+q^2)+t^3q^3)+\cdots.
\end{align*}

\begin{prop}
\label{prop:C1}
The generating function $C(r,t,q)$ satisfies the following 
recurrence equation.
\begin{align}
\label{eq:recC1}
C(r,t,q)=1+tr C(rq,t,q)+C(rq,t,q)(C(r,t,q)-1)r.
\end{align}
\end{prop}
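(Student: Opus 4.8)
The plan is to transport everything to the permutation side. Under the bijection $\phi$ of Section~\ref{sec:lbt}, an element of $\mathtt{NCCP}(n)$ is read off, in one-line notation, as a permutation in $\mathcal{S}_{n}$, and the defining condition of $\mathtt{NCCP}(n;312)$ translates exactly into the absence of a $312$-pattern in that word: any three positions $a<b<c$ with $w_{b}<w_{c}<w_{a}$ automatically lie in three blocks with indices $i<j\le k$, because two entries in a common block appear in increasing order. Under this identification $\rho'(\pi)$ is $1+\mathrm{des}(w)$, one plus the number of descents of the word, and $\mathrm{inv}(\pi)$ is the ordinary inversion number; both are now purely combinatorial functions of $312$-avoiding words. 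So it suffices to establish \eqref{eq:recC1} for $C(r,t,q)=1+\sum_{w}r^{|w|}t^{1+\mathrm{des}(w)}q^{\mathrm{inv}(w)}$, the sum over nonempty $312$-avoiding permutations $w$.

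First I would set up the classical decomposition of a $312$-avoiding permutation at the position of its minimal entry. Write $w=L\,1\,R$, with $1$ in position $|L|+1$. The key observation is that $312$-avoidance forces every entry of $L$ to be smaller than every entry of $R$: if $w_{i}>w_{j}$ with $i<|L|+1<j$, then $(w_{i},1,w_{j})$ is a $312$-pattern. Hence $L$ is a permutation of $\{2,\dots,|L|+1\}$ and $R$ of $\{|L|+2,\dots,n\}$; and, conversely, gluing any $312$-avoiding $L$ on $\{2,\dots,k+1\}$ and $R$ on $\{k+2,\dots,n\}$ around a central $1$ produces a $312$-avoiding word, since the central $1$ can only play the role of the ``$1$'' of a pattern (which would force a larger entry of $L$ to the left of a smaller entry of $R$, impossible as $L<R$ entrywise), and likewise no pattern can straddle the $L$/$R$ boundary. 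Thus $w\mapsto(\mathrm{st}(L),\mathrm{st}(R))$ is a bijection from nonempty $312$-avoiding permutations of $[n]$ onto pairs of (possibly empty) $312$-avoiding permutations of total size $n-1$.

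Next I would record the behaviour of the three statistics under this splitting. Size is additive plus one: $|w|=|L|+|R|+1$. Inversions: those internal to $L$ and to $R$ give $\mathrm{inv}(L)+\mathrm{inv}(R)$; the entry $1$ inverts with each of the $|L|$ entries of $L$ and with no entry of $R$; and there is no inversion between $L$ and $R$ since $L<R$ entrywise; hence $\mathrm{inv}(w)=\mathrm{inv}(L)+\mathrm{inv}(R)+|L|$. Descents: $L$ and $R$ retain their own descents, there is a descent immediately before $1$ precisely when $L\neq\emptyset$, and never a descent immediately after $1$; so $1+\mathrm{des}(w)$ equals $(1+\mathrm{des}(L))+(1+\mathrm{des}(R))$ when $R\neq\emptyset$ (the block of $1$ merges into the leading block of $R$), equals $(1+\mathrm{des}(L))+1$ when $R=\emptyset\neq L$, and equals $1$ when $L=R=\emptyset$.

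Finally I would sum $r^{|w|}t^{1+\mathrm{des}(w)}q^{\mathrm{inv}(w)}$ over nonempty $w$, splitting into the four cases by the emptiness of $L$ and $R$. The case $L=R=\emptyset$ contributes $rt$; the case $L=\emptyset\neq R$ contributes $r\bigl(C(r,t,q)-1\bigr)$; the case $\emptyset\neq L$, $R=\emptyset$ contributes $rt\bigl(C(rq,t,q)-1\bigr)$, where the factor $q$ picked up by each entry of $L$ from its inversion with $1$ is exactly the substitution $r\mapsto rq$; and the case $L,R\neq\emptyset$ contributes $r\bigl(C(rq,t,q)-1\bigr)\bigl(C(r,t,q)-1\bigr)$ for the same reason. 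Adding the constant term $1$ and expanding, the terms $rt$, $rC(r,t,q)$ and $r$ cancel in pairs, leaving $C(r,t,q)=1+tr\,C(rq,t,q)+r\,C(rq,t,q)\bigl(C(r,t,q)-1\bigr)$, which is \eqref{eq:recC1}. The only real obstacle is the bookkeeping at the boundary and the corner cases — counting blocks correctly according as the entry $1$ is absorbed by $R$'s leading block or stands alone, and assembling the four empty/nonempty pieces without over- or under-counting; checking that the splitting is a genuine bijection in both directions is routine.
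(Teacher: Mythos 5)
Your proof is correct and follows essentially the same route as the paper: decompose the $312$-avoiding word at the position of the entry $1$, record how $|w|$, $\mathrm{inv}$ and the block count split (with the $r\mapsto rq$ substitution absorbing the $|L|$ inversions against $1$), and assemble the recurrence. You simply carry out the four-case summation and the cancellation explicitly where the paper leaves them as "from these observations."
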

\begin{proof}
Let $\pi\in\mathtt{NCCP}(n;312)$ and $\omega:=\omega_1\ldots\omega_n$ 
be the corresponding permutation to $\pi$.
Suppose $\omega_i=1$.
Since $\omega$ is a $312$-pattern avoiding permutation, 
$\omega_{l}:=\omega_{1}\ldots\omega_{i-1}$ is a permutation in $[1,i-1]$ and 
$\omega_{r}:=\omega_{i+1}\ldots\omega_{n}$ is a permutation in $[i+1,n]$.
Thus, $\omega_{i}$ divides $\omega$ into two permutations.
Note that we have 
\begin{align*}
&\mathrm{inv}(\omega)=\mathrm{inv}(\omega_{l})+\mathrm{inv}(\omega_{r})+i-1, \\
&\rho'(\omega)=
\begin{cases}
\rho'(\omega_{l})+1, & \omega_{r}=\emptyset, \\
\rho'(\omega_{l})+\rho'(\omega_{r}), & \omega_{r}\neq\emptyset.
\end{cases}
\end{align*}
The shift $r\rightarrow rq$ gives $\mathrm{inv}(\omega)\rightarrow\mathrm{inv}(\omega)+n$, where 
$n$ is the size of the permutation $\omega$.

From these observations, we have the recurrence equation (\ref{eq:recC1}).
\end{proof}

From Proposition \ref{prop:C1}, $C(r,t,q)$ has an expression
in terms of a continued fraction:
\begin{align*}
C(r,t,q)&=1-\cfrac{rt}{r-\cfrac{1}{C(rq,t,q)}}, \\
&=1-\cfrac{rt}{r-\cfrac{1}{1-\cfrac{rtq}{rq-\cfrac{1}{1-\cdots}}}}.
\end{align*}

Note that the recurrence relation (\ref{eq:recC1}) is similar to 
that of Catalan numbers.
Actually, the number of non-crossing partitions is given by 
the Catalan numbers.
If we set $t=q=1$ in Eq. (\ref{eq:recC1}), we obtain the generating 
function of Catalan numbers.
We will rewrite the formal power series $C(r,t,q)$ in terms of 
another combinatorial object which is counted by Catalan numbers. 

To rewrite $C(r,t,q)$, we briefly introduce the notion of Dyck paths. 

\begin{defn}
A {\it Dyck} path of size $n$ is a lattice path from $(0,0)$ to (2n,0) 
with up steps $(1,1)$ and down steps $(1,-1)$, which never goes below
the horizontal line $y=0$.  
\end{defn}

It is well-known that the number of Dyck paths of size $n$ is given by 
the $n$-th Catalan number.	

We write an up step $U$ and a down step $D$. 
Then, we have five Dyck paths of size $3$:
\begin{align*}
UDUDUD, \quad UDUUDD, \quad UUDDUD, \quad UUDUDD, \quad UUUDDD.
\end{align*} 
The lowest path and the highest path are given by $(UD)^{n}$ and $U^{n}D^{n}$ respectively.
Let $d$ be a Dyck path of size $n$.
A pattern $UD$ in a Dyck path $d$ is called a {\it peak}. 
We denote by $\mathrm{Peak}(d)$ the number of peaks in $d$. 
We denote by $\mathrm{Area}(d)$ the number of squares which are above the lowest path
and $d$.
For example, the Dyck path $d=UUDDUD$ has two peaks and $\mathrm{Area}(d)=1$.

The generating function $C(r,t,q)$ can be expressed as a generating function 
of Dyck paths:
\begin{prop}
\label{prop:CinDyck}
We have 
\begin{align*}
C(r,t,q)=\sum_{n\ge0} r^{n}\sum_{d\in\mathtt{Dyck}(n)}t^{n+1-\mathrm{Peak}(d)}q^{\mathrm{Area}(d)}.
\end{align*}
\end{prop}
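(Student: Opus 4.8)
The plan is to prove the identity by showing that the generating function on the right-hand side satisfies the same recurrence \eqref{eq:recC1} as $C(r,t,q)$, and then invoking uniqueness of the formal power series solution. Write $D(r,t,q)$ for the right-hand side (with the $n=0$ summand, as in the definition of $C$, understood to be $1$), and set $D^{*}(r,t,q):=D(r,t,q)-1$, the contribution of the nonempty Dyck paths. The combinatorial engine will be the \emph{first-return decomposition}: every nonempty Dyck path $d$ factors uniquely as $d=Ud_{1}Dd_{2}$ with $d_{1},d_{2}$ possibly empty Dyck paths, where the displayed $U$ and $D$ are matched steps. Writing $|d|$ for the size (semilength) of $d$, this gives $|d|=|d_{1}|+1+|d_{2}|$, in analogy with the splitting $\omega=\omega_{l}\cdot 1\cdot\omega_{r}$ used in the proof of Proposition~\ref{prop:C1}.

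First I would record the behaviour of $\mathrm{Area}$ and $\mathrm{Peak}$ under this decomposition. Wrapping $d_{1}$ in $UD$ raises the whole sub-path one level, adding exactly $|d_{1}|$ unit squares, so $\mathrm{Area}(Ud_{1}D)=\mathrm{Area}(d_{1})+|d_{1}|$; concatenation at ground level is additive on area; hence $\mathrm{Area}(d)=\mathrm{Area}(d_{1})+|d_{1}|+\mathrm{Area}(d_{2})$. Thus $|d_{1}|$ plays the role of the $i-1$ inversions contributed by the letter $1$, and the substitution $r\mapsto rq$ corresponds to raising a sub-path by one level, just as in Proposition~\ref{prop:C1}. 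For peaks, no peak is created at the junction between $Ud_{1}D$ and $d_{2}$ (the path reads $DU$ there), while the wrapping creates a new peak only when $d_{1}=\emptyset$ (then $UD$ is itself a peak); so $\mathrm{Peak}(Ud_{1}D)=\mathrm{Peak}(d_{1})$ if $d_{1}\neq\emptyset$, $\mathrm{Peak}(UD)=1$, and in all cases $\mathrm{Peak}(d)=\mathrm{Peak}(Ud_{1}D)+\mathrm{Peak}(d_{2})$.

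Next I would sum the weight $r^{|d|}t^{|d|+1-\mathrm{Peak}(d)}q^{\mathrm{Area}(d)}$ over nonempty $d$, splitting on whether $d_{2}$ is empty, mirroring the case split on whether $\omega_{r}$ is empty. If $d_{2}=\emptyset$, then $d=Ud_{1}D$, and separating $d_{1}=\emptyset$ (weight $rt$) from $d_{1}\neq\emptyset$ (weight $rt\cdot(rq)^{|d_{1}|}t^{|d_{1}|+1-\mathrm{Peak}(d_{1})}q^{\mathrm{Area}(d_{1})}$, summing to $rt\,D^{*}(rq,t,q)$) gives total $rt\,D(rq,t,q)$. If $d_{2}\neq\emptyset$, the weight of $d$ factors as
\[
\bigl(r\,(rq)^{|d_{1}|}\,t^{|d_{1}|+1-\mathrm{Peak}(Ud_{1}D)}\,q^{\mathrm{Area}(d_{1})}\bigr)\cdot\bigl(r^{|d_{2}|}\,t^{|d_{2}|+1-\mathrm{Peak}(d_{2})}\,q^{\mathrm{Area}(d_{2})}\bigr),
\]
and summing the first factor over all $d_{1}$ (the empty one contributing $r$, the nonempty ones $r\,D^{*}(rq,t,q)$) yields $r\,D(rq,t,q)$, while the second factor summed over nonempty $d_{2}$ yields $D^{*}(r,t,q)$. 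Adding the two cases gives $D^{*}(r,t,q)=rt\,D(rq,t,q)+r\,D(rq,t,q)\,D^{*}(r,t,q)$, which upon substituting $D^{*}=D-1$ is precisely recurrence \eqref{eq:recC1} for $D$.

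Finally, since $C$ and $D$ both have constant term $1$ in $r$ and both satisfy \eqref{eq:recC1} — whose right-hand side determines the coefficient of $r^{n}$ from the coefficients of $r^{m}$ with $m<n$ (the term $trC(rq,t,q)$ raises the $r$-degree by one, and $rC(rq,t,q)(C(r,t,q)-1)$ is $r$ times a product of two series with vanishing constant term) — a routine induction on the $r$-degree gives $C=D$. The only genuinely delicate point, I expect, is getting the peak bookkeeping right near empty sub-paths: because the empty Dyck path carries weight $t$ rather than $1$, the ``$+1$'' in the exponent of $t$ has to be distributed correctly, which is exactly what the sub-divisions $d_{1}=\emptyset$ vs.\ $d_{1}\neq\emptyset$ and $d_{2}=\emptyset$ vs.\ $d_{2}\neq\emptyset$ accomplish; the remainder is bookkeeping parallel to the proof of Proposition~\ref{prop:C1}.
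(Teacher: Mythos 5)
Your proof is correct and takes essentially the same approach as the paper: the paper's own argument decomposes a nonempty Dyck path as $d_1\circ d_2$ with $d_1$ primitive (equivalently, the first-return factorization $Ud_1Dd_2$ you use) and asserts that this translates into recurrence (\ref{eq:recC1}) for the right-hand side. You merely supply the $\mathrm{Area}$/$\mathrm{Peak}$ bookkeeping and the uniqueness-of-solution step that the paper leaves implicit.
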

\begin{proof}
From the definition of a Dyck path $d$, $d$ touches the horizontal line $y=0$ at least once 
except the origin $(0,0)$.
If the path $d$ touches once, $d$ cannot be decompose into a concatenation of 
two Dyck paths. On the other hand, if $d$ touches more than twice, 
$d$ can be decomposed into a concatenation of two Dyck paths $d_1\circ d_2$,
where $d_1$ touches the horizontal line once.
If we translate this decomposition of a Dyck path into a recurrence 
relation for $C(r,t,q)$, one sees that $C(r,t,q)$ satisfies Eq. (\ref{eq:recC1}).
This completes the proof.
\end{proof}

From Proposition \ref{prop:CinDyck}, it is clear that 
the rank function $\rho$ reflects the number of peaks in a Dyck path, 
and the inversion number of $\pi$ reflects the area of the Dyck path.

\subsection{Shellability}
We introduce the notion of shellability of a poset $P$ following \cite{Bjo80,BjoGarSta82,BjoWac83}.
We denote by $C(P)$ the covering relations, that is, 
$C(P):=\{(x,y)\in P\times P| x<y\}$.
An {\it edge-labeling} of $P$ is a map $\lambda: C(P)\rightarrow \Lambda$, 
where $\Lambda$ is some poset. 
In this paper, since we consider an integer labeling of 
the poset $P$, we let $\Lambda=\mathbb{N}$.
We assign an integer to an each edges of the Hasse diagram of $P$.
Let $c:=x_{0}<x_{1}<\ldots<x_{k}$ be an unrefinable chain in the poset.
Then, an edge-labeling $\lambda$ is called {\it rising}
if $\lambda(x_{0},x_{1})\le\lambda(x_{1},x_{2})\le\ldots\le\lambda(x_{k-1},x_{k})$.
 
\begin{defn}[\cite{Bjo80,BjoGarSta82,BjoWac83}]
\label{defn:Llabeling}
We define an $R$-labeling and $EL$-labeling as follows.
\begin{enumerate}
\item An edge-labeling $\lambda$ is an $R$-labeling if any interval $[x,y]$ in $P$,
there exists a unique unrefinable chain $c: x=x_{0}<x_{1}<\ldots<x_{k}=y$ whose 
edge-labeling is rising.
\item $\lambda$ is called an $EL$-labeling if 
\begin{enumerate}
\item $\lambda$ is an $R$-labeling, 
\item for every interval $[x,y]$, these is a unique unrefinable chain $c$ and 
if $x<z\le y$, $z\not=x_{1}$, then $\lambda(x,x_{1})<\lambda(x,z)$.
\end{enumerate}
\end{enumerate}
\end{defn}

The condition (2b) implies that the labeling of the unique rising chain $c:x<x_1<\ldots<x_{k}=y$
for an interval $[x,y]$ is lexicographically first compared to other unrefinable chains.

\begin{defn}
A poset is {\it lexicographically shellable} ($EL$-shellable for short) 
if is is graded and admits an $EL$-labeling.
\end{defn}

\begin{prop}[Theorem 2.3 in \cite {Bjo80}]
\label{prop:lsp}
If the poset $P$ is a lexicographically shellable poset, then $P$ is shellable.
\end{prop}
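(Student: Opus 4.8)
The plan is to exhibit an explicit shelling order on the facets of the order complex $\Delta(\overline{P})$, where $\overline{P}:=P\setminus\{\hat{0},\hat{1}\}$ and, since $P$ is graded and bounded, the facets of $\Delta(\overline{P})$ are precisely the sets $\overline{m}:=\{x_{1},\dots,x_{n-1}\}$ coming from the maximal chains $m\colon\hat{0}=x_{0}\lessdot x_{1}\lessdot\cdots\lessdot x_{n}=\hat{1}$ of $P$. To each such $m$ attach its label word $\lambda(m):=(\lambda(x_{0},x_{1}),\dots,\lambda(x_{n-1},x_{n}))$, fix any linear order $\prec$ on the maximal chains refining the lexicographic order on label words, and enumerate the facets as $\overline{m_{1}}\prec\overline{m_{2}}\prec\cdots$. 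I would prove that $\prec$ is a shelling order; this gives shellability of $\Delta(\overline{P})$, that is, shellability of $P$.

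First I would establish the one structural fact that makes everything work: for every interval $[x,y]$ of $P$, the unique rising maximal chain of $[x,y]$ has label word strictly lexicographically smaller than that of any other maximal chain of $[x,y]$ (``strictly'' meaning the rising chain already wins at the first coordinate of disagreement). This follows by induction on the length of $[x,y]$, using the $R$-labeling part of Definition~\ref{defn:Llabeling} to produce the rising chain, condition (2b) for the comparison at the first step, and the inductive hypothesis on the interval above it. As an immediate consequence, $m_{1}$ is the globally rising chain, so every $m\neq m_{1}$ has a \emph{descent}: an index $i$ with $\lambda(x_{i-1},x_{i})>\lambda(x_{i},x_{i+1})$. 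Write $\mathrm{Des}(m)$ for the set of such $i$, and note $\mathrm{Des}(m)\neq\emptyset$ whenever $m\neq m_{1}$.

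Next, for a fixed $m\neq m_{1}$ put $\Delta_{\prec m}:=\bigcup_{m'\prec m}\overline{m'}$; the shelling condition requires that $\overline{m}\cap\Delta_{\prec m}$ be non-empty and pure of codimension $1$ in $\overline{m}$. For $i\in\mathrm{Des}(m)$ the rank-$2$ interval $[x_{i-1},x_{i+1}]$ is not rising through $x_{i}$, so by the structural fact there is an atom $x_{i}'\neq x_{i}$ with $\lambda(x_{i-1},x_{i}')<\lambda(x_{i-1},x_{i})$; replacing $x_{i}$ by $x_{i}'$ yields a maximal chain $\widehat{m}\prec m$ with $\overline{\widehat{m}}\cap\overline{m}=\overline{m}\setminus\{x_{i}\}$, so each $\overline{m}\setminus\{x_{i}\}$ with $i\in\mathrm{Des}(m)$ lies in $\Delta_{\prec m}$, which is therefore non-empty. (Conversely, if $j\notin\mathrm{Des}(m)$ then $[x_{j-1},x_{j+1}]$ is rising through $x_{j}$, so by (2b) every flip at $j$ produces a \emph{larger} first label, and no chain $\prec m$ contains $\overline{m}\setminus\{x_{j}\}$.) For purity it remains to show that every proper face $\tau\subsetneq\overline{m}$ with $\tau\in\Delta_{\prec m}$ satisfies $\tau\subseteq\overline{m}\setminus\{x_{j}\}$ for some $j\in\mathrm{Des}(m)$, i.e.\ $\tau$ misses a descent vertex. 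If not, $\tau$ contains every $x_{j}$ with $j\in\mathrm{Des}(m)$; then the restriction of $m$ to each interval $[z_{t},z_{t+1}]$ of the chain $\tau\cup\{\hat{0},\hat{1}\}$ is rising, since an internal descent of such a restriction would be a descent vertex of $m$ lying strictly inside $[z_{t},z_{t+1}]$ and hence \emph{not} in $\tau$. Choosing $m'\prec m$ with $\tau\subseteq\overline{m'}$, the chains $m$ and $m'$ agree at all the $z_{t}$ but differ somewhere, so they first differ strictly inside one interval $[z_{t},z_{t+1}]$; there $m|_{[z_{t},z_{t+1}]}$ is rising, hence strictly lex-first by the structural fact, which forces $\lambda(m)\prec\lambda(m')$ and contradicts $m'\prec m$. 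Thus $\overline{m}\cap\Delta_{\prec m}$ is exactly the union of the codimension-$1$ faces $\overline{m}\setminus\{x_{j}\}$, $j\in\mathrm{Des}(m)$, which is pure of codimension $1$ and non-empty, so $\prec$ is a shelling.

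The routine parts (the bijection between maximal chains and facets, the bookkeeping with label words) are harmless; the one step that really requires care is the purity argument in the last paragraph, where one must rule out that $\overline{m}\cap\Delta_{\prec m}$ is strictly larger than that union. It is precisely there that the $EL$-hypothesis is used in full: the strict inequality $\lambda(x,x_{1})<\lambda(x,z)$ of condition (2b), not merely the uniqueness of rising chains, is what forces the rising restriction $m|_{[z_{t},z_{t+1}]}$ to be \emph{strictly} lex-first and thereby produces the contradiction.
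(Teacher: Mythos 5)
The paper does not prove this proposition — it is quoted directly from Theorem 2.3 of \cite{Bjo80} — so there is no in-paper argument to compare against. Your proof is correct and is essentially Bj\"orner's original one: the lexicographic order on label words is a shelling order, the restriction of each facet $\overline{m}$ being the union of the codimension-one faces indexed by the descents of $\lambda(m)$, with the strict lex-minimality of rising chains (your ``structural fact'', proved by induction from condition (2b)) doing all the work; the only spot deserving a word more care is the purity step, where that fact should be invoked on the subinterval beginning at the last element common to $m$ and $m'$ rather than at $z_t$ itself, but your formulation (``wins at the first coordinate of disagreement'') already covers this.
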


\begin{prop}
\label{prop:NCCPLlabeling}
The poset $\mathcal{L}_{NCCP}$ admits an $EL$-labeling.
\end{prop}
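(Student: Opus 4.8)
The plan is to construct an explicit edge-labeling $\lambda$ of the Hasse diagram of $\mathcal{L}_{NCCP}(n)$ directly from the rotation data attached to each cover, and then to verify the two conditions of Definition~\ref{defn:Llabeling}. Recall from Section~\ref{sec:covNCCP} that every cover $\pi\lessdot\pi'$ arises from the labeled binary tree of $\pi$ by the rotation attached to a \emph{unique} triplet $\nu=(\mathfrak{n},\mathfrak{n}_{a},\mathfrak{m}(S))$. Thus I would set
\begin{align*}
\lambda(\pi,\pi'):=(L(\mathfrak{n}_{a}),\,L(\mathfrak{n})),
\end{align*}
i.e.\ record the node $\mathfrak{n}_{a}$ to which the flipped left edge is re-attached and then the node $\mathfrak{n}$ being moved, further refined by (enough of) the data of $\mathfrak{m}(S)$ so that the assignment is injective on the covers of any fixed element; the values are ordered by the induced lexicographic order, which is also the order in which such a label is to be refined into a (parking function, permutation) pair in Section~\ref{sec:lpk}. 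Since $\mathcal{L}_{NCCP}(n)$ is already graded and bounded by Corollary~\ref{cor:NCCPgbl}, it remains to show that $\lambda$ is an $R$-labeling and satisfies condition~(2b) of Definition~\ref{defn:Llabeling}; $EL$-shellability, and with it Proposition~\ref{prop:lsp}, then follows.

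First I would give a closed description of the candidate rising chain of an arbitrary interval $[\pi,\pi']$: among all $\sigma$ with $\pi\lessdot\sigma\le\pi'$ choose the one, say $\sigma_{\min}$, with smallest $\lambda(\pi,\sigma)$, and iterate from $\sigma_{\min}$. Two things must be checked. First, this greedy step stays inside $[\pi,\pi']$ and reaches $\pi'$ after $\rho(\pi')-\rho(\pi)$ steps; I would deduce this from the explicit join/meet construction of Definition~\ref{defn:joinmeet} together with Lemma~\ref{lemma:lbtLR}, since the relation $\sigma\le\pi'$ translates into inclusions of the sets $\mathfrak{R}(i;\cdot)$, and the minimal-label rotation is precisely the one that enlarges the most constrained such set as little as possible, hence never leaves the interval. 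Second, the labels along the greedy chain are weakly increasing --- a local statement comparing two successive rotations of the same tree.

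The core of the proof, and \textbf{the step I expect to be the main obstacle}, is uniqueness together with lexicographic minimality: I must show that the greedy chain is the \emph{only} maximal chain of $[\pi,\pi']$ with weakly increasing labels and that its label word is lexicographically least among all maximal chains of $[\pi,\pi']$. The natural mechanism is an exchange (``diamond'') lemma for rotations: if $\sigma_{1}\neq\sigma_{2}$ are atoms of $[\pi,\pi']$ with $\lambda(\pi,\sigma_{1})<\lambda(\pi,\sigma_{2})$, then every maximal chain of $[\pi,\pi']$ beginning with $\pi\lessdot\sigma_{2}$ must, at some later step, use a cover of label $\le\lambda(\pi,\sigma_{1})$, hence cannot be rising. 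Establishing this requires a careful analysis of how two rotations of one labeled tree interact --- in particular how their left-extended sequences and triplet data overlap --- and is the genuinely technical heart; granting it, uniqueness follows by induction on $\rho(\pi')-\rho(\pi)$ (the unique rising chain of $[\pi,\pi']$ is $\pi\lessdot\sigma_{\min}$ followed by the unique rising chain of $[\sigma_{\min},\pi']$, whose first label is $\ge\lambda(\pi,\sigma_{\min})$ by the second point above), and lexicographic minimality follows because the greedy chain realizes at every rank the smallest label available in the interval. Finally, as a consistency check and a base case for the exchange lemma --- and because it is needed for the later M\"obius and chain computations --- I would verify that, under the isomorphism of Proposition~\ref{prop:312Kre}, the restriction of $\lambda$ to the sublattice $\mathtt{NCCP}(n;312)$ reduces to a variant of the known $EL$-labeling of the Kreweras lattice of \cite{Bjo80,EdelSim94}.
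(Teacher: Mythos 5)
Your labeling is essentially the one the paper uses: the paper's $\widetilde{L}$-labeling records exactly $l(\mathfrak{n}_a)$ on a cover $\pi\lessdot\pi'$ (the pair $(l(\mathfrak{n}_a),l(\mathfrak{n}))$ only reappears later as the refinement feeding the M\"obius and maximal-chain counts), and the actual $EL$-labeling is the order-reversed quantity $L(\pi,\pi')=n+\rho'(\pi)-\widetilde{L}(\pi,\pi')$, so that the unique rising chain of an interval is the one along which $l(\mathfrak{n}_a)$ \emph{strictly decreases}. This reversal is not cosmetic: with the raw values ordered increasingly, the chains with weakly increasing $l(\mathfrak{n}_a)$ are the plentiful ones (from $\hat{0}$ to $\hat{1}$ there are $(2n-3)!!$ of them --- that count \emph{is} the M\"obius computation of Theorem \ref{thrm:moebius}), so your greedy ``smallest label first'' chain would not be unique. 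You would need to either reverse the order on labels or run the greedy construction from the top of the interval downward.

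The more serious problem is that the genuine content of the proposition --- that each interval $[x,y]$ contains exactly one maximal chain whose labels are monotone in the right sense, and that this chain is lexicographically first --- is exactly the exchange/diamond lemma that you name, defer, and then use under the phrase ``granting it.'' As submitted this is an outline, not a proof: everything after that point (induction on $\rho(\pi')-\rho(\pi)$, lexicographic minimality, the consistency check against the Kreweras lattice) is downstream of the unproven step. For comparison, the paper discharges this step by a local analysis of the triplet $\nu=(\mathfrak{n},\mathfrak{n}_a,\mathfrak{m}(S))$: it identifies, at each element $x_i$ of a chain, the node $\mathfrak{n}_{\mathrm{max}}(x_i)$ of maximal label in $\mathfrak{m}(S)$ attached to its child by a left edge, shows $l(\mathfrak{n}_{\mathrm{max}}(x_i))>l(\mathfrak{n}_{\mathrm{max}}(x_{i+1}))$ along any unrefinable chain, and derives both the existence and the uniqueness of the decreasing chain (and condition (2b) of Definition \ref{defn:Llabeling}) from the uniqueness of $\mathfrak{n}_{\mathrm{max}}$ at each step. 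Some such concrete analysis of how two rotations of the same labeled tree interact must be supplied before your argument counts as a proof.
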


To prove Proposition \ref{prop:NCCPLlabeling}, we will construct an $EL$-labeling 
on $\mathcal{L}_{NCCP}$.

Let $L(T)$ be a labeled tree and $\nu:=(\mathfrak{n},\mathfrak{n}_a,\mathfrak{m}(S))$ 
be the triplet constructed from $L(T)$ as in Section \ref{sec:covNCCP}.
We denote by $l(\mathfrak{n})$ the label of the node $\mathfrak{n}$ in $L(T)$.

\begin{defn}
If $L(T')$ covers $L(T)$ in the lattice $\mathcal{L}_{NCCP}$, that is, 
$\phi(L(T'))$ covers $\phi(L(T))$, 
we put a label $l(\mathfrak{n}_a)$ on the edge connecting $L(T)$ and $L(T')$.
We call this labeling $\widetilde{L}$-labeling.
\end{defn}

\begin{example}
An explicit $\widetilde{L}$-labeling of the lattice $\mathcal{L}_{NCCP}(3)$ is 
shown in Figure \ref{fig:LlabelNCCP3}. 
\begin{figure}[ht]
\begin{tikzpicture}
\node (0) at (0,0) {$123$}; 
\node (1) at (-3,-2){$13/2$};
\node (2) at (-1,-2){$2/13$};
\node (3) at (1,-2){$23/1$};
\node (4) at (3,-2){$3/12$};
\node (5) at (0,-4){$3/2/1$};
\draw[anchor=south east] (0.south) to node{$2$} (1.north);
\draw[anchor=east] (0.south) to node{$1$} (2.north);
\draw[anchor=west] (0.south) to node{$1$} (3.north);
\draw[anchor=south west] (0.south) to node{$1$} (4.north);
\draw[anchor=north east] (5.north) to node {$1$}(1.south);
\draw[anchor=east] (5.north) to node {$1$}(2.south);
\draw[anchor=west] (5.north) to node {$2$}(3.south);
\draw[anchor=north west] (5.north) to node {$1$}(4.south);
\end{tikzpicture}
\caption{$\widetilde{L}$-labeling on the lattice $(\mathtt{NCCP}(3),\le)$}
\label{fig:LlabelNCCP3}
\end{figure}
For example, we have a label $2$ on the edge from $3/2/1$ to $23/1$.
This is because the triplet $\nu$ is given by $\nu=(3,2,\emptyset)$.
For other edges from $3/2/1$, we have a label $1$ since $l(\mathfrak{n}_a)=1$.
\end{example}

We construct an $EL$-labeling from the $\widetilde{L}$-labeling.
\begin{defn}
Suppose that $\pi'$ covers $\pi$ and $\widetilde{L}(\pi,\pi')$ be the 
$\widetilde{L}$-labeling on the edge connecting $\pi$ and $\pi'$ in
the lattice $\mathcal{L}_{NCCP}$. 
The, we define 
\begin{align*}
L(\pi,\pi'):=n+\rho'(\pi)-\widetilde{L}(\pi,\pi'),
\end{align*}
where $\rho'(\pi)$ is the number of blocks in $\pi$.
\end{defn}

\begin{lemma}
\label{lemma:LNCCP}
The edge-labeling $L(\pi,\pi')$ is an $EL$-labeling.
\end{lemma}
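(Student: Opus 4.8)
The plan is to rephrase the statement in terms of the $\widetilde{L}$-labeling, which is combinatorially transparent, and then verify the two defining properties of an $EL$-labeling interval by interval. Along any saturated chain $\pi=\pi_{0}\lessdot\pi_{1}\lessdot\cdots\lessdot\pi_{k}$ the number of blocks drops by one at each step, so $\rho'(\pi_{i-1})=\rho'(\pi_{0})-(i-1)$ and hence
\begin{align*}
L(\pi_{i-1},\pi_{i})=n+\rho'(\pi_{0})-(i-1)-\widetilde{L}(\pi_{i-1},\pi_{i}).
\end{align*}
Therefore $L(\pi_{i-1},\pi_{i})\le L(\pi_{i},\pi_{i+1})$ if and only if $\widetilde{L}(\pi_{i},\pi_{i+1})<\widetilde{L}(\pi_{i-1},\pi_{i})$; that is, a saturated chain is rising for $L$ exactly when its $\widetilde{L}$-labels are strictly decreasing. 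Likewise, for two covers $\pi\lessdot z$ and $\pi\lessdot z'$ one has $L(\pi,z)<L(\pi,z')$ if and only if $\widetilde{L}(\pi,z)>\widetilde{L}(\pi,z')$, since $\rho'(\pi)$ occurs in both. Hence it is enough to prove, for every interval $[\pi,\sigma]$ of $\mathcal{L}_{NCCP}(n)$: \textbf{(A)} there is a unique saturated chain in $[\pi,\sigma]$ along which the $\widetilde{L}$-labels strictly decrease; and \textbf{(B)} if $\pi\lessdot\pi_{1}$ is its first cover, then $\widetilde{L}(\pi,\pi_{1})>\widetilde{L}(\pi,z)$ for every $z\neq\pi_{1}$ with $\pi\lessdot z\le\sigma$.

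I would prove (A) and (B) together by induction on $\rho(\sigma)-\rho(\pi)$, the cases of length $0$ and $1$ being immediate. For the inductive step I would use Lemma \ref{lemma:lbtLR} to work with the labeled binary trees $L(T_{\pi})$, $L(T_{\sigma})$ and the sets $\mathfrak{L}(i;\cdot),\mathfrak{R}(i;\cdot)$, where $\pi\le\sigma$ translates into $\mathfrak{R}(i;\pi)\subseteq\mathfrak{R}(i;\sigma)$ for all $i$, so that the covers $\pi\lessdot z\le\sigma$ are precisely the rotations of $L(T_{\pi})$ that turn one left edge still in disagreement with $L(T_{\sigma})$ into a right edge without introducing a new disagreement. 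For each such $z$ the interval $[z,\sigma]$ is shorter, so by induction it has a unique rising chain whose first label equals $\max\{\widetilde{L}(z,z'):z\lessdot z'\le\sigma\}$ (again by induction). One then has to show that prepending the cover $\pi\lessdot z$ yields a rising chain of $[\pi,\sigma]$ for exactly one $z$, namely the $z=\pi_{1}$ maximizing $l(\mathfrak{n}_{a})=\widetilde{L}(\pi,z)$ among all covers below $\sigma$, and that this maximum is attained by a unique triplet $\nu=(\mathfrak{n},\mathfrak{n}_{a},\mathfrak{m}(S))$; concretely, after performing the maximal rotation every left edge that still has to be removed to reach $\sigma$ should carry a strictly smaller value of $l(\mathfrak{n}_{a})$. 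These assertions are statements (A) and (B) for $[\pi,\sigma]$.

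The technical heart, and what I expect to be the main obstacle, is the bookkeeping of the rotation of Section \ref{sec:covNCCP}: a cover is encoded by a triplet $(\mathfrak{n},\mathfrak{n}_{a},\mathfrak{m}(S))$, and one must track how the extended paths $\overrightarrow{p},\overrightarrow{q}$, the left sets $LS(\cdot)$ and the weakly-left/weakly-right relations transform under a rotation and under composition of two covers --- in particular, which nodes $\mathfrak{n}_{a}\in LS(\mathfrak{n})$ are admissible given the positions of the smaller-labelled paths relative to $p(\mathfrak{n})$. I would make this tractable by first settling (A) and (B) on the sublattice $\mathcal{L}_{\mathrm{Kre}}(n)=(\mathtt{NCCP}(n;312),\le)$, where the covers are the elementary moves of the proof of Lemma \ref{lemma:lat312} (detach the subtree rooted at a node $\ell$ joined to its parent by a right edge and reattach it as a left child of the node $\ell-1$): via the isomorphism of Proposition \ref{prop:312Kre} the restricted $\widetilde{L}$-labeling is a re-encoding of the known $EL$-labeling of the Kreweras lattice due to Bj\"orner and Edelman--Simion, for which (A) and (B) hold. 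One then bootstraps to all of $\mathcal{L}_{NCCP}(n)$ using that a rotation alters only the subtree rooted at the parent $\mathfrak{n}_{0}$ of $\mathfrak{n}$ together with its reattachment, and the decomposition of a partition around its largest letter $n$ used throughout Section \ref{sec:NCCP}, which reduces a general interval to intervals in smaller $\mathcal{L}_{NCCP}(m)$ and to the $312$-avoiding case. The delicate point of that reduction is the compatibility of the greedy rule ``$l(\mathfrak{n}_{a})$ maximal'' with the constraint of staying below $\sigma$: one must check that the highest-labelled admissible rotation neither overshoots $\sigma$ nor ties with a competing cover, and this is exactly where the explicit description of the admissible $\mathfrak{n}_{a}$ from Section \ref{sec:covNCCP} is used.
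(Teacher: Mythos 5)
Your proposal follows essentially the same route as the paper: you first observe that, because $\rho'$ drops by exactly one along a cover, a saturated chain is rising for $L$ precisely when its $\widetilde{L}$-labels are strictly decreasing (and the comparison of covers of a fixed $\pi$ is likewise reversed), and you then reduce the lemma to the existence and uniqueness of a strictly decreasing $\widetilde{L}$-chain in each interval, obtained greedily by always rotating at the admissible node of maximal label --- which is exactly the paper's $\mathfrak{n}_{\mathrm{max}}$ argument for conditions (1) and (2b). The extra scaffolding you add (induction on interval length, first settling the $312$-avoiding sublattice via the Kreweras $EL$-labeling) does not appear in the paper, but the combinatorial verification you defer --- that the maximal admissible rotation below $\sigma$ exists, is unique, and strictly dominates every competing cover --- is precisely the step the paper also asserts rather than spells out.
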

\begin{proof}
By definition of an $\widetilde{L}$-labeling, $L(\pi,\pi')$ is 
an $R$-labeling if $\widetilde{L}(\pi,\pi')$ satisfies the following 
condition: 
For any interval $[x,y]$ in $\mathcal{L}_{NCCP}$, there exists a 
unique unrefinable chain $c:x=x_0<x_1<\ldots<x_{k}=y$
such that
\begin{align}
\label{eq:Lwdec}
\widetilde{L}(x_0,x_1)>\widetilde{L}(x_1,x_2)>\ldots>\widetilde{L}(x_{k-1},x_{k}).
\end{align}

Suppose that $x<y$ in $\mathcal{L}_{NCCP}$. 
Recall that the triplet $\nu=(\mathfrak{n},\mathfrak{n}_{a},\mathfrak{m}(S))$ 
gives a rotation on $x_{i}$.
Let $\mathfrak{n}_{\mathrm{max}}(x_{i})$ be the node with a maximum label in $\mathfrak{m}(S)$ 
such that it is connected to the child node by a left edge.
Since $x_{i+1}$ covers $x_{i}$ in the chain $c$, we have Eq. (\ref{eq:Lwdec}) 
by considering $L(x_{i},x_{i+1})=l(\mathfrak{n}_{\mathrm{max}})$.
The uniqueness is guaranteed by the uniqueness of $\mathfrak{n}_{\mathrm{max}}$ and 
by the fact that $l(\mathfrak{n}_{\mathrm{max}}(x_{i}))>l(\mathfrak{n}_{\mathrm{max}}(x_{i+1}))$	
By the condition (1) in Definition \ref{defn:Llabeling}, $L(x,y)$ is an $R$-labeling. 
 
Further, by construction of $\mathfrak{n}_{\mathrm{max}}$, there exists a unique chain 
$c$ and $\widetilde{L}(x,x_1)> \widetilde{L}(x,z)$ if $x<z\le y$ and $z\neq x_1$.
Therefore, by the condition (2b) in Definition \ref{defn:Llabeling}, $L(x,y)$
is an $EL$-labeling.
This completes the proof.
\end{proof}

\begin{proof}[Proof of Proposition \ref{prop:NCCPLlabeling}]
We have an explicit $EL$-labeling from Lemma \ref{lemma:LNCCP}.
\end{proof}

\begin{theorem}
The lattice $\mathcal{L}_{NCCP}$ is shellable.
\end{theorem}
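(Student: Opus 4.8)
The plan is to assemble the statement directly from the results already established in this section, since the theorem is a formal consequence of them. First I would invoke Corollary~\ref{cor:NCCPgbl}, which guarantees that $\mathcal{L}_{NCCP}$ is a graded bounded lattice; gradedness is the structural hypothesis required by the notion of lexicographic shellability as set out in Definition~\ref{defn:Llabeling} and the surrounding discussion.

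Next I would recall Proposition~\ref{prop:NCCPLlabeling}, which asserts that $\mathcal{L}_{NCCP}$ admits an $EL$-labeling; the explicit such labeling is the map $L(\pi,\pi')=n+\rho'(\pi)-\widetilde{L}(\pi,\pi')$ verified to be an $EL$-labeling in Lemma~\ref{lemma:LNCCP}. Combining gradedness with the existence of an $EL$-labeling shows, by definition, that $\mathcal{L}_{NCCP}$ is lexicographically shellable ($EL$-shellable).

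Finally I would apply Proposition~\ref{prop:lsp} (Theorem~2.3 in \cite{Bjo80}), which states that every lexicographically shellable poset is shellable. This yields the conclusion that $\mathcal{L}_{NCCP}$ is shellable, completing the proof.

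There is no genuine obstacle here: the entire content has been carried out in Lemma~\ref{lemma:LNCCP} and Propositions~\ref{prop:NCCPLlabeling} and~\ref{prop:lsp}, so the only thing to be careful about is citing the correct prerequisites (gradedness from Corollary~\ref{cor:NCCPgbl}, the $EL$-labeling from Proposition~\ref{prop:NCCPLlabeling}) and noting that shellability then follows formally. If anything deserved elaboration it would be a remark that one may in fact conclude more, for instance that the order complex of the proper part of $\mathcal{L}_{NCCP}$ is Cohen--Macaulay, but that is beyond what the stated theorem asks.
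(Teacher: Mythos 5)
Your proposal is correct and follows essentially the same route as the paper: gradedness from Corollary~\ref{cor:NCCPgbl}, the $EL$-labeling from Proposition~\ref{prop:NCCPLlabeling}, and then Proposition~\ref{prop:lsp} to conclude shellability. Nothing is missing.
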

\begin{proof}
From Corollary \ref{cor:NCCPgbl}, the poset $\mathcal{L}_{NCCP}$ is graded.
From Proposition \ref{prop:NCCPLlabeling}, $\mathcal{L}_{NCCP}$ admits 
an $EL$-labeling.
From Poposition \ref{prop:lsp}, $\mathcal{L}_{NCCP}$ is shellable, 
which completes the proof.
\end{proof}

\begin{remark}
The $EL$-shellability is a stronger condition compared to chain lexicographically shellability ($CL$-shellablity) \cite{BjoWac83}
 and standard notion of shellability.
One of important consequences of shelability of a poset $P$ is that 
the poset $P$ is Cohen--Macaulay (see for example \cite{Bjo80,BjoGarSta82,BjoWac83}).
\end{remark}

\subsection{M\"obius function and \texorpdfstring{$EL$}{EL}-labeling}
We will compute the M\"obius function of $\mathcal{L}_{NCCP}(n)$ and 
the Kreweras lattice by use of the explicit $EL$-labeling introduced 
in the previous subsection.

\subsubsection{M\"obius function}
\label{subsec:moebius}
We recall the definition of the M\"obius function following \cite{Gre82}.
The {\it M\"obius function} of a lattice $\mathcal{L}$, 
$\mu:\mathcal{L}\times\mathcal{L}\rightarrow\mathbb{Z}$, is defined recursively 
by 
\begin{align*}
\mu(x,y):=
\begin{cases}
1 & \text{if } x=y, \\
-\sum_{x\le z<y}\mu(x,z) & \text{if } x<y.
\end{cases}
\end{align*}
We define 
\begin{align*}
\mu(\mathcal{L}):=\mu(\hat{0},\hat{1}).
\end{align*}

One of the main results in this paper is the following theorem regarding 
to the M\"obius function of $\mathcal{L}_{NCCP}(n)$.

\begin{theorem}
\label{thrm:moebius}
Let $\mathcal{L}$ be the lattice $(\mathtt{NCCP}(n),\le)$ of noncommutative crossing 
partitions. 
Then, we have 
\begin{align*}
\mu(\mathcal{L})=(-1)^{n-1}(2n-3)!!.
\end{align*}
\end{theorem}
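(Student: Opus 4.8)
The plan is to use the explicit $EL$-labeling $L(\pi,\pi')$ constructed in Lemma \ref{lemma:LNCCP}, together with the standard consequence of $EL$-shellability that $(-1)^{\rho(\hat0,\hat1)}\mu(\mathcal{L})$ counts the number of maximal chains in $[\hat0,\hat1]$ whose $L$-labeling is \emph{strictly decreasing} (equivalently, the number of \emph{falling} maximal chains), see \cite{Bjo80,BjoGarSta82}. Since $\rho(\hat0,\hat1)=n-1$, it suffices to show that the number of strictly decreasing maximal chains from $\hat0=n/n-1/\cdots/1$ to $\hat1=12\cdots n$ equals $(2n-3)!!$. First I would translate the condition into the $\widetilde L$-labeling: because $L(\pi,\pi')=n+\rho'(\pi)-\widetilde L(\pi,\pi')$ and $\rho'(\pi)$ drops by exactly $1$ at each cover, a maximal chain is $L$-falling if and only if its $\widetilde L$-labels $\widetilde L(x_0,x_1),\dots,\widetilde L(x_{n-2},x_{n-1})$ satisfy $\widetilde L(x_{i-1},x_i)\ge \widetilde L(x_i,x_{i+1})+?$ — more precisely one checks directly that $L$ strictly decreasing is equivalent to $\widetilde L(x_{i-1},x_i)>\widetilde L(x_i,x_{i+1})-1$, i.e. $\widetilde L(x_{i-1},x_i)\ge \widetilde L(x_i,x_{i+1})$, a \emph{weakly decreasing} condition on the $\widetilde L$-sequence. (The exact inequality is a short bookkeeping step using that at each step $\rho'$ decreases by one.)

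Next I would set up a recursion counting the weakly-decreasing maximal chains starting at $\hat0$. Recall $\widetilde L(\pi,\pi')=l(\mathfrak n_a)$, the label of the node $\mathfrak n_a$ at which the rotated block is re-attached. Starting from $\hat0$, whose labeled tree is the all-left chain $1\to 2\to\cdots\to n$ read bottom-up, the first cover has $\widetilde L$-label equal to $l(\mathfrak n_a)$ for some admissible node; the largest possible first label is $n-1$, and in general the number of choices of $\mathfrak n_a$ realizing a given weakly-decreasing continuation is what produces the double factorial. Concretely, I expect to prove by induction on $n$ that the number $M_n$ of weakly-decreasing maximal chains out of $\hat0$ in $\mathcal{L}_{NCCP}(n)$ satisfies $M_n=(2n-3)\,M_{n-1}$ with $M_1=1$: after the first rotation the labeled tree becomes (up to relabeling) a tree on $n-1$ effective nodes whose bottom portion is again an all-left chain, and the constraint ``next $\widetilde L$-label $\le$ first label'' leaves exactly $2n-3$ admissible first moves $\nu=(\mathfrak n,\mathfrak n_a,\mathfrak m(S))$ — these come from the $n-2$ choices of where along the left spine the rotated segment ends paired with the two sides (weakly-left vs. right) the comparison node can sit, plus the degenerate/boundary cases, totalling $2n-3$. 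I would verify the base cases $n=2$ ($M_2=1=(-1)!!$-normalized, giving $\mu=-1$) and $n=3$ ($M_3=3$, giving $\mu=3=(-1)^2\cdot 3!!$) against Figure \ref{fig:LlabelNCCP3} directly.

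The main obstacle is the combinatorial heart of the inductive step: showing that, after performing any admissible first rotation $\nu$ out of $\hat 0$, the induced subposet of chains that can still be completed to a \emph{globally} weakly-decreasing maximal chain is isomorphic (as a labeled poset, with $\widetilde L$-labels shifted consistently) to the corresponding object for $\mathcal{L}_{NCCP}(n-1)$, \emph{and} that the number of first rotations whose $\widetilde L$-label is at least the maximum $\widetilde L$-label available at the second step is exactly $2n-3$. This requires a careful analysis of the triplet data $(\mathfrak n,\mathfrak n_a,\mathfrak m(S))$ and of how the left-extended and right-extended sequences behave when the tree is the all-left caterpillar; the bijection A335845 hint in the excerpt and the structure of $\mathcal{L}(\mathbf s)$ suggest organizing the count by the ``type'' sequence $\mathbf s$, so an alternative (and perhaps cleaner) route is to first prove $\mu(\mathcal{L})=-\sum_{\mathbf s\lessdot'}(\cdots)$ type recursion via the fibers of $\rho'$ and then recognize $(2n-3)!!=\prod_{j=1}^{n-1}(2j-1)$ as the resulting product. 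Either way, once the recursion $M_n=(2n-3)M_{n-1}$ is established, the identity $M_n=(2n-3)!!$ and hence $\mu(\mathcal{L})=(-1)^{n-1}(2n-3)!!$ follows immediately.
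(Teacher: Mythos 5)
Your overall strategy coincides with the paper's: use the explicit $EL$-labeling, invoke Proposition \ref{prop:MoebiusRlabel} so that $(-1)^{n-1}\mu(\mathcal{L})$ counts the maximal chains with strictly decreasing $L$-labels, and then enumerate those chains. However, there are two genuine problems with the way you carry this out.

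First, your translation into the $\widetilde{L}$-labeling goes the wrong way. Since $L(x_{i-1},x_i)=n+\rho'(x_{i-1})-\widetilde{L}_i$ and $\rho'(x_i)=\rho'(x_{i-1})-1$, the condition $L_i>L_{i+1}$ is equivalent to $\widetilde{L}_i<\widetilde{L}_{i+1}+1$, i.e.\ $\widetilde{L}_i\le\widetilde{L}_{i+1}$: the $\widetilde{L}$-sequence must be \emph{weakly increasing}, not weakly decreasing as you assert. This is not cosmetic: the weakly increasing condition, combined with maximality of the chain, is what forces $a_i\le i$ on the sequence $\mathbf{a}=(l(\mathfrak{n}_a))_i$ (one argues by induction starting from $a_1\le 1$), and that parking-function-type constraint is the structural input to the final count. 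Your direction happens to give the right answer for $n=3$ by accident of symmetry, but it does not set up the correct enumeration in general.

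Second, and more seriously, the enumeration itself is missing. Your proposed recursion $M_n=(2n-3)M_{n-1}$ is numerically consistent with $(2n-3)!!$, but the two claims it rests on --- that there are exactly $2n-3$ admissible first moves, and that after any such move the residual counting problem is isomorphic to the one for $n-1$ --- are both unproven, and the second is doubtful as stated: the number of admissible continuations after a first cover depends on which node $\mathfrak{n}$ (equivalently which value $b_1$) was consumed, since the $b_i$'s must be distinct elements of $[2,n]$, so the fiber over a first move is not uniformly of size $M_{n-1}$. The paper avoids this by encoding each falling chain as a pair $(\mathbf{a},\mathbf{b})$ with $\mathbf{a}$ weakly increasing, $a_i\le i$, $b_i>a_i$, and the $b_i$ distinct in $[2,n]$, computing $|B(\mathbf{a})|=\prod_{i=1}^{n-1}(i+1-a_i)$, and then evaluating $\sum_{\mathbf{a}}\prod_i(i+1-a_i)=(2n-3)!!$ by the telescoping identity (\ref{eq:sumfact}) --- a genuinely multi-step summation rather than a one-factor-per-level product. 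You have correctly identified this enumeration as ``the main obstacle,'' but that obstacle is the entire content of the theorem, so as written the proposal is a plan rather than a proof.
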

\begin{remark}
\label{rmrk:moebius}
Three remarks are in order.
\begin{enumerate}
\item Note that the value $\mu(\mathcal{L})$ of the M\"obius function is equal to 
the number of following combinatorial objects: 
the labeled ternary trees with $n-1$ internal nodes,
the number of labeled rooted planar trees with $n$ nodes, and the total number of Dyck tilings of size $n-1$.
\item Let $\mathcal{L'}$ be the sublattice $(\mathtt{NCCP}(n;312),\le)$ of $312$-avoiding noncommutative 
crossing partitions in $\mathcal{L}$.
Then, the M\"obius function is equal to the $(n-1)$-th Catalan number up to a sign  as calculated 
by Krewaras in \cite{Kre72}.
As in the case of $\mathcal{L}$, 
the $(n-1)$-th Catalan number is equal to  the number of (unlabeled) rooted planar trees 
with $n$ node and the one of (unlabeled) binary trees with $n-1$ internal nodes.  
\item 
Form observations (1) and (2), $\mu(\mathcal{L})$ corresponds to labeled combinatorial objects,
on the other hand, $\mu(\mathcal{L'})$ corresponds to unlabeled combinatorial objects.
Thus, the restriction of $\mathcal{L}$ to $\mathcal{L'}$ corresponds to forgetting 
the labeled structure in the lattice $\mathcal{L}$. 
\end{enumerate}
\end{remark}

\begin{remark}
Given a poset $(P,\le)$, one can associate a simplicial complex $\Delta:=\Delta(P,\le)$ such 
that the vertices of $\Delta$ are the elements of $P$, and the simplices of $\Delta$ are the chains 
of $P$. This simplicial complex is called the {\it order complex} of $P$.
Then, the M\"obius function of $P$ can be interpreted as the Euler characteristic 
of the order complex \cite{Sta97b1}. 
In our context, we have 
\begin{align*}
\mu(\mathcal{L})=\chi(\Delta(\mathcal{L})).
\end{align*}
\end{remark}

Before proceeding to the proof of Theorem \ref{thrm:moebius}, 
we introduce some propositions and lemmas.
We start from a characterization of the M\"obius function 
in terms of unrefinable chains in the Hasse diagram.

\begin{prop}[\cite{Bjo80,Sta74}]
\label{prop:MoebiusRlabel}
Suppose $\mathcal{L}$ admits an $R$-labeling.
When $x<y$ in $\mathcal{L}$, 
$(-1)^{r(x,y)}\mu(x,y)$ equals the number of chains 
$x=x_0<x_1<\ldots <x_{k}=y$ such that 
\begin{align}
\label{eq:chaindec}
\lambda(x_{0},x_1)>\lambda(x_{1},x_2)>\ldots>\lambda(x_{k-1},x_{k}),
\end{align}
where $\lambda$ is an edge-labeling.	
\end{prop}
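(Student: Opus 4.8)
The plan is to reproduce the classical argument behind this statement (due to Björner and Stanley): compute $\mu(x,y)$ from Philip Hall's theorem as a signed enumeration of all chains of the interval $[x,y]$, and then collapse that sum by sending each chain to its canonical rising refinement, which is available exactly because $\lambda$ is an $R$-labeling. First I would observe that in any chain $x=x_0<x_1<\dots<x_k=y$ for which the labels $\lambda(x_{i-1},x_i)$ appearing in the statement are defined, each step must be a covering relation, so the chain is saturated and $k=r(x,y)=:r$; here I use that $\mathcal{L}$ is graded (Proposition \ref{prop:rank}), so $r(x,y)$ is well defined and every saturated chain of $[x,y]$ has length $r$. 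Writing $N(x,y)$ for the number of saturated chains of $[x,y]$ whose label word is strictly decreasing, the goal becomes $\mu(x,y)=(-1)^{r}N(x,y)$. By Philip Hall's theorem, $\mu(x,y)=\sum_{\mathfrak{c}}(-1)^{\ell(\mathfrak{c})}$, the sum running over all chains $\mathfrak{c}\colon x=z_0<\dots<z_k=y$ of $[x,y]$, with $\ell(\mathfrak{c})=k$.

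The heart of the proof is the map $\mathfrak{c}\mapsto\widehat{\mathfrak{c}}$ from chains to saturated chains: given $\mathfrak{c}$ as above, replace each step $z_{i-1}<z_i$ by the unique rising saturated chain of the subinterval $[z_{i-1},z_i]$, whose existence and uniqueness is precisely the $R$-labeling hypothesis (Definition \ref{defn:Llabeling}), and concatenate. I would then fix a saturated chain $\mathfrak{m}\colon x=m_0\lessdot m_1\lessdot\dots\lessdot m_r=y$ with label word $\ell_1,\dots,\ell_r$ and evaluate the fiber sum $\sum_{\widehat{\mathfrak{c}}=\mathfrak{m}}(-1)^{\ell(\mathfrak{c})}$. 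The key observation is that $\widehat{\mathfrak{c}}=\mathfrak{m}$ holds if and only if $\mathfrak{c}$ visits $x$, $y$, and the vertex $m_j$ for every descent position $j$ of $\mathfrak{m}$ (a descent meaning $\ell_j>\ell_{j+1}$), and otherwise visits an arbitrary subset of the remaining internal vertices: a segment of $\mathfrak{m}$ between two retained vertices equals the rising chain of the corresponding interval exactly when its label word is weakly increasing, that is, when it skips no descent. Hence the fiber over $\mathfrak{m}$ is indexed by the sets $J$ of retained internal positions with $D(\mathfrak{m})\subseteq J\subseteq\{1,\dots,r-1\}$, where $D(\mathfrak{m})$ is the descent set, and $\ell(\mathfrak{c})=|J|+1$. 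Writing $J=D(\mathfrak{m})\sqcup J'$, the fiber sum becomes $-(-1)^{|D(\mathfrak{m})|}\sum_{J'\subseteq\{1,\dots,r-1\}\setminus D(\mathfrak{m})}(-1)^{|J'|}$, which vanishes unless $\{1,\dots,r-1\}\setminus D(\mathfrak{m})=\emptyset$, i.e.\ unless $\mathfrak{m}$ is strictly decreasing, in which case it equals $-(-1)^{r-1}=(-1)^{r}$. Summing over all saturated chains $\mathfrak{m}$ of $[x,y]$ yields $\mu(x,y)=(-1)^{r}N(x,y)$.

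The one genuinely delicate point is the characterization of the fibers of the refinement map, which rests squarely on the uniqueness clause of the $R$-labeling (a weakly increasing saturated chain of an interval, if one exists, is the rising chain of that interval); everything else is Philip Hall's theorem, the gradedness that makes $r(x,y)$ and the comparison of chain lengths meaningful, and the elementary identity $\sum_{T\subseteq[m]}(-1)^{|T|}=0$ for $m\ge1$. One can also argue via a sign-reversing pairing showing that $(-1)^{r(x,\cdot)}N(x,\cdot)$ obeys the defining recursion of $\mu$, which is essentially the form of Björner's original argument, but the rising-refinement computation above is the cleanest; and since the statement is quoted from \cite{Bjo80,Sta74}, it is equally legitimate simply to cite those sources.
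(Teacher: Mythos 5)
Your argument is correct. Note, however, that the paper does not prove this proposition at all: it is quoted as a known result with citations to Bj\"orner and Stanley, so there is no in-paper proof to compare against. What you have written is a faithful and complete reconstruction of the classical argument from those sources --- Philip Hall's theorem plus the rising-refinement map, with the fiber over a maximal chain $\mathfrak{m}$ correctly identified as the subsets of internal vertices containing all descent positions, so that the alternating fiber sum vanishes unless $\mathfrak{m}$ is strictly decreasing and equals $(-1)^{r}$ otherwise. The one delicate point, that a weakly increasing segment of $\mathfrak{m}$ must coincide with \emph{the} rising chain of its subinterval, is exactly where the uniqueness clause of the $R$-labeling is used, and you invoke it correctly; the appeal to gradedness (so that all saturated chains of $[x,y]$ have length $r(x,y)$) is also the right hypothesis to make explicit, since the proposition's statement presupposes it.
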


From Proposition \ref{prop:MoebiusRlabel}, to compute the M\"obius 
function of a poset $P$, it is enough to 
count the number of chains which satisfy Eq. (\ref{eq:chaindec}).

For the computation of M\"obius function, we introduce a pair of integer 
sequences which come from the edge labeling.

Let $\mathbf{a}:=(a_1,\ldots,a_{n-1})$ be an integer sequence such that 
\begin{align}
\label{eq:conda}
\begin{split}
&1\le a_{i}\le i, \\
&a_{i}\le a_{i+1}, \quad i\in[n-2].
\end{split}
\end{align}
We denote by $A(n-1)$ the set of such sequences $\mathbf{a}$.
Give a sequence $\mathbf{a}$, we consider an integer sequence 
$\mathbf{b}:=(b_{1},\ldots,b_{n-1})$ 
such that 
\begin{align}
\label{eq:condb}
a_{i}< b_{i}\le n, \qquad i\in[n-1],
\end{align}
and all $b_{i}$'s are distinct in $[2,n]$, i.e., each integer $j\in[2,n]$ appears exactly 
once in $b_{i}$'s. 
We denote by $B(\mathbf{a})$ the set of sequences $\mathbf{b}$
satisfying Eq. (\ref{eq:condb}).

\begin{example}
Take $\mathbf{a}=(1,1,2)$.
Then, we have 
\begin{align*}
B(1,1,2)=\{ (2,3,4), (3,2,4), (2,4,3), (4,2,3) \}.
\end{align*}
\end{example}

\begin{lemma}
\label{lemma:cardA}
The cardinality $|A(n-1)|$ of $A(n-1)$ is given by $n-1$-st Catalan number.
\end{lemma}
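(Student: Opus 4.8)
The plan is to set $m:=n-1$ and exhibit an explicit bijection between $A(m)$ and the set of Dyck paths of size $m$; since the number of Dyck paths of size $m$ is the $m$-th Catalan number (as recalled above), this gives at once that $|A(n-1)|=|A(m)|$ is the $(n-1)$-st Catalan number. Recall that $A(m)$ consists of the weakly increasing sequences $\mathbf{a}=(a_1,\ldots,a_m)$ with $1\le a_i\le i$; in particular $a_1=1$.

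Given $\mathbf{a}\in A(m)$, I would build the lattice path $P(\mathbf{a})$ with $m$ up steps and $m$ down steps that is uniquely determined by the rule that exactly $a_i-1$ down steps precede the $i$-th up step. Concretely, $P(\mathbf{a})$ reads, from left to right: up step $1$, then $a_2-a_1$ down steps, up step $2$, then $a_3-a_2$ down steps, $\ldots$, up step $m$, and finally $m-a_m+1$ down steps (there are $a_1-1=0$ down steps before the first up step). The defining properties of $\mathbf{a}$ translate exactly into $P(\mathbf{a})$ being a Dyck path: $a_1=1$ forces the path to start with an up step; weak monotonicity $a_{i-1}\le a_i$ makes every down run have nonnegative length; the bound $a_i\le i$ says that just before the $i$-th up step the path sits at height $(i-1)-(a_i-1)=i-a_i\ge 0$, and since within each down run the height is smallest at the run's right endpoint, the path never goes below $y=0$; finally the numbers of up steps and of down steps are both $m$, so $P(\mathbf{a})$ ends at $(2m,0)$. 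Conversely, from a Dyck path $d$ of size $m$ I would set $a_i$ to be one plus the number of down steps of $d$ lying to the left of its $i$-th up step; monotonicity in $i$ is immediate, and nonnegativity of $d$ forces $a_i\le i$, so $\mathbf{a}\in A(m)$. The two assignments are visibly mutually inverse.

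The only step that needs a little care — and the only place where the precise inequality $a_i\le i$, rather than mere monotonicity of $\mathbf{a}$, is used — is the verification that $P(\mathbf{a})$ stays weakly above the $x$-axis and, dually, that the reconstructed sequence satisfies $a_i\le i$; both follow from the single height computation above, so I anticipate no genuine obstacle. Combining the bijection with the known enumeration of Dyck paths yields $|A(n-1)|$ equal to the $(n-1)$-st Catalan number, as claimed.
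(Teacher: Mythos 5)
Your proof is correct, but it takes a different route from the paper. You identify $A(n-1)$ with Dyck paths of size $n-1$ via the rule ``exactly $a_i-1$ down steps precede the $i$-th up step,'' and your height computation ($i-a_i\ge 0$ just before the $i$-th up step, with the minimum of each down run attained at its right end) is exactly the check needed to see that the conditions (\ref{eq:conda}) correspond precisely to the Dyck-path condition; the inverse map is indeed immediate. The paper instead builds a bijection between $A(n-1)$ and \emph{canonical labeled binary trees} with $n-1$ nodes: starting from the single node labeled $1$ (forced by $a_1=1$), one attaches the node labeled $p+1$ to the $a_{p+1}$-th external edge from the left, and the weak monotonicity $a_p\le a_{p+1}$ is what keeps the labeling canonical; the count then follows from Proposition \ref{prop:clbt}. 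The two arguments are equally valid. Your version is more elementary and self-contained (it leans only on the standard enumeration of Dyck paths, which the paper also recalls), whereas the paper's choice of target object is not accidental: the canonical labeled binary trees attached to $\mathbf{a}$ are reused later (e.g.\ in Proposition \ref{prop:Krec}, where the unique rising maximal chain associated to $\mathbf{a}$ passes through exactly these canonical trees), so the tree bijection carries structural information that the Dyck-path bijection does not directly provide.
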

\begin{proof}
We have a bijection between $\mathbf{a}$ and canonical labeled binary trees.
To see this, we construct a canonical labeled binary tree from $\mathbf{a}$ by 
the following procedures.
First, since $a_{1}=1$, we put a binary tree consisting of a single node which is 
labeled one.
Suppose that we have a canonical labeled binary tree $T_{p}$ corresponding to $(a_{1},\ldots,a_{p})$ with 
$p\ge1$.
Then, we add a binary tree consisting of a single node labeled $p+1$ to the $a_{p+1}$-th 
external edge of $T_{p}$ from left.
Since $a_{p}\le a_{p+1}$, if $T_{p}$ is canonical, then $T_{p+1}$ is also canonical.
It is obvious that this map from $\mathbf{a}$ to a canonical labeled binary tree 
is invertible. Thus, the number of $\mathbf{a}$ is equal to that of canonical labeled 
binary trees with $n-1$ internal nodes, which is equal to the $n-1$-st Catalan number.
\end{proof}

\begin{prop}
\label{prop:Cardab}
We have 
\begin{align*}
\sum_{\mathbf{a}\in A(n-1)}|B(\mathbf{a})|
=\prod_{j=0}^{n-1}(2j+1).
\end{align*}
\end{prop}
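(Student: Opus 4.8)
The plan is to first put $|B(\mathbf a)|$ into closed form and then sum that expression over $A(n-1)$.

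First I would note that, for a fixed $\mathbf a\in A(n-1)$, the set $B(\mathbf a)$ is exactly the set of bijections $\mathbf b\colon[n-1]\to\{2,\ldots,n\}$ with $b_i\in T_i:=\{a_i+1,\ldots,n\}$. Because $\mathbf a$ is weakly increasing, these admissibility sets are nested, $T_1\supseteq T_2\supseteq\cdots\supseteq T_{n-1}$, with $|T_i|=n-a_i$. Counting the bijections greedily in the order $i=n-1,n-2,\ldots,1$, when position $i$ is reached the $n-1-i$ values already assigned to positions $i+1,\ldots,n-1$ all lie in $T_{i+1}\subseteq T_i$, so position $i$ admits exactly $|T_i|-(n-1-i)=i+1-a_i$ values; the hypothesis $a_i\le i$ makes this positive, and the nesting makes every such choice extendable to a full bijection. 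Hence
\[
|B(\mathbf a)|=\prod_{i=1}^{n-1}(i+1-a_i).
\]

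This reduces the proposition to the weighted enumeration
\[
S_{n-1}:=\sum_{\mathbf a\in A(n-1)}\ \prod_{i=1}^{n-1}(i+1-a_i),
\]
and the product on the right-hand side of the statement is a product of consecutive odd numbers, so the goal is to identify exactly which ones occur. I would prove the identity by induction on $n$, establishing the recursion $S_{n-1}=(2n-3)\,S_{n-2}$ with $S_0=1$, which evaluates $S_{n-1}$ to $1\cdot 3\cdot 5\cdots (2n-3)$. The cleanest conceptual route is a bijection between the objects counted by $S_{n-1}$ — a sequence $\mathbf a\in A(n-1)$ together with a choice $f_i\in[1,\,i+1-a_i]$ for each $i$ — and the set $\{(h_1,\ldots,h_{n-1}):1\le h_i\le 2i-1\}$, which is visibly enumerated by $\prod_{i=1}^{n-1}(2i-1)$. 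That set is the standard encoding of increasing plane (ordered) trees on $n$ nodes, i.e.\ the labeled rooted planar trees of Remark \ref{rmrk:moebius}, where $h_i$ records which of the $2i-1$ gaps of the tree on $i$ nodes receives the leaf labeled $i+1$; deleting the largest leaf then realizes the recursion $S_{n-1}=(2n-3)S_{n-2}$, and Lemma \ref{lemma:cardA} ties the $\mathbf a$-coordinate to the underlying shape.

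The main obstacle is precisely the construction of this bijection, because it cannot be carried out one coordinate at a time. Given $a_{i-1}$, the number of admissible pairs $(a_i,f_i)$ is $\sum_{a_i=a_{i-1}}^{i}(i+1-a_i)=\binom{i+2-a_{i-1}}{2}$, which is neither constant nor equal to $2i-1$; similarly, deleting the last coordinate of $\mathbf a$ yields a map whose fibre over a length-$(n-2)$ pair has the non-constant size $\binom{c+2}{2}$, where $c$ is that pair's final weight. Thus the factor $2i-1$ is genuinely global, matched to the leaf-deletion order of the trees rather than to the position order of $\mathbf a$, and this is where the real work lies; an alternative is a direct transfer-operator or generating-function evaluation of the staircase sum above. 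Either way one obtains $S_{n-1}=1\cdot3\cdots(2n-3)=\prod_{j=0}^{n-2}(2j+1)$, in agreement with the signless M\"obius value $(2n-3)!!$ of Theorem \ref{thrm:moebius} and Table \ref{table:summary}; for this consistency the upper index of the displayed product is to be read as $n-2$.
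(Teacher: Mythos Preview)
Your computation of $|B(\mathbf a)|=\prod_{i=1}^{n-1}(i+1-a_i)$ is correct and matches the paper's first step. However, the second half of your argument is not a proof but a plan: you propose to show $S_{n-1}=(2n-3)S_{n-2}$ via a bijection to increasing plane trees, then immediately explain why that bijection cannot be built coordinatewise, and finally gesture at ``a direct transfer-operator or generating-function evaluation'' without carrying it out. The crucial step---evaluating the sum---is never actually performed.

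The paper fills this gap by an elementary iterated summation, which is much shorter than either route you sketch. After the same product formula for $|B(\mathbf a)|$, the paper substitutes $a_i\mapsto i-a_i$ to rewrite
\[
S_{n-1}=\sum_{\mathbf a\in A'(n-1)}\prod_{i=1}^{n-1}(1+a_i),
\qquad
A'(n-1)=\{\mathbf a : a_1=0,\ 0\le a_i\le a_{i-1}+1\},
\]
and then sums over $a_{n-1},a_{n-2},\ldots,a_2$ in turn using the single identity
\[
\sum_{0\le x\le y+1}\prod_{j=1}^{m}(x+j)=\frac{1}{m+1}\prod_{j=2}^{m+2}(y+j),
\]
which telescopes the whole sum down to $(2n-3)!/\prod_{j=1}^{n-2}(2j)=\prod_{j=0}^{n-2}(2j+1)$. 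This is precisely the ``staircase sum'' you allude to; the point is that it admits a two-line closed-form evaluation, so no bijection with labeled plane trees is needed. Your observation that the displayed upper index should be $n-2$ rather than $n-1$ is correct and is confirmed by the paper's own final line.
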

\begin{proof}
From the definition of $\mathbf{a}$, the cardinality of $B(\mathbf{a})$ 
is given by
\begin{align*}
|B(\mathbf{a})|=\prod_{i=1}^{n-1}(i+1-a_{i}).
\end{align*}
Thus, we have
\begin{align}
\label{eq:sumB}
\begin{split}
\sum_{\mathbf{a}\in A(n-1)}|B(\mathbf{a})|
&=\sum_{\mathbf{a}\in A(n-1)}\prod_{i=1}^{n-1}(i+1-a_{i}), \\
&=\sum_{\mathbf{a}\in A'(n-1)}\prod_{i=1}^{n-1}(1+a_{i}),
\end{split}
\end{align}
where 
\begin{align*}
A'(n-1):=\Big\{\mathbf{a}:=(a_1,\ldots,a_{n-1})\Big| 
\begin{array}{c}
a_1=0 \\
0\le a_{i}\le a_{i-1}+1, \quad \text{ for } i\in[2,n-1]
\end{array}
\Big\}.
\end{align*}
We have the following identity, which is easy to prove:
\begin{align}
\label{eq:sumfact}
\sum_{0\le x\le y+1}\prod_{j=1}^{n}(x+j)
=\genfrac{}{}{}{}{1}{n+1}\prod_{j=2}^{n+2}(y+j).
\end{align}
By taking the summation of Eq. (\ref{eq:sumB}) with respect to $a_{n-1}, a_{n-2},\ldots,a_{2}$,
we have 
\begin{align*}
\sum_{\mathbf{a}\in A'(n-1)}\prod_{i=1}^{n-1}(1+a_{i})
&=\prod_{j=1}^{n-2}\genfrac{}{}{}{}{1}{2j}\sum_{a_{1}=0}\prod_{j=1}^{2n-3}(a_{1}+j), \\
&=\left(\prod_{j=1}^{n-2}\genfrac{}{}{}{}{1}{2j}\right)(2n-3)!, \\
&=\prod_{j=0}^{n-2}(2j+1),
\end{align*}
where we have used Eq. (\ref{eq:sumfact}). This completes the proof. 
\end{proof}

\begin{proof}[Proof of Theorem \ref{thrm:moebius}]
Recall that $\mathcal{L}_{NCCP}$ admits an $EL$-labeling, which is an $R$-labeling.
Proposition \ref{prop:MoebiusRlabel}, to compute the M\"obius function $\mu(\mathcal{L}_{NCCP})$, 
we will count the number of chains which satisfy 
Eq. (\ref{eq:chaindec}).
In terms of the $\widetilde{L}$-labeling, it is enough to count the number of chains which satisfy
\begin{align}
\label{eq:LiLi1}
\widetilde{L}_{i}\le \widetilde{L}_{i+1}, \qquad i\in[1,n-1]
\end{align}
where $\widetilde{L}_{i}:=\widetilde{L}(x_{i-1},x_{i})$.

We refine the $\widetilde{L}$-Labeling. 
Recall that if $x_{i+1}$ covers $x_{i}$, the triplet $\nu=(\mathfrak{n},\mathfrak{n}_a,\mathfrak{m}(S))$
defines the rotation on $x_{i}$.
By definition of the $\widetilde{L}$-labeling, we have $\widetilde{L}_{i}=l(\mathfrak{n}_{a})$.
We define two integer sequences $\mathbf{a}:=(a_1,\ldots,a_{n-1})$ and $\mathbf{b}:=(b_1,\ldots,b_{n-1})$ by 
\begin{align}
\label{eq:ab}
\begin{split}
&a_i=l(\mathfrak{n}_{a}), \\
&b_{i}=l(\mathfrak{n}).
\end{split}
\end{align}
By definition of the triplet $\nu$ for $x_{i}$, we have $l(\mathfrak{n})>l(\mathfrak{n}_a)$, 
i.e., $b_{i}>a_{i}$.
Since we consider maximal chains from $\hat{0}$ to $\hat{1}$, 
each integer $p\in[2,n]$ appears exactly once in $\mathbf{b}$.

The condition (\ref{eq:LiLi1}) is equivalent to the condition $a_{i}\le a_{i+1}$.
Further, if the chains are maximal, the integer sequence $\mathbf{a}$ satisfies $a_{i}\le i$.
Suppose that $a_{1}\ge2$. Then, from Eq. (\ref{eq:LiLi1}), there is no integer $i$ such that 
$a_{i}=1$. This means that the node labeled $1$ (the root of the binary tree) has a left 
child. This contradicts the fact that we consider the maximal chains.
We have $a_{1}\le 1$. One can similarly show $a_{i}\le i$ by induction on $i$. 

As a summary, the sequences $\mathbf{a}$ and $\mathbf{b}$ satisfy the 
conditions (\ref{eq:conda}) and (\ref{eq:condb}).
Counting the maximal chains with the condition (\ref{eq:LiLi1}) is equivalent 
to counting the pairs $(\mathbf{a},\mathbf{b})$ with conditions (\ref{eq:conda}) 
and (\ref{eq:condb}).
From Proposition \ref{prop:Cardab}, the number of such maximal chains 
is equal to $\prod_{j=0}^{n-2}(2j+1)$ and the sign is $(-1)^{n-1}$, 
which completes the proof.	
\end{proof}

In Figure \ref{fig:LlabelNCCP32}, we give an explicit refined labeling 
for the lattice $\mathcal{L}_{NCCP}(3)$.
The pair of integers $(a,b)$ refers the pair $(l(\mathfrak{n}_a),l(\mathfrak{n}))$
in the triplet (see Eq. (\ref{eq:ab})). 
Note that an integer in $\{2,3\}$ appears exactly once as $b$ in an unrefinable maximal 
chain. 
\begin{figure}[ht]
\begin{tikzpicture}
\node (0) at (0,0) {$123$}; 
\node (1) at (-3,-2){$13/2$};
\node (2) at (-1,-2){$2/13$};
\node (3) at (1,-2){$23/1$};
\node (4) at (3,-2){$3/12$};
\node (5) at (0,-4){$3/2/1$};
\draw[anchor=south east] (0.south) to node{$(2,3)$} (1.north);
\draw[anchor=east] (0.south) to node{$(1,2)$} (2.north);
\draw[anchor=west] (0.south) to node{$(1,2)$} (3.north);
\draw[anchor=south west] (0.south) to node{$(1,3)$} (4.north);
\draw[anchor=north east] (5.north) to node {$(1,2)$}(1.south);
\draw[anchor=east] (5.north) to node {$(1,3)$}(2.south);
\draw[anchor=west] (5.north) to node {$(2,3)$}(3.south);
\draw[anchor=north west] (5.north) to node {$(1,2)$}(4.south);
\end{tikzpicture}
\caption{The refined $\widetilde{L}$-labeling on the lattice $(\mathtt{NCCP}(3),\le)$.}
\label{fig:LlabelNCCP32}
\end{figure}

\subsubsection{M\"obius function for the Kreweras lattice}
Let $\mathbf{a}$ and $\mathbf{b}$ be integer sequences defined in Section \ref{subsec:moebius}.

\begin{prop}
\label{prop:Krec}
Given an integer sequence $\mathbf{a}$,
we have a unique maximal chain $c$ satisfying Eq. (\ref{eq:chaindec}) 
in the lattice $\mathcal{L}_{NCCP}$ such that $c$ contains only elements 
in $\mathtt{NCCP}(n;312)$.
\end{prop}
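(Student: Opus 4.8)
The plan is to exhibit an explicit recursive construction of the maximal chain $c$ from the sequence $\mathbf{a}$, and then argue both that it lands inside $\mathtt{NCCP}(n;312)$ and that it is the unique such chain. Recall from the proof of Theorem~\ref{thrm:moebius} that a maximal chain $c: \hat 0 = x_0 \lessdot x_1 \lessdot \cdots \lessdot x_{n-1} = \hat 1$ satisfying Eq.~(\ref{eq:chaindec}) is encoded by a pair $(\mathbf{a},\mathbf{b})$ with conditions (\ref{eq:conda}) and (\ref{eq:condb}), where at step $i$ the triplet $\nu = (\mathfrak{n}, \mathfrak{n}_a, \mathfrak{m}(S))$ has $l(\mathfrak{n}_a) = a_i$ and $l(\mathfrak{n}) = b_i$. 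The content of this proposition is that requiring every $x_i$ to be $312$-avoiding forces $\mathbf{b}$ to be uniquely determined by $\mathbf{a}$, and conversely every $\mathbf{a} \in A(n-1)$ is realized by exactly one such chain. First I would translate the $312$-avoiding condition on $x_i$, via Lemma~\ref{lemma:312clbt}, into the statement that each labeled binary tree $L(T_i) := \phi^{-1}(x_i)$ is canonical; so I must show that there is a unique way to build a maximal chain of \emph{canonical} labeled trees whose edge labels $a_i$ form the prescribed weakly increasing sequence.

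The key steps, in order: (1) Observe that a rotation $\nu = (\mathfrak{n},\mathfrak{n}_a,\mathfrak{m}(S))$ applied to a canonical tree $L(T)$ yields a canonical tree precisely when it is the ``simple'' rotation used in the proof of Lemma~\ref{lemma:lat312} — namely, one takes a node labeled $l$ attached to its parent by a right edge, detaches the subtree rooted at $l$, and reattaches it by a left edge to the node labeled $l-1$ (which is weakly left to $l$ because the tree is canonical). In triplet language this forces $\mathfrak{m}(S) = \emptyset$ and $\mathfrak{n}_a$ to be the node labeled $l-1$; equivalently $a_i = l(\mathfrak{n}) - 1 = b_i - 1$, i.e.\ $b_i = a_i + 1$. (2) Consequently, along a maximal chain inside $\mathtt{NCCP}(n;312)$ the sequence $\mathbf{b}$ is \emph{forced} to equal $(a_1+1, \ldots, a_{n-1}+1)$, so the chain is determined by $\mathbf{a}$ alone. (3) Conversely, given $\mathbf{a} \in A(n-1)$, build the chain from the \emph{top}: starting from $\hat 1 = 12\cdots n$ (the all-right-edges tree), at step $i$ (going down) the label $a_{n-i}$ together with $b_{n-i} = a_{n-i}+1$ specifies which node to move by a left edge; verify by induction that conditions (\ref{eq:conda}) guarantee the target node $a_{n-i}$ is available and that the decreasing-label condition (\ref{eq:chaindec}) — equivalently $a_i \le a_{i+1}$ — is exactly what makes this well-defined all the way down to $\hat 0$. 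This is essentially the inverse of the bijection in Lemma~\ref{lemma:cardA} between $A(n-1)$ and canonical labeled binary trees, extended to whole chains. (4) Conclude that $c$ exists, is unique, and consists entirely of $312$-avoiders since every intermediate tree is canonical by step~(1).

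The main obstacle I expect is step~(1): proving that a rotation of a canonical tree is canonical \emph{if and only if} it is the simple rotation with $\mathfrak{m}(S) = \emptyset$ and $\mathfrak{n}_a$ the node labeled $l(\mathfrak{n}) - 1$. The ``if'' direction is already contained in Lemma~\ref{lemma:lat312}. For the ``only if'' direction one must show that any rotation with $\mathfrak{m}(S) \neq \emptyset$, or with $\mathfrak{n}_a$ not labeled $l(\mathfrak{n})-1$, destroys canonicity — i.e.\ it creates a pair of labels $j < j+1$ with $j+1$ strictly left of $j$ (not weakly right), equivalently reintroduces a $312$-pattern in the associated partition. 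This requires a careful look at how the left-extended sequence and the nodes $\mathfrak{m}(S)$ get redistributed during the rotation: any node pulled off the left-extended sequence of $\mathfrak{n}$ that is not adjacent in label to $\mathfrak{n}$, or any choice $\mathfrak{n}_a$ with $l(\mathfrak{n}_a) < l(\mathfrak{n}) - 1$, leaves a ``gap'' whose endpoints violate the canonical (weakly-right) condition. Once this rigidity lemma is in place, the rest is the bookkeeping of steps (2)–(4), which is routine. I would also remark that this proposition makes transparent why $|A(n-1)|$ — the $(n-1)$-st Catalan number (Lemma~\ref{lemma:cardA}) — recovers Kreweras's value $(-1)^{n-1} C_{n-1}$ for the M\"obius function of the Kreweras lattice, which is presumably the corollary the authors draw next.
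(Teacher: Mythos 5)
Your overall strategy --- translate membership in $\mathtt{NCCP}(n;312)$ into canonicity of the trees $\phi^{-1}(x_i)$ via Lemma~\ref{lemma:312clbt}, and then show that canonicity forces $\mathbf{b}$ to be determined by $\mathbf{a}$ --- is the same as the paper's. But the rigidity claim on which you build everything (step~(1)) is false, and with it the formula $b_i=a_i+1$ in step~(2). What \emph{is} forced in a canonical tree is that the \emph{parent} $\mathfrak{n}_0$ of a left-edge child $\mathfrak{n}$ carries the label $l(\mathfrak{n})-1$; but the edge label $a_i$ is $l(\mathfrak{n}_a)$, and $\mathfrak{n}_a$ ranges over the left set $LS(\mathfrak{n})$ (minus a possible correction), not just over $\{\mathfrak{n}_0\}$. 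A rotation with $l(\mathfrak{n}_a)<l(\mathfrak{n})-1$ can perfectly well send a canonical tree to a canonical tree. Concretely, for $n=3$ and $\mathbf{a}=(1,1)$ the unique good chain is $3/2/1\lessdot 2/13\lessdot 123$ with refined labels $(1,3)$ then $(1,2)$ (see Figure~\ref{fig:LlabelNCCP32}): the first rotation moves the node labeled $3$ with $\mathfrak{n}_a$ the root labeled $1$, so $b_1-a_1=2$, and every partition in the chain is $312$-avoiding. Your formula would give $\mathbf{b}=(2,2)$, which is not even admissible, since $\mathbf{b}$ must use each integer of $[2,n]$ exactly once; this incompatibility arises for every $\mathbf{a}$ with a repeated entry, hence already for $\mathbf{a}=(1,1)$.

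The correct determination of $\mathbf{b}$ from $\mathbf{a}$, which is what the paper proves, is a greedy rule run from the top of the chain downward: set $S=[2,n]$, and for $i=n-1,n-2,\ldots,1$ let $b_i$ be the $a_i$-th smallest element remaining in $S$, then delete it from $S$. (Check: $\mathbf{a}=(1,1)$ gives $b_2=2$, $b_1=3$, i.e.\ $\mathbf{b}=(3,2)$, matching the chain through $2/13$; $\mathbf{a}=(1,2)$ gives $\mathbf{b}=(2,3)$, matching the chain through $13/2$.) The uniqueness argument then consists in showing that any other admissible $\mathbf{b}''$ produces at least one non-canonical tree along the chain. So you would need to replace your ``rigidity lemma'' by this global statement about which $\mathbf{b}$ keeps all intermediate trees canonical; as written, steps (2)--(4) of your proposal cannot be repaired without that change.
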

\begin{proof}
Suppose that $\mathbf{a}:=(a_1,\ldots,a_{n-1})$ is given.
We construct $\mathbf{b}'$ from $\mathbf{a}$ recursively as follows:
\begin{enumerate}
\item Set $i=n-1$ and $S=[2,n]$.
\item Define $b_{i}$ as the $a_{n-1}$-th smallest integer $s_{\mathrm{min}}$ in $S$.
\item Then, replace $S$ by $S\setminus\{s_{\mathrm{min}}\}$, decrease $i$ by one, and 
go to (2).
The algorithm stops when $i=0$.
\end{enumerate}
Once we fix a pair of integer sequences $\mathbf{a}$ and $\mathbf{b'}$, we have 
a maximal chain from $\hat{0}$ to $\hat{1}$.
Then, by the definition of a rotation, this maximal chain contains only elements 
in $\mathtt{NCCP}(n;312)$ since the labeled binary trees associated to the chain
are all canonical.
Suppose $\mathbf{b''}\neq\mathbf{b'}$ for a given $\mathbf{a}$.
Then, it is clear that the pair $(\mathbf{a},\mathbf{b''})$ gives an 
element which is not in $\mathtt{NCCP}(n;312)$ since there exists at least 
one binary trees which are not canonical.
Therefore, $\mathbf{b'}$ gives a unique maximal chain satisfying Eq. (\ref{eq:chaindec}).
This completes the proof.
\end{proof}

\begin{cor}[Theorem 6 in \cite{Kre72}]
\label{cor:KreMoebius}
The M\"obius function $\mu(\mathcal{L}_{\mathrm{Kre}}(n))$ is equal to 
$(-1)^{n-1}C_{n-1}$, where $C_{n-1}$ is the $(n-1)$-st Catalan number.
\end{cor}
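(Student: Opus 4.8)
The plan is to run the argument of Theorem~\ref{thrm:moebius} inside the sublattice $\mathcal{L}_{\mathrm{Kre}}(n)$. By Proposition~\ref{prop:312Kre} we identify $\mathcal{L}_{\mathrm{Kre}}(n)$ with $(\mathtt{NCCP}(n;312),\le)$, and by Lemma~\ref{lemma:lat312} this is a graded bounded lattice whose least element $\hat{0}=n/n-1/\ldots/1$ has rank $0$ and whose greatest element $\hat{1}=12\ldots n$ has rank $n-1$. First I would restrict the $\widetilde{L}$-labeling (equivalently the $EL$-labeling $L$ of Lemma~\ref{lemma:LNCCP}) to the Hasse diagram of $\mathcal{L}_{\mathrm{Kre}}(n)$: since $\mathcal{L}_{\mathrm{Kre}}(n)$ and $\mathcal{L}_{NCCP}$ share the same rank function, every cover $\pi\lessdot\pi'$ in $\mathcal{L}_{\mathrm{Kre}}(n)$ is also a cover in $\mathcal{L}_{NCCP}$ and hence already carries a label $l(\mathfrak{n}_a)$. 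I would then verify that this restricted labeling is again an $R$-labeling — in fact an $EL$-labeling — of $\mathcal{L}_{\mathrm{Kre}}(n)$, so that Proposition~\ref{prop:MoebiusRlabel} applies to the sublattice.

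For that verification I would reuse the description of covers inside $\mathtt{NCCP}(n;312)$ from the proofs of Lemma~\ref{lemma:lat312} and Lemma~\ref{lemma:lesub}: starting from a canonical labeled tree, a cover is realized by detaching the subtree rooted at a node $\mathfrak{n}$ that is a left child and reattaching it along a right-extended sequence, with triplet $(\mathfrak{n},\mathfrak{n}_a,\emptyset)$, and such rotations can be arranged so that the resulting tree is again canonical. Mimicking the proof of Lemma~\ref{lemma:LNCCP} inside $\mathtt{NCCP}(n;312)$ — at each step of a maximal chain in an interval $[x,y]$ of $\mathcal{L}_{\mathrm{Kre}}(n)$ one selects the admissible rotation whose node $\mathfrak{n}_a$ has maximal label among those keeping the labeled tree canonical — should produce the unique maximal chain with strictly decreasing $\widetilde{L}$-labels, and the same node data gives the lexicographically-first condition. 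This is the step I expect to be the main obstacle: restrictions of $R$-labelings are not automatically $R$-labelings, and the unique rising chain of an interval $[x,y]$ in $\mathcal{L}_{NCCP}$ need not lie in $\mathtt{NCCP}(n;312)$, so the property must be re-established intrinsically in the sublattice; Proposition~\ref{prop:Krec} is precisely this phenomenon worked out for the interval $[\hat{0},\hat{1}]$ and indicates how the argument for a general interval runs.

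Granting the $R$-labeling, Proposition~\ref{prop:MoebiusRlabel} gives that $(-1)^{n-1}\mu(\mathcal{L}_{\mathrm{Kre}}(n))$ equals the number of maximal chains $\hat{0}=x_0\lessdot x_1\lessdot\cdots\lessdot x_{n-1}=\hat{1}$ in $\mathcal{L}_{\mathrm{Kre}}(n)$ whose labels satisfy $\widetilde{L}_1>\widetilde{L}_2>\cdots>\widetilde{L}_{n-1}$. These are exactly the maximal chains of $\mathcal{L}_{NCCP}$ from $\hat{0}$ to $\hat{1}$ that satisfy the decreasing-label condition and stay in $\mathtt{NCCP}(n;312)$, so by Proposition~\ref{prop:Krec} they are in bijection with the sequences $\mathbf{a}\in A(n-1)$, the companion sequence $\mathbf{b}$ being forced. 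By Lemma~\ref{lemma:cardA} there are $C_{n-1}$ such sequences, so $(-1)^{n-1}\mu(\mathcal{L}_{\mathrm{Kre}}(n))=C_{n-1}$, i.e. $\mu(\mathcal{L}_{\mathrm{Kre}}(n))=(-1)^{n-1}C_{n-1}$, recovering Theorem~6 of \cite{Kre72}.
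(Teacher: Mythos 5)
Your proposal is correct and follows essentially the same route as the paper: restrict the $\widetilde{L}$-labeling to the sublattice $(\mathtt{NCCP}(n;312),\le)\cong\mathcal{L}_{\mathrm{Kre}}(n)$, invoke Proposition \ref{prop:MoebiusRlabel}, and count the decreasing-label maximal chains via Proposition \ref{prop:Krec} and Lemma \ref{lemma:cardA}. The only difference is that you explicitly flag (and propose to verify) that the restricted labeling is still an $R$-labeling on the sublattice, a point the paper asserts without further argument.
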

\begin{proof}
The $EL$-labeling on $\mathcal{L}_{NCCP}$ is also a $EL$-labeling on 
the Kreweras lattice labeled an element in $\mathtt{NCCP}(n;312)$.
From Proposition \ref{prop:MoebiusRlabel},
it is enough to count the number of maximal chains satisfying 
the condition Eq. (\ref{eq:chaindec}) in the Kreweras lattice 
to compute the M\"obius function.

From Proposition \ref{prop:Krec}, the number of chains is 
equal to the cardinality of $\mathbf{a}$, which 
is equal to the $(n-1)$-st Catalan number 
by Lemma \ref{lemma:cardA}.
This completes the proof.
\end{proof}

\begin{remark}
For the Kreweras lattice, two types of $EL$-labelings are known.
The first one is proposed by Gessel in \cite[Exapmpel 2.9]{Bjo80} and the second 
one is by Stanley in \cite{Sta97}.
We briefly recall their definitions.

Suppose we have a cover relation $\pi\subset\pi'$. Then this corresponds to
a merging of two blocks $B_1$ and $B_2$ of $\pi$ into a single block $B_1\cup B_2$ 
of $\pi'$. 
We have the following two edge labelings $\lambda_1$ and $\lambda_2$.
\begin{enumerate}
\item 
The edge labeling $\lambda_1$ is given by $\lambda_1(\pi,\pi'):=\max\{\min(B_1), \min(B_{2})\}$ 
\cite{Bjo80}.
\item 
Suppose that $\min(B_1)<\min(B_{2})$. 
Then, the edge labeling $\lambda_2$ is given by 
$\lambda_2(\pi,\pi')=\max\{i\in B_1: i < B_{2} \}$,
where $i< B_2$ denotes that $i$ is less than any element of $B_2$ \cite{Sta97}.
\end{enumerate}

The first edge-labeling $\lambda_1$ behaves nicely in the case of the lattice 
$(\mathtt{NCP}(n),\subseteq)$. 
The second edge-labeling $\lambda_2$ is equivalent to the restriction of 
the $\widetilde{L}$-labeling from $(\mathtt{NCCP}(n),\le)$ to $(\mathtt{NCCP}(n;312),\le)$.
\end{remark}

\begin{remark}
The M\"obius function for the Kreweras lattice was also recomputed by use of 
what is called {\it NBB bases} in the lattice in \cite{BlaSag97}.
\end{remark}

\subsubsection{Multiplicities}
\label{sec:mult}
Let $(\mathbf{a},\mathbf{b})$ a pair of integer sequences of length $n-1$ 
defined in Section \ref{subsec:moebius}.
From Proposition \ref{prop:Cardab}, we have $(2n-3)!!$ such pairs.
By definition of $\mathbf{b}$, we have a permutation 
$w:=\mathbf{b}-1=(b_1-1,\ldots,b_{n-1}-1)$ of length $n-1$.
We say that $\mathbf{b}$ is congruent to a permutation $w$ if $w=\mathbf{b}-1$.

In the previous section, we focus on $\mathbf{a}$ rather than $\mathbf{b}$.
In this section, we focus on $\mathbf{b}$ and count the number of 
$\mathbf{a}$ satisfying the conditions (\ref{eq:conda}) and (\ref{eq:condb}).

\begin{defn}
We denote by $M(w)$ the number of $\mathbf{a}$'s such that $\mathbf{b}$ is 
congruent to $w$.
\end{defn}

To obtain a recurrence equation for $M(w)$, we introduce some definitions.

\begin{defn}
Given a permutation $w:=(w_1,\ldots,w_{n-1})$, we define an increasing sequence 
(from left to right) $\gamma(w):=(\gamma_1,\ldots,\gamma_{n-1})$ recursively such that 
\begin{enumerate}
\item Each $\gamma_{i}$ is either ``$\ast$" or an integer in $[n-1]$.
\item $\gamma_{n-1}=w_{n-1}$. 
\item Suppose $\gamma_{j}\neq\ast$. 
If $w_{i}<w_{j}<w_{k}$ with $i+1\le k\le j-1$, we define $\gamma_{i}:=w_{i}$ and $\gamma_{k}:=\ast$
for $i+1\le k\le j-1$.
\end{enumerate}
We say that $w$ is type $\gamma$ if $\gamma=\gamma(w)$.
\end{defn}

Note that if we ignore $\ast$'s in $\gamma(w)$, we have an increasing integer sequence 
from left to right.
For example, we have $\gamma(w)=*1**25$ if $w=614325$.

\begin{defn}
We denote by $\Gamma(n-1)$ the set of possible types $\gamma(w)$ where $w$ is a permutation
of length $n-1$.
\end{defn}

Let $\gamma\in\Gamma(n)$.
We construct $\gamma'\in\Gamma(n-1)$ from $\gamma$ as follows.
Fix  an integer $\nu$ in $\{1,2,\ldots,\gamma_{n}\}$. 
Note that $\gamma_{n}\neq\ast$ by construction of $\gamma$ and $\gamma_{n}$ is 
the maximum in $\gamma$ if we ignore $\ast$'s.
We have two cases 1) $\gamma_{n-1}=\ast$, and 2) $\gamma_{n-1}\neq\ast$.

\paragraph{Case 1)}
We define $\gamma':=(\gamma'_{1},\ldots,\gamma'_{n-1})$:  
\begin{enumerate}
\item If $\gamma_{i}=\ast$, then $\gamma'_{i}:=\ast$ for $1\le i\le n-2$.
\item $\gamma'_{n-1}:=\min\{\nu,n-1\}$.
\item If $\gamma_{i}\neq\ast$ and $\gamma_{i}\ge\gamma'_{n-1}$, then $\gamma'_{i}=\gamma'_{n-1}$ for $1\le i\le n-2$.
Similarly, if $\gamma_{i}\neq\ast$ and $\gamma'_{i}<\gamma'_{n-1}$, then $\gamma'_{i}=\gamma_{i}$ for $1\le i\le n-2$.
\end{enumerate}

\paragraph{Case 2)}
We define $\gamma':=(\gamma'_{1},\ldots,\gamma'_{n-1})$ as Case 1).
We replace the condition (2) by the following condition:  
\begin{enumerate}
\item[(2')] $\gamma'_{n-1}:=\min\{\gamma_{n-1},\nu\}$.
\end{enumerate}

\begin{defn}
We write $\gamma\xrightarrow{\nu}\gamma'$ if $\gamma'\in\Gamma(n-1)$ is obtained from 
$\gamma\in\Gamma(n)$ by the procedure above.
\end{defn}

\begin{remark}
Since there are $\gamma_{n}$ choices for $\nu$, we have $\gamma_{n}$ $\gamma'$'s.
Note that we may have the same $\gamma'$ from different $\nu\in\{1,\ldots,\gamma_{n}\}$.
This occurs when $\gamma_{n-1}\neq\ast$. 
\end{remark}

\begin{example}
Suppose $\gamma=1234$. 
We have $\gamma'\in\{123, 122, 111 \}$ and 
the multiplicity of $123$ is two ($\nu=3,4$).
\end{example}

\begin{defn}
We define a number $m(\gamma)$, $\gamma\in\Gamma(n)$ recursively by
\begin{align*}
m(\gamma)=\sum_{\gamma':\gamma\xrightarrow{\nu}\gamma'}m(\gamma'),
\end{align*}
with the initial condition $m(\gamma_{0})=1$ where $\gamma_{0}=(1)\in\Gamma(1)$.
We call $m(\gamma)$ the {\it multiplicity} of $\gamma$.
\end{defn}

\begin{example}
Take $\gamma=12*3$.
Then, by definition, we have
\begin{align*}
m(12*3)=m(123)+m(122)+m(111). 
\end{align*}
Similarly, we have $m(123)=2m(12)+m(11)=5$, $m(122)=m(12)+m(11)=3$, and $m(111)=1$.
Therefore, $m(12*3)=5+3+1=9$.
In fact, by definition, $m(12*3)$ is  the number of integer sequences $\mathbf{a}$ 
such that the integer sequence $\mathbf{b}$ is type $12*3$, i.e.,  
\begin{align*}
m(12*3)&=\#\{1111,1112,1113,1122,1123,1133,1222,1223,1233\}, \\
&=9.
\end{align*}
\end{example}

\begin{prop}
Suppose $\gamma=(1,2,\ldots,n)$.
Then, the multiplicity $m(\gamma)$ is equal to the $n$-th Catalan number.
\end{prop}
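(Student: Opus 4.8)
The plan is to pass to the combinatorial description of the multiplicity. Recall (as recorded in the Example just above, and as is built into the recursion $m(\gamma)=\sum_{\gamma\xrightarrow{\nu}\gamma'}m(\gamma')$, in which $\nu$ encodes the last coordinate $a_{n}$ of $\mathbf{a}$) that $m(\gamma)$ counts the integer sequences $\mathbf{a}$ satisfying (\ref{eq:conda}) for which the associated sequence $\mathbf{b}\in B(\mathbf{a})$ has type $\gamma$. So it suffices to enumerate the pairs $(\mathbf{a},\mathbf{b})$ with $\mathbf{b}$ of type $(1,2,\ldots,n)$.

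First I would pin down $\mathbf{b}$. Put $w:=\mathbf{b}-1$; since the entries of $\mathbf{b}$ form a permutation of $\{2,\ldots,n+1\}$, the sequence $w$ is a permutation of $[n]$ and $\mathrm{type}(\mathbf{b})=\gamma(w)$. By the recursive definition of $\gamma(w)$, every entry of $\gamma(w)$ that is not $\ast$ is assigned the value of the corresponding entry of $w$; hence a type with no $\ast$ can be realized only by a permutation that equals that type coordinatewise. Since $(1,2,\ldots,n)$ has no $\ast$, the equation $\gamma(w)=(1,2,\ldots,n)$ forces $w_{i}=i$ for all $i$, i.e. $w$ is the identity permutation and $\mathbf{b}=(2,3,\ldots,n+1)$. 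Thus $(2,3,\ldots,n+1)$ is the unique sequence of type $(1,2,\ldots,n)$.

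Finally, for $\mathbf{b}=(2,3,\ldots,n+1)$ the defining condition (\ref{eq:condb}) of $B(\mathbf{a})$ reads $a_{i}<b_{i}=i+1$, i.e. $a_{i}\le i$, which is already imposed by (\ref{eq:conda}). Hence $(2,3,\ldots,n+1)\in B(\mathbf{a})$ for every $\mathbf{a}\in A(n)$, and conversely every pair $(\mathbf{a},\mathbf{b})$ contributing to $m(1,2,\ldots,n)$ arises in this way. Therefore $m(1,2,\ldots,n)=|A(n)|$, which equals the $n$-th Catalan number by Lemma \ref{lemma:cardA}. The only point requiring care is the uniqueness of the type-$(1,2,\ldots,n)$ sequence $\mathbf{b}$, i.e. carefully unwinding the definition of $\gamma(w)$; everything else is immediate. (One could instead argue straight from the recursion, using that $(1,2,\ldots,n)\xrightarrow{\nu}(1,2,\ldots,n-1)$ for $\nu\in\{n-1,n\}$ and $(1,2,\ldots,n)\xrightarrow{\nu}(1,\ldots,\nu,\nu,\ldots,\nu)$ for $1\le\nu\le n-2$, but then one must first evaluate $m$ on these ``staircase-then-constant'' types, which is more laborious.)
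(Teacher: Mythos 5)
Your computation is sound: the non-$\ast$ entries of $\gamma(w)$ are by construction the corresponding entries of $w$, so a $\ast$-free type forces $w_i=\gamma_i$ for all $i$; hence the identity is the unique permutation of type $(1,2,\ldots,n)$, and for $\mathbf{b}=(2,3,\ldots,n+1)$ the constraint $a_i<b_i$ in (\ref{eq:condb}) is exactly $a_i\le i$, so the count collapses to $|A(n)|$, which is the $n$-th Catalan number by Lemma \ref{lemma:cardA}. The route is, however, genuinely different from the paper's, and it carries one caveat: at this point in the text $m(\gamma)$ is \emph{defined} only by the recursion $m(\gamma)=\sum_{\gamma\xrightarrow{\nu}\gamma'}m(\gamma')$, and the identification of $m(\gamma)$ with the number of sequences $\mathbf{a}$ whose companion $\mathbf{b}$ has type $\gamma$ (equivalently, $M(w)=m(\gamma)$) is the content of Proposition \ref{prop:wgamma}, proved only afterwards. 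The preceding Example asserts this identification informally, but it is not a definition, so your opening step imports a later result; this is harmless (the proof of Proposition \ref{prop:wgamma} does not depend on the present statement), but it should be stated as a forward reference rather than as something ``built into the recursion.'' The paper instead stays entirely inside the recursive definition: it unwinds a full chain $\gamma\xrightarrow{\nu_n}\cdots\xrightarrow{\nu_1}\emptyset$ and observes that the resulting sequences $(\nu_1,\ldots,\nu_n)$ are exactly those with $1\le\nu_i\le i$ and $\nu_{i-1}\le\nu_i$, i.e.\ the elements of $A(n)$ — close to the alternative you sketch in your parenthetical, except that no intermediate values of $m$ on the ``staircase-then-constant'' types ever need to be evaluated. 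What your approach buys is a cleaner conceptual picture (a unique $\mathbf{b}$, then a free choice of $\mathbf{a}$); what the paper's buys is self-containedness at this point in the exposition.
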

\begin{proof}
Suppose that we have a sequence $\nu_{n},\nu_{n-1},\ldots,\nu_{1}$ 
such that 
\begin{align*}
\gamma_{i}\xrightarrow{\nu_{i}}\gamma_{i-1}, \qquad 1\le i\le n,
\end{align*}
where $\gamma_{i}\in\Gamma(i)$.
Then, the sequence $\widetilde{\nu}=(\nu_1,\ldots,\nu_{n})$ satisfies 
$1\le \nu_{i}\le i$ and $\nu_{i-1}\le \nu_{i}$ by definition.
The total number of $\widetilde{\nu}$ is equal to the number of 
integer sequences $\mathbf{a}$, which is equal to the $n$-th
Catalan number. This completes the proof.
\end{proof}

The following proposition gives another formula for the 
M\"obius function of $\mathcal{L}_{NCCP}(n)$ in terms of 
the multiplicities $m(\gamma)$.

Let $\Gamma'(n)$ be the set of sequences of integers $(a_1,\ldots,a_{n})$
such that $1\le a_{i}\le i$ for $i\in[1,n]$.
It is easy to see that the cardinality of $\Gamma'(n)$ is $n!$.
\begin{prop}
The sum of $m(\gamma)$ is given by 
\begin{align}
\label{eq:summgamma}
\sum_{\gamma\in\Gamma'(n-1)}m(\gamma)=(2n-3)!!.
\end{align}
\end{prop}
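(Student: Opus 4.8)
The plan is to evaluate $m(\gamma)$ on each $\gamma\in\Gamma'(n-1)$ combinatorially, sum over $\Gamma'(n-1)$, and recognize the result as $\sum_{\mathbf{a}\in A(n-1)}|B(\mathbf{a})|$, which is $(2n-3)!!$ by Proposition~\ref{prop:Cardab}. First I would observe that, although $m$ was introduced on the set of types $\Gamma(n-1)$, the defining recursion applies verbatim to an arbitrary star-free sequence $\gamma=(a_{1},\ldots,a_{k})\in\Gamma'(k)$: since no entry is $\ast$, every transition $\gamma\xrightarrow{\nu}\gamma'$ is an instance of Case~2, and unwinding Cases~1 and~2 shows that, for $\nu\in\{1,\ldots,a_{k}\}$, the sequence $\gamma^{(\nu)}:=\gamma'$ is obtained from $\gamma$ by deleting $a_{k}$ and replacing each remaining entry $a_{i}$ by $\min(a_{i},a_{k-1},\nu)$; in particular $\gamma^{(\nu)}\in\Gamma'(k-1)$, so the recursion never leaves the star-free world, and $m(\gamma)=\sum_{\nu=1}^{a_{k}}m(\gamma^{(\nu)})$.

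The heart of the argument is the following lemma, which I would prove by induction on $k$: for every $\gamma=(a_{1},\ldots,a_{k})\in\Gamma'(k)$,
\[
m(\gamma)=\#\{(\nu_{1},\ldots,\nu_{k})\in\mathbb{Z}^{k}\ :\ 1\le\nu_{1}\le\nu_{2}\le\cdots\le\nu_{k}\ \text{and}\ \nu_{i}\le a_{i}\ \text{for all}\ i\}.
\]
The base case $k=1$ is immediate, as $\Gamma'(1)=\{(1)\}$ and both sides equal $1$. For the inductive step, apply $m(\gamma)=\sum_{\nu=1}^{a_{k}}m(\gamma^{(\nu)})$; by the induction hypothesis $m(\gamma^{(\nu)})$ counts weakly increasing $(\mu_{1},\ldots,\mu_{k-1})$ with $\mu_{k-1}\le\min(a_{k-1},\nu)$ and $\mu_{i}\le\min(a_{i},a_{k-1},\nu)$ for $i<k-1$, and since $\mu_{i}\le\mu_{k-1}\le\min(a_{k-1},\nu)$ the latter constraints reduce to $\mu_{i}\le a_{i}$. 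Summing over $\nu=1,\ldots,a_{k}$ and reading $\nu$ as a new last coordinate $\mu_{k}$ (a weakly increasing $(\mu_{1},\ldots,\mu_{k})$ with $\mu_{i}\le a_{i}$ arises from exactly the $\nu\in\{\mu_{k-1},\ldots,a_{k}\}$, i.e. $\max(0,a_{k}-\mu_{k-1}+1)$ of them) reproduces the right-hand side for $\gamma$. The delicate points here are that the entries of $\gamma$ need not be monotone — so one must track the vanishing cases in the count $\max(0,a_{k}-\mu_{k-1}+1)$ — and that the capping operation $\gamma\mapsto\gamma^{(\nu)}$ must be checked against the literal wording of Cases~1 and~2. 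This verification, while not deep, carries the heaviest bookkeeping, and I expect it to be the main obstacle.

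With the lemma established, the proposition follows by interchanging the order of summation. Indeed,
\[
\sum_{\gamma\in\Gamma'(n-1)}m(\gamma)=\#\{(\mathbf{a},\nu)\ :\ 1\le a_{i}\le i,\ 1\le\nu_{1}\le\cdots\le\nu_{n-1},\ \nu_{i}\le a_{i}\ \forall i\}.
\]
Summing over $\nu$ first, the $\nu$ that occur are exactly the weakly increasing sequences with $1\le\nu_{i}\le i$, i.e. $\nu\in A(n-1)$ (whose cardinality is the $(n-1)$-st Catalan number by Lemma~\ref{lemma:cardA}); and for a fixed such $\nu$ the admissible $\mathbf{a}$ are obtained by choosing each $a_{i}$ independently in $\{\nu_{i},\ldots,i\}$, so there are $\prod_{i=1}^{n-1}(i+1-\nu_{i})=|B(\nu)|$ of them. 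Hence $\sum_{\gamma\in\Gamma'(n-1)}m(\gamma)=\sum_{\nu\in A(n-1)}|B(\nu)|$, which equals $(2n-3)!!$ by Proposition~\ref{prop:Cardab}. This establishes Eq.~(\ref{eq:summgamma}).
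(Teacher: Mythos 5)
Your proof is correct, and it reaches the same endpoint as the paper --- the identity $\sum_{\mathbf{a}\in A(n-1)}|B(\mathbf{a})|=(2n-3)!!$ of Proposition \ref{prop:Cardab} --- but by a more self-contained route. The paper's proof is a two-line assertion that the left-hand side of Eq. (\ref{eq:summgamma}) equals the number of pairs $(\mathbf{a},\mathbf{b})$ satisfying (\ref{eq:conda}) and (\ref{eq:condb}); justifying that assertion requires knowing that summing $m$ over the star-free sequences in $\Gamma'(n-1)$ reproduces $\sum_{w}M(w)$, which is not immediate since $m$ was defined on types, the type map $w\mapsto\gamma(w)$ is many-to-one, and $\Gamma'(n-1)$ is not $\Gamma(n-1)$. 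You bypass this entirely: your closed-form lemma, that for star-free $\gamma=(a_1,\dots,a_k)$ the quantity $m(\gamma)$ counts weakly increasing sequences $\nu$ with $\nu_i\le a_i$, is proved directly from the recursion (your unwinding of Case 2 into $\gamma^{(\nu)}_i=\min(a_i,a_{k-1},\nu)$ matches the paper's rules, and the reduction of the constraint $\mu_i\le\min(a_i,a_{k-1},\nu)$ to $\mu_i\le a_i$ via $\mu_i\le\mu_{k-1}$ is sound), and the final interchange of summation lands exactly on $\sum_{\nu\in A(n-1)}\prod_i(i+1-\nu_i)=\sum_{\nu\in A(n-1)}|B(\nu)|$. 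What your route buys is an explicit combinatorial formula for $m$ on $\Gamma'(n-1)$ that makes the proposition independent of the permutation/type machinery of Proposition \ref{prop:wgamma}; what the paper's route buys is brevity, at the cost of leaving the identification with pairs implicit. One phrasing slip worth fixing: a full weakly increasing sequence $(\mu_1,\dots,\mu_k)$ arises from exactly one $\nu$, namely $\nu=\mu_k$; it is the prefix $(\mu_1,\dots,\mu_{k-1})$ that is counted in $\max(0,a_k-\mu_{k-1}+1)$ of the terms $m(\gamma^{(\nu)})$, which is also the number of its admissible extensions, so the arithmetic is unaffected.
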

\begin{proof}
The sum in the left hand side of Eq. (\ref{eq:summgamma}) is equal 
to the number of the pairs $(\mathbf{a},\mathbf{b})$ satisfying the 
conditions (\ref{eq:conda}) and (\ref{eq:condb}).
Then, by Proposition \ref{prop:Cardab}, the sum is equal to 
$(2n-3)!!$, which completes the proof.
\end{proof}
For example, we have six sequences in $\Gamma'(3)$:
\begin{align*}
111\quad 112 \quad 113 \quad 121 \quad 122 \quad 123
\end{align*}
Note that two sequences $(1,1,1)$ and $(1,2,1)$ give the 
same type $(*,*,1)$.
If we replace the sum on $\Gamma'(n-1)$ by $\Gamma(n-1)$ in Eq. (\ref{eq:summgamma}),
we need to introduce the multiplicity of the type $\gamma$.
It is easy to verify that $m(**1)=m(111)=m(121)=1$.

The next proposition gives the number $M(w)$ in terms of $m(\gamma)$. 
\begin{prop}
\label{prop:wgamma}
Suppose that $w$ is type $\gamma\in\Gamma(n-1)$.
Then, we have 
\begin{align*}
M(w)=m(\gamma).
\end{align*}
\end{prop}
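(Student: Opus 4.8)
The plan is to rewrite both $M(w)$ and $m(\gamma)$ as cardinalities of the same family of nondecreasing integer sequences and then match a common recursion.

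First I would reformulate $M(w)$ intrinsically. Fix a permutation $w$ of length $n-1$; by definition $M(w)$ counts the sequences $\mathbf{a}\in A(n-1)$ with $\mathbf{b}:=(w_1+1,\ldots,w_{n-1}+1)\in B(\mathbf{a})$, i.e. the integer sequences $\mathbf{a}$ with $1\le a_i\le i$, $a_i\le a_{i+1}$ and $a_i\le w_i$ for all $i$ (the remaining requirements in~(\ref{eq:condb}) hold automatically since $w$ is a permutation of $[n-1]$). Because $\mathbf{a}$ is nondecreasing, the conditions ``$a_i\le i$ for all $i$'' and ``$a_i\le w_i$ for all $i$'' are jointly equivalent to $a_i\le c_i(w)$ for all $i$, where $c_i(w):=\min\bigl(i,\min_{k\ge i}w_k\bigr)$. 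The key point is that $\min_{k\ge i}w_k$ is attained at the first right-to-left minimum of $w$ at a position $\ge i$, and --- this is precisely what the construction of $\gamma(w)$ records --- the positions carrying a non-$\ast$ entry of $\gamma:=\gamma(w)$ are exactly the right-to-left minima of $w$, the entry there being the value of $w$. Hence $c_i(w)=\min\bigl(i,\gamma_{j(i)}\bigr)$ with $j(i):=\min\{\,j\ge i:\gamma_j\neq\ast\,\}$, so $M(w)$ depends on $w$ only through $\gamma(w)$. Writing, for any $\gamma$ of length $\ell$, $M(\gamma)$ for the number of sequences $1\le a_1\le\cdots\le a_\ell$ with $a_i\le c_i(\gamma):=\min(i,\gamma_{j(i)})$, the claim reduces to $M(\gamma)=m(\gamma)$ for all $\gamma\in\Gamma(n-1)$.

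Next I would prove $M(\gamma)=m(\gamma)$ by induction on the length of $\gamma$, comparing with the recursion defining $m$. The base case $\gamma=(1)$ is trivial ($c_1=1$, so both sides are $1$). For $\gamma\in\Gamma(m)$ with $m\ge 2$, since $c_m(\gamma)=\min(m,\gamma_m)=\gamma_m$, splitting on the value $a_m=\nu$ gives
\[
M(\gamma)=\sum_{\nu=1}^{\gamma_m}\#\bigl\{\,1\le a_1\le\cdots\le a_{m-1}\ :\ a_i\le\min(\nu,c_i(\gamma))\ \text{for all }i\bigr\}.
\]
So it suffices to prove the combinatorial core of the argument: if $\gamma\xrightarrow{\nu}\gamma'$, then
\[
c_i(\gamma')=\min\bigl(\nu,c_i(\gamma)\bigr)\qquad\text{for all }i\in[m-1].
\]
Granting this, the $\nu$-th summand is $M(\gamma')$, which equals $m(\gamma')$ by the induction hypothesis, and summing over $\nu$ recovers $m(\gamma)=\sum_{\gamma':\gamma\xrightarrow{\nu}\gamma'}m(\gamma')$; together with the fact that $w$ is of type $\gamma(w)$ this yields $M(w)=M(\gamma(w))=m(\gamma(w))$.

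The main obstacle is the displayed identity $c_i(\gamma')=\min(\nu,c_i(\gamma))$, which I would establish by unwinding the definition of $\gamma\xrightarrow{\nu}\gamma'$ in its two cases ($\gamma_{m-1}=\ast$ and $\gamma_{m-1}\neq\ast$). The bookkeeping is: the $\ast$-positions among the first $m-1$ coordinates are unchanged; $\gamma'_{m-1}$ equals $\min(\nu,m-1)$, resp.\ $\min(\nu,\gamma_{m-1})$; and every surviving non-$\ast$ value $\gamma_i$ becomes $\min(\gamma_i,\gamma'_{m-1})$. From this one checks that, for $i\le m-1$, the index $j_{\gamma'}(i)$ either equals $j_{\gamma}(i)$ (when $j_{\gamma}(i)\le m-1$) or equals $m-1$ (when $j_{\gamma}(i)=m$), and in both cases $\gamma'_{j_{\gamma'}(i)}=\min(\nu,\gamma_{j_{\gamma}(i)})$ up to a further $\min$ with $m-1$ that is absorbed by the leading $\min(i,-)$ in $c_i$; this gives the identity. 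Handling the degenerate subcases --- notably those in which several $\nu$ produce the same $\gamma'$, which the recursion for $m$ already counts with multiplicity --- is routine, and the worked examples $\gamma=12\ast3$ and $\gamma=\ast\ast1$ exhibit every case that arises.
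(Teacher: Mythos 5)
Your proof is correct and follows essentially the same route as the paper: the paper also reduces $M(w)$ to counting weakly increasing sequences componentwise below a profile read off from $\gamma$ (its $\mathbf{a}_{\mathrm{max}}$, obtained by propagating each non-$\ast$ entry leftward over the $\ast$'s, is exactly your $c_i(\gamma)$), and then matches this count with $m(\gamma)$ via the chains $\gamma\xrightarrow{\nu}\gamma'$. The only difference is one of rigor: where the paper simply asserts that the $\nu$-sequences counted by $m(\gamma)$ are precisely the sequences bounded by $\mathbf{a}_{\mathrm{max}}$, you prove it by induction through the identity $c_i(\gamma')=\min(\nu,c_i(\gamma))$, which is a welcome tightening of the same argument.
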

\begin{proof}
The value $M(w)$ is equal to the number of $\mathbf{a}:=(a_1,\ldots,a_{n-1})$ such that 
$a_{i}$ is weakly increasing and $a_{i}\le w_{i}\le n-1$.
Given a permutation $w$ of type $\gamma=(\gamma_1,\ldots,\gamma_{n-1})$, 
we define $\mathbf{a}_{\mathrm{max}}:=(a'_1,\ldots,a'_{n-1})$ by 
$a'_{i}=\gamma_{i}$ if $\gamma_i\neq\ast$, and 
$a'_{j}=\gamma_{k}$ if $\gamma_{j}=\ast$ for $i<j<k$ and $\gamma_{i},\gamma_{k}\neq\ast$.
Then, $M(w)$ is the number of sequences $\mathrm{a}$ such that $\mathbf{a}$ is smaller 
than or equal to $\mathrm{a}_{\mathrm{max}}$ in the lexicographic order.

On the other hand, we have a chain
\begin{align}
\label{eq:chainnu}
\gamma\xrightarrow{\nu_{n-1}}\gamma'\xrightarrow{\nu_{n-2}}\ldots\xrightarrow{\nu_2}(1)\xrightarrow{\nu_1}\emptyset.
\end{align} 
Then, $m(\gamma)$ counts the number of $\mu=(\nu_1,\ldots,\nu_{n-1})$ satisfying 
Eq. (\ref{eq:chainnu}).
By construction of $\nu$, it is easy to see that $\nu$ is smaller or equal to 
$\mathrm{a}_{\mathrm{max}}$ in the lexicographic order.
Therefore, We have $M(w)=m(\gamma)$.
\end{proof}

The following corollary is a direct consequence of Proposition \ref{prop:wgamma}.
\begin{cor}
If $w$ and $w'$ has the same type $\gamma\in\Gamma(n-1)$, then $M(w)=M(w')$.	
\end{cor}
 
\subsection{Maximal chains and labeled parking functions} 
\label{sec:lpk}
We briefly review the definition of a parking function following \cite{Sta97} and introduce 
its labeling.

A {\it parking function} is a sequence $\mathbf{a}:=(a_1,\ldots,a_{n})$ 
of positive integers in $[n]$ such that 
if $\widetilde{a}_{1}\le \widetilde{a}_{2}\le\cdots\le \widetilde{a}_{n}$
is the increasing rearrangement of $\mathbf{a}$, then $\widetilde{a}_{i}\le i$ 
for $1\le i\le n$.

A {\it labeled parking function} is a pair of two integer sequences 
$(\mathbf{a},\mathbf{b})$ in $[n]$ such that 
$\mathbf{a}$ is a parking function, $a_{i}\le b_{i}$ for $1\le i\le n$, 
and each integer $p\in[n]$ appears in $\mathbf{b}$ exactly once.
We denote by $\mathcal{LP}_{n}$ the set of labeled parking functions $(\mathbf{a},\mathbf{b})$.

\begin{example}
For $n=3$, we have sixteen parking functions:
\begin{align*}
111 \qquad 112 \qquad 121 \qquad 211\qquad 113 \qquad 131 \qquad 311 \qquad 122 \\
212 \qquad 221 \qquad 123 \qquad 132 \qquad 213\qquad 231 \qquad 312 \qquad 321
\end{align*}
Consider a parking function $\mathbf{a}=121$. Then, possible sequences for $\mathbf{b}$ are 
\begin{align*}
123 \qquad 321 \qquad 132 \qquad 231.
\end{align*}
The number of labeled parking functions for $n=3$ is $36$.  
\end{example}

The next two propositions give the number of parking functions 
and labeled parking functions.

\begin{prop}[\cite{KonWei66,Pyk59}]
\label{prop:pk}
The number of parking functions in $[n]$ is $(n+1)^{n-1}$.
\end{prop}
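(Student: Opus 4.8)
The plan is to use Pollak's circular argument. First I would enlarge the count: consider all sequences $\mathbf{a}=(a_1,\ldots,a_n)$ with entries in the cyclic group $\mathbb{Z}/(n+1)\mathbb{Z}$, of which there are $(n+1)^{n}$. Interpret such a sequence as the preferences of $n$ cars trying to park in $n+1$ spots placed on a circle and labeled by $\mathbb{Z}/(n+1)\mathbb{Z}$: car $i$ drives to spot $a_i$ and then advances clockwise to the first free spot. Because there is one more spot than there are cars, this greedy procedure always succeeds and leaves exactly one spot unoccupied; write $e(\mathbf{a})\in\mathbb{Z}/(n+1)\mathbb{Z}$ for that spot. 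Restricting to sequences with entries in $[n]=\{1,\ldots,n\}$ (the nonzero residues), one checks that $\mathbf{a}$ is a parking function in the sense of the definition precisely when $e(\mathbf{a})=0$: in that case no car ever wraps past $n$ back to $0$, so the process coincides with ordinary linear parking on $\{1,\ldots,n\}$, and the latter succeeds exactly when $\#\{i:a_i\le j\}\ge j$ for all $j\in[n]$, which is equivalent to the condition $\widetilde a_i\le i$ for all $i$ in the definition.

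Next I would exploit the additive action of $\mathbb{Z}/(n+1)\mathbb{Z}$ on the set of all $(n+1)^{n}$ sequences given by $k\cdot\mathbf{a}:=(a_1+k,\ldots,a_n+k)$. Relabeling all the circular spots by a fixed rotation commutes with the greedy parking rule, so $e(k\cdot\mathbf{a})=e(\mathbf{a})+k$. Consequently the action is free (for $k\neq0$ the sequences $\mathbf{a}$ and $k\cdot\mathbf{a}$ have different empty spots, hence differ), every orbit has size $n+1$, and each orbit contains exactly one sequence $\mathbf{a}$ with $e(\mathbf{a})=0$, that is, exactly one parking function. Therefore the number of parking functions equals the number of orbits, which is $(n+1)^{n}/(n+1)=(n+1)^{n-1}$.

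The only point needing real care is the equivariance $e(k\cdot\mathbf{a})=e(\mathbf{a})+k$; the rest is bookkeeping. I would prove it by running the two greedy procedures for $\mathbf{a}$ and $k\cdot\mathbf{a}$ in parallel, processing the cars in the common order $1,2,\ldots,n$, and showing by induction on the number of cars parked that the occupied set for $k\cdot\mathbf{a}$ is always the occupied set for $\mathbf{a}$ translated by $k$. An alternative would be a direct induction on $n$, splitting on the number of indices $i$ with $a_i=1$ and simplifying the resulting sum by Abel's binomial identity, but Pollak's argument is shorter and is the one I would write up.
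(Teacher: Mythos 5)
The paper offers no proof of this proposition at all: it is stated as a classical result and attributed to Konheim--Weiss and Pyke, so there is nothing internal to compare your argument against. Your write-up of Pollak's circular argument is correct and is the standard self-contained proof: the equivariance $e(k\cdot\mathbf{a})=e(\mathbf{a})+k$, the freeness of the $\mathbb{Z}/(n+1)\mathbb{Z}$-action, and the orbit count $(n+1)^{n}/(n+1)$ are all right, and you correctly identify the equivariance as the one step needing a real argument. The only point I would ask you to make explicit is the very last identification: you prove ``parking function $\Leftrightarrow e(\mathbf{a})=0$'' only for sequences with entries in $[n]$, but then assert that the unique $e=0$ element of each orbit \emph{is} a parking function. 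To close this you need the observation that if spot $0$ is empty at the end of the circular process then it was empty throughout, so no car can have preferred it, hence that representative automatically has all entries in $\{1,\dots,n\}$. That is a one-line addition, not a flaw in the approach. Compared with the paper, which simply outsources the statement to the literature, your proof has the advantage of being short and self-contained; if you wanted an argument closer in spirit to the rest of this paper (which repeatedly converts counting problems into counts of labeled trees and chains), the alternative route via the bijection between parking functions and maximal chains or labeled forests would also work, but Pollak's argument is the cleaner choice here.
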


Define the generating function for labeled parking functions:
\begin{align*}
I_{n}(p,q):=\sum_{(\mathbf{a},\mathbf{b})\in\mathcal{LP}_{n}}
p^{a_1+a_2+\cdots+a_{n}-n}q^{\mathrm{inv}(\mathbf{b})},
\end{align*} 
where $\mathrm{inv}(\mathbf{b})$ is the number of inversions 
in the permutation $\mathbf{b}$.

\begin{prop}
\label{prop:lpk}
We have
\begin{align*}
I_{n}(p,q)=\prod_{i=1}^{n}\genfrac{}{}{}{}{(1-p^{i})(1-q^{i})}{(1-p)(1-q)}.
\end{align*}
\end{prop}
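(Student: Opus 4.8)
The plan is to prove the product formula for $I_n(p,q)$ by establishing a recurrence in $n$ that matches the recurrence satisfied by the claimed product, using the decomposition of a labeled parking function according to the position(s) of the value $1$ in $\mathbf a$ (equivalently, according to the last car that parks in spot $1$, in the parking-function interpretation). First I would recall the standard decomposition of parking functions: given $\mathbf a=(a_1,\ldots,a_n)$ a parking function of length $n$, let $S=\{i : a_i=1\}$; then removing the coordinates indexed by $S$ and subtracting $1$ from every remaining entry yields a parking function of length $n-|S|$. Conversely, choosing a subset $S\subseteq[n]$ with $1\in\text{(the multiset of values)}$—more precisely choosing which $k$ of the $n$ positions carry the value $1$—and an arbitrary parking function of length $n-k$ on the remaining positions reconstructs $\mathbf a$. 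This gives the classical identity $P_n(p):=\sum p^{a_1+\cdots+a_n-n}=\sum_{k\ge 1}\binom{n}{k}p^{\,0\cdot k}\,p^{(n-k)}\cdot\!$(shift)$\cdots$, but since we are carrying the extra statistic $\mathrm{inv}(\mathbf b)$ I would instead track everything through the bijection with labeled objects.

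The cleaner route, which I expect to use, is to pass to labeled rooted forests (or equivalently to the labeled $(k+1)$-ary / planar tree picture already present in the paper) and identify $I_n(p,q)$ as a generating function over those, or else to argue directly on pairs $(\mathbf a,\mathbf b)$ as follows. Consider the largest label $n$, sitting at some position $j$ (i.e.\ $b_j=n$). The value $a_j$ may be anything in $[1,b_j]=[1,n]$; write $a_j=i$. Deleting position $j$ from both sequences, the pair $(\mathbf a',\mathbf b')$ obtained from the remaining $n-1$ coordinates, after relabeling $\mathbf b'$ to a permutation of $[n-1]$, is again a labeled parking function of length $n-1$ \emph{provided} we also note that the increasing rearrangement condition is preserved (since $a_j=i\le n$ is never a binding constraint once the other $n-1$ entries already rearrange to satisfy $\widetilde a_k\le k$ for $k\le n-1$, the full sequence satisfies it automatically as $\widetilde a_n\le n$ trivially). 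The contribution of the deleted coordinate to the two statistics is $p^{a_j-1}=p^{i-1}$ and $q^{\#\{k\neq j: b_k>n \text{ or } \ldots\}}$; here I must be careful—deleting the position holding the \emph{maximum} label contributes $q^{n-j}$ inversions, and summing over $j\in[n]$ the positions of $n$ gives a factor $\sum_{j=1}^n q^{n-j}=1+q+\cdots+q^{n-1}=\frac{1-q^n}{1-q}$, while summing over $i\in[1,n]$ gives $\sum_{i=1}^n p^{i-1}=\frac{1-p^n}{1-p}$. These two sums are \emph{independent} of the remaining data, so
\begin{align*}
I_n(p,q)=\frac{(1-p^n)(1-q^n)}{(1-p)(1-q)}\,I_{n-1}(p,q),
\end{align*}
with $I_1(p,q)=1$, and the product formula follows by induction.

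The step I expect to be the main obstacle is verifying rigorously that the decomposition above is a genuine bijection, i.e.\ that the map ``delete the position of the maximal label $n$'' together with the record $(i,j)=(a_j,\text{position of }n)$ is invertible onto $[1,n]\times[1,n]\times\mathcal{LP}_{n-1}$, and in particular that the parking-function property for $\mathbf a$ is equivalent to the parking-function property for $\mathbf a'$ with \emph{no} extra condition on $a_j$. The forward direction (parking $\Rightarrow$ the $n-1$ restricted entries park) needs the observation that deleting one car and decrementing nothing still leaves a parking function of length $n-1$: if $\widetilde a_1\le\cdots\le\widetilde a_n$ with $\widetilde a_k\le k$, then dropping \emph{any} one entry and re-sorting yields $\widetilde a'_1\le\cdots\le\widetilde a'_{n-1}$ with $\widetilde a'_k\le\widetilde a_{k+1}\le k+1$—which is one too weak. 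So a small correction is needed: I should instead delete the car with the \emph{largest value}, or delete and decrement appropriately, or—cleanest—delete the entry equal to $n$ if one exists and otherwise note every $a_k\le n-1$ already. In practice the standard fix is to decompose on the value $1$ as in the first paragraph and track $\mathrm{inv}(\mathbf b)$ through that decomposition, which requires the refined count $\sum_{S}\,q^{(\text{inversions created by the positions of }1)}$; this is where a short generating-function lemma of the shape $\sum_{k\ge1}\binom{n}{k}_q p^{?}=\cdots$ enters. I would flag this bookkeeping as the only delicate point; once the correct decomposition is fixed the recurrence and the induction are immediate, and the product telescopes to $\prod_{i=1}^n\frac{(1-p^i)(1-q^i)}{(1-p)(1-q)}$ as claimed. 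As a sanity check, setting $p=q=1$ gives $I_n(1,1)=n!\cdot n!=(n!)^2$, matching the number of maximal chains in $\mathcal L_{NCCP}(n)$ from Table \ref{table:summary}, and setting $q=1$ recovers $I_n(p,1)=\frac{\prod_{i=1}^n(1-p^i)}{(1-p)^n}\cdot n! $, consistent with Proposition \ref{prop:pk} at $p=1$.
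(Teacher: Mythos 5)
Your route --- a recurrence in $n$ obtained by deleting the coordinate of $\mathbf b$ carrying the maximal label $n$ --- is genuinely different from the paper's and does work, but the ``delicate point'' you flag has a one-line resolution that you are missing, and without it your write-up stalls in the wandering of the final paragraph. The key observation is that for a pair $(\mathbf a,\mathbf b)$ with $\mathbf b$ a permutation of $[n]$, the pointwise condition $a_i\le b_i$ already \emph{implies} that $\mathbf a$ is a parking function: for every $k$ one has $\#\{i: a_i\le k\}\ge\#\{i: b_i\le k\}=k$, which is exactly the increasing-rearrangement condition. Hence $\mathcal{LP}_{n}$ is precisely the set of pairs with $\mathbf b\in\mathcal{S}_{n}$ and $a_i\le b_i$, and your deletion map is a clean bijection $\mathcal{LP}_{n}\to[n]\times[n]\times\mathcal{LP}_{n-1}$ recording $(a_j,j)$ where $b_j=n$: the surviving coordinates satisfy $a'_i\le b'_i$ with $\mathbf b'$ a permutation of $[n-1]$, so no separate verification of the parking property is needed, and the inverse (insert $b_j=n$ and $a_j=i$ at position $j$, for arbitrary $i,j\in[n]$) is unrestricted. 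The detours you float --- deleting the entry of $\mathbf a$ with the largest value, or decomposing on the set of positions where $a_i=1$ and chasing a $q$-binomial identity --- are unnecessary, and the latter would be considerably harder to push through while tracking $\mathrm{inv}(\mathbf b)$. With the observation above, your factors $\sum_{j=1}^{n} q^{n-j}=\frac{1-q^{n}}{1-q}$ and $\sum_{i=1}^{n} p^{i-1}=\frac{1-p^{n}}{1-p}$ are correct and the recurrence telescopes to the claimed product. The paper's own proof is shorter and avoids induction altogether: it fixes $\mathbf b$ and notes that the sum over admissible $\mathbf a$ is $\prod_{i}\frac{1-p^{b_i}}{1-p}=\prod_{i}\frac{1-p^{i}}{1-p}$, independent of $\mathbf b$, so $I_n(p,q)$ factors as this product times $\sum_{\mathbf b}q^{\mathrm{inv}(\mathbf b)}=\prod_{i}\frac{1-q^{i}}{1-q}$; your recurrence buys nothing extra here but is a legitimate alternative. (A minor aside: your last sanity check is off --- $I_n(p,1)$ at $p=1$ gives $(n!)^2$, the count of labeled parking functions, and has no direct bearing on the $(n+1)^{n-1}$ of Proposition \ref{prop:pk}.)
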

\begin{proof}
Let $\mathbf{b}_{0}=(1,2,\ldots,n)$.
The number of labeled parking functions $(\mathbf{a},\mathbf{b}_{0})$ is 
$n!$ since $\mathbf{b}_{0}$ implies $1\le a_{i}\le i$.
Recall that each $a_{i}$ contributes to $I_{n}(p,q)$ by $p^{a_{i}}$.
Therefore, we have a generating function $\prod_{i=1}^{n}(1-p^{i})/(1-p)$
for a fixed $\mathbf{b}_{0}$.	

Since a sequence $\mathbf{b}$ is a permutation of $[n]$, the number of 
$\mathbf{b}$ is $n!$.
It is obvious that the generating function of such $\mathbf{b}$ is given 
by $\prod_{i=1}^{n}(1-q^{i})/(1-q)$ since $\mathrm{inv}(\mathbf{b})$ is the 
same as the length function.
We denote by $w$ the permutation for $\mathbf{b}$.
Then, a labeled parking function $(\mathbf{a}',\mathbf{b})$ is obtained 
from $(\mathbf{a},\mathbf{b}_{0})$ by the action of a permutation 
$w$ on $\mathbf{a}$.
This means that if we fix a $\mathbf{b}$, the generating function with respect 
to $\mathbf{a}$ is given by $\prod_{i=1}^{n}(1-p^{i})/(1-p)$.

From these observations, we have $n!$ integer sequences $\mathbf{b}$, and 
each integer sequence $\mathbf{b}$ has $n!$ integer sequences $\mathbf{a}$.
Therefore, the generating function of labeled parking functions is 
given by $\prod_{i=1}^{n}(1-p^{i})/(1-p)\prod_{j=1}^{n}(1-q^{j})/(1-q)$.
\end{proof}

The number of labeled parking functions can be deduced from Proposition \ref{prop:lpk}.
\begin{cor}
\label{cor:lpk}
The number of labeled parking functions in $[n]$ is $(n!)^{2}$.
\end{cor}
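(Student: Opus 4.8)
The plan is to derive Corollary \ref{cor:lpk} directly from Proposition \ref{prop:lpk} by specializing the generating function $I_{n}(p,q)$ at $p=q=1$. Since $I_{n}(p,q)=\sum_{(\mathbf{a},\mathbf{b})\in\mathcal{LP}_{n}}p^{a_1+\cdots+a_n-n}q^{\mathrm{inv}(\mathbf{b})}$, setting $p=q=1$ collapses every monomial to $1$, so $I_{n}(1,1)=|\mathcal{LP}_{n}|$, the number of labeled parking functions in $[n]$.

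The only subtlety is that the closed form $\prod_{i=1}^{n}\frac{(1-p^{i})(1-q^{i})}{(1-p)(1-q)}$ is a priori $0/0$ at $p=q=1$, so I would not substitute naively. Instead, recall the standard identity $\frac{1-x^{i}}{1-x}=1+x+\cdots+x^{i-1}$, valid as a polynomial identity; evaluating this polynomial at $x=1$ gives $i$. Hence, rewriting
\begin{align*}
I_{n}(p,q)=\left(\prod_{i=1}^{n}\frac{1-p^{i}}{1-p}\right)\left(\prod_{i=1}^{n}\frac{1-q^{i}}{1-q}\right)
=\left(\prod_{i=1}^{n}(1+p+\cdots+p^{i-1})\right)\left(\prod_{i=1}^{n}(1+q+\cdots+q^{i-1})\right),
\end{align*}
each factor on the right is an honest polynomial, and setting $p=q=1$ gives $\prod_{i=1}^{n}i\cdot\prod_{i=1}^{n}i=(n!)^{2}$.

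Combining the two computations yields $|\mathcal{LP}_{n}|=I_{n}(1,1)=(n!)^{2}$, which is exactly the claim. There is essentially no obstacle here: the entire content is the substitution $p=q=1$ together with the elementary observation that the $q$-integer $[i]_q:=\frac{1-q^{i}}{1-q}$ specializes to $i$ at $q=1$. The one point requiring a word of care in the write-up is to phrase the evaluation through the polynomial form rather than the rational form, so that no division by zero occurs; alternatively one can argue by continuity of the polynomial $I_n(p,q)$ in $(p,q)$, but the polynomial-identity route is cleaner and entirely formal.
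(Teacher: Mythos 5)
Your proposal is correct and follows exactly the paper's route: the paper's proof of Corollary \ref{cor:lpk} is the one-line instruction ``Set $p=q=1$ in Proposition \ref{prop:lpk}.'' Your extra care in evaluating through the polynomial form $\prod_{i=1}^{n}(1+p+\cdots+p^{i-1})\prod_{i=1}^{n}(1+q+\cdots+q^{i-1})$ rather than the rational form is a reasonable refinement of the same argument, not a different approach.
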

\begin{proof}
Set $p=q=1$ in Proposition \ref{prop:lpk}.
\end{proof}

The following theorem connects the number of maximal chains in $\mathcal{L}_{NCCP}(n)$ 
with the number of labeled parking functions.
\begin{theorem}
\label{thrm:mchainNCCP}
The number of maximal chains in the lattice $\mathcal{L}_{NCCP}(n)$ 
is $((n-1)!)^{2}$.
\end{theorem}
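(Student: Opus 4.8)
The plan is to identify maximal chains in $\mathcal{L}_{NCCP}(n)$ with a suitable subset of labeled parking functions via the refined $\widetilde{L}$-labeling, and then count that subset. Recall from the proof of Theorem \ref{thrm:moebius} that every maximal chain $\hat{0}=x_0\lessdot x_1\lessdot\cdots\lessdot x_{n-1}=\hat{1}$ carries a pair of integer sequences $\mathbf{a}=(a_1,\ldots,a_{n-1})$ and $\mathbf{b}=(b_1,\ldots,b_{n-1})$ defined by $a_i=l(\mathfrak{n}_a)$ and $b_i=l(\mathfrak{n})$ for the triplet $\nu_i=(\mathfrak{n},\mathfrak{n}_a,\mathfrak{m}(S))$ effecting the $i$-th cover, and that these always satisfy $1\le a_i<b_i\le n$, with each value in $[2,n]$ occurring exactly once among the $b_i$. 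The first step is to show that the assignment of a maximal chain to the pair $(\mathbf{a},\mathbf{b})$ is injective, and to characterize its image precisely: a pair $(\mathbf{a},\mathbf{b})$ of length $n-1$ arises from a (necessarily unique) maximal chain if and only if $1\le a_i\le i$ does \emph{not} hold in general — rather, the correct constraint is that $\mathbf{a}$ is \emph{unrestricted} in the range $1\le a_i<b_i$ once the $b_i$ are fixed, since along a maximal chain there is no monotonicity requirement on the $\widetilde{L}$-labels. Concretely, I would prove that fixing $\mathbf{b}$ (equivalently, fixing the sequence of ``active'' labels $\mathfrak{n}$ in which order the right-children are created, read from $\hat0$ to $\hat1$) and then fixing, for each step, the choice of $\mathfrak{n}_a$ among the admissible nodes, determines the chain uniquely via Lemma \ref{lemma:lbtLR} and the construction of the rotation; conversely every chain produces such data. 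The number of admissible choices for $a_i$ at step $i$ is exactly $b_i-1$, because $\mathfrak{n}_a$ ranges over nodes with label in $\{1,\ldots,b_i-1\}$ lying in the appropriate left set, and for the minimal-length situation each such label occurs exactly once.

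Granting that bijective description, the count becomes $\sum_{\mathbf{b}}\prod_{i=1}^{n-1}(b_i-1)$ where $\mathbf{b}$ ranges over all sequences that are permutations of $[2,n]$; equivalently, setting $w=\mathbf{b}-1$ a permutation of $[n-1]$, this is $\sum_{w\in\mathcal{S}_{n-1}}\prod_{i=1}^{n-1}w_i = (n-1)!\cdot(n-1)!=((n-1)!)^2$, since $\prod_i w_i=(n-1)!$ is independent of $w$. Alternatively — and this is the route I would actually write up to make the ``labeled parking function'' language of Section \ref{sec:lpk} pay off — one observes that the data $(\mathbf{a},\mathbf{b})$ attached to a maximal chain, after the shift $\mathbf{b}\mapsto\mathbf{b}-1$, is precisely a labeled parking function on $[n-1]$: the sequence $\mathbf{a}$ (reindexed appropriately) is a parking function of length $n-1$ and $\mathbf{b}-1$ is a permutation of $[n-1]$ with $a_i\le b_i-1$ componentwise. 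One then quotes Corollary \ref{cor:lpk}, which gives $((n-1)!)^2$ labeled parking functions on $[n-1]$, and the theorem follows. I would phrase the main lemma as: \emph{the map sending a maximal chain of $\mathcal{L}_{NCCP}(n)$ to its pair $(\mathbf{a},\mathbf{b}-1)$ is a bijection onto $\mathcal{LP}_{n-1}$.}

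The main obstacle — and the step that needs genuine care rather than bookkeeping — is establishing that this map is a bijection, in particular surjectivity and the claim that at step $i$ there are exactly $b_i-1$ choices of $\mathfrak{n}_a$ (no more, no fewer) each leading to a distinct valid cover, and that any prescribed sequence of such choices assembles into a genuine saturated chain from $\hat0$ to $\hat1$ of length $n-1$. This requires a careful induction on $n$ (or on the length of the chain), tracking how the left-extended sequences and left sets $LS(\mathfrak{n})$ evolve under a rotation, and invoking Lemma \ref{lemma:lbtLR} to guarantee that the intermediate objects $\{(\mathfrak{L}(i;\pi),\mathfrak{R}(i;\pi))\}$ are consistently realized by labeled binary trees at every stage. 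The monotonicity-free nature of maximal chains (as opposed to the rising chains counted in Theorem \ref{thrm:moebius}) is what produces the factor $\prod(b_i-1)$ instead of $\prod a_i$, and I would take care to contrast the two counts explicitly so the reader sees why one gets $((n-1)!)^2$ here and $(2n-3)!!$ there. Once the bijection with $\mathcal{LP}_{n-1}$ is in place, the enumeration is immediate from Corollary \ref{cor:lpk}.
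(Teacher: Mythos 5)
Your proposal follows essentially the same route as the paper: both encode a maximal chain by the pair $(\mathbf{a},\mathbf{b})$ coming from the refined $\widetilde{L}$-labeling, identify the resulting pairs (after the shift $\mathbf{b}\mapsto\mathbf{b}-1$) with labeled parking functions on $[n-1]$, and conclude via Corollary \ref{cor:lpk}. Your write-up is in fact more explicit than the paper's brief argument, which asserts that the chain-to-pair correspondence is a bijection onto the labeled parking functions without spelling out the surjectivity and uniqueness points you correctly flag as the substantive step.
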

\begin{proof}
Recall that we have the refined $\widetilde{L}$-labeling on the edges in $\mathcal{L}_{NCCP}(n)$
as in the proof of Theorem \ref{thrm:moebius}.
We define a pair of integer sequences $(\mathbf{a},\mathbf{b})$ by Eq. (\ref{eq:ab}).
The number of maximal chains in $\mathcal{L}_{NCCP}(n)$ is equal to the 
number of pairs $(\mathbf{a},\mathbf{b})$.
Note that the conditions on $(\mathbf{a},\mathbf{b})$ are the same as labeled parking functions.
Thus, the number of pairs $(\mathbf{a},\mathbf{b})$ is equal to 
the number of labeled parking functions, i.e., $((n-1)!)^2$ by Corollary \ref{cor:lpk}.
\end{proof}

\begin{theorem}[\cite{Sta97}]
\label{thrm:mchainKre}
The number of maximal chains in the lattice $\mathcal{L}_{\mathrm{Kre}}(n)$ 
is $n^{n-2}$.
\end{theorem}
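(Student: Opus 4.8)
The plan is to restrict the refined $\widetilde{L}$-labeling of $\mathcal{L}_{NCCP}(n)$ — the one introduced in the proof of Theorem~\ref{thrm:moebius} and exploited in the proof of Theorem~\ref{thrm:mchainNCCP} — to the sublattice $\mathcal{L}_{\mathrm{Kre}}(n)=(\mathtt{NCCP}(n;312),\le)$. By Lemma~\ref{lemma:lat312} this is a graded sublattice with the same rank function, so every maximal chain $\hat 0=\pi_0\lessdot\pi_1\lessdot\cdots\lessdot\pi_{n-1}=\hat 1$ has length $n-1$, and along it each cover $\pi_{i-1}\lessdot\pi_i$ carries the pair of labels $(a_i,b_i)=(l(\mathfrak n_a),l(\mathfrak n))$ coming from its defining triplet $\nu=(\mathfrak n,\mathfrak n_a,\mathfrak m(S))$, exactly as in Eq.~(\ref{eq:ab}). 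As in the proof of Theorem~\ref{thrm:mchainNCCP}, $\mathbf a=(a_1,\ldots,a_{n-1})$ is then a parking function of length $n-1$, $\mathbf b=(b_1,\ldots,b_{n-1})$ is a permutation of $[2,n]$, and $a_i<b_i$; the only additional requirement, by Lemma~\ref{lemma:312clbt}, is that every labeled binary tree occurring along the chain be canonical.

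The heart of the proof is the claim that the assignment (maximal chain in $\mathcal{L}_{\mathrm{Kre}}(n)$) $\longmapsto \mathbf a$ is a bijection onto the set of parking functions of length $n-1$. I would prove this by induction on the length of a chain inside $\mathtt{NCCP}(n;312)$: given a canonical labeled tree $L(T_{i})$ and a prescribed value $a\in[n]$, there is \emph{at most one} canonical labeled tree covering $L(T_i)$ whose defining triplet satisfies $l(\mathfrak n_a)=a$, because canonicity rigidly fixes $\mathfrak n_a$, the left-extended sequence above $\mathfrak n$, and hence $\mathfrak n$ and $\mathfrak m(S)$ (this uses the structure of canonical trees invoked in Lemma~\ref{lemma:lat312} and in the proof of Proposition~\ref{prop:312Kre}); and conversely, starting from $\hat 0$, any prescribed parking-function value can be realized at each step. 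This simultaneously yields that each parking function $\mathbf a$ determines a \emph{unique} companion permutation $\mathbf b$ — generalizing the construction in the proof of Proposition~\ref{prop:Krec} from weakly increasing $\mathbf a$ to arbitrary parking functions — and that every parking function occurs.

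Granting the claim, the number of maximal chains in $\mathcal{L}_{\mathrm{Kre}}(n)$ equals the number of parking functions of length $n-1$, which by Proposition~\ref{prop:pk} (applied with $n$ replaced by $n-1$) is $\bigl((n-1)+1\bigr)^{(n-1)-1}=n^{n-2}$. Since the restricted $\widetilde{L}$-labeling coincides with Stanley's $\lambda_2$-labeling on $(\mathtt{NCP}(n),\subseteq)$ — as recorded in the remark following Corollary~\ref{cor:KreMoebius} — this argument also recovers the bijection of \cite{Sta97} between maximal chains of the Kreweras lattice and parking functions.

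The main obstacle is the bijection claim of the second paragraph: one must show that along a maximal chain lying in $\mathtt{NCCP}(n;312)$ the canonicity of the labeled binary trees leaves no freedom for $\mathbf b$ once $\mathbf a$ is fixed, while still permitting every parking-function value to be attained. This is exactly the mechanism by which passing from $\mathcal{L}_{NCCP}(n)$ — where $(\mathbf a,\mathbf b)$ ranges over all labeled parking functions, giving the count $((n-1)!)^2$ of Theorem~\ref{thrm:mchainNCCP} — down to the Kreweras sublattice collapses the labeled count to the unlabeled count $n^{n-2}$, and it amounts to a repackaging of Stanley's analysis of his $EL$-labeling in the present $\widetilde{L}$-labeling language.
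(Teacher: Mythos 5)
Your proposal is correct and follows essentially the same route as the paper: restrict the refined $\widetilde{L}$-labeling to the sublattice $(\mathtt{NCCP}(n;312),\le)$, observe that canonicity forces a unique companion sequence $\mathbf{b}$ for each parking function $\mathbf{a}$ (the paper cites the construction in Proposition~\ref{prop:Krec} for exactly this), and count parking functions of length $n-1$ via Proposition~\ref{prop:pk}. You are somewhat more explicit than the paper about what needs to be verified in the uniqueness/realizability claim, but the argument is the same.
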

\begin{proof}
Recall that the lattice $\mathcal{L}_{\mathrm{Kre}}(n)$ is a sublattice of 
$\mathcal{L}_{NCCP}(n)$.
We consider the $\widetilde{L}$-labeling on $\mathcal{L}_{NCCP}(n)$ 
as in the proof of Theorem \ref{thrm:mchainNCCP}.
Note that the element in $\mathcal{L}_{\mathrm{Kre}(n)}$ is in $\mathtt{NCCP}(n;312)$.
This means that we have a unique maximal chain if we choose a parking function $\mathbf{a}$.
In fact, the integer sequence $\mathbf{b}$ in the refined $\widetilde{L}$-labeling for 
$\mathbf{a}:=(a_1,\ldots,a_{n-1})$ is given as in the proof of Proposition \ref{prop:Krec}.
Thus, the number of maximal chains is equal to the number of 
parking functions, i.e., $(n)^{n-2}$ by Proposition \ref{prop:pk}.
\end{proof}

\begin{remark}
From Theorems \ref{thrm:mchainNCCP} and \ref{thrm:mchainKre}, 
the number of maximal chains in $\mathcal{L}_{NCCP}(n)$ is equal to 
the number of labeled parking functions, and that in $\mathcal{L}_{\mathrm{Kre}}(n)$ 
is equal to the number of (unlabeled) parking functions.
As observed in Remark \ref{rmrk:moebius}, combinatorics on $\mathcal{L}_{\mathrm{Kre}}(n)$ 
can be obtained from $\mathcal{L}_{NCCP}(n)$ by ignoring the labels on combinatorial objects.
\end{remark}

\section{Generalized Dyck tilings and \texorpdfstring{$k$}{k}-ary trees}
\label{sec:kDyck}

In this section, we introduce two combinatorial objects: $k$-ary trees 
and $k$-Dyck tilings.
As we will see below, a labeled $k$-ary trees is bijective to 
a $k-1$-Dyck tiling.
If we forget the labels of a labeled $k$-ary tree, we have 
$k$-ary trees (without labels) which are enumerated by 
the Fuss--Catalan number. This implies that a $k$-ary tree without 
labels is bijective to a $k-1$-Dyck paths.
Similarly, if we restrict ourselves to $k-1$-Dyck tilings without 
non-trivial tiles, they are bijective to $k-1$-Dyck paths.
When we study the noncommutative crossing partitions, 
labels in a tree and non-trivial tiles in a $k$-Dyck tiling 
play a central role.

\subsection{\texorpdfstring{$k$}{k}-Ary trees}
\label{sec:kDktree}
In Section \ref{sec:lbt}, we introduce a labeled binary tree.
In this section, we consider a $k$-ary tree and its labeling.

A {\it $k$-ary tree} is a rooted tree in which each node has 
at most $k$ children. 
A labeled $k$-ary tree $L(T)$ is a tree $T$ with labeling 
satisfying the conditions in Definition \ref{def:labeledtree}.

Let $L(T)$ be a complete labeled $k$-ary tree with $n$ nodes.
We will construct an integer sequence $h(L(T)):=(h_1,\ldots,h_{n})$ 
such that 
\begin{align}
\label{eq:condh}
0\le h_{i}\le (k-1)(i-1).
\end{align}
When $n=1$, we define $h=(0)$, which corresponds to a tree consisting 
of a single node, the root.
Suppose that $n\ge2$ and $L(T')$ be a labeled tree obtained 
from $L(T)$ by deleting the node with the integer label $n$.
We denote by $h_{n}+1$ the position (counted from right to left) 
of the edge connected to the node with the label $n$ in $L(T)$.
We also denote by $h(L(T')):=(h_{1},\ldots,h_{n-1})$ the integer 
sequence associated with $L(T')$.
Then, we define $h(L(T)):=(h_{1},\ldots,h_{n})$.
By construction of $h(L(T))$, the integer $h_{n}$ is in $[0,(k-1)(i-1)]$.

\begin{prop}
\label{prop:numlktree}
The number of labeled $k$-ary trees with $n$ nodes  is given 
by 
\begin{align}
\label{eq:numktree}
\prod_{j=0}^{n-1}((k-1)j+1).
\end{align}
\end{prop}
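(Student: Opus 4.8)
The plan is to establish the count $\prod_{j=0}^{n-1}((k-1)j+1)$ by exhibiting an explicit bijection between labeled $k$-ary trees with $n$ nodes and the set of integer sequences $h=(h_1,\ldots,h_n)$ satisfying condition~(\ref{eq:condh}), namely $0\le h_i\le (k-1)(i-1)$ for each $i\in[n]$. The cardinality of that sequence set is immediately $\prod_{i=1}^{n}((k-1)(i-1)+1)=\prod_{j=0}^{n-1}((k-1)j+1)$, so once the bijection is in place the proposition follows. Most of the construction of the map $L(T)\mapsto h(L(T))$ has already been given in the text preceding the statement, so the proof amounts to verifying that this map is well defined and invertible.

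First I would check that $h(L(T))$ really does satisfy~(\ref{eq:condh}). This is the content of the recursive definition: deleting the node labeled $n$ from a labeled $k$-ary tree $L(T)$ yields a smaller labeled $k$-ary tree $L(T')$ on $n-1$ nodes (the label condition in Definition~\ref{def:labeledtree} is preserved because $n$ is the largest label, hence a leaf, so its removal cannot break the increasing-along-paths property), and the external edges available in $L(T')$ to which a new node labeled $n$ can be attached number exactly $(k-1)(n-1)+1$ — a complete $k$-ary tree on $n-1$ nodes has $(k-1)(n-1)+1$ external edges. Labelling these from the right by $0,1,\ldots,(k-1)(n-1)$ gives $h_n\in[0,(k-1)(n-1)]$, and by induction $h(L(T'))$ satisfies the analogous bounds, so $h(L(T))$ satisfies~(\ref{eq:condh}).

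Next I would construct the inverse. Given a sequence $h=(h_1,\ldots,h_n)$ with $0\le h_i\le (k-1)(i-1)$, build the tree by induction: start with the single root (forced, since $h_1=0$), and having built the labeled $k$-ary tree $T_{i-1}$ on nodes $1,\ldots,i-1$, attach a new node labeled $i$ at the external edge in position $h_i+1$ counted from the right among the $(k-1)(i-1)+1$ external edges of $T_{i-1}$; this is always possible precisely because $h_i\le (k-1)(i-1)$. The resulting $T_n$ is a complete $k$-ary tree, and its labels increase from the root to the leaves since each newly added node receives the currently largest label and is attached as a leaf. The two constructions — peeling off the node labeled $n$ versus attaching a node labeled $i$ at a prescribed external slot — are manifestly mutually inverse step by step, so the maps $L(T)\mapsto h(L(T))$ and $h\mapsto T$ are inverse bijections.

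The main (mild) obstacle is bookkeeping: one must be careful that "position counted from right to left" is used consistently in both directions, and that the edge-counting identity "a complete $k$-ary tree with $m$ nodes has $(k-1)m+1$ external edges" is invoked correctly (it follows from the fact that each of the $m$ internal nodes contributes $k$ edges downward while $m-1$ of those lead to internal nodes, leaving $km-(m-1)=(k-1)m+1$ external edges). With these points checked, the count is
\begin{align*}
\#\{\text{labeled }k\text{-ary trees with }n\text{ nodes}\}
=\prod_{i=1}^{n}\bigl((k-1)(i-1)+1\bigr)
=\prod_{j=0}^{n-1}\bigl((k-1)j+1\bigr),
\end{align*}
which is~(\ref{eq:numktree}).
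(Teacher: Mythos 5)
Your proposal is correct and follows essentially the same route as the paper: both establish the bijection between labeled $k$-ary trees and the sequences $h$ satisfying condition~(\ref{eq:condh}), then count the sequences. You simply supply more detail than the paper does (the external-edge count $(k-1)m+1$ and the explicit inverse construction), which the paper leaves implicit.
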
 
\begin{proof}
By definition of an integer sequence $h(L(T))$ constructed from a labeled $k$-ary tree $L(T)$,
each integer $h_{i}$, $1\le i\le n$, satisfies the condition (\ref{eq:condh}).
Conversely, given an integer sequence $h$ satisfying the condition (\ref{eq:condh}), 
we have a unique labeled $k$-ary tree.
Thus, we have a canonical bijection between $h$ and $L(T)$.
The number of $k$-ary trees with $n$ nodes is equal to the number of $h$ satisfying 
(\ref{eq:condh}). 
We have Eq. (\ref{eq:numktree}) as a direct consequence of observations above.
\end{proof}

We define analogously a canonical $k$-ary tree by use of Definition \ref{defn:canotree} 
as in the case of binary trees.
Consider the two labels $1\le l_1<l_2\le n$ in a $k$-ary tree $T$.
The tree $T$ is canonical if and only if the node labeled $l_2$ is 
weakly right to the node labeled $l_1$.

It is well-known that the number of binary trees with $n$ nodes is
given by the Catalan number. The following proposition gives the 
number of $k$-ary trees with $n$ nodes (which do not have labels).

\begin{prop}
\label{prop:numcktree}
The number of canonical $k$-ary tree with $n$ nodes is given by 
the $n$-th Fuss--Catalan number
\begin{align*}
\genfrac{}{}{}{}{1}{kn+1}\genfrac{(}{)}{0pt}{}{kn+1}{n}.
\end{align*}
\end{prop}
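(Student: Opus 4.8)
The plan is to set up a bijection between canonical $k$-ary trees with $n$ nodes and $k$-ary trees with $n$ nodes \emph{without} labels, in exactly the same way as in the binary case (Proposition \ref{prop:clbt}), and then separately enumerate the unlabeled $k$-ary trees by the Fuss--Catalan number. First I would argue, in parallel with Proposition \ref{prop:clbt}, that given any (unlabeled) $k$-ary tree $T$ with $n$ nodes there is a \emph{unique} canonical labeling: one assigns the label $1$ to the root and then fills in the remaining labels $2,3,\ldots,n$ greedily so that each successive label sits on the leftmost available node that is weakly right to the node bearing the previous label; the ``increasing from root to leaf'' condition of Definition \ref{def:labeledtree} together with the ``weakly right'' condition of Definition \ref{defn:canotree} pins this down. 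Conversely, forgetting the labels of a canonical $k$-ary tree recovers $T$. Hence the number of canonical $k$-ary trees with $n$ nodes equals the number of unlabeled $k$-ary trees with $n$ nodes.

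The second, arithmetic half is to show that the number of unlabeled (complete) $k$-ary trees with $n$ internal nodes is the Fuss--Catalan number $\tfrac{1}{kn+1}\binom{kn+1}{n}$. This is classical, but to keep the argument self-contained I would give the standard recursion: decomposing a nonempty $k$-ary tree at its root into its $k$ ordered subtrees yields $F_n = \sum_{n_1+\cdots+n_k = n-1} F_{n_1}\cdots F_{n_k}$ with $F_0 = 1$, so the generating function $F(x)=\sum_{n\ge 0}F_n x^n$ satisfies $F(x) = 1 + x\,F(x)^k$; the Lagrange inversion formula then extracts $F_n = \tfrac{1}{kn+1}\binom{kn+1}{n}$. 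Alternatively, and perhaps more in the spirit of this paper, one can use the integer-sequence encoding $h(L(T))$ introduced just before Proposition \ref{prop:numlktree}: the number of \emph{labeled} $k$-ary trees is $\prod_{j=0}^{n-1}((k-1)j+1)$ by Proposition \ref{prop:numlktree}, and a cycle-lemma / ballot-type count of how many of the $n!$ labelings of a fixed shape are canonical versus total would relate the two — though this route is messier and I would likely just cite or reprove the clean generating-function computation.

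The main obstacle is the uniqueness-of-canonical-labeling step: in the binary case the two children are simply ``left'' and ``right'', but for a $k$-ary tree one must be careful about what ``weakly right to'' means across the $k$ children of a common ancestor and about the interaction with the ``increasing from root to leaf'' constraint. Concretely, I need to check that the greedy labeling procedure never gets stuck (there is always a node weakly right to the current one that is still unlabeled, as long as not all nodes are labeled) and never has a genuine choice (the leftmost such node is forced). I expect this to follow by induction on $n$ exactly as in Proposition \ref{prop:clbt}, reading off the children of the root in left-to-right order and recursing into each subtree, but it is the place where the argument is not a literal copy of the binary case and so deserves the most care. Once that is in place, combining the bijection with the Fuss--Catalan count finishes the proof.
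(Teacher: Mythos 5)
Your proposal follows the paper's proof exactly: the paper likewise argues that every unlabeled $k$-ary tree admits a unique canonical labeling, so the count reduces to that of unlabeled $k$-ary trees, which it then attributes to classical enumerative combinatorics. You simply spell out the two steps (the greedy uniqueness argument and the generating-function/Lagrange-inversion computation) that the paper asserts without detail.
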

\begin{proof}
Note that given a $k$-ary tree, we have a unique labeling on the tree
which is canonical.
Thus, the number of canonical $k$-ary trees is equal to the number 
of $k$-ary trees without labels.
Then, this number is equal to the $n$-th Fuss--Catalan number 
by a classical theory of enumerative combinatorics. 
\end{proof}

\subsection{Generalized Dyck tilings}
\label{sec:GDyckT}
We briefly summarize basic facts about $k$-Dyck tilings
(see e.g. \cite{JosVerKim16,KeyWil12,KimMesPanWil14,Shi21,ShiZinJus12} and 
references therein).

A $k$-Dyck path of size $n$ is a lattice path from $(0,0)$ to 
$(kn,n)$ which never goes below the line $y=x/k$.
We represent a $k$-Dyck path by up steps $U=(0,1)$ and down steps $D=(1,0)$.
By definition, a $k$-Dyck path of size $n$ has $n$ up steps and $kn$ down steps.
We have a unique lowest path, denoted $\mu_{0}=(UD^{k})^{n}$.

A {\it $k$-Dyck tile} is a ribbon (connected skew shape which does not contain a $2\times2$ box)
such that the centers of the unit boxes form a $k$-Dyck path. 
The size of $k$-Dyck tile is the size of the $k$-Dyck path which characterize the tile.
Especially, a unit box is a Dyck tile of size $0$.
A {\it $k$-Dyck tiling} is a tiling by $k$-Dyck tiles above a Dyck path $\mu$.
A $k$-Dyck tiling is called {\it cover-inclusive} if the sizes of $k$-Dyck tiles 
are weakly decreasing from bottom to top.
In terms of the tiles, cover-inclusiveness implies that if we translate a Dyck tile by $(1,-1)$,
then it is contained in another Dyck tile or strictly below the path $\mu$. 
In this paper, we consider only cover-inclusive $k$-Dyck tilings above the lowest path $\mu_{0}$.
The restriction of the lowest path is important when we study the relation between
a chain in the lattice of noncommutative crossing partitions and a $k$-Dyck tilings.

We say that a unit box in a Dyck tiling is a trivial tile, and a Dyck tile of size $n\ge1$ is 
a non-trivial tile.

In Figure \ref{fig:kDT}, we give an example of $2$-Dyck tiling of size $n=5$.
We have seven unit boxes, one $2$-Dyck tile of size one, and one $2$-Dyck tile of size $2$.
Therefore, we have two non-trivial tiles in this $2$-Dyck tiling.
\begin{figure}[ht]
\begin{tikzpicture}[scale=0.6]
\foreach \a in {0,1,2,3,4}
\draw(\a+\a,\a)--(\a+\a,\a+1)--(\a+\a+2,\a+1);
\draw(1,1)--(1,3)--(3,3)--(3,4)--(6,4);
\draw (0,1)--(0,2)--(1,2);
\draw(0,2)--(0,4)--(3,4);
\draw(2,4)--(2,5)--(8,5);
\foreach \a in {3,4,5,6,7}
\draw(\a,4)--(\a,5);
\draw[red](0,1.5)--(1.5,1.5)--(1.5,2.5)--(3.5,2.5)--(3.5,3.5)--(6,3.5);
\draw[red](0,2.5)--(0.5,2.5)--(0.5,3.5)--(3,3.5);
\draw[red](2,4.5)--(8,4.5);
\end{tikzpicture}
\caption{A cover-inclusive $2$-Dyck tiling of size $5$}
\label{fig:kDT}
\end{figure}
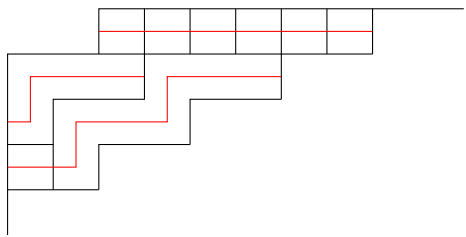

We will construct a bijection between a $k-1$-Dyck tiling $D$ and 
$k$-ary labeled binary tree $L(T)$.
Since we have a bijection between $h(L(T))$ satisfying Eq. (\ref{eq:condh})
and a labeled $k$-ary tree $L(T)$ in Section \ref{sec:kDktree}, it is enough 
to construct a bijection between $D$ and $h(L(T))$.
The bijection can be realized by use of trajectories on a Dyck tiling.

Recall that a $k-1$-Dyck tile $D$ is a ribbon. 
We put a red line in $D$ from east edge to the south-most west edge.
We draw red lines on all Dyck tiles which form a Dyck tiling.
We call the red line a trajectory. 
In the case of Figure \ref{fig:kDT}, we have three red lines 
corresponding to three trajectories.
The right end of a trajectory is associated to the up step $u$ in $\mu_{0}$ by 
translating it by $(p,-p)$ for some non-negative integer $p$.
Then, we say that the trajectory is associated with the up step $u$.
Let $\mathtt{Traj}(i)$ is the set of Dyck tiles where the trajectory 
is associated with the $i$-th up step (from left) in $\mu_{0}$.

A $k-1$-Dyck tile $d$ of size $n$ has a weight $\mathrm{wt}(d):=(k-1)n+1$.
Then, the integer sequence $h:=(h_1,\ldots,h_{n})$ is defined in 
terms of the weight of Dyck tiles:
\begin{align*}
h_{i}:=\sum_{d\in\mathtt{Traj}(i)}\mathrm{wt}(d).
\end{align*}

By construction, we always have $h_1=0$ since there is no trajectory associated 
to the left-most up step.
It is easily shown that $h_{i}$ satisfies the condition Eq. (\ref{eq:condh}), 
and there is a one-to-one correspondence between a sequence $h$ and a $k-1$-Dyck 
tiling $D$.

For example, the $2$-Dyck tiling in Figure \ref{fig:kDT} has 
a sequence $h$:
\begin{align*}
h=(0,0,3,6,6).
\end{align*}
Here, $h_2=0$ since there is no trajectory associated to the second up step in $\mu_{0}$,
and $h_{3}=3$ since the Dyck tile of size one is associated to the third up step in $\mu_{0}$.

\begin{lemma}
\label{lemma:hntDT}
Let $h=(h_1,\ldots,h_{n})$ be the integer sequence obtained from a $k-1$-th Dyck tiling.
The followings are equivalent:
\begin{enumerate}
\item The Dyck tiling contains at least one non-trivial Dyck tiles.
\item There exists $i\in[2,n-1]$ such that $h_{i}+k-1<h_{i+1}$.
\end{enumerate}
\end{lemma}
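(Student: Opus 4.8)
The plan is to analyze the structure of a cover-inclusive $(k-1)$-Dyck tiling via its trajectories and the associated weight sequence $h = (h_1,\dots,h_n)$. The key dictionary entry is that the tile of size $0$ (a unit box) has weight $k-1$ while a nontrivial tile of size $m\ge 1$ has weight $(k-1)m+1 \ge (k-1)+1 = k$, i.e. a nontrivial tile carries strictly more weight than $k-1$. Thus the presence of nontrivial tiles is detected by the contributions of the individual tiles to the sums $h_i = \sum_{d\in\mathtt{Traj}(i)}\mathrm{wt}(d)$. First I would establish the implication $(2)\Rightarrow(1)$: if the tiling consisted only of trivial tiles (unit boxes), then each trajectory set $\mathtt{Traj}(i)$ contributes a multiple of $k-1$, and more precisely a careful count of how many unit boxes can lie in $\mathtt{Traj}(i)$ above the lowest path $\mu_0=(UD^{k-1})^n$ — wait, here with $k-1$-Dyck tiling the lowest path is $(UD^{k-1})^n$ — shows $h_{i+1} - h_i \le k-1$ for all $i$. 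Contrapositively, $h_i + k - 1 < h_{i+1}$ for some $i$ forces at least one nontrivial tile. The cleanest way to see $h_{i+1}-h_i\le k-1$ in the all-trivial case is to recall (or re-derive from the bijection in Section \ref{sec:kDktree}) that all-trivial cover-inclusive tilings correspond to $k$-ary trees whose canonical labeling data $h$ satisfies $h_{i+1} \le h_i + (k-1)$, which is exactly the statement that adding node $i+1$ to the tree attaches it within one ``block'' to the right of node $i$'s position; equivalently the all-trivial tilings biject with $(k-1)$-Dyck \emph{paths} and the corresponding $h$ are the area-type statistics with increments bounded by $k-1$.

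Next I would prove $(1)\Rightarrow(2)$. Suppose the tiling has a nontrivial tile; pick a nontrivial tile $d$ of size $m\ge 1$ that is maximal in the cover-inclusive order (nothing nontrivial sits above it along its trajectory, or more carefully, take the topmost nontrivial tile in its trajectory column), and let $i$ be the index with $d\in\mathtt{Traj}(i)$. Here I would use the geometry of the trajectory: a trajectory entering a tile of size $m\ge 1$ travels across a genuine ribbon, and when it exits on the west it drops to a strictly lower row; by cover-inclusiveness the tiles strictly below $d$ in the same trajectory are at least as large, so they too contribute weight $\ge k-1$ each, but that by itself is not enough — I need to compare $h_i$ with $h_{i+1}$, not just bound $h_i$ from below. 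The right move is to compare $\mathtt{Traj}(i)$ with $\mathtt{Traj}(i+1)$ cell by cell: the trajectory associated with up-step $i+1$ lies weakly above and to the right of that associated with up-step $i$ (this monotonicity of trajectories is a standard feature of cover-inclusive Dyck tilings), and the discrepancy in their total weights is accounted for exactly by the tiles through which the $(i+1)$-st trajectory passes but the $i$-th does not. When a nontrivial tile $d$ sits in $\mathtt{Traj}(i)$ at its topmost nontrivial occurrence, I claim one can choose the index so that $\mathtt{Traj}(i+1)$ ``jumps over'' $d$, gaining its weight $(k-1)m+1$ while at most re-accounting unit boxes, yielding $h_{i+1} \ge h_i + (k-1)m+1 > h_i + (k-1)$, and the index is in $[2,n-1]$ because the leftmost up-step has empty trajectory and the rightmost contributes but cannot be the $i$ with a strictly larger successor. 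The restriction $i\le n-1$ is automatic since we are comparing $h_i$ and $h_{i+1}$.

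The main obstacle I anticipate is making the trajectory bookkeeping in $(1)\Rightarrow(2)$ fully rigorous: one must argue that the weight difference $h_{i+1}-h_i$ is \emph{exactly} the total weight of tiles met by the $(i+1)$-st trajectory minus those met by the $i$-th, with a clean description of which tiles are ``gained'' and which are ``lost'' as one passes from trajectory $i$ to trajectory $i+1$, and then that a maximal nontrivial tile can always be made one of the ``gained'' tiles for a suitable $i$. I would handle this by induction on the size of the tiling (peeling off the topmost tile or the leftmost column of trajectories), reducing to the case where the nontrivial tile in question is on the bottom row, where the comparison of $\mathtt{Traj}(i)$ and $\mathtt{Traj}(i+1)$ is transparent. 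A fallback, if the direct trajectory argument proves unwieldy, is to route everything through the bijection with labeled $k$-ary trees from Proposition \ref{prop:numlktree}: translate ``all tiles trivial'' into a structural condition on the tree (each node $i+1$ attached within $k-1$ slots to the right of node $i$, i.e. canonical-type placement) and read off the increment bound on $h$ directly from the tree, with condition $(2)$ corresponding precisely to the failure of that local condition, i.e. to a genuine ``crossing'' attachment that can only arise from a nontrivial tile.
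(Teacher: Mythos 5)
Your direction $(2)\Rightarrow(1)$ is, in outline, the paper's own argument (contrapositive: with only unit boxes, a trajectory can reach at most $k-1$ columns further left than the preceding one, since consecutive up steps of $\mu_0$ are separated by $k-1$ down steps, whence $h_{i+1}\le h_i+k-1$). But your opening dictionary is off: by the paper's definition $\mathrm{wt}(d)=(k-1)\cdot\mathrm{size}(d)+1$, a unit box has weight $1$, not $k-1$, so in the all-trivial case $h_i$ is just the number of unit boxes in $\mathtt{Traj}(i)$ and is not a multiple of $k-1$; the increment bound has to come from the geometry of the trajectories (or from the tree bijection, as in your fallback), not from divisibility.

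The direction $(1)\Rightarrow(2)$ contains a genuine error. You place the non-trivial tile $d$ of size $m$ in $\mathtt{Traj}(i)$ and claim $h_{i+1}\ge h_i+(k-1)m+1$, i.e.\ that the jump occurs between the trajectory containing $d$ and the \emph{next} one. The correct statement, and the one the paper uses, is that a non-trivial tile in the $(i+1)$-st trajectory forces $h_i+k-1<h_{i+1}$: the jump is between the trajectory containing the tile and the \emph{preceding} one. The paper's own example refutes your inequality: the $2$-Dyck tiling of Figure \ref{fig:kDT} has $h=(0,0,3,6,6)$ with a tile of size $2$ (weight $5$) in $\mathtt{Traj}(4)$, yet $h_5=h_4=6$, nowhere near $h_5\ge h_4+5$; the index witnessing that tile is $i=3$, via $h_3+2=5<6=h_4$. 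Relatedly, the bookkeeping ``$\mathtt{Traj}(i+1)$ jumps over $d$, gaining its weight'' is incoherent: each tile belongs to exactly one trajectory, so the weight of $d\in\mathtt{Traj}(i)$ is counted in $h_i$ and never in $h_{i+1}$, and trajectories are not nested, so $h_{i+1}-h_i$ is not a sum of weights of ``extra'' tiles (indeed $h$ need not be monotone: $h=(0,1,0)$ is attainable for $k=2$). To repair the argument, take the non-trivial tile to lie in trajectory $i+1$ and show that its ribbon shape makes that trajectory reach more than $k-1$ columns further left than trajectory $i$ can, which is precisely what the inequality $h_i+k-1<h_{i+1}$ records.
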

\begin{proof}
(2) $\Rightarrow$ (1). Suppose that $h_{i}+k-1<h_{i+1}$ and  let $t_{i}$ and $t_{i+1}$ be 
a trajectory associated to the $i$-th and $i+1$-st up steps in $\mu_{0}$.
In $\mu_{0}$, we have $k-1$ down steps between the $i$-th and $i+1$-st
up steps. Therefore, the condition that $h_{i}+k-1<h_{i+1}$ means that 
the $i+1$-st trajectory reaches far left to the $i$-th trajectory.
This is impossible if Dyck tiles are trivial in the $i+1$-st trajectory, and 
possible if we have non-trivial Dyck tiles in the $i+1$-st trajectory and 
the $i$-th trajectory is translated by $(-1,1)$.

(1) $\Rightarrow$ (2). 
Conversely, suppose that a Dyck tiling contains at least one non-trivial 
Dyck tiles and there exists a non-trivial Dyck tile in the $i+1$-st trajectory.
Then, by construction of trajectories, 
it is easily shown that we have $h_{i}+k-1<h_{i+1}$.

From these considerations, (1) and (2) are equivalent.
\end{proof}

We have the following proposition as a summary.

\begin{prop}
\label{prop:kDyckenu}
We have 
\begin{enumerate}
\item The number of $k$-Dyck tilings above $\mu_{0}$ is 
given by $\prod_{j=0}^{n-1}((k-1)j+1)$.
\item The number of $k$-Dyck tilings containing only trivial tiles 
is given by $n$-th Fuss--Catalan number.
\end{enumerate}
\end{prop}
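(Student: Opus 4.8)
The plan is to prove the two enumerative statements of Proposition~\ref{prop:kDyckenu} by exploiting the bijections already set up in this section, so that essentially no new combinatorics is required beyond combining known correspondences. For part~(1), I would note that Section~\ref{sec:GDyckT} constructs, for each cover-inclusive $k$-Dyck tiling $D$ above the lowest path $\mu_0$ of size $n$, an integer sequence $h(D):=(h_1,\ldots,h_n)$ with $0\le h_i\le (k-1)(i-1)$, via the trajectory/weight construction, and that this assignment is a bijection between such tilings and sequences satisfying \eqref{eq:condh}. (Here one must be slightly careful with the indexing convention: the Proposition is stated for $k$-Dyck tilings, whereas the bijection in the text is phrased for $k-1$-Dyck tilings; I would fix this by reading the Proposition's ``$k$-Dyck tiling'' as matching the ``$k-1$-Dyck tiling'' of the bijection, i.e.\ the parameter in \eqref{eq:condh} is the one appearing in the product $\prod_{j=0}^{n-1}((k-1)j+1)$.) Then the count of such sequences is immediate: $h_1$ is forced to be $0$, and for $2\le i\le n$ there are exactly $(k-1)(i-1)+1$ admissible values of $h_i$, independently, so the total is $\prod_{i=1}^{n}((k-1)(i-1)+1)=\prod_{j=0}^{n-1}((k-1)j+1)$. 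Alternatively, and even more directly, I can invoke Proposition~\ref{prop:numlktree}: labeled $k$-ary trees with $n$ nodes are counted by exactly this product, and Section~\ref{sec:GDyckT} exhibits a bijection between those trees (equivalently, the sequences $h$) and the cover-inclusive $k$-Dyck tilings above $\mu_0$.

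For part~(2), I would combine Lemma~\ref{lemma:hntDT} with Proposition~\ref{prop:numcktree}. Lemma~\ref{lemma:hntDT} says a $k-1$-Dyck tiling contains only trivial tiles precisely when its sequence $h$ satisfies $h_{i+1}\le h_i+(k-1)$ for all $i\in[2,n-1]$ (the negation of condition~(2) of that lemma), together with the automatic constraints $h_1=0$ and $0\le h_i\le (k-1)(i-1)$. So the tilings with only trivial tiles are in bijection with sequences $h$ obeying these ``slowly increasing'' inequalities. I would then argue that these sequences are exactly the ones corresponding, under the bijection of Section~\ref{sec:kDktree}, to \emph{canonical} $k$-ary trees: in a canonical $k$-ary tree the node labeled $i+1$ is weakly right of the node labeled $i$, which translates into the constraint that the attaching position $h_{i+1}$ cannot jump too far to the left relative to $h_i$ — concretely, $h_{i+1}\le h_i+(k-1)$. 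Granting this identification, Proposition~\ref{prop:numcktree} gives that the number of canonical $k$-ary trees with $n$ nodes is the $n$-th Fuss--Catalan number $\tfrac{1}{kn+1}\binom{kn+1}{n}$, which is the claimed count.

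I expect the main obstacle to be pinning down precisely the translation between the combinatorial condition ``the $k$-ary tree is canonical'' and the arithmetic condition ``$h_{i+1}\le h_i+(k-1)$ for $i\ge 2$, $h_1=0$''. This is the only step that is not literally a quotation of an earlier result: one has to trace through the recursive construction of $h(L(T))$ (delete the node labeled $n$, record the position $h_n+1$ of its connecting edge counted from the right) and check that ``weakly right'' at each insertion step is equivalent to the inequality on consecutive entries. I would handle this by an induction on $n$: assuming the equivalence for trees with $n-1$ nodes, insert the node labeled $n$ and verify that the freshly added node is weakly right of the node labeled $n-1$ if and only if $h_n\le h_{n-1}+(k-1)$, using the $k-1$ down steps separating consecutive up steps in $\mu_0$ exactly as in the proof of Lemma~\ref{lemma:hntDT}. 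Once this equivalence is established, both halves of the proposition follow by stitching together the cited bijections and counts, so the write-up should be short.

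\begin{proof}
For (1), recall from Section~\ref{sec:GDyckT} that associating to a cover-inclusive $k$-Dyck tiling $D$ above $\mu_{0}$ the weighted trajectory sequence $h(D)=(h_1,\ldots,h_n)$ gives a bijection onto integer sequences satisfying \eqref{eq:condh}, equivalently (Section~\ref{sec:kDktree}) onto labeled $k$-ary trees with $n$ nodes. Since $h_1=0$ and each $h_i$ for $2\le i\le n$ ranges over $(k-1)(i-1)+1$ values independently, the number of such tilings is
\begin{align*}
\prod_{i=2}^{n}\bigl((k-1)(i-1)+1\bigr)=\prod_{j=0}^{n-1}\bigl((k-1)j+1\bigr),
\end{align*}
which also equals the number of labeled $k$-ary trees with $n$ nodes by Proposition~\ref{prop:numlktree}.

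For (2), by Lemma~\ref{lemma:hntDT} a tiling $D$ consists only of trivial tiles if and only if its sequence $h$ satisfies $h_{i+1}\le h_i+(k-1)$ for all $i\in[2,n-1]$, in addition to $h_1=0$ and \eqref{eq:condh}. An induction on $n$, tracing the recursive construction of $h(L(T))$ and using the $k-1$ down steps between consecutive up steps of $\mu_{0}$ exactly as in the proof of Lemma~\ref{lemma:hntDT}, shows that these are precisely the sequences $h$ corresponding to canonical $k$-ary trees with $n$ nodes. By Proposition~\ref{prop:numcktree} the number of canonical $k$-ary trees with $n$ nodes is the $n$-th Fuss--Catalan number, which completes the proof.
\end{proof}
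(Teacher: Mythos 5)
Your proposal is correct and follows essentially the same route as the paper: part (1) is the bijection with labeled $k$-ary trees (equivalently the sequences $h$ satisfying \eqref{eq:condh}) counted via Proposition \ref{prop:numlktree}, and part (2) uses Lemma \ref{lemma:hntDT} to isolate the trivial-tile tilings and Proposition \ref{prop:numcktree} for the Fuss--Catalan count. The only cosmetic difference is that the paper passes to the reversed sequence $h'_i=(k-1)i-h_i$ and counts weakly increasing sequences, whereas you identify the admissible $h$ directly with canonical $k$-ary trees; your handling of the $k$ versus $k-1$ indexing mismatch also agrees with what the paper's own proof implicitly assumes.
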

\begin{proof}
(1) A $k$-Dyck tiling is bijective to a labeled $k$-ary tree. 
Thus, the number of $k$-Dyck tilings is equal to the number of labeled 
$k$-ary trees, i.e., $\prod_{j=0}^{n-1}((k-1)j+1)$ by Proposition \ref{prop:numlktree}

(2) 
Let $h:=(h_1,\ldots, h_{n})$ be the integer sequence associated to a $k$-Dyck tiling containing only trivial tiles.
Define $h':=(h'_1,\ldots, h'_{n})$ by $h'_{i}=(k-1)i-h_{i}$.
From Lemma \ref{lemma:hntDT}, $h'$ satisfies $h'_{i}\le h'_{i+1}$ and $h_{i}\le (k-1)i$.
It is obvious that the number of such $h'$ is equal to the number of (unlabeled) $k$-ary tree.
Recall that the number of unlabeled $k$-ary tree is equal to that of canonical labeled $k$-ary trees. 
Thus, the number of such tilings 
is equal to the number of canonical trees, i.e., the $n$-th 
Fuss--Catalan number by Proposition \ref{prop:numcktree}.
\end{proof}

\begin{remark}
Two remarks are in order:
\begin{enumerate}
\item
When $k=2$, we have a standard notion of Dyck tilings. 
Proposition \ref{prop:kDyckenu} implies that the number of 
Dyck tilings of $n$ nodes is $n!$.
Note that this number is the same as the number of noncommutative 
crossing partitions in $[n]$.
\item
Recall that the number of elements in $\mathtt{NCP}(n)$ is given by 
the Catalan number. The number of the Dyck tilings ($k=2$ case) with 
only trivial tiles is also equal to the Catalan number.
This fact can be obtained by the fact that we have a natural bijection 
between a Dyck tilings with only trivial tiles and a Dyck path.
A Dyck tiling with non-trivial tiles corresponds to a labeled binary 
tree which is not canonical, or crossing.
\end{enumerate}
\end{remark}

\section{Intervals of the lattice}
\label{sec:int}
Recall that we have the correspondence between $k-1$-chains in the lattice 
of non-crossing partitions and $k$-ary trees \cite{Edel80}.
This is realized by identifying an element in a $k-1$-chain with 
a labeled canonical binary tree, and successively by building up 
a $k$-ary tree from $k-1$ labeled canonical binary trees.
In this section, we generalize the correspondence to the case of 
non-commutative crossing partitions.
Recall that noncommutative crossing partitions are a generalization of non-crossing partitions.
Thus, if such a correspondence exists, the restriction of noncommutative crossing partitions 
to canonical non-crossing partitions has to give the same correspondence studied in \cite{Edel80}.  
This implies that a partition which is not canonical or crossing has to give 
a $k$-ary tree which is not canonical.	
For this purpose, we need to introduce combinatorial objects which contain 
$k$-ary trees as a subset.
This is achieved by introducing the labeled $k$-ary trees, or equivalently,
the $k$-Dyck tilings above the lowest path $\mu_{0}$.

\subsection{Decomposition of \texorpdfstring{$k$}{k}-ary trees by binary trees}
\label{sec:deckbt}
In this section, we give a recipe to decompose labeled $k$-ary trees by use of 
$k-1$ labeled binary trees.

Let $L:=L(T(n;k))$ be a labeled $k$-ary tree with $n$ internal nodes.
Since $L$ is a $k$-ary tree, every node has $k$ children, or equivalently 
$k$ edges below it.
Recall that each node in $k$-ary tree has $k$ edges below it.
We call the $p$-th edge from right below a node a $p$-edge for $1\le p\le k$.

We construct a set $\mathfrak{D}(L):=(D_{1},\ldots,D_{k-1})$ 
of $k-1$ labeled binary trees $D_{i}$, $1\le i\le k-1$, from $L$ 
by the following way:
\begin{enumerate}
\item Fix an integer $i\in[1,k-1]$. 
\item We divide $p$-edges with $1\le p\le k$ into two sets $RE(i)$ and $LE(i)$ where 
$RE(i)$ is the set of $p$-edges with $1\le p\le i$, and $LE(i)$ is the set of 
$p$-edges with $i+1\le p\le k$.
\item When $n=1$, $D_{i}$ is a labeled binary tree with the root labeled one.
We construct $D_{i}$ recursively as follows.
\begin{enumerate}
\item 
Suppose we have a labeled binary tree $D'_{i}$ corresponding to $L'$ where 
$L'$ is a labeled $k$-ary tree consisting of labels in $[1,n-1]$.
The labeled tree $L'$ is obtained from $L$ by deleting the node $\mathfrak{n}(n)$ labeled $n$.
Let $n'$ be the label of the parent node $\mathfrak{n}(n')$ of $\mathfrak{n}(n)$ in $L$, and 
$e(n,n')$ be the edge in $L$ connecting  the node $\mathfrak{n}(n')$ with the node $\mathfrak{n}(n)$.
By definition, we have $n'<n$.
\item 
Let $\mathfrak{m}$ be the node labeled $n'$ in $D'_{i}$.
\begin{enumerate}
\item
If $e(n,n')\in RE(i)$, recall we have a sequence of nodes in $D'_{i}$: 
a right-extended sequence $\overrightarrow{q}(\mathfrak{m})$ as in Definition \ref{defn:lereseq}.
We add the node labeled $n$ to $D'_{i}$ in such a way that 
the node labeled $n$ is connected to the node labeled the maximal integer in $\overrightarrow{q}(\mathfrak{m})$ 
by a right edge. The right-extended sequence is lengthened by one. 
\item
If $e(n,n')\in LE(i)$, take a right-most path $p_{l}(\mathfrak{m})$ from the node $\mathfrak{m}$ to a leaf 
such that the node $\mathfrak{m}$ is connected to a child by a left edge in $p_{l}(\mathfrak{m})$.
If such a path does not exist, then we define $p_{l}(\mathfrak{m}):=\mathfrak{m}$.
We add the node labeled $n$ to $D'_{i}$  by a left edge in such a way that 
the node labeled $n$ is connected to the node at the leaf of $p_{l}(\mathfrak{m})$.
\end{enumerate}
\item We define $D_{i}$ as the new labeled binary tree obtained from $D'_{i}$ and node labeled $n$.
\end{enumerate}
\end{enumerate}

Note that the adding a node labeled $n$ by a left edge in (3-b-ii) is well-defined 
even when $p_{l}(\mathfrak{m})=\mathfrak{m}$ since the node $\mathfrak{m}$ does not 
have a left child.
 
\begin{example}
\label{ex:LDD}
We consider the following canonical ternary labeled tree $L$ and its decomposition
of two labeled binary trees $\mathfrak{D}(L)=(D_1,D_2)$.
In Figure \ref{fig:LDD}, we recursively construct two labeled binary trees $D_1$ and $D_2$ 
from the canonical ternary labeled tree $L$.

\begin{figure}[ht]
\begin{align*}
\begin{tikzpicture}
\node at (0,0){$L:$};
\node at (1.5,0){
\tikzpic{-0.5}{
[grow'=up,level distance=0.5cm, 
level 1/.style={sibling distance=0.6cm},
level 2/.style={sibling distance=0.77cm},
level 3/.style={sibling distance=0.42cm}
]
\node {$1$} child{}child{}child{};
}
};
\node at (3,0){$\rightarrow$};
\node at (4.5,0) {
\tikzpic{-0.5}{
[grow'=up,level distance=0.5cm, 
level 1/.style={sibling distance=0.6cm},
level 2/.style={sibling distance=0.6cm},
level 3/.style={sibling distance=0.42cm}
]
\node {$1$} child{node{$2$}child{}child{}child{}}child{}child{};
}
};
\node at (6,0){$\rightarrow$};
\node at (7.5,0){
\tikzpic{-0.5}{
[grow'=up,level distance=0.8cm, 
level 1/.style={sibling distance=0.8cm},
level 2/.style={sibling distance=0.77cm},
level 3/.style={sibling distance=0.42cm}
]
\node {$1$} child{node{$2$}child{}child{}child{node{$3$}}}child{}child{};
}
};
\node at (9.5,0) {$\rightarrow$};
\node at (11,0){
\tikzpic{-0.5}{
[grow'=up,level distance=0.8cm, 
level 1/.style={sibling distance=0.8cm},
level 2/.style={sibling distance=0.77cm},
level 3/.style={sibling distance=0.42cm}
]
\node {$1$} child{node{$2$}child{}child{}child{node{$3$}}}child{node{$4$}}child{};
}
};
\node at (0,-2.5) {$D_{1}:$};
\node at (1.5,-2.5){
\tikzpic{-0.5}{
[grow'=up,level distance=0.5cm, 
level 1/.style={sibling distance=0.6cm},
level 2/.style={sibling distance=0.6cm},
level 3/.style={sibling distance=0.42cm}
]
\node {$1$} child{}child{};
}
};
\node at (3,-2.5){$\rightarrow$};
\node at (4.5,-2.5){
\tikzpic{-0.5}{
[grow'=up,level distance=0.7cm, 
level 1/.style={sibling distance=0.6cm},
level 2/.style={sibling distance=0.6cm},
level 3/.style={sibling distance=0.42cm}
]
\node {$1$} child{node{$2$}child{}child{}}child{};
}
};
\node at (6,-2.5){$\rightarrow$};
\node at (7.5,-2.5){
\tikzpic{-0.5}{
[grow'=up,level distance=0.7cm, 
level 1/.style={sibling distance=0.6cm},
level 2/.style={sibling distance=0.6cm},
level 3/.style={sibling distance=0.42cm}
]
\node {$1$} child{node{$2$}child{}child{node{$3$}}}child{};
}
};
\node at (9.5,-2.5){$\rightarrow$};
\node at (11,-2.5){
\tikzpic{-0.5}{
[grow'=up,level distance=0.7cm, 
level 1/.style={sibling distance=0.6cm},
level 2/.style={sibling distance=0.6cm},
level 3/.style={sibling distance=0.42cm}
]
\node {$1$} child{node{$2$}child{}child{node{$3$}child{node{$4$}}child{}}}child{};
}
};
\node at (0,-4.8) {$D_{2}:$};
\node at (1.5,-4.8){
\tikzpic{-0.5}{
[grow'=up,level distance=0.5cm, 
level 1/.style={sibling distance=0.6cm},
level 2/.style={sibling distance=0.6cm},
level 3/.style={sibling distance=0.42cm}
]
\node {$1$} child{}child{};
}
};
\node at (3,-4.8){$\rightarrow$};
\node at (4.5,-4.8){
\tikzpic{-0.5}{
[grow'=up,level distance=0.7cm, 
level 1/.style={sibling distance=0.6cm},
level 2/.style={sibling distance=0.6cm},
level 3/.style={sibling distance=0.42cm}
]
\node {$1$} child{node{$2$}child{}child{}}child{};
}
};
\node at (6,-4.8){$\rightarrow$};
\node at (7.5,-4.8){
\tikzpic{-0.5}{
[grow'=up,level distance=0.7cm, 
level 1/.style={sibling distance=0.6cm},
level 2/.style={sibling distance=0.6cm},
level 3/.style={sibling distance=0.42cm}
]
\node {$1$} child{node{$2$}child{}child{node{$3$}}}child{};
}
};
\node at (9.5,-4.8){$\rightarrow$};
\node at (11,-4.8){
\tikzpic{-0.5}{
[grow'=up,level distance=0.7cm, 
level 1/.style={sibling distance=0.6cm},
level 2/.style={sibling distance=0.6cm},
level 3/.style={sibling distance=0.42cm}
]
\node {$1$} child{node{$2$}child{}child{node{$3$}}}child{node{$4$}};
}
};
\end{tikzpicture}
\end{align*}
\caption{The decomposition of a ternary tree into two binary trees}
\label{fig:LDD}
\end{figure}
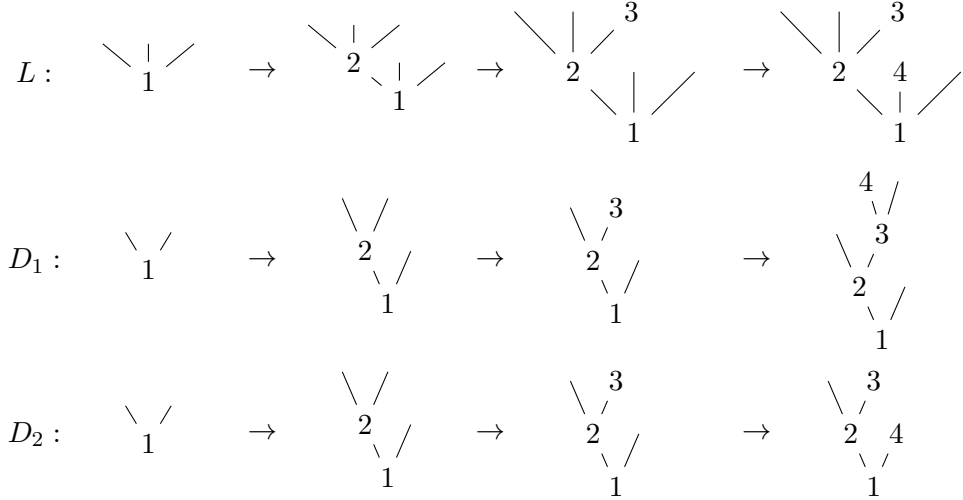
\end{example}

\begin{example}
\label{ex:ktobinary}
We consider a labeled $k$-ary tree $L$ with $k=4$ depicted as in Figure \ref{fig:4tree}.
Note that the tree is not canonical.
\begin{figure}[ht]
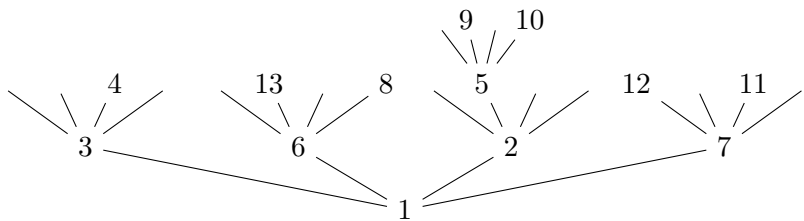

\tikzpic{-0.5}{
[grow'=up,level distance=0.84cm, 
level 1/.style={sibling distance=2.8cm},
level 2/.style={sibling distance=0.77cm},
level 3/.style={sibling distance=0.42cm}
]
\node {$1$}
	child{node{$3$}
		child{node{}}
		child{node{}}
		child{node{$4$}}
		child{node{}}
	}
	child{node{$6$}
		child{node{}}
		child{node{$13$}}
		child{node{}}
		child{node{$8$}}
	}
	child{node{$2$}
		child{node{}}
		child{node{$5$}
			child{node{}}
			child{node{$9$}}
			child{node{}}
			child{node{$10$}}
		}
		child{node{}}
		child{node{}}
	}
		child{node{$7$}
		child{node{$12$}}
		child{node{}}
		child{node{$11$}}
		child{node{}}
	};
}
\caption{A labeled $4$-ary tree}
\label{fig:4tree}
\end{figure}
The three labeled binary trees $\mathfrak{D}(L)$ are depicted in Figure \ref{fig:3btree}.
\begin{figure}[ht]
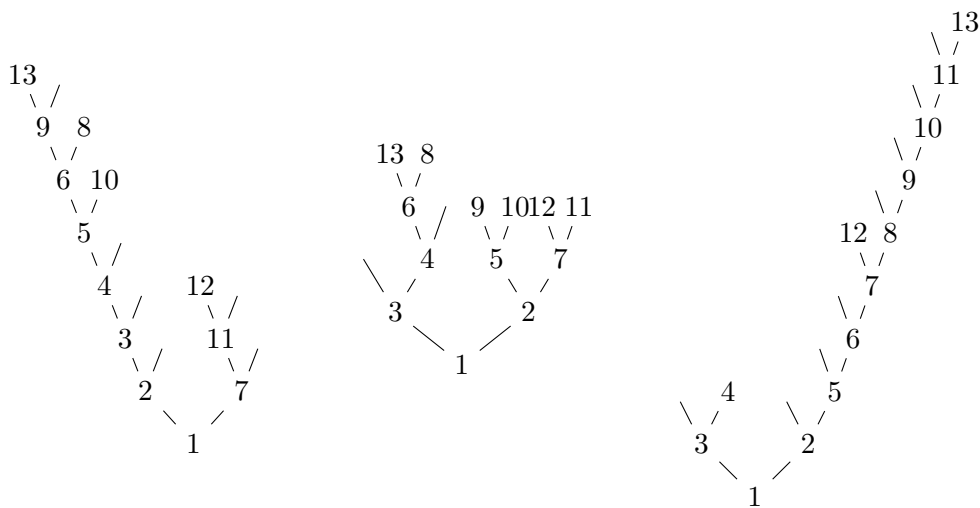

\tikzpic{-0.5}{
[scale=0.7,grow'=up,level distance=1cm, 
level 1/.style={sibling distance=1.8cm},
level 2/.style={sibling distance=0.77cm}
]
\node {$1$}
	child{node{$2$}
		child{node{$3$}
			child{node{$4$}
				child{node{$5$}
					child{node{$6$}
						child{node{$9$}child{node{$13$}}child{node{}}}
						child{node{$8$}}
					}
					child{node{$10$}}
				}
				child{node{}}
			}
			child{node{}}
		}
		child{node{}}
	}
	child{node{$7$}
		child{node{$11$}
			child{node{$12$}}
			child{node{}}
		}
		child{node{}}
	};
}\qquad
\tikzpic{-0.5}{
[scale=0.7,grow'=up,level distance=1cm, 
level 1/.style={sibling distance=2.5cm},
level 2/.style={sibling distance=1.2cm},
level 3/.style={sibling distance=0.7cm}
]
\node{$1$}
   child{node{$3$}child{}
   	child{node{$4$}
   		child{node{$6$}child{node{$13$}}child{node{$8$}}}
   		child{}
   	}
   }
   child{node{$2$}
   	child{node{$5$}
   		child{node{$9$}}
   		child{node{$10$}}
   	}
   	child{node{$7$}
   		child{node{$12$}}
   		child{node{$11$}}
   	}
   };	
}\quad
\tikzpic{-0.5}{
[scale=0.7,grow'=up,level distance=1cm, 
level 1/.style={sibling distance=2cm},
level 2/.style={sibling distance=1cm},
level 3/.style={sibling distance=0.7cm}
]
\node{$1$}
	child{node{$3$}
		child{node{}}
		child{node{$4$}}
	}
	child{node{$2$}
		child{node{}}
		child{node{$5$}
			child{node{}}
			child{node{$6$}
				child{node{}}
				child{node{$7$}
					child{node{$12$}}
					child{node{$8$}
						child{node{}}
						child{node{$9$}
							child{node{}}
							child{node{$10$}
								child{node{}}
								child{node{$11$}
									child{node{}}
									child{node{$13$}}
								}
							}
						}
					}
				}
			}
		}
	};
}

\caption{Three binary trees $\mathfrak{D}(L)=(D_1,D_2,D_3)$ corresponding to the $4$-ary tree $L$ in Figure \ref{fig:4tree}.
The trees $D_1$, $D_2$ and $D_3$ are depicted from left to right.}
\label{fig:3btree}
\end{figure}
\end{example}

\begin{defn}
\label{defn:chi}
We denote by $\chi$ the map from a labeled $k$-ary tree $L$ to $\mathfrak{D}(L)$ defined as above. 
\end{defn}

Recall a labeled binary tree corresponds to a noncommutative crossing partition 
as in Section \ref{sec:lbt}.
The next proposition connects a set $\mathfrak{D}(L)$ of labeled binary trees 
with $k-1$-chain in the Hasse diagram of $\mathcal{L}_{NCCP}$.

\begin{prop}
Let $\mathfrak{D}(L):=(D_1,\ldots,D_{k-1})$ be the set of $k-1$ labeled binary 
tree obtained from a labeled $k$-ary tree $L$ by $\chi$.
Then, $\mathfrak{D}(L)$ satisfies 
\begin{align}
\label{eq:Drising}
D_{1}\le D_{2}\le \ldots \le D_{k-1},
\end{align} 
in $\mathcal{L}_{NCCP}=(\mathtt{NCCP}(n),\le)$.
\end{prop}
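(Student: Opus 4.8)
The plan is to argue by induction on the number $n$ of internal nodes of the labeled $k$-ary tree $L$. The case $n=1$ is vacuous, since then every $D_i$ is the one-node tree. For the inductive step, let $\mathfrak{n}(n)$ be the node of $L$ carrying the largest label, let $L'=L\setminus\mathfrak{n}(n)$ be the labeled $k$-ary tree on $[n-1]$ obtained by erasing it, let $n'$ be the label of its parent in $L$, and set $(D'_1,\dots,D'_{k-1})=\chi(L')$; by the induction hypothesis $D'_1\le D'_2\le\dots\le D'_{k-1}$ in $\mathcal{L}_{NCCP}(n-1)$. By construction each $D_i$ is obtained from $D'_i$ by the single insertion of the node labeled $n$ described in clause (3) of the construction of $\chi$: if the parent edge $e(n,n')$ of $\mathfrak{n}(n)$ is the $q$-th edge from the right below its parent, then $e(n,n')\in RE(i)$ exactly when $q\le i$, so $\mathfrak{n}(n)$ is grafted onto $D'_i$ by a right edge at the bottom of the right-extended sequence of the node $n'$ when $i\ge q$, and by a left edge at the leaf of the rightmost left-starting path from $n'$ when $i\le q-1$.

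Comparing $D_i$ with $D_{i+1}$ splits into two cases. If $i\ne q-1$, then $e(n,n')$ lies in $RE(i)\cap RE(i+1)$ (when $i\ge q$) or in $LE(i)\cap LE(i+1)$ (when $i\le q-2$), so $D_i$ and $D_{i+1}$ arise from the already comparable pair $D'_i\le D'_{i+1}$ by grafting the new maximal label in the identical prescribed manner. For this I would establish a monotonicity lemma: grafting the globally largest label onto two comparable labeled binary trees, in the same way (right leaf at the bottom of the right-extended sequence of a fixed node, or left leaf at the end of its rightmost left-starting path), produces comparable trees. I would prove this by refining $D'_i\le D'_{i+1}$ to an unrefinable chain of rotations and checking that each rotation commutes with the grafting up to a single rotation (or an equality); concretely, appending the maximal label neither disturbs the nodes $\mathfrak{n}_a$ and $\mathfrak{m}(S)$ of the triplet defining the rotation nor alters the sets $\mathfrak{R}(p;\cdot)$ for $p<n$ beyond the possible presence of $n$ itself, which is compatible with the direction of the inequality.

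If $i=q-1$ (which can only happen when $2\le q\le k-1$), then $e(n,n')\in LE(q-1)$ but $e(n,n')\in RE(q)$, so $\mathfrak{n}(n)$ is grafted onto $D'_{q-1}$ by a left edge and onto $D'_{q}$ by a right edge. I would factor the desired inequality as $D_{q-1}\le\widetilde{D}\le D_{q}$, where $\widetilde{D}$ is obtained from $D'_{q-1}$ by grafting $\mathfrak{n}(n)$ by a right edge at the bottom of the right-extended sequence of $n'$: the inequality $\widetilde{D}\le D_{q}$ is the monotonicity lemma applied to $D'_{q-1}\le D'_{q}$ with right-grafting, while $D_{q-1}\le\widetilde{D}$ says that, inside a fixed tree and at a fixed node $n'$, replacing a left-leaf graft of the maximal label by a right-end graft is an upward step. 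This last move is a single rotation whose distinguished node is $\mathfrak{n}(n)$ together with the chain of left edges above it created by the insertion, and $\rho$ increases by exactly one because one left edge is traded for a right edge, in accordance with the remark following Proposition \ref{prop:rank}. Chaining these inequalities gives $D_{q-1}\le D_{q}$, completing the induction.

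The main obstacle is the monotonicity lemma, i.e. controlling how the cover relation of $\mathcal{L}_{NCCP}$ — defined through the somewhat elaborate rotation of Section \ref{sec:covNCCP} — interacts with appending the globally largest label, either as the new bottom of a right chain or as a new left leaf; this forces a case analysis on the ingredients of the defining triplet. A subtler point worth emphasizing is that comparability in $\mathcal{L}_{NCCP}$ is strictly finer than componentwise inclusion of the sequences $\{\mathfrak{R}(p;\pi)\}_{p}$ (for instance $3/12$ and $13/2$ are incomparable although their $\mathfrak{R}$-sequences are nested), so the inclusions that are visibly available from the construction do not by themselves yield $D_i\le D_{i+1}$; it is precisely the rotation-by-rotation bookkeeping above that upgrades them to genuine inequalities in the lattice.
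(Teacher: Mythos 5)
Your proposal is correct and follows essentially the same route as the paper: both arguments insert the labels one at a time and reduce everything to the claim that grafting a new maximal label onto two comparable trees in the prescribed manner preserves (or creates, when the graft is left in $D_i$ and right in $D_{i+1}$) the order relation, the paper organizing this per pair $(i,i+1)$ starting from the first label where $RE(i)$ and $RE(i+1)$ differ, and you organizing it as induction on $n$ with the top label removed. Your explicit isolation of the monotonicity lemma and the factorization through $\widetilde{D}$ in the case $i=q-1$ make visible the same compatibility step that the paper asserts with ``it is easy to see'' and ``this is guaranteed by the compatibility of the definition of the rotation.''
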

\begin{proof}
By construction, we have $RE(i)\subseteq RE(i+1)$ and $LE(i)\supseteq LE(i+1)$ 
for $1\le i\le k-2$.	
If $RE(i)=RE(i+1)$, or equivalently $LE(i)=LE(i+1)$ for all labels in $L$, we obviously have $D_{i}=D_{i+1}$.
So, we assume that $RE(i)\subset RE(i+1)$ and $LE(i)\supset LE(i+1)$.
Let $n'\in[1,n]$ be the smallest integer such that $n'\in RE(i+1)$ and $n'\notin RE(i)$. 
Let $D'_{i}=D'_{i+1}$ be the two labeled binary trees whose labels are up to $n'-1$.
We add the node labeled $n'$ to $D'_{i}$ and $D'_{i+1}$.
Then, it is easy to see that if we add the node labeled $n'$ to $D'_i$ and $D'_{i+1}$, 
$D_{i}<D_{i+1}$ by construction of $D_{i}$ and $D_{i+1}$ and by definition of the rotation of $D_{i}$.
The node $n'$ is connected to its parent node by a left edge in $D'_{i}$ and by a right edge 
in $D'_{i+1}$. The position of the node $n'$ in $D'_{i+1}$ is right to that in $D'_{i}$.

Suppose that we have $D'_{i}<D'_{i+1}$ where the labels are up to $n'-1$.
We want to show that $D_{i}<D_{i+1}$ after the addition of the node labeled $n'$.
This is guaranteed by the compatibility of the definition of the rotation on $D_{i}$ 
and the recursive construction of $D_{i}$ and $D_{i+1}$.
The position of the node $n'$ in $D_{i+1}$ is always weakly right to that in $D_{i}$.	
Therefore, we have Eq. (\ref{eq:Drising}) for $\mathfrak{D}(L)$.
\end{proof}

\begin{example}
We consider the same example as in Example \ref{ex:ktobinary}.
The three noncommutative crossing partitions associated to the 
$4$-ary tree in Figure \ref{fig:4tree} are 
\begin{align*}
d/9/68/5a/4/3/2/1c/b/7 < 3d/68/4/19/5a/2c/7d < 34/1256c/789abd,
\end{align*}
from left to right where $(a,b,c,d)=(10,11,12,13)$.
See also Figure \ref{fig:3btree} for the three labeled binary 
trees.
\end{example}

The next theorem gives the correspondence between a labeled $k$-ary 
tree and a $k-1$-chain of labeled binary trees.
\begin{theorem}
\label{thrm:bijchi}
Let $L$ be a labeled $k$-ary tree.
The map $\chi: L \mapsto\mathfrak{D}(L)$ is a bijection, where 
$\mathfrak{D}(L)$ satisfies Eq. (\ref{eq:Drising}).
\end{theorem}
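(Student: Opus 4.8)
The plan is to prove that $\chi$ is a bijection by exhibiting an explicit inverse map and checking that the two compositions are identities. Since the number of labeled $k$-ary trees with $n$ nodes equals $\prod_{j=0}^{n-1}((k-1)j+1)$ (Proposition \ref{prop:numlktree}), it would in principle suffice to show that the set of $(k-1)$-chains $D_{1}\le D_{2}\le\cdots\le D_{k-1}$ in $\mathcal{L}_{NCCP}(n)$ has the same cardinality and that $\chi$ is injective (or surjective). However, the cardinality of such chains is exactly the content of Theorem \ref{thrm:interval} (the number of $(k-1)$-chains), which is presumably proved \emph{using} this bijection; so to avoid circularity I would instead construct the inverse $\chi^{-1}$ directly.

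First I would set up the reconstruction of $L$ from a chain $\mathfrak{D}=(D_{1},\ldots,D_{k-1})$ node by node, in increasing order of labels, mirroring the recursive definition of $\chi$. The key observation is local: given the position of the node labeled $m$ in each $D_{i}$ (i.e.\ whether it attaches by a left edge or a right edge to its parent's right-extended sequence), the sets $RE(i)$ and $LE(i)$ at $m$ are determined, and since $RE(1)\subseteq RE(2)\subseteq\cdots$ is a monotone chain of the form $\{1,\ldots,p\}$ with $0\le p\le k$ (I would first prove this monotone-prefix structure, which is where the hypothesis $D_{i}\le D_{i+1}$ is used essentially), there is a unique index $p=p(m)$ such that $e(m,m')\in RE(i)$ iff $i\le p$. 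This $p(m)$ tells us that the node labeled $m$ is attached to its parent $m'$ in $L$ by the $p(m)$-th edge from the right; and $m'$ itself is recovered because the recursive rule in step (3) of the construction of $\chi$ attaches $m$ to a node determined by $m'$ in a way that can be read off from $D_{p(m)}$ (the right-extended sequence) and from $D_{p(m)+1}$ (the right-most left-path). I would verify that the parent and the edge-slot recovered this way are consistent across all $D_{i}$, which amounts to checking that the monotone prefix $\{i\le p(m)\}$ is precisely the set of $i$ for which $m$ sits on the right-extended sequence of its parent in $D_{i}$.

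Then I would run an induction on $n$: assuming $\chi$ restricted to labeled $k$-ary trees on $[n-1]$ is a bijection onto the corresponding chains of binary trees on $[n-1]$, and assuming the monotone-prefix lemma, adding the node labeled $n$ in the tree $L$ at edge-slot $p$ below a node $m'$ corresponds bijectively to simultaneously adding the node labeled $n$ to each $D_{i}$ by a right edge (if $i\le p$) or a left edge (if $i>p$) at the prescribed location; and conversely every way of extending a valid chain $(D_{1}',\ldots,D_{k-1}')$ on $[n-1]$ to a valid chain on $[n]$ arises this way for a unique $(m',p)$. The main obstacle, I expect, is the monotone-prefix lemma: one must show that for a chain $D_{1}\le\cdots\le D_{k-1}$ obtained as $\mathfrak{D}(L)$, the positions of each label can only "move rightward" as $i$ increases in a way consistent with a single threshold $p(m)$ — equivalently, that the rotation taking $D_{i}$ to $D_{i+1}$ is of the special restricted type built into the $\chi$ construction, and that no other chains occur. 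This is essentially a careful bookkeeping argument about how the recursive step interacts with the definition of the cover relation $\lessdot$ and the rotation triplet $\nu$, and it is the step where the earlier structural lemmas on right-extended sequences (Definition \ref{defn:lereseq}) and on the rotation must be invoked in detail. Once that lemma is in place, surjectivity and injectivity of $\chi$ follow formally from the inductive construction of $\chi^{-1}$, completing the proof.
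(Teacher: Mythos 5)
Your overall strategy --- proving bijectivity by exhibiting an explicit inverse $\chi^{-1}$ rather than by a counting argument (which, as you correctly note, would be circular with Theorem \ref{thrm:interval}) --- is exactly the paper's strategy, but the two constructions of the inverse are organized differently. The paper reconstructs $L$ from the top down: for each node $\mathfrak{r}$ it compares the right-extended sequences $\overrightarrow{q}(\mathfrak{r})|_{i}$ (equivalently the sets $R(\mathfrak{r};i)$ of labels lying to the right of $\mathfrak{r}$) across consecutive $D_{i}$, declares the difference $R(\mathfrak{r};i+1)\setminus R(\mathfrak{r};i)$ to be the label set of the subtree hanging off $\mathfrak{r}$ by an $(i+1)$-edge, and supplements this by reading the $1$-edges off the right edges of $D_{1}$ and the $k$-edges off the left edges of $D_{k-1}$. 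You instead reconstruct incrementally in increasing label order, recovering for each label $m$ a single threshold $p(m)$ from whether $m$ attaches by a right or a left edge in each $D_{i}$, and then its parent and edge-slot from the local data in the two binary trees straddling the threshold. Your version has the merit of isolating the real content of the theorem, which the paper's proof passes over in silence: that an \emph{arbitrary} chain $D_{1}\le\cdots\le D_{k-1}$ in $\mathcal{L}_{NCCP}(n)$ exhibits the threshold structure (your ``monotone-prefix lemma''), i.e.\ that the only chains occurring in the lattice are those produced by the restricted rotations built into Definition \ref{defn:chi}. That is precisely surjectivity, and it is where all the work lies.

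Two caveats. First, you state the monotone-prefix lemma but do not prove it, so as written your argument is a plan rather than a proof; to be fair, the paper's own proof is no more complete --- it describes the inverse procedure and concludes in one sentence, without verifying that the procedure is well defined on an arbitrary chain or that the two composites are identities. Second, a small indexing slip: since $RE(i)$ consists of the $q$-edges with $q\le i$, a node attached to its parent by a $p$-edge satisfies $e(m,m')\in RE(i)$ if and only if $i\ge p$, so the set of indices $i$ for which $m$ is added by a right edge is the up-set $\{p,p+1,\ldots,k-1\}$ rather than the down-set $\{1,\ldots,p\}$ as you wrote; this is consistent with the fact that passing from $D_{i}$ to $D_{i+1}$ converts left edges to right edges and hence raises the rank. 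The slip is purely notational and does not affect the structure of your argument.
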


We explicitly give an inverse $\chi^{-1}:\mathfrak{D}(L)\mapsto L$ to prove 
Theorem \ref{thrm:bijchi}.
We assume that $\mathfrak{D}(L):=(D_1,\ldots,D_{k-1})$ satisfies Eq. (\ref{eq:Drising}).

Since $D_{i}\le D_{i+1}$, the size of right-extended sequence for any node in $D_{i}$
is longer than or equal to that in $D_{i+1}$.
We denote by $\overrightarrow{q}(\mathfrak{r})|_{i}$ the right-extended sequence 
of the root (node labeled one) $\mathfrak{r}$ in $D_{i}$.
Let $i\in[1,k-2]$.
We have two cases: 1) $\overrightarrow{q}(\mathfrak{r})|_{i}=\overrightarrow{q}(\mathfrak{r})|_{i+1}$,
and 2) $\overrightarrow{q}(\mathfrak{r})|_{i}\subset\overrightarrow{q}(\mathfrak{r})|_{i+1}$.

\paragraph{Case 1)}
The tree has no subtree whose root is connected to the root $\mathfrak{r}$ by 
an $i+1$-edge.

\paragraph{Case 2)}
Let $R(\mathfrak{r};i)$ be the set of labels which is right to the root $\mathfrak{r}$ 
in $D_{i}$. 
The tree has a subtree $T'$ whose root is connected to the root $\mathfrak{r}$
by an $i+1$-edge.
The subtree $T'$ consists of the labels in $R(\mathfrak{r};i+1)\setminus R(\mathfrak{r};i)$.

By the above procedures, one knows the labels in the subtree which is connected 
to the root by a $p$-edge in $L$.
By replacing the root by a node labeled $l$ in $[1,n]$ one-by-one in the above procedure, 
we extract the tree structure above the node labeled $l$ and reconstruct 
a $k$-ary tree $L$.

Further, we extract some simple combinatorial structures of a tree from two 
labeled binary trees $D_{1}$ and $D_{k-1}$:
\begin{enumerate}
\item If two labels $l_1$ and $l_2$ are connected by a right edge in $D_{1}$,
the nodes labeled $l_1$ and $l_2$ are connected by a $1$-edge in $L$. 
\item 
If two labels $l_1$ and $l_2$ are connected by a left edge in $D_{k-1}$,
the nodes labeled $l_1$ and $l_2$ are connected by a $k$-edge in $L$.
\end{enumerate}

\begin{proof}[Proof of Theorem \ref{thrm:bijchi}]
The theorem follows from the fact that the map $\chi$ has an inverse as above.
\end{proof}

The next proposition implies that the definitions of canonical trees in 
labeled $k$-ary trees and $k-1$ labeled binary trees are compatible
with each other.
\begin{prop}
\label{prop:kLTcano}
Let $L(T)$ be a labeled $k$-ary tree, and $\mathfrak{D}(L(T))$ be 
the set of $k-1$ labeled binary trees corresponding to $L(T)$.
Then, the followings are equivalent:
\begin{enumerate}
\item $L(T)$ is canonical.
\item $k-1$ labeled binary trees in $\mathfrak{D}(L(T))$ are all canonical.
\end{enumerate}
\end{prop}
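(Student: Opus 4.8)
The plan is to induct on $n$, the number of nodes, exploiting the fact that canonicity of a labeled tree with largest label $n$ splits into two independent conditions.

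I would first record the elementary observation that in any labeled tree the node labeled $n$ is a leaf, and that deleting it changes neither the left--right order of the remaining nodes nor the relation ``weakly right to'' among them (in the incomplete $k$-ary model, removing a leaf frees its slot but does not reorder the other children; likewise for binary trees). Hence a labeled tree $L$ with $n$ nodes is canonical if and only if the tree $L^{\circ}$ obtained by deleting the node labeled $n$ is canonical and, in addition, the node labeled $n$ is weakly right to the node labeled $n-1$ in $L$. Applying this to $L(T)$ and to each $D_i\in\mathfrak{D}(L(T))$, and using that by the recursive definition of $\chi$ one has $\mathfrak{D}(L(T)^{\circ})=(D_1^{\circ},\dots,D_{k-1}^{\circ})$ with $D_i$ obtained from $D_i^{\circ}$ by inserting the node labeled $n$, the induction hypothesis applied to $L(T)^{\circ}$ reduces the proposition to the single equivalence
\[
\bigl(\text{node } n \text{ weakly right to node } n-1 \text{ in } L(T)\bigr)
\;\Longleftrightarrow\;
\bigl(\text{node } n \text{ weakly right to node } n-1 \text{ in } D_i \text{ for every } i\in[1,k-1]\bigr).
\]

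To establish this equivalence I would analyze where the node labeled $n$ is inserted. Let $n'$ be its parent in $L(T)$, with $e(n,n')$ a $p$-edge. By the definition of $\chi$, for $i\ge p$ the node labeled $n$ is attached in $D_i$ by a right edge lengthening the right-extended sequence through the node labeled $n'$, while for $i<p$ it is attached in $D_i$ by a left edge at the foot of the rightmost left-starting path from the node labeled $n'$. I would then split into the case $n'=n-1$ (equivalently, $n-1$ is an ancestor of node $n$ in $L(T)$, since $n'<n$ forces this), where both sides hold trivially because the node labeled $n$ is placed below the node labeled $n-1$ in every $D_i$; and the case where the two nodes diverge at a common ancestor $v$ of $L(T)$, the path to node $n$ leaving $v$ through an $a$-edge and the path to node $n-1$ through a $b$-edge with $a\ne b$, so that ``node $n$ weakly right to node $n-1$ in $L(T)$'' is equivalent to $a<b$. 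In this case I would track how the two branches of $v$ are encoded in $D_i$ as the parameter $i$ crosses the thresholds $a$ and $b$, using the monotonicity $RE(i)\subseteq RE(i+1)$ together with the elementary facts from the description of $\chi^{-1}$ that the right edges of $D_1$ are exactly the $1$-edges of $L(T)$ and the left edges of $D_{k-1}$ are exactly the $k$-edges of $L(T)$, to conclude that node $n$ is weakly right to node $n-1$ in all $D_i$ precisely when $a<b$.

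The main obstacle is exactly this last case analysis: one must verify, simultaneously for all $i$, that the position of the newly inserted node relative to the node labeled $n-1$ in the binary tree $D_i$ faithfully reflects the branch order at the forking node $v$ of $L(T)$. The subtlety is that the right-extended and left-starting paths used in the construction of $\chi$ flatten several levels of the $k$-ary tree into a single comb of $D_i$, so ``weakly right to'' in $D_i$ is not literally the image of ``weakly right to'' in $L(T)$ for arbitrary pairs; the point is that it \emph{is} preserved on the consecutive pair $(n-1,n)$, which is what the recursive bookkeeping above is designed to isolate. Once this equivalence is in hand, both implications of the proposition follow immediately, and as a consistency check the forward implication recovers the remark from the introduction that a $(k-1)$-chain lying entirely in $\mathtt{NCCP}(n;312)$ corresponds under $\chi^{-1}$ to a $k$-Dyck tiling with only trivial tiles, i.e.\ to a canonical $k$-ary tree.
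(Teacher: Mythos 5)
Your proposal is correct and follows essentially the same route as the paper: induction on $n$, with the inductive step reduced to analyzing where the node labeled $n$ is attached in $L(T)$ versus in each $D_i$. Your explicit reduction to the single consecutive pair $(n-1,n)$ is a cleaner framing of what the paper does implicitly, and the fork-node case analysis you defer as ``the main obstacle'' is precisely the content the paper supplies via its condition (A) on the location of the labels in $[n'+1,n-1]$ among the subtrees of the parent node $n'$.
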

\begin{proof}
Let $L(T')$ be a labeled $k$-ary tree whose labels are up to $n-1$, 
and $\mathfrak{D}(L(T'))$  be the set of $k-1$ 
labeled binary trees obtained from $L(T')$, i.e., $\mathfrak{D}(L(T')):=(D'_1,\ldots,D'_{k-1})$.
We prove the statement by induction on $n$.
When $n=1,2$, the conditions are trivial and we have (1) $\Leftrightarrow$ (2).
We assume $n\ge2$, and both $L(T')$ and $\mathfrak{D}(L(T'))$ are canonical.

(1) $\Rightarrow$ (2). 
Suppose that the node labeled $n$ is a child of the node labeled $n-1$ in $L(T)$.
In this case, the node labeled $n-1$ has neither right or left children 
in $D_{i}$, $1\le i\le k-1$. 
The node labeled $n$ is connected to the node 
labeled $n-1$ by a left or right edge in $D_{i}$ by the map $\chi$.
Then, it is obvious that all $D_{i}$'s are canonical.

Suppose that the node labeled $n$ is a child of the node labeled $n'$ with $n'\le n-2$
in $L(T)$. 
Suppose that two nodes labeled $n$ and $n'$ are connected by a $p$-edge. 
Since $L(T)$ is canonical and $n$ is a maximal label, 
there is no subtree such that its root is connected to the node $n'$ by 
a $j$-edge for some $1\le j\le p-1$.
Further, we have the following condition:
\begin{enumerate}[(A)]
\item
A node whose label is in $[n'+1,n-1]$ is in a subtree 
such that its root is connected to the node $n'$ by a $j$-edge for some $p+1\le j\le k-1$.
\end{enumerate}
These are because of the definition of being canonical.
If $1\le i\le p-1$, we add the node labeled $n$ to $D'_{i}$ by a left edge.
In $D'_{i}$, $n'$ has a left sub-tree, and we take a right-most path of nodes from $n'$, which starts 
by a left edge, to add the node labeled $n$. Such a path always exists by the condition (A).
Then, it is easy to see that $D_{i}$ is canonical.
If $p\le i\le k-1$, we add the node labeled $n$ to $D'_{i}$ by a right edge.
In this case, we lengthen the right-extended sequence of the node $n'$. 
Since all the labels in $[n'+1,n-1]$ are weakly left to the node $n$ in $D_{i}$,
$D_{i}$ is canonical. 

(2) $\Rightarrow$ (1). 
Suppose we have the node labeled $n$ is connected to the parent node by 
a left edge in $D_{i}$, $1\le i\le p-1$, and by a right edge in $D_{j}$, 
$p\le j\le k-1$.

Suppose that the node labeled $n$ is connected to the node labeled $n'$ 
in $D_{p}$. Note that $n'<n$.
If $n'=n-1$, the node labeled $n$ is connected to the node $n'$ by the $p$-edge in $L(T)$.
This implies that the node $n$ is weakly right to the node $n'$, and hence	 
$L(T)$ is canonical.
Suppose $n'<n-1$.
The condition that a binary tree $D_p$ is canonical implies that 
a node whose label is in $[n'+1,n-1]$ is in a left subtree whose root is $n'$.
We add the node labeled $n$ to $L(T')$ such that it is connected to the node 
labeled $n'$ by a $p$-edge in $L(T)$. The condition that all $D_{i}$'s are canonical 
implies that the nodes whose labels are in $[n'+1,n-1]$ are weakly left to the node 
$n$ in all $D_{i}$.
From these, a node whose label is in $[n'+1,n-1]$ is weakly right to the node $n'$
and weakly left to the node $n$ in $L(T)$.
Then, it is easy to see that $L(T)$ is canonical.

From above considerations, we have (1) $\Leftrightarrow$ (2).
\end{proof}

\begin{remark}
Since a canonical labeled $k$-ary tree corresponds to an element in $\mathtt{NCCP}(n;312)$,
Proposition \ref{prop:kLTcano} gives the same correspondence between 
a non-crossing partition and a $k-1$ (unlabeled) binary trees.
This is already observed in \cite{Edel80,Edel82}.
\end{remark}

Let $k\ge2$.
Let $T$ be a $k$-ary tree with $n$ nodes and $h(T)$ be the integer sequence 
satisfying the condition (\ref{eq:condh}).
By Theorem \ref{thrm:bijchi}, we have a bijection between a labeled $k$-ary 
tree $L$ and a set of $k-1$ binary trees $\mathfrak{D}(L):=(B_{1},\ldots,B_{k-1})$.

\begin{prop}
Suppose that, for all $1\le i\le n-1$, the node labeled $i$ is connected by an edge 
to the node labeled $i+1$ in $L$.
Then, we have 
\begin{align}
\label{eq:hlabel}
h(L)=\sum_{1\le i\le k-1}h(B_{i}).
\end{align} 
\end{prop}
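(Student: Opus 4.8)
The plan is to exploit the rigidity that the hypothesis forces on $L$. Since the labels of a labeled $k$-ary tree increase along every root-to-leaf path, whenever the nodes labeled $a$ and $a+1$ are joined by an edge the node labeled $a$ must be the parent of the node labeled $a+1$. First I would check that, under the hypothesis, $L$ is the chain $1\to2\to\cdots\to n$: for each $a\le n-1$ the node labeled $a+1$ is a child of the node labeled $a$, and if the node labeled $a$ had a \emph{second} child, with some label $m\ge a+2$, then applying the hypothesis to the index $m-1$ would force the node labeled $m-1$ to be the parent of the node labeled $m$, whereas that parent is the node labeled $a$; this gives $m-1=a$, hence $m=a+1$, a contradiction. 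So every node labeled $a\le n-1$ has exactly one child, the node labeled $a+1$, and the only remaining datum is, for each $a\in[1,n-1]$, the integer $p_a\in[1,k]$ such that this child is attached to the node labeled $a$ by the $p_a$-edge. Reading off the definition of $h$ for a $k$-ary tree — repeatedly peeling off the largest label just strips the chain from its leaf, and the node labeled $a+1$ hangs from its parent through the edge in position $p_a$ from the right — gives $h(L)=(h_1,\dots,h_n)$ with $h_1=0$ and $h_{a+1}=p_a-1$ for $1\le a\le n-1$.

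Next I would run the recipe defining $\chi$ on this chain to compute each $B_j$, $1\le j\le k-1$. The key point, proved by induction on the number of labels (which are adjoined in the order $1,2,\dots,n$), is that in $B_j$ the node labeled $a$ is still a leaf of the tree built from the labels $\le a$ at the moment the node labeled $a+1$ is adjoined, since no node is added in between. Hence the node labeled $a+1$ is attached \emph{directly} to the node labeled $a$ in $B_j$: by a right edge when the $p_a$-edge lies in $RE(j)$, i.e. when $p_a\le j$ (the right-extended sequence of the parent is the parent alone), and by a left edge when the $p_a$-edge lies in $LE(j)$, i.e. when $p_a\ge j+1$ (the right-most left-starting path at the parent reduces to the parent itself). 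As $B_j$ is a binary tree, this says $h(B_j)=(h^{(j)}_1,\dots,h^{(j)}_n)$ with $h^{(j)}_1=0$, $h^{(j)}_{a+1}=0$ if $p_a\le j$, and $h^{(j)}_{a+1}=1$ if $p_a\ge j+1$. Summing over $j$ and using $1\le p_a\le k$,
\[
\sum_{j=1}^{k-1}h^{(j)}_{a+1}=\#\{\,j:1\le j\le k-1,\ j<p_a\,\}=p_a-1=h_{a+1},
\]
while the first components agree trivially; this is exactly (\ref{eq:hlabel}), read componentwise.

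The only step that genuinely needs care is the inductive claim inside the construction of $\chi$: that because the labels are added in increasing order, the parent node labeled $a$ has acquired no child in $B_j$ by the time the node labeled $a+1$ is attached, so that the ``right-extended sequence'' (in the $RE(j)$ case) and the ``right-most left-starting path'' (in the $LE(j)$ case) appearing in the definition of $\chi$ both collapse to that single node, making the attachment to the node labeled $a$ itself. Once this is in place, the rest is the transparent dictionary between an edge in position $p$ below a $k$-ary node and the left/right choice at a binary node, together with the one-line count displayed above.
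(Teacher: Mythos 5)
Your overall strategy is sound and runs parallel to the paper's: under the hypothesis $L$ is the chain $1\to 2\to\cdots\to n$ (your argument for this is correct and makes explicit something the paper leaves implicit); in each $B_j$ the node labeled $a+1$ is attached directly to the node labeled $a$, by a right edge when $p_a\le j$ and by a left edge when $p_a\ge j+1$; and the identity reduces to counting the $p_a-1$ indices $j$ with $j<p_a$. However, there is a genuine error in your evaluation of $h$. The quantity $h_i+1$ is the position of the attachment edge of the node labeled $i$, counted from right to left among \emph{all} external edges of the complete tree on the labels $1,\dots,i-1$ --- this is what makes $h_i$ range over $[0,(k-1)(i-1)]$ as in Eq.\ (\ref{eq:condh}) and yields the product formula of Proposition \ref{prop:numlktree} --- not its position among the $k$ child slots of its parent. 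Consequently your formulas $h_{a+1}=p_a-1$ and $h^{(j)}_{a+1}\in\{0,1\}$ are false. For instance, take $k=3$, $n=3$, with node $2$ on the $3$-edge of node $1$ and node $3$ on the $1$-edge of node $2$: then $h(L)=(0,2,2)$, not $(0,2,0)$, and $h(B_1)=h(B_2)=(0,1,1)$, not $(0,1,0)$. Your componentwise verification therefore compares quantities that are not $h(L)$ and $h(B_j)$.

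What is true, and what your computation actually establishes, is the statement about increments: attaching node $a+1$ at the $p_a$-edge of the leaf labeled $a$ inserts the $k$ new slots exactly where the old external edge sat, so $h_{a+1}-h_a=p_a-1$; likewise $h^{(j)}_{a+1}-h^{(j)}_a$ equals $1$ for a left edge and $0$ for a right edge. Your count $\#\{j:j<p_a\}=p_a-1$ then gives $\sum_j\bigl(h^{(j)}_{a+1}-h^{(j)}_a\bigr)=h_{a+1}-h_a$, and since all first components vanish, telescoping over $a$ (equivalently, the induction on $n$ that the paper runs) yields the claim. So the gap is repairable by replacing your absolute formulas with these increment formulas and adding the telescoping step. (Incidentally, your left/right split --- left edges in $B_j$ precisely for $1\le j\le p_a-1$ --- is the correct reading of the definition of $\chi$; the paper's own proof states both the split and the increment of $h(L)$ with a compensating off-by-one.)
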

\begin{proof}
By the construction of the bijection $\chi$ from $L$ to $\mathfrak{D}(L)$,
it is obvious that the node labeled $i$ is connected by an edge to the node 
labeled $i+1$ in $B_{p}$ for all $1\le p\le k-1$.
We prove Eq. (\ref{eq:hlabel}) by induction on $n$.
In the case of $n=1$ and $2$, Eq. (\ref{eq:hlabel}) is obviously satisfied.
Suppose that Eq. (\ref{eq:hlabel}).
We denote $h(L)=:(h_1,\ldots,h_{n})$ and $h(B_{i})=:(h'_{i,1},\ldots,h'_{i,n})$.
Equation (\ref{eq:hlabel}) is equivalent to $h_{j}=\sum_{1\le i\le k-1}h'_{i,j}$
for all $1\le j\le k-1$.

Suppose that the node labeled $n$ is connected to its parent node by a $p$-edge.
Then, by definition of $h(L)$, we have $h_{n}=h_{n-1}+p$.
In $\mathfrak{D}(L)$, the node labeled $n$ is connected to its parent edge 
by a left edge in $B_{i}$ for $1\le i\le p$ and by a right edge in $B_{i}$ for 
$p+1\le i\le k-1$.
The connection by a left (resp. right) edge in $B_{i}$ implies that $h'_{i,n}=h'_{i,n-1}+1$ 
(resp. $h'_{i,n}=h'_{i,n-1}$).
Therefore, we have $\sum_{1\le i\le k-1}h'_{i,n}=\sum_{1\le i\le k-1}h'_{i,n-1}+p$.
On the other hand, by induction assumption, we have $h_{n-1}=\sum_{1\le i\le k-1}h'_{i,n-1}$.
From these we have $h_{n}=\sum_{1\le i\le k-1}h'_{i,n}$, 
which completes the proof.
\end{proof}

\begin{remark}
The equality in Eq. (\ref{eq:hlabel}) does not hold for a general labeled tree $L$.
For example, consider the $3$-ary labeled tree $L$ with $h(L)=(0,2,1)$.
Then, we have two binary trees $B_{1}$ and $B_{2}$ such that 
$h(B_{1})=(0,1,2)$ and $h(B_{2})=(0,1,0)$.
From these we have $(0,2,1)\neq (0,2,2)=(0,1,2)+(0,1,0)$.
This comes from the operation (3-b-ii) above Example \ref{ex:LDD}.
\end{remark}

\subsection{Intervals of the lattice}
\label{sec:interlat}
Fix an integer $k\ge2$.

The following theorem is one of the main results. 
We count the number of chains of length $k-1$ by use of the bijection between 
a $k-1$-chain and a $k-1$-Dyck tilings.
Further, we translate the condition that all elements of a chain are 
contained in $\mathtt{NCCP}(n;312)$ into the condition on 
$k-1$-Dyck tilings.

\begin{theorem}
\label{thrm:interval}
Let $c(\pi,\pi';k-1):\pi=\pi_{1}\le \pi_{2}\le\ldots\le \pi_{k-1}=\pi'$ be a chain in the lattice $\mathcal{L}_{NCCP}$.
Then, we have   	
\begin{enumerate}
\item 
The number of chains of length $k-1$ in $\mathcal{L}_{NCCP}$ 
is given by 
\begin{align}
\label{eq:numck1}
\sum_{\pi}\sum_{\pi':\pi\le\pi'}|c(\pi,\pi';k-1)|
=\prod_{j=0}^{n-1}((k-1)j+1).
\end{align}
\item The number of chains of length $k-1$ which contain only elements in 
$\mathtt{NCCP}(n;312)$ is given by the $n$-th Fuss--Catalan number.
\end{enumerate}
\end{theorem}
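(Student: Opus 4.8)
The plan is to deduce both parts of Theorem \ref{thrm:interval} from the bijective machinery already assembled, namely Theorem \ref{thrm:bijchi} (the map $\chi$ identifying labeled $k$-ary trees with $(k-1)$-chains $D_1\le\dots\le D_{k-1}$ in $\mathcal{L}_{NCCP}$), Proposition \ref{prop:numlktree} (the count $\prod_{j=0}^{n-1}((k-1)j+1)$ of labeled $k$-ary trees), Proposition \ref{prop:numcktree} (the Fuss--Catalan count of canonical, equivalently unlabeled, $k$-ary trees), and Proposition \ref{prop:kLTcano} (a labeled $k$-ary tree is canonical iff all the binary trees in $\mathfrak{D}(L)$ are canonical). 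For part (1), first I would observe that a chain of length $k-1$ in $\mathcal{L}_{NCCP}$, written $c(\pi,\pi';k-1):\pi=\pi_1\le\pi_2\le\dots\le\pi_{k-1}=\pi'$, is by definition a weakly increasing $(k-1)$-tuple of elements of $\mathtt{NCCP}(n)$, which under the bijection $\phi$ of Section \ref{sec:lbt} is exactly a tuple $(D_1,\dots,D_{k-1})$ of labeled binary trees with $n$ nodes satisfying $D_1\le\dots\le D_{k-1}$. The left-hand side of \eqref{eq:numck1} counts precisely such tuples as $\pi,\pi'$ range over all pairs with $\pi\le\pi'$ and the internal chain varies; Theorem \ref{thrm:bijchi} puts these in bijection with labeled $k$-ary trees on $n$ nodes, so the count is $\prod_{j=0}^{n-1}((k-1)j+1)$ by Proposition \ref{prop:numlktree}.

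For part (2), I would restrict the bijection $\chi$ to chains all of whose elements lie in $\mathtt{NCCP}(n;312)$. By Lemma \ref{lemma:312clbt}, $\pi_i\in\mathtt{NCCP}(n;312)$ iff the labeled binary tree $D_i=\phi^{-1}(\pi_i)$ is canonical, so such a chain corresponds to a tuple $(D_1,\dots,D_{k-1})$ of canonical labeled binary trees with $D_1\le\dots\le D_{k-1}$. By Proposition \ref{prop:kLTcano}, this holds exactly when the labeled $k$-ary tree $L=\chi^{-1}(D_1,\dots,D_{k-1})$ is canonical. Hence $\chi$ restricts to a bijection between the set of $(k-1)$-chains in $\mathcal{L}_{NCCP}$ lying entirely in $\mathtt{NCCP}(n;312)$ and the set of canonical labeled $k$-ary trees on $n$ nodes. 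By Proposition \ref{prop:numcktree} the latter is counted by the $n$-th Fuss--Catalan number $\tfrac{1}{kn+1}\binom{kn+1}{n}$, which gives (2).

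One small technical point to handle carefully is the case $k=2$, where a ``chain of length $k-1=1$'' is a single element and the formula reads $\prod_{j=0}^{n-1}(j+1)=n!$ for (1) and the $n$-th Catalan number for (2); both are consistent with $|\mathtt{NCCP}(n)|=n!$ and $|\mathtt{NCCP}(n;312)|=C_n$, so the boundary case is subsumed. I would also spell out that the outer double sum $\sum_{\pi}\sum_{\pi':\pi\le\pi'}|c(\pi,\pi';k-1)|$ genuinely enumerates all length-$(k-1)$ chains without double counting: a chain determines its endpoints $\pi=\pi_1$ and $\pi'=\pi_{k-1}$ uniquely, so the terms are disjoint, and every chain is counted once. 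The main obstacle is not a deep one — it is making sure that the notion of ``chain of length $k-1$'' in the statement (a weakly increasing tuple of $k-1$ elements, allowing equalities) matches the domain of $\chi$ used in Theorem \ref{thrm:bijchi}; once that bookkeeping is pinned down, both parts are immediate corollaries of results already proved, and no further computation is required.
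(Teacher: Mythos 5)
Your proposal is correct and follows essentially the same route as the paper: both parts are obtained by composing $\phi^{-1}$ with the bijection $\chi$ of Theorem \ref{thrm:bijchi} to identify $(k-1)$-chains with labeled $k$-ary trees, then invoking Proposition \ref{prop:numlktree} for part (1) and Lemma \ref{lemma:312clbt}, Proposition \ref{prop:kLTcano}, and Proposition \ref{prop:numcktree} for part (2). The extra bookkeeping you supply (disjointness of the double sum, the $k=2$ boundary case) is consistent with, and slightly more explicit than, the paper's own argument.
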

\begin{proof}
(1) The bijection $\phi^{-1}$ defined in Section \ref{sec:lbt} gives $k-1$ labeled binary trees 
$D_1,\ldots,D_{k-1}$ where $D_{i}:=\phi^{-1}(\pi_{i})$.
Note that $\phi^{-1}$ is the order-preserving, which implies 
$D_{i}\le D_{i+1}$.
Then, by the bijection $\chi$ defined in Definition \ref{defn:chi} gives 
a labeled $k$-ary tree.
Therefore, we have a bijection from a chain $c(\pi,\pi';k-1)$ 
and a labeled $k$-ary tree.
From Proposition \ref{prop:numlktree}, the number of chains of length $k-1$
is given by Eq. (\ref{eq:numck1}).

(2) Since a chain contains only elements in $\mathtt{NCCP}(n;312)$, 
the labeled binary tree $D_{i}:=\phi^{-1}(\pi_{i})$ is canonical by Lemma \ref{lemma:312clbt}.
Denote by $\mathfrak{D}:=(D_1,\ldots,D_{k-1})$.
By Proposition \ref{prop:kLTcano}, the labeled $k$-ary tree obtained by 
$\chi(\mathfrak{D})$ is canonical.
Since both $\phi^{-1}$ and $\chi$ are bijections, the number of chains which contain
only elements in $\mathtt{NCCP}(n;312)$ is equal to the number of canonical 
$k$-ary trees, which is the $n$-th Fuss--Catalan number.
\end{proof}

\begin{remark}
Since the lattice $(\mathtt{NCCP}(n;312),\le)$ is isomorphic to the Kreweras lattice 
$\mathcal{L}_{Kre}=(\mathtt{NCP}(n),\subseteq)$, 
the number of chains of length $k-1$ in $\mathcal{L}_{Kre}$ is given by 
the Fuss--Catalan number by Theorem \ref{thrm:interval}.
We recover the results in \cite{Edel82} (see also \cite{Edel80}).
\end{remark}

The following proposition follows from the schematic correspondence as shown 
in Figure \ref{fig:cco}.
\begin{prop}
Let $c(\pi,\pi';k)$ be a chain in $\mathcal{L}_{NCCP}(n)$ as above.
\begin{enumerate}
\item We have a bijection between $c(\pi,\pi';k)$ and a $k$-Dyck tiling.
\item Let $D$ be a $k$-Dyck tiling corresponding to a chain $c(\pi,\pi';k)$. 
The followings are equivalent:
\begin{enumerate}
\item The chain $c(\pi,\pi';k)$ contains a partition $\pi''\notin\mathtt{NCCP}(n;312)$.
\item The $k$-Dyck tiling $D$ has non-trivial tiles. 
\end{enumerate}
\end{enumerate}
\end{prop}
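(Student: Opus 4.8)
The plan is to realize the bijection in (1) as a composition of three bijections already established, and then to derive (2) by tracking a single dichotomy---\emph{canonical} versus \emph{not canonical}, equivalently \emph{only trivial tiles} versus \emph{at least one non-trivial tile}---through that composition.

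For (1) I would proceed as follows. Given a chain $c(\pi,\pi';k)\colon \pi=\pi_1\le\pi_2\le\cdots\le\pi_k=\pi'$, apply $\phi^{-1}$ termwise. Since the order on $\mathtt{NCCP}(n)$ is by construction the one transported from labeled binary trees via $\phi$, the map $\phi$ is an order isomorphism, so this yields a weakly increasing $k$-tuple $(D_1,\ldots,D_k)$ of labeled binary trees, and $c(\pi,\pi';k)\mapsto(D_1,\ldots,D_k)$ is a bijection onto the set of such tuples. Next, Theorem \ref{thrm:bijchi}, applied with $k+1$ in place of $k$, says that $\chi$ is a bijection between labeled $(k+1)$-ary trees and weakly increasing $k$-tuples of labeled binary trees, so $L:=\chi^{-1}(D_1,\ldots,D_k)$ is a labeled $(k+1)$-ary tree. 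Finally, the trajectory--weight construction of Section \ref{sec:GDyckT}, together with the bijection of Section \ref{sec:kDktree} between labeled $(k+1)$-ary trees and integer sequences $h=(h_1,\ldots,h_n)$ with $0\le h_i\le k(i-1)$, gives a bijection between labeled $(k+1)$-ary trees and cover-inclusive $k$-Dyck tilings above $\mu_0$. Composing the three bijections yields the map $c(\pi,\pi';k)\mapsto D$ in (1).

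For (2) I would argue as follows. By Lemma \ref{lemma:312clbt}, $\pi_i\in\mathtt{NCCP}(n;312)$ if and only if $D_i=\phi^{-1}(\pi_i)$ is canonical, so the chain contains some $\pi''\notin\mathtt{NCCP}(n;312)$ exactly when some $D_i$ is not canonical; by Proposition \ref{prop:kLTcano} (again with $k+1$ in place of $k$) this happens if and only if $L=\chi^{-1}(D_1,\ldots,D_k)$ is not canonical. Thus the proposition reduces to the statement that, under the bijection of Section \ref{sec:GDyckT}, a labeled $(k+1)$-ary tree $L$ is canonical if and only if the corresponding $k$-Dyck tiling $D$ has only trivial tiles. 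To prove this I would show that $L$ is canonical precisely when its sequence $h=h(L)$ satisfies $h_{i+1}\le h_i+k$ for all $i$: mirroring the proof of Lemma \ref{lemma:cardA}, a canonical labeled $(k+1)$-ary tree with $n$ nodes is built by attaching the node labeled $p+1$ at a position that is weakly increasing (as $p$ grows) when external edges are counted from the left, and converting "counted from the left'' to the "counted from the right'' convention used to define $h$ turns this monotonicity into $h_{p+1}\le h_p+k$ (with $h_1=0$ and $0\le h_2\le k$ making the case $p=1$ automatic). On the tiling side, the $k$-Dyck (i.e.\ $(k+1)$-ary) version of Lemma \ref{lemma:hntDT} states that $D$ contains a non-trivial tile if and only if $h_i+k<h_{i+1}$ for some $i\in[2,n-1]$. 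Combining the two characterizations gives "$L$ canonical $\Leftrightarrow$ $D$ has only trivial tiles'', and chaining this with the previous equivalences proves (2).

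The main obstacle is this last equivalence, and in particular the clean criterion "$L$ canonical $\Leftrightarrow h_{i+1}\le h_i+k$ for all $i$'', which requires extending the proof of Lemma \ref{lemma:cardA} from binary to $(k+1)$-ary trees and carefully checking the change in the number of external edges as nodes are added. A secondary, purely bookkeeping nuisance is reconciling index conventions: Section \ref{sec:GDyckT} phrases Lemma \ref{lemma:hntDT} and Proposition \ref{prop:kDyckenu} for "$(k-1)$-st Dyck tilings''/labeled $k$-ary trees, whereas a chain $c(\pi,\pi';k)$ produces $k$ labeled binary trees, hence a labeled $(k+1)$-ary tree and a $k$-Dyck tiling, so every invocation of those statements must be shifted accordingly. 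Once the canonicity criterion in terms of $h$ is in hand, the remainder is a routine composition of bijections and equivalences already proved.
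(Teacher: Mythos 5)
Your proposal is correct and follows essentially the same route as the paper: both realize the bijection in (1) as the composition (chain) $\leftrightarrow$ ($k$ labeled binary trees) $\leftrightarrow$ (labeled $(k+1)$-ary tree) $\leftrightarrow$ ($k$-Dyck tiling), and both reduce (2), via Lemma \ref{lemma:312clbt} and Proposition \ref{prop:kLTcano}, to the equivalence between canonicity of the $(k+1)$-ary tree and the absence of non-trivial tiles. The only difference is that you make explicit the $h$-sequence criterion for canonicity ($h_{i+1}\le h_i+k$, matching the shifted Lemma \ref{lemma:hntDT}), which the paper leaves implicit by appealing to the correspondence already used in Proposition \ref{prop:kDyckenu}; your criterion is correct and this is a reasonable gap to fill rather than a change of method.
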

\begin{proof}
(1) Given a labeled $(k+1)$-ary tree $L(T)$, we have a unique chain $c(\pi,\pi';k)$ in the 
lattice $\mathcal{L}_{NCCP}$ by decomposing $L(T)$ into $k$ labeled binary trees.
This decomposition is unique and gives a bijection between them.
Similarly, we have a bijection between a tree $L(T)$ and a $k$-Dyck tiling.
By composing the bijections, we have a bijection between $c(\pi,\pi';k)$ and 
a $k$-Dyck tiling.

(2) 
We consider the set of canonical labeled $k$-ary trees as in Figure \ref{fig:cco}.
On one hand, we have a bijection between $c(\pi,\pi';k)$ which contains only elements in $\mathtt{NCCP}(n;312)$
and  a canonical labeled $k$-ary tree.
On the other hand, we have a bijection between a canonical $k$-ary tree and a $k$-Dyck tiling without non-trivial 
tiles.
Thus, we have a bijection between a $k$-Dyck tiling without non-trivial tiles and $c(\pi,\pi';k)$ whose elements
are in $\mathtt{NCCP}(n;312)$.
By contraposition, (a) and (b) are equivalent.
\end{proof}

\bibliographystyle{amsplainhyper} 
\bibliography{biblio}

\end{document}